\title{Finiteness of analytic cohomology of Lubin-Tate $(\varphi_L,\Gamma_L)$-modules}
\author{Rustam Steingart}
\address{Ruprecht-Karls-Universität Heidelberg,
	Mathematisches Institut,Im Neuenheimer Feld 205, D-69120 Heidelberg}
\email{rsteingart@mathi.uni-heidelberg.de}
\date{\today}
\theoremstyle{plain}
\newtheorem{thm}{Theorem}[subsection]
\newtheorem{lem}[thm]{Lemma}
\newtheorem{rem}[thm]{Remark}
\newtheorem{prop}[thm]{Proposition}
\newtheorem{cor}[thm]{Corollary}
\newtheorem*{cor*}{Corollary}
\newtheorem{introtheorem}{Theorem}
\theoremstyle{definition}
\newtheorem{defn}[thm]{Definition}
\newcommand{\NN}{\mathbb{N}}
\newcommand{\N}{\mathbb{N}}
\newcommand{\Gal}{\operatorname{Gal}}
\newcommand{\ZZ}{\mathbb{Z}}
\newcommand{\QQ}{\mathbb{Q}}
\newcommand{\Q}{\mathbb{Q}}
\newcommand{\CC}{\mathbb{C}}
\newcommand{\C}{\mathbb{C}}
\newcommand{\cR}{\mathcal{R}}
\newcommand{\cEnd}{\mathcal{E}nd}
\newcommand{\id}{\operatorname{id}}
\newcommand{\res}{\operatorname{res}}
\DeclarePairedDelimiter\abs{\lvert}{\rvert}%
\DeclarePairedDelimiter\norm{\lVert}{\rVert}%
\begin{document}
	\maketitle
\begin{abstract}
	We prove finiteness and base change properties for analytic cohomology of families of $L$-analytic $(\varphi_L,\Gamma_L)$-modules parametrised by affinoid algebras in the sense of Tate. For technical reasons we work over a field $K$ containing a period of the Lubin-Tate group, which allows us to describe analytic cohomology in terms of an explicit generalised Herr complex.
\end{abstract}
	\section*{Introduction}
		Let $L/\Q_p$ be a finite extension and let $\varphi_L$ be a Frobenius power series for a uniformiser $\pi_L.$ We denote by $L_\infty$ the Lubin-Tate extension of $L$ attached to $\varphi_L$ and set $\Gamma_L= \operatorname{Gal}(L_{\infty}/L).$ In the cyclotomic case, i.e., $L=\Q_p$ and $\varphi(T) = (1+T)^p-1$ there is, due to Fontaine, an equivalence of categories between $p$-adic Galois representations and étale $(\varphi,\Gamma)$-modules. The theory of $(\varphi,\Gamma)$-modules allows for rather simple descriptions of the Galois and Iwasawa cohomology of a representation $V$ in terms of the $(\varphi,\Gamma)$-module attached to $V$ (cf.\ \cite{herr1998cohomologie, cherbonnier1999theorie}) and similar results can be achieved for the Lubin-Tate $(\varphi_L,\Gamma_L)$-modules attached to $L$-linear representations of $G_L$ (cf.\ \cite{kupferer2022herr, SV15}). In order to describe invariants of an $L$-linear representation $V$ in terms of the $(\varphi_L,\Gamma_L)$-modules one would like to work with $(\varphi_L,\Gamma_L)$-modules over the Robba ring $\cR_L = \bigcup_{r \in[0,1)} \cR_L^{[r,1)},$ where $\cR_L^{[r,1)}$ denotes the ring of formal Laurent series with coefficients in $L$, which converge on the half open annulus defined by $[r,1).$ 
It turns out that the category of étale $(\varphi_L,\Gamma_L)$-modules over $\cR_L$ is equivalent to the category of overconvergent representations. While in the cyclotomic case every Galois representation is overconvergent (cf. \cite{cherbonnier1998representations}), the theory breaks down if  $L \neq \Q_p.$ A sufficient condition for overconvergence is analyticity  and a theorem of Berger asserts that there is an equivalence of categories between $L$-analytic representations and $L$-analytic $(\varphi_L,\Gamma_L)$-modules (cf. \cite{Berger2016}).
An $L$-linear representation is called $L$-analytic if $\CC_p \otimes_{L,\sigma} V$ is trivial as a $\C_p$-semi-linear representation for every non-identity embedding $\sigma\colon L  \to \C_p.$ A $(\varphi_L,\Gamma_L)$-module over $\cR_L$ is analytic if the action of $\Gamma_L$ is $L$-analytic, more precisely pro-$L$-analytic in the sense of \cite{Berger2016}. This is equivalent to requiring that the action is differentiable and the action of $\operatorname{Lie(\Gamma_L)}$ is $L$-bilinear. 
This leads to another interesting cohomology theory: the pro-analytic cohomology computed by the total complex of the double complex $$C^{\bullet}_\text{pro-an}(\Gamma_L,M) \xrightarrow{\varphi_L-1}C^{\bullet}_\text{pro-an}(\Gamma_L,M),$$ where $C^{\bullet}_\text{pro-an}(\Gamma_L,M)$ is defined analogously to group cohomology, using pro-analytic cochains instead of continuous cochains. This complex was studied in \cite{FX12, BFFanalytic, colmez2016representations}, but the considerations are restricted to low cohomological degrees (i.e.\ zero and one) or $(\varphi_L,\Gamma_L)$-modules attached to $L$-analytic characters.  At the expense of having to extend coefficients to a large (transcendental if $L \neq \Q_p$) field extension $K$ of $L,$ we can describe analytic cohomology by a complex similar to the classical complex considered by Herr. This allows us to even treat $(\varphi_L,\Gamma_L)$-modules over relative Robba rings $\cR_A=A \hat{\otimes}_K \cR_K,$ where $A$ is $K$-affinoid in the sense of Tate. These modules should be thought of as families parametrised by the rigid analytic space $\operatorname{Sp}(A).$ 
In our case the group $\Gamma_L \cong o_L^{\times}$ contains an open subgroup $U \cong o_L$ of finite index and 
by the work of Schneider and Teitelbaum there exists a rigid $L$-analytic variety $\mathfrak{X}_{U},$ whose points parametrise the locally $L$-analytic characters on $U,$ such that the global sections of its structure sheaf are given by the algebra of locally $L$-analytic $L$-valued distributions $D(U,L)$ via the Fourier isomorphism. They further show that over $K$ containing a certain period $\Omega_L \in \CC_p$ the variety $\mathfrak{X}_{U}$ is isomorphic to the open unit disc.
This provides us with an  isomorphism $D(U,K) \cong \cR_K^{[0,1)}$ and we denote by $Z$ the preimage of a coordinate $T$ under this isomorphism. By transport of structure it is evident that $$D(U,K) \xrightarrow{Z} D(U,K) \to K \to 0$$ is a projective resolution of $K$ and hence the complex $M \xrightarrow{Z} M$ represents $\mathbf{RHom}_{D(U,K)}(K,M).$ Kohlhaase's analytic cohomology theory (cf. \cite{Kohlhaase}) provides us with a comparison isomorphism to $C^{\bullet}_\text{pro-an}(U,M)$ (cf. Corollary \ref{cor:comparison}). Note that Kohlhaase works over a spherically complete field and additionally endows the cohomology groups with topologies. The relevant parts of his construction also work in our case, but the above comparison is merely of algebraic nature. This leads us to studying the analytic Herr complex given by the total complex
$$C_{\varphi_L,Z}(M):=Tot\left(\begin{tikzcd} M \arrow{d}{Z} \arrow{r}{\varphi_L-1}& M\arrow{d}{-Z} \\	 M \arrow{r}{\varphi_L-1} &M\end{tikzcd}\right).$$ 
Our work is inspired by the work of Kedlaya, Pottharst and Xiao in \cite{KPX} and specialises to their situation in the case $L=\Q_p,$ where one typically (e.g. for $p\neq 2$) takes $Z = \gamma-1$ for a topological generator $\gamma \in  \Gamma_{\QQ_p}.$ The main difference is having to pass to a large extension, to be able to describe the distribution algebra in terms of a single operator $Z$ and the implicit nature thereof. 
	The starting point is the study of the $\Gamma_L$-action on $M^{\Psi=0}.$ By transport of structure via the isomorphism $\cR_K^{[0,1)} \cong D(U,K)$ we can define the group Robba ring $\cR_K(U)$ (and similarly $\cR_K(\Gamma_L)$). We denote by $\eta(1,T)$ the power series corresponding to the dirac distribution $\delta_1$ attached to $1 \in o_L \cong U$ under this isomorphism. Comparing the action of $Z$ and the action of $T$ allows us to show the following theorem:
\begin{introtheorem} [Theorem \ref{thm:kerpsiprojective}]
	Let $M$ be an $L$-analytic $(\varphi_L,\Gamma_L)$-module over $\cR_A$ admitting a model\footnote{A model is a $(\varphi_L,\Gamma_L)$-module $M^{[r_0,1)}$ over $\cR_A^{[r_0,1)}$ such that $M = \cR_A \otimes_{\cR_A^{[r_0,1)}}M^{[r_0,1)}.$} over $[r_0,1),$ then there exists $r_1 \geq r_0$ such that for any $r\geq r_1$ the $\Gamma_L$-action on $(M^{[r,1)})^{\psi=0}$ extends to an action of $\cR_A^{[r,1)}(\Gamma_L)$ with respect to which $(M^{[r,1)})^{\psi=0}$ is finite projective of rank $\operatorname{rank}_{\cR_A}(M).$ 
\end{introtheorem}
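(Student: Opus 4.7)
The plan is to reduce the statement to a Mellin-transform type identification for the trivial module and then transport the structure to a general $M$ by local trivialisation. Before shrinking everything to the appropriate radius, the first conceptual step is to observe that the $L$-analyticity of the $\Gamma_L$-action on $M^{[r,1)}$ automatically extends the action to the locally $L$-analytic distribution algebra $D(\Gamma_L,A)$; the whole difficulty is to show that on $\psi=0$-parts this extension actually lands in the (smaller) group Robba ring $\cR_A^{[r,1)}(\Gamma_L)$, and that with this enhanced structure one obtains a finite projective module.

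The heart of the proof is the base case $M = \cR_A^{[r_0,1)}$. Here I would prove a Lubin--Tate analogue of Colmez's Mellin isomorphism, namely that for $r_1 \geq r_0$ chosen large enough, multiplication by $\eta(1,T)$ induces an isomorphism
\[
\cR_A^{[r,1)}(\Gamma_L) \xrightarrow{\sim} \bigl(\cR_A^{[r,1)}\bigr)^{\psi=0}, \qquad \mu \longmapsto \mu\cdot\eta(1,T),
\]
for every $r \geq r_1$. Unwinding the Fourier isomorphism $D(U,K)\cong\cR_K^{[0,1)}$, the element $\eta(1,T)$ corresponds to $\delta_1$, so that on the restricted subspace $U\subset \Gamma_L$ the claim reduces to identifying $\bigl(\cR_A^{[r,1)}\bigr)^{\psi=0}$ with $\cR_A^{[r,1)}(U)$. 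To establish the claim one first checks that $\eta(1,T)\in (\cR_K^{[r,1)})^{\psi=0}$ (which follows because $\psi$ is dual, under Fourier, to pullback along multiplication by $\pi_L$, and $1\in o_L$ is not in $\pi_L o_L$). Injectivity comes from the fact that $\eta(1,T)$ is a unit in a suitable completion, and surjectivity is obtained from the Lubin--Tate decomposition $\cR_A^{[r,1)}=\bigoplus_{i=0}^{q-1}[\omega_{\mathrm{LT}}]^i \cdot \varphi_L(\cR_A^{[r^{1/q},1)})$ combined with the description of $\eta(a,T)$ for $a\in o_L$. Finally, one averages over finite index to descend from $U$ to $\Gamma_L$, which only costs projectivity of rank equal to $[\Gamma_L:U]$.

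For general $M$, I would proceed by local trivialisation on $\operatorname{Sp}(A)\times \{|T|\in[r,1)\}$: after increasing $r_0$ to some $r_1$, the model $M^{[r_1,1)}$ is a direct summand of a finite free module of rank $d=\operatorname{rank}_{\cR_A}(M)$ over $\cR_A^{[r_1,1)}$. Because $\psi$ commutes with the projectors, the $\psi=0$-part of $M^{[r,1)}$ is a direct summand of $\bigl(\cR_A^{[r,1)}\bigr)^{\psi=0\,\oplus d}$. The $\cR_A^{[r,1)}(\Gamma_L)$-structure from the base case then transports via this decomposition, and finite projectivity of rank $d$ follows at once. The subtle point is that the decomposition into a summand is only $\cR_A^{[r,1)}$-linear, so one has to check that the transported $\Gamma_L$-action on $(M^{[r,1)})^{\psi=0}$ coincides with the intrinsic one; this is done by comparing both actions on the image of $M^{[r,1)}$ under the $\psi$-decomposition of $M^{[r/q,1)}$ and using continuity.

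The main obstacle I expect is the Mellin isomorphism in the relative setting. In the cyclotomic case one has the trivial identity $\eta(1,T)=1+T$ and bijectivity is immediate, but in the genuine Lubin--Tate case $\eta(1,T)$ is only implicitly defined as a transcendental power series depending on the choice of the period $\Omega_L$. Making the convergence estimates uniform over the affinoid base $A$, and in particular finding the precise $r_1$ past which both the action of $\cR_A^{[r,1)}(\Gamma_L)$ converges and the inverse of the Mellin map is continuous, is the technical crux of the proof; this is where the comparison between the operators $Z$ and the multiplication-by-$T$ operator on $\cR_K^{[0,1)}$ enters decisively.
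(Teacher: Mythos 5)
Your reduction of the general case to the trivial module $\cR_A$ has a genuine gap, and it occurs exactly at the point you flag as ``subtle.'' If you split $M^{[r_1,1)}$ off as a direct summand of a finite free $\cR_A^{[r_1,1)}$-module, the projectors are only $\cR_A^{[r_1,1)}$-linear: they are compatible neither with $\varphi_M$ nor with the semilinear $\Gamma_L$-action. Consequently (a) $\psi_M$, which is defined through the linearised isomorphism $\varphi_M^*M\cong M$ and hence depends on the Frobenius structure of $M$, does \emph{not} commute with the projectors, so $(M^{[r,1)})^{\psi_M=0}$ is not a direct summand of $\bigl((\cR_A^{[r,1)})^{\psi=0}\bigr)^{\oplus d}$; and (b) even if it were, the $\cR_A^{[r,1)}(\Gamma_L)$-structure you would transport from the free module is the one where $Z$ acts through the scalar action on $\cR_A^{[r,1)}$, which differs from the intrinsic action on $M^{\psi=0}$ by the (nontrivial, $M$-dependent) semilinear cocycle of the $\Gamma_L$-action. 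These two actions genuinely disagree, so no continuity argument can identify them; the discrepancy is the whole content of the theorem. Your base case (the Mellin-type isomorphism $\cR_A^{[r,1)}(\Gamma_L)\cong(\cR_A^{[r,1)})^{\psi=0}$) is correct and is indeed the special case $M=\cR_A$ of the statement, but the general case does not follow from it by trivialisation.

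The paper's proof confronts the discrepancy head on. Using the decomposition $(M^{[r^{1/q^n},1)})^{\psi=0}\cong \ZZ[\Gamma_L]\otimes_{\ZZ[\Gamma_n]}\eta(1,T)\varphi_M^n M^{[r,1)}$, one is reduced to the twisted action $H_n(\gamma)\colon m\mapsto \eta\bigl(\tfrac{\chi_{LT}(\gamma)-1}{\pi_L^n},T\bigr)\gamma m$ on $M^I$ for a closed interval $I$. The $L$-analyticity of the $\Gamma_L$-action on $M$ (via the operator-norm estimate $\lVert\gamma-1\rVert_{M^I}\to 0$) yields the key bound $\lVert H_n(Z_n)-T\rVert_{M^I}<r_0\leq\lVert T^{-1}\rVert_{M^I}^{-1}$ for $n$ large, so that $H_n$ extends to $\cR_A^I(\Gamma_n)$ and $H_n(Z_n)$ is an invertible perturbation of multiplication by $T$. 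Projectivity and the rank computation then follow from a perturbation argument: one chooses a complement $N^I$ with the tautological action $Z_n\mapsto T$ and shows that the comparison endomorphism of $M^I\oplus N^I$ is within distance $<1$ of the identity, hence an automorphism. Finally one glues over an admissible cover of $[r,1)$ using coadmissibility. The operator-norm estimate in the middle step depends irreducibly on $M$ and its $\Gamma_L$-action; this is the ingredient your proposal is missing.
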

In the cyclotomic case this is \cite[Theorem 3.1.1]{KPX} and this theorem was proven by Schneider and Venjakob in \cite{SchneiderVenjakobRegulator} for free modules over $\cR_K.$
This result shows that the variable $Z$ has properties analogous to the operator $\gamma-1$ studied in the classical case. As an immediate corollary we obtain a comparison isomorphism between the $(\Psi,Z)$-cohomology (defined using the left-inverse $\Psi$ of $\varphi_L$) and $(\varphi_L,Z)$-cohomology.
\begin{cor*}
	Let $M$ be an $L$-analytic $(\varphi_L,\Gamma_L)$-module over $\cR_A.$ The morphism of complexes 
	$$	\begin{tikzcd}
		C_{\varphi_L,Z}(M)  \arrow[d]: &M \arrow[d,"{\operatorname{id}}"] \arrow[r ]      & M \oplus M \arrow[d, "-\Psi\oplus\operatorname{id}"] \arrow[r] & M \arrow[d, "-\Psi"]   \\
		C_{\Psi,Z}(M): 	&M \arrow[r] & M \oplus M\arrow[r]                                          & M                                               
	\end{tikzcd}$$
	is a quasi-isomorphism.
\end{cor*}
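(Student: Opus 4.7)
The plan is to identify the cone of the displayed morphism with the Koszul complex of $Z$ on $M^{\Psi=0}$ and then to invoke Theorem~\ref{thm:kerpsiprojective} to see that $Z$ acts invertibly on that module.

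First, I would check that the diagram is a morphism of complexes. Horizontal compatibility is the identity $-\Psi\circ(\varphi_L-1)=\Psi-1$, which is immediate from $\Psi\circ\varphi_L=\id$. Vertical compatibility reduces to $\Psi\circ Z=Z\circ\Psi$; this holds because $\Psi$ commutes with the whole $\Gamma_L$-action on $M$ and hence with every element of the distribution algebra.

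Next, I would analyse the ``horizontal slice'' $F\colon[M\xrightarrow{\varphi_L-1}M]\to[M\xrightarrow{\Psi-1}M]$ given by $(\id,-\Psi)$. Since $\varphi_L$ is a section of $\Psi$, the map $\Psi$ is surjective on $M$ with kernel $M^{\Psi=0}$; a short calculation then shows that $\cone(F)$ is concentrated in a single cohomological degree, where it equals $M^{\Psi=0}$. Concretely, any cocycle $(a,b)$ is cohomologous, via the coboundary associated to $b$, to one of the form $(a',0)$ with $a'=a-b+\varphi_L b\in M^{\Psi=0}$, and such a representative is a coboundary only if $a'=0$. Because $Z$ is a chain map of both $[M\xrightarrow{\varphi_L-1}M]$ and $[M\xrightarrow{\Psi-1}M]$ and commutes with $F$, the cone of the morphism in the statement is quasi-isomorphic to the two-term complex $[M^{\Psi=0}\xrightarrow{Z}M^{\Psi=0}]$.

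Finally, I would invoke Theorem~\ref{thm:kerpsiprojective}: for $r\geq r_1$ the module $(M^{[r,1)})^{\Psi=0}$ is finite projective over $\cR_A^{[r,1)}(\Gamma_L)$. Under the Fourier isomorphism $\cR_A^{[r,1)}(U)\cong\cR_A^{[r,1)}$ the element $Z$ corresponds to the coordinate $T$, which is a unit in $\cR_A^{[r,1)}$ for every $r>0$ since the annulus $\{r\leq|T|<1\}$ excludes the origin. Hence $Z$ is a unit in $\cR_A^{[r,1)}(\Gamma_L)$, acts invertibly on $(M^{[r,1)})^{\Psi=0}$, and, after passing to the colimit in $r$, on $M^{\Psi=0}$ itself. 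The two-term complex above is therefore acyclic and the morphism of complexes is a quasi-isomorphism. The only substantial input is Theorem~\ref{thm:kerpsiprojective}; the rest is routine diagram-chasing, and the most delicate point in practice is tracking the models $M^{[r,1)}$ carefully enough to apply that theorem.
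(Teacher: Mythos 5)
Your argument is correct and is essentially the paper's own: the paper computes the kernel and cokernel complexes of the vertical morphism (cokernel zero by surjectivity of $\Psi$, kernel equal to $[M^{\Psi=0}\xrightarrow{Z}M^{\Psi=0}]$), which is the same reduction you achieve via the cone, and then kills the resulting two-term complex by the invertibility of $Z$ on $M^{\Psi=0}$. Your final step re-derives exactly the paper's Theorem \ref{thm:Zinvertible} from Theorem \ref{thm:kerpsiprojective} (namely that $Z$ becomes the unit $T$ in $\cR_A^{[r,1)}(\Gamma_n)$ for $r>0$), so the input is the same as well.
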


Next we study finiteness and base change properties of $C_{\varphi_L,Z}(M).$ Here we deviate from the approach of \cite{KPX}, who deduce finiteness as a consequence of finiteness of Iwasawa cohomology, and instead opt for an approach using methods of \cite{kedlaya2016finiteness}. The finiteness of $(\varphi_L,Z)$-cohomology can be deduced systematically from the general finiteness statements in \cite{kedlaya2016finiteness}.  We denote by $\mathbf{D}^{[0,2]}_{perf}(A)$ the full subcategory of the derived category consisting of complexes, which are quasi-isomorphic to a complex of finitely generated projective modules concentrated in degrees $[0,2].$
\begin{introtheorem}[Theorem \ref{thm:perfect}]
	Let $A,B$ be $K$-affinoid and let $M$ be an $L$-analytic $(\varphi_L,\Gamma_L)$-module over $\cR_A.$ Let $f\colon A  \to B$ be a morphism of $K$-affinoid algebras. Then:
	\begin{enumerate}[1.)]
		\item $C_{\varphi_L,Z}(M) \in \mathbf{D}^{[0,2]}_{\text{perf}}(A).$
		\item The natural morphism $C_{\varphi_L,Z}(M) \otimes_{A}^\mathbb{L} B \to C_{\varphi_L,Z}(M \hat{\otimes}_A B)$ is a quasi-isomorphism.
	\end{enumerate}
	In particular the cohomology groups $H^i_{\varphi_L,Z}(M)$ are finite $A$-modules for every $i.$
\end{introtheorem}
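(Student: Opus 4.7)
The first step is to invoke the Corollary above to replace $C_{\varphi_L,Z}(M)$ by the quasi-isomorphic complex $C_{\Psi,Z}(M)$. This is advantageous because $\Psi$, being a genuine left inverse of $\varphi_L$, is surjective on $M$; hence each of the two rows of the double complex defining $C_{\Psi,Z}(M)$ has horizontal cohomology concentrated in degree zero and equal to $M^{\psi=0}$. A spectral-sequence argument (or a direct diagram chase) then yields a quasi-isomorphism
\[C_{\Psi,Z}(M) \simeq \bigl[M^{\psi=0} \xrightarrow{Z} M^{\psi=0}\bigr]\]
concentrated in degrees $0$ and $1$. Both assertions of the theorem thereby reduce to the analogous statements for this two-term complex, in which the whole of Theorem \ref{thm:kerpsiprojective} can be brought to bear.

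Next, I would fix a model $M^{[r_0,1)}$ and choose $r \geq r_1$ large enough that Theorem \ref{thm:kerpsiprojective} applies, presenting $(M^{[r,1)})^{\psi=0}$ as a finite projective module of rank $d = \mathrm{rank}_{\cR_A}(M)$ over the Banach algebra $\cR_A^{[r,1)}(\Gamma_L)$, in which $Z$ is a distinguished element via the Fourier isomorphism. Passing to the filtered colimit over $r$ identifies the reduced complex with the colimit of $[(M^{[r,1)})^{\psi=0} \xrightarrow{Z} (M^{[r,1)})^{\psi=0}]$, so the problem becomes one about multiplication by $Z$ on a finite projective Banach module. At this level I would apply the finiteness framework of \cite{kedlaya2016finiteness}: the required Fredholm/compact-operator input is supplied by the structure of $\cR_A^{[r,1)}(\Gamma_L)$ and the fact that $Z$ plays the role of a coordinate on the distribution algebra. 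This yields $C_{\varphi_L,Z}(M) \in \mathbf{D}^{[0,1]}_{\mathrm{perf}}(A) \subset \mathbf{D}^{[0,2]}_{\mathrm{perf}}(A)$ and in particular the finiteness of $H^i_{\varphi_L,Z}(M)$ over $A$.

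For the base-change assertion, the reduction above is functorial in $A$: Theorem \ref{thm:kerpsiprojective} is compatible with base change $A \to B$, so $(M \hat{\otimes}_A B)^{\psi=0}$ is finite projective over $\cR_B^{[r,1)}(\Gamma_L)$ and naturally isomorphic to the completed base change of $(M^{[r,1)})^{\psi=0}$. Since the reduced complex over $A$ is now known to be perfect, the derived tensor product $- \otimes_A^{\mathbb{L}} B$ agrees with the completed base change of its entries, and combining this with the previous functoriality produces the required quasi-isomorphism $C_{\varphi_L,Z}(M) \otimes_A^{\mathbb{L}} B \simeq C_{\varphi_L,Z}(M \hat{\otimes}_A B)$.

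The main obstacle will be verifying in detail that multiplication by $Z$ on $(M^{[r,1)})^{\psi=0}$ fits into the finiteness framework of \cite{kedlaya2016finiteness}: one must check that the relevant maps are completely continuous in the appropriate Banach-module sense and that this property is preserved under the passage $A \to B$. The Fourier description of $\cR_A^{[r,1)}(\Gamma_L)$, together with the explicit structure provided by Theorem \ref{thm:kerpsiprojective}, is what must be leveraged most carefully here.
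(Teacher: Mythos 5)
The first reduction step is where your argument breaks. The rows of the double complex defining $C_{\Psi,Z}(M)$ are $M \xrightarrow{\Psi-1} M$, not $M \xrightarrow{\Psi} M$: surjectivity of $\Psi$ does not make $\Psi-1$ surjective, and $\ker(\Psi-1)=M^{\Psi=1}$ has nothing to do with $M^{\psi=0}$. Consequently the claimed quasi-isomorphism $C_{\Psi,Z}(M)\simeq[M^{\psi=0}\xrightarrow{Z}M^{\psi=0}]$ is false. A quick sanity check: for $M=\cR_A$ one has $H^0_{\varphi_L,Z}(\cR_A)\supseteq A\neq 0$ (constants are fixed by $\varphi_L$ and killed by the augmentation variable $Z$), whereas your reduced complex has $H^0=(M^{\psi=0})^{Z=0}=0$ precisely because Theorem \ref{thm:Zinvertible} makes $Z$ invertible on $M^{\psi=0}$. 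For the same reason your conclusion $C_{\varphi_L,Z}(M)\in\mathbf{D}^{[0,1]}_{\mathrm{perf}}(A)$ is too strong; the complex genuinely lives in degrees $[0,2]$. What is actually true — and is the content of the Corollary you invoke — is that $[M^{\psi=0}\xrightarrow{Z}M^{\psi=0}]$ is the \emph{kernel} of the map of complexes $C_{\varphi_L,Z}(M)\to C_{\Psi,Z}(M)$, and its acyclicity is what makes that map a quasi-isomorphism; it does not collapse either complex to two terms.

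The role of Theorem \ref{thm:kerpsiprojective} and of \cite{kedlaya2016finiteness} in the actual proof is also different from what you describe. One first shows (Lemma \ref{lem:complexcomputecohomology}) that the cohomology is computed by the complex $C_{\varphi_L,Z,[r,s]}(M)$ on a closed interval: here the invertibility of $Z$ on $\ker\Psi$ is used to prove that $\varphi_L$, and hence the restriction $C_{\varphi_L,Z,[r,1)}\to C_{\varphi_L,Z,[r^{1/q},1)}$ (which is homotopic to $\varphi_L$), is a quasi-isomorphism. The input to \cite[Lemma 1.10]{kedlaya2016finiteness} is then the complete continuity of the restriction maps $M^{[r,s]}\to M^{[r',s']}$ between Banach modules (Lemma \ref{lem:interval}), not any Fredholm property of multiplication by $Z$ on $(M^{[r,1)})^{\psi=0}$, which is a Fréchet and not a Banach module. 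Finally, concentration in $\mathbf{D}^{[0,2]}_{\mathrm{perf}}(A)$ requires an extra argument you omit: one combines finiteness of the $H^i$ with flatness of $\cR_A$ over $A$ to truncate a bounded-above resolution in degree $0$ and show the degree-$0$ kernel is finitely presented and flat, hence projective. The base-change statement is then deduced as in \cite{KPX} using Lemma \ref{lem:finitecomplete}, not from a base change of $(M^{[r,1)})^{\psi=0}$.
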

In the last section we explicitly describe a comparison between $H^1_{\varphi,Z}(M)^{\Gamma_L}$ and the group of analytic extensions $\operatorname{Ext}^1_{an}(\cR_A,M)$ and explain the comparison to pro-analytic cohomology using Kohlhaase's theory.
The finiteness of analytic cohomology is used in our subsequent article \cite{RustIwasawa} to study Iwasawa cohomology of analytic $(\varphi_L,\Gamma_L)$-modules. 
\section*{Acknowledgements}
This article is based on parts of the author's Ph.D.\ thesis (cf. \cite{rusti}) and we thank Otmar Venjakob for his guidance and many helpful discussions. We also thank Jan Kohlhaase for his valuable remarks  on \cite{rusti}. This research was funded by the Deutsche Forschungsgemeinschaft (DFG, German Research Foundation) TRR 326 \textit{Geometry and Arithmetic of Uniformized Structures}, project number 444845124.

	\section{Preliminaries}
\subsection{Robba rings and their $(\varphi_L,\Gamma_L)$-actions.}
Let $\varphi_L(X) \in o_L \llbracket X\rrbracket$ be a Frobenius power series for the uniformiser $\pi_L \in o_L,$  i.e., a series satisfying $$\varphi_L(X)=\pi_LX + \text{terms of higher order}$$ and $$\varphi_L(X)\equiv X^q \mod \pi_L.$$
There is a unique Lubin-Tate group law $F_{LT}(X,Y)$ and a unique injective homomorphism of rings $$o_L \to \operatorname{End}(LT),$$ mapping $a\in o_L$ to a power series $[a](X)$ such that $[\pi_L](X)=\varphi_L(X).$ Denote by $L_n$ the extension of $L$ that arises by adjoining all $\pi_L^n$-torsion points of the $LT$-group. The set $LT[\pi_L^n]$ of $\pi_L^n$-torsion points carries a natural $o_L$-module structure which respect to which it is a free $o_L/\pi_L^no_L$-module of rank $1$. One can show that $L_n$ is a finite Galois extension of $L$ with Galois group isomorphic to $\operatorname{End}(LT[\pi_L^n]) \cong (o_L/\pi_L^no_L)^\times$ and by passing to the limit one obtains a continuous character $$\chi_{LT}: G_L \to o_L^\times$$ inducing for $L_\infty = \bigcup_{n\geq1} L_n$ an isomorphism $$\Gamma_L:=\Gal(L_\infty/L) \to o_L^\times.$$
We endow $o_L\llbracket X \rrbracket$ with an action of $\Gamma_L$ via $\gamma(f(X))= f( [\chi_{LT}(\gamma)](X))$ and similarly for $\varphi_L.$ We implicitly fix a norm on $\C_p$ extending the norm on $L$ and endow each subfield with said norm. We frequently deal with power series or Laurent series converging
on some annulus $[r,s].$ We will express radii in relative terms in order to avoid having to fix a normalisation.
\begin{defn}
	Let $K \subset \CC_p$ be a complete field.
	We denote by $\mathcal{R}_K^{[r,s]}$ the ring of Laurent series (resp. power series if $r=0$) with coefficients in $K$ that converge on the annulus $r\leq \lvert x\rvert \leq s$ for $r,s \in p^{\QQ}$ and $x \in \CC_p.$
	It is a Banach algebra with respect to the norm $\lvert \cdot  \vert_{[r,s]} := \max (\lvert \cdot \rvert_r, \lvert \cdot \rvert_s).$ We further define the Fréchet algebra $\mathcal{R}_K^r:=\mathcal{R}_K^{[r,1)} := \varprojlim_{r<s<1} \mathcal{R}_K^{[r,s]}$ and finally the \textbf{Robba ring} $\mathcal{R}_K:= \varinjlim_{0\leq r<1} \mathcal{R}_K^{[r,1)}$ endowed with the LF topology.
\end{defn}
The action of $\Gamma_L$ extends by continuity to $\cR_L^I$ for any interval $I=[r,s]\subset [0,1].$ For details concerning the $\varphi_L$-action we refer to \cite[Section 2.2]{Berger}. To ensure that the action of $\varphi_L$ on $\cR_L^I$ is well-defined
one has to assume that the lower boundary $r$ of $I$ is either $0$ or $r > \abs{u}^q=:r_L$ for some (any by \cite[Proposition 1.3.12]{Schneider2017}) non-trivial $\pi_L$-torsion point $u$ of the Lubin-Tate group \footnote{In the second case the assumptions guarantee $\varphi(T)\in (\cR_L^{I^{1/q}})^{\times}.$}.
When $\varphi_L$ acts on $\cR_L^I$ it changes the radius of convergence and we obtain a morphism 
$$\varphi_L: \cR_L^{I}\to \cR_L^{I^{1/q}} .$$ We implicitly assume that $r,s$ lie in $\abs{\overline{\QQ_p}},$ because in this case the algebra $\cR_L^I$ is affinoid (cf. \cite[Example 1.3.2]{lutkebohmert2016rigid}) and this assumption is no restriction when considering the Robba ring due to cofinality considerations.
We henceforth endow the rings 
$\cR_L^{[r,1)}$ (for $r=0$ or $r>r_L$) and $\cR_L$ with the $(\varphi_L,\Gamma_L)$-actions induced by the actions on $\cR_L^I.$ We also work with relative versions of these rings either defined over some affinoid $A$ over $L$ or more generally some complete field extension $K/L$ contained in $\CC_p.$ Before we describe these relative Robba rings we shall discuss some generalities concerning completed tensor products of Fréchet- and LF-spaces.

\begin{defn}
	Let $X,Y$ be (semi-)normed modules over a normed ring $S.$
	On $X \otimes_SY$ we define the tensor product (semi-)norm $$\lvert v\rvert := \inf_{r} \max \lvert x_i\rvert \lvert y_i\rvert,$$ where $r$ ranges over all representations of $v$ as a sum of elementary tensors $v = \sum_{i} x_i \otimes y_i.$
\end{defn}

\begin{defn} 
	The \textbf{projective completed tensor product} of normed $S$-modules is defined as the completion of the usual tensor product with respect to the tensor product norm. 
	If the topologies on $X$ and $Y$ are defined by a family of semi-norms, we can extend this notion in the obvious way. 
	We write $$X \hat{\otimes}_S Y:=X \hat{\otimes}_{S,\pi} Y$$ for the projective completed tensor product.
\end{defn}
\begin{rem}
	\label{rem:Frechetlim}
	Let $X = \varprojlim_{n \in \NN} X_n$ be a Fréchet space over $K$ with Banach spaces $X_n$ and let $W$ be a normed $K$-vector space. Assume that the transition maps $X_{n+1} \to X_n$ have topologically dense image. Then the canonical map $$X \hat{\otimes}_K W \to \varprojlim_n X_n \hat{\otimes}_K W$$ is a topological isomorphism.
\end{rem}
\begin{proof}
	This is a special case of Lemma 2.1.4 in \cite{Berger}. Note that all involved spaces are Hausdorff because they are metrizable.
\end{proof}
\begin{defn}
	Let $V,W$ be locally convex $K$-vector spaces. The \textbf{inductive tensor product topology} is defined as the finest topology such that the bilinear map $$V\times W \to V\otimes_K W$$ is separately continuous.
	We denote the completion of the usual tensor product with respect to that topology by $V \hat{\otimes}_{K,i}W.$ 
\end{defn}
\begin{rem}
	\label{rem:lfcomplete}
	The inductive and projective tensor products agree for Fréchet spaces. The inductive tensor product and its completed version commute with countable strict locally convex inductive limits of Fréchet spaces.
\end{rem}
\begin{proof}
	For the first statement see \cite[Proposition 17.6]{SchneiderNFA}. By \cite[1.1.30]{emerton2017locally} the inductive tensor product commutes with locally convex inductive limits. Let $V = \varinjlim_n V_n$ and $W=\varinjlim_n W_n$ be strict LF-spaces with Fréchet spaces $V_n$ (resp. $W_n$). We already know  that $V \hat{\otimes}_{K,i} W$  is the completion of $\varinjlim_n (V_n \otimes_{K,i} W_n).$ In the proof of \cite[2.1.7]{Berger} it is shown that for an inductive system $(E_n)_n$ of locally convex vector spaces such that $\varinjlim_{n\in \NN} \widehat{E_n}$ is Hausdorff and complete the natural map $$\varinjlim_n\widehat{E_n} \to \widehat{\varinjlim_nE_n}$$ is an isomorphism. By \cite[Theorem 11.2.4 and Theorem 11.2.5]{PGS} we may apply this result to $E_n = V_n \otimes_K W_n,$ which yields the desired result.
\end{proof}

\begin{rem}
	Let $D \subset V$ (resp. $E \subset W$) be dense subsets of locally convex spaces $V,W.$ Then $D \otimes_K V$ is dense in $E \otimes_K W$ and $E \hat{\otimes}_KW.$
\end{rem}
\begin{proof}
	\cite[Corollary 10.2.10]{PGS} shows that the natural map $D \otimes_K E \to V \otimes_K W$ is a topological embedding. Applying Corollary 10.2.10(v) in loc.cit. to each seminorm defining the topology on $V \otimes_KW$ shows that $D \otimes_K E$ is dense in the $V \otimes_K W$. Because $V \otimes_K W\to V \widehat{\otimes}_K W$ is a topological embedding with dense image the statement follows.
\end{proof}
\begin{defn}
	An \textbf{affinoid algebra} $A$ over a non-archimedean complete field $F$ is an algebra that is isomorphic to $$\mathbb{T}^n / I,$$ where $\mathbb{T}^n$ denotes the $n$-dimensional Tate-algebra $F\langle X_1, \dots, X_n\rangle$ and $I\subset \mathbb{T}^n$ is an ideal. We always endow $A$ with the residue norm obtained from the Gauß-norm on $\mathbb{T}^n.$ By \cite[3.1 Proposition 20]{bosch2014lectures} any two residue norms are equivalent and any ideal in $\mathbb{T}^n$ is closed (cf. \cite[Section 2.3]{bosch2014lectures}).
\end{defn}

\begin{defn}
	Let $F\subset K$ be a complete subfield and $A$ be an affinoid algebra over $F.$ We define the \textbf{relative Robba rings}
	$\mathcal{R}_A^{[r,s]}:= \mathcal{R}_K^{[r,s]} \hat{\otimes}_F A $ and similarly  $\mathcal{R}_A^{[r,1)}$ and $\mathcal{R}_A := \varinjlim_{0\leq r<1} \mathcal{R}_A^{[r,1)}.$ These rings are naturally equipped with topolgies induced by the tensor product norm on $\cR^{[r,s]}_A$. 
\end{defn}
\begin{defn}
	A linear map $T:E\to F$ between locally convex $K$-vector spaces is called \textbf{compactoid} if there exists a zero neighbourhood $U\subset E$ such that $T(U)$ is \textbf{compactoid} in $W$ meaning that for every zero neighbourhood $V\subset F$ there exists a finite set $e_1,\dots,e_n$ such that $$T(U) \subset V+ \sum_{i=1}^n o_K e_i.$$
\end{defn} 
The following is a technical subtlety and does not follow from \ref{rem:lfcomplete} because (non-strict) LF-spaces are not automatically complete. 
\begin{lem}
	\label{lem:Banachdirectlimit}
	Let $E = \varinjlim_n E_n$ be an LB-space with $L$-Banach spaces $E_n$ and compactoid steps. Let $W$ be an $L$-Banach space then
	the natural map $$\varinjlim_n( E_n\hat{\otimes}_{L,\pi}W) \to E\hat{\otimes}_{L,\pi}W$$ is an isomorphism. In particular $$E\hat{\otimes}_{L,\pi}W=E\hat{\otimes}_{L,i}W.$$
\end{lem}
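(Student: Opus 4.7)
The plan is to apply the Berger-type completion argument recorded in the proof of Remark~\ref{rem:lfcomplete}. Set $F_n := E_n \hat{\otimes}_{L,\pi} W$; since $E_n$ and $W$ are both Banach spaces, $F_n$ is again Banach, and the projective and inductive tensor topologies coincide on $E_n \otimes_L W$ by hypocontinuity (or \cite[Proposition 17.6]{SchneiderNFA}). The Berger result asserts that for an inductive system $(G_n)_n$ with $\varinjlim_n \widehat{G_n}$ Hausdorff and complete, the natural map $\varinjlim_n \widehat{G_n} \to \widehat{\varinjlim_n G_n}$ is an isomorphism. Combined with the identification $\varinjlim_n (E_n \otimes_{L,i} W) = E \otimes_{L,i} W$ from \cite[1.1.30]{emerton2017locally}, this will yield $\varinjlim_n F_n \cong E \hat{\otimes}_{L,i} W$.

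The heart of the proof is verifying that $\varinjlim_n F_n$ is Hausdorff and complete, which cannot be read off directly from Remark~\ref{rem:lfcomplete} because the LB-structure on $E$ is not assumed strict. This is the precise place where the compactoidity hypothesis enters: if $T \colon E_n \to E_{n+1}$ is compactoid, then the induced map $T \hat{\otimes} \operatorname{id}_W \colon F_n \to F_{n+1}$ is again compactoid, because the tensor product of a compactoid subset of $E_{n+1}$ with a bounded subset of $W$ (say its unit ball) has compactoid image in $F_{n+1}$. This is a direct check from the definition, or an instance of general non-archimedean functional analysis (cf.\ \cite[Chapter 8]{PGS}). An LB-space obtained as an inductive limit of Banach spaces with compactoid transition maps is of Silva type, and such a space is regular, Hausdorff and complete by a classical result (see e.g.\ \cite[Section 11]{PGS}).

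Once Berger's lemma has been applied, it remains to upgrade the identification $\varinjlim_n F_n \cong E \hat{\otimes}_{L,i} W$ to one with the projective completed tensor product. Functoriality of $\hat{\otimes}_{L,\pi}$ applied to $E_n \hookrightarrow E$ produces a canonical continuous arrow $\varinjlim_n F_n \to E \hat{\otimes}_{L,\pi} W$. Conversely, the bilinear pairing $E \times W \to \varinjlim_n F_n$ is jointly continuous: the restrictions $E_n \times W \to F_n$ are jointly continuous between Banach spaces, and the Silva structure on the target (together with $W$ being Banach, hence in particular metrizable) is enough to promote this to joint continuity at the level of the inductive limit. The universal property of $\hat{\otimes}_{L,\pi}$ then yields a continuous inverse $E \hat{\otimes}_{L,\pi} W \to \varinjlim_n F_n$, and the ``in particular'' equality $E \hat{\otimes}_{L,\pi} W = E \hat{\otimes}_{L,i} W$ drops out by comparing the two descriptions of $\varinjlim_n F_n$.

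The main obstacle is unambiguously the completeness of the compactoid LB-space $\varinjlim_n F_n$; without it the Berger reduction collapses, and it is essentially the \emph{only} non-formal input, since the compactoidity of the transition maps and the Silva completeness theorem are what distinguish this lemma from the much easier strict case treated in Remark~\ref{rem:lfcomplete}.
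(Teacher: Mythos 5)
Your reduction to a Silva-type completeness statement breaks down at its central step: the claim that a compactoid map $T\colon E_n\to E_{n+1}$ induces a compactoid map $T\hat{\otimes}\id_W\colon E_n\hat{\otimes}_{L,\pi}W\to E_{n+1}\hat{\otimes}_{L,\pi}W$ is false whenever $W$ is infinite-dimensional. Concretely, the image of the unit ball of $E_n\hat{\otimes}_{L,\pi}W$ contains all elementary tensors $T(u)\otimes w$ with $u,w$ in the respective unit balls; picking $u_0$ with $e:=T(u_0)\neq 0$ and a continuous functional $\lambda$ on $E_{n+1}$ with $\lambda(e)=1$, the retraction $\lambda\otimes\id_W$ maps the set $e\otimes B_W$ onto $B_W$, so compactoidity of the image would force the unit ball of $W$ to be compactoid, i.e.\ $W$ finite-dimensional. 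In general ``compactoid times bounded'' is only bounded, not compactoid; you would need both factors compactoid. So $\varinjlim_n(E_n\hat{\otimes}_{L,\pi}W)$ is \emph{not} a compactoid inductive limit, the Silva completeness theorem does not apply, and the completeness needed to invoke the Berger-type lemma from the proof of Remark \ref{rem:lfcomplete} is exactly what remains unproved. Since you yourself identify this completeness as ``the only non-formal input,'' the proposal as written does not establish the lemma. (The final paragraph's promotion of separate to joint continuity also leans on the Silva structure and would need a separate justification, e.g.\ barrelledness of $E$ plus metrizability of $W$, but that is secondary.)

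The paper takes a different, duality-based route that sidesteps the issue: by \cite[11.3.5]{PGS} the compactoid LB-space $E$ is complete and reflexive with Fréchet strong dual $E'$, and it is bornological as an inductive limit of bornological spaces; then $E\hat{\otimes}_{L,\pi}W\cong E''\hat{\otimes}_{L,\pi}W\cong\mathcal{L}_b(E',W)$ by \cite[18.8]{SchneiderNFA}, while \cite[Proposition 1.5]{schneiderteitlbaumlocallyanalytic} identifies $\varinjlim_n(E_n\hat{\otimes}_{L,\pi}W)$ with the same space $\mathcal{L}_b(E',W)$. The completeness of the colimit is thus obtained a posteriori from its identification with a space of continuous linear maps out of a Fréchet space into a Banach space, not from any compactoidity of the transition maps $F_n\to F_{n+1}$. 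If you want to salvage your approach you would have to replace the Silva argument by this (or an equivalent) duality step.
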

\begin{proof}
	By \cite[11.3.5]{PGS} $E$ is complete reflexive and its strong dual $E':=E'_b$ is Fréchet. Furthermore as an inductive limit of bornological spaces $E$ is bornological by \cite[Example 2) after 6.13]{SchneiderNFA}. By \cite[18.8]{SchneiderNFA} together with reflexivity we have $$E \hat{\otimes}_{L,\pi}W = E'' \hat{\otimes}_{L,\pi}W = \mathcal{L}_b(E',W),$$ where $\mathcal{L}_b(-,-)$ denotes the space of continuous linear maps endowed with the strong topology. Furthermore by \cite[Proposition 1.5]{schneiderteitlbaumlocallyanalytic} $$\varinjlim_{n} E_n\hat{\otimes}_{L,\pi} W= \mathcal{L}_b(E',W).$$ Combining the above and unwinding the definitions of the involved maps yields the desired claim. 
\end{proof}
\begin{lem}
	\label{lem:affbasechange}
	Let $F \subset K$ be a complete subfield and $A$ be an affinoid algebra over $F$. Then the natural map induces isomorphisms
	$$\cR_K^I \hat{\otimes}_F A \cong \cR_K^I \hat{\otimes}_K (K \hat{\otimes}_F A)$$
	
	and 
	$$\cR_K \hat{\otimes}_{F,i} A\cong \cR_K \hat{\otimes}_{K,i} (K \hat{\otimes}_F A). $$
\end{lem}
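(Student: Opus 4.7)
The plan is to reduce both claimed isomorphisms to a single associativity statement at the Banach level and then propagate it first through a projective limit (to treat the Fr\'echet case $I = [r,1)$) and finally through an inductive limit (to obtain the assertion for the Robba ring).

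The core input is the Banach case: for a closed interval $I = [r,s]$ the space $\cR_K^I$ is a $K$-Banach space, and the aim is to establish
\[
\cR_K^I \hat{\otimes}_F A \cong \cR_K^I \hat{\otimes}_K (K \hat{\otimes}_F A).
\]
Since $\cR_K^I$ carries a $K$-vector space structure, the purely algebraic identity $E \otimes_F A = E \otimes_K (K \otimes_F A)$ already matches the dense subspaces on both sides, so what remains is to see that the corresponding projective tensor product seminorms coincide under this identification. I would verify this by comparing universal properties: both sides represent the functor on $K$-Banach spaces $Z$ given by continuous $F$-bilinear maps $\cR_K^I \times A \to Z$, the right-hand side via the universal property of the extension of scalars functor $A \mapsto K \hat{\otimes}_F A$ from $F$- to $K$-Banach spaces.

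For $I = [r,1)$ I would write $\cR_K^{[r,1)} = \varprojlim_{r<s<1} \cR_K^{[r,s]}$ as a Fr\'echet space. The transition maps given by restriction of Laurent series have dense image since Laurent polynomials are dense in every $\cR_K^{[r,s]}$. Applying Remark \ref{rem:Frechetlim} to the normed $F$-vector space $A$ and, separately, to the normed $K$-vector space $K \hat{\otimes}_F A$ pulls the completed tensor product through the projective limit on both sides, and the Banach case identifies the two resulting inverse systems termwise, yielding the half-open interval version of the first isomorphism.

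For the Robba ring I would use that $\cR_K = \varinjlim_{0 \le r < 1} \cR_K^{[r,1)}$ is a countable strict LF-limit of Fr\'echet spaces. Remark \ref{rem:lfcomplete} then commutes the completed inductive tensor product with this inductive limit on both sides of the second claim. On each Fr\'echet layer the same remark identifies the inductive and projective completed tensor products, so the Fr\'echet case just proven supplies the required termwise isomorphism, and passing to the inductive limit concludes. The main obstacle is the Banach-level associativity itself, since it genuinely traverses two base fields; everything thereafter is a matter of pushing limits through completed tensor products using the preparatory remarks.
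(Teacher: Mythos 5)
Your proposal is correct and follows essentially the same route as the paper: establish the associativity $\cR_K^I \hat{\otimes}_F A \cong \cR_K^I \hat{\otimes}_K (K \hat{\otimes}_F A)$ at the Banach level and then pass to projective and inductive limits. The only difference is that the paper disposes of the Banach-level step by citing \cite[2.1.7 Proposition 7]{BGR} (after noting that $F \subset K$ is isometric, hence contractive), whereas you re-derive it via universal properties; your limit arguments via Remarks \ref{rem:Frechetlim} and \ref{rem:lfcomplete} simply spell out what the paper compresses into ``the second part follows by taking limits.''
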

\begin{proof}
	The embedding $F \subset K$ is by construction isometric and thus contracting. Applying \cite[2.1.7 Proposition 7]{BGR} we obtain
	$$\cR_K^I \hat{\otimes}_F A = (\cR_K^I \hat{\otimes}_K K)\hat{\otimes}_F A \cong \cR_K^I \hat{\otimes}_K (K\hat{\otimes}_F A).$$
	The second part follows by taking limits.
\end{proof}
Lemma \ref{lem:affbasechange} allows us to restrict ourselves to the case $F=K$ since the base change $K \hat{\otimes}_F A$ of an affinoid algebra over $F$ is an affinoid algebra over $K$ (cf. \cite[6.1.1. Corollary 9]{BGR}).
\begin{rem}
	\label{rem:robbacomplete} 
	The relative Robba ring $\cR_A$ is complete. In particular $\cR_A = A\hat{\otimes}_{L,i} \cR_L.$ Furthermore $\cR_A = A\hat{\otimes}_{L,\pi} \cR_L.$
\end{rem}
\begin{proof}
	Recall from the proof of \cite[2.1.6]{Berger} that $\cR_L$ admits a decomposition of the form $\cR_L = \cR_L^+ \oplus E$ with an $LB$-space $E= \varinjlim_n E_n$ with compactoid steps. For such spaces it is known that their inductive limit is complete by \cite[11.3.5]{PGS}. We obtain a corresponding decomposition $\cR_A= \cR_A^+ \oplus \varinjlim_n A \hat{\otimes}_{L,i} E_n$ with $\cR_A^+ = A \hat{\otimes}_{L,i}\cR_L^+$ and hence $\cR_A^+ = A \hat{\otimes}_{L,\pi}\cR_L^+$ by \ref{rem:lfcomplete}. The other summand is treated by the preceding Lemma \ref{lem:Banachdirectlimit}.
	
\end{proof}


We have two reasonable choices for the $(\varphi_L,\Gamma_L)$-actions on $\cR_K$ for a complete field $K\subset \CC_p.$ One possibility is the linear extension of the $(\varphi_L,\Gamma_L)$ action from $\cR_L$ using $\cR_K =K\hat{\otimes}_L \cR_L.$ If on the other hand $K$ is invariant under the $G_L$ action on $\CC_p,$ we can take the semi-linear $G_L$-action, which factors over $\Gamma_L$ if $K\subset \widehat{L_\infty}.$ Unless stated otherwise we consider only the former action. This action also makes sense for more general coefficients. Another reason for studying the linear action rather than the semi-linear actions is that we would like to work with $L$-analytic actions. The semi-linear action on say $\widehat{L}_\infty$ will never be $L$-analytic by \cite[Corollaire 4.3]{bergercolmez2016theoriedesen}.
\subsection{$p$-adic Fourier theory and $D(G,L)$ actions.} We give a survey of $p$-adic distributions, that play a crucial role in the study of $L$-analytic $(\varphi,\Gamma_L)$-modules. In the case $G=\mathbb{Z}_p$ and $L=\mathbb{Q}_p$ a theorem of Amice asserts that $D(G,\QQ_p)$ is isomorphic to the holomorphic functions on the open unit disc in the variable $\mathfrak{Z}=\delta_1-1.$ This facilitates the study of the $\Gamma_{\QQ_p}$-action in the classical theory. If $L \neq \QQ_p$ a similar result can only be achieved after passing to a large extension $K$ of $L$ that contains a certain (transcendental) period $\Omega_L \in \CC_p.$ In the notation of \cite{schneider2001p} the period can be taken to be $\Omega_L:= \Omega_t$ for some basis $t$ of the Tate module of the dual of the Lubin-Tate group.
\begin{defn}
	Let $G$ be a compact $L$-analytic group. We denote by $D_{\QQ_p}(G,K)$ the algebra of $\QQ_p$-analytic distributions with values in $K,$ which is the strong dual of the space $C^{\QQ_p\text{-an}}(G,K)$ of locally $\QQ_p$-analytic functions on $G$ with values in $K$ with multiplication given by convolution. We denote by $\delta_g$ the Dirac distribution associated to $g,$ by which we mean the evaluation map $\delta_g\colon f \mapsto f(g).$ We denote by $D(G,K)$ the quotient of $D_{\QQ_p}(G,K)$  corresponding to the dual of the subspace $C^{an}(G,K):=C^{L-an}(G,K)$ of $C^{\QQ_p\text{-an}}(G,K)$ consisting of locally $L$-analytic functions.
\end{defn}
For a detailed description of the topology on $C^{an}(G,K)$ we refer the reader to \cite[Chapters 10 and 12]{Schneider2017}. In \cite[Section 2]{schneider2001p} Schneider and Teitelbaum construct the character variety $\widehat{G}$ over $L$ whose $K$-points parametrise locally $L$-analytic $K$-valued characters of the additive group $G=o_L.$ For a complete field extension $K$ of $L$ we denote by $\mathcal{O}_K(\widehat{G})$ the global sections of the base change of said variety to $K.$ We denote by $\log_{LT}(T) \in \cR_L^{[0,1)}$ the logarithm of the formal Lubin-Tate group.
\begin{thm}
	\label{thm:Fourier}
	Let $G=o_L$ viewed as an $L$-analytic group in the natural sense and let $L \subset K \subset \CC_p$ be a complete intermediate field. Then the Fourier transform\footnote{ Defined on p. 452 in loc.\ cit..} induces an isomorphism of $K$-Fréchet algebras
	$$D(G,K) \to \mathcal{O}_K(\widehat{G}).$$
	If $K$ contains a period $\Omega_L$  of the Lubin-Tate group, then $\widehat{G}$ and the open unit disc $\mathbb{B}$ are isomorphic over $K$ and by combining the above with the Fourier isomorphism we obtain an isomorphism $$D(G,K) \xrightarrow{\cong} \mathcal{O}_K(\mathbb{B}).$$
	By choosing a coordinate $T$ on $\mathbb{B}$ it can be described explicitly by mapping a dirac distribution $\delta_a$ to the power series $$\eta(a,T)=\exp(a\Omega_L\log_{LT}(T))\in o_K\llbracket T \rrbracket.$$
\end{thm}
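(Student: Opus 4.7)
The plan is to follow the strategy of Schneider and Teitelbaum in \cite{schneider2001p}, decomposing the statement into two largely independent components: the abstract Fourier isomorphism $D(G,K)\cong \mathcal{O}_K(\widehat{G})$ valid for any $L\subset K \subset \CC_p$, and the explicit identification of $\widehat{G}_K$ with $\mathbb{B}$ once $K$ contains the period $\Omega_L$.

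For the first component, I would construct $\widehat{G}$ as the rigid $L$-analytic group representing the functor that sends an affinoid $K$-algebra $A$ to the group of locally $L$-analytic characters $G \to A^\times$. This construction equips $\widehat{G}$ with a universal character $\kappa\colon G \to \mathcal{O}(\widehat{G})^\times$, in terms of which the Fourier transform is defined by $F(\mu) := \mu(\kappa)$. That $F$ is a $K$-algebra homomorphism follows from the group law on $\widehat{G}$ together with the standard identification $D(G \times G,K)\cong D(G,K)\hat{\otimes}_K D(G,K)$, which translates the convolution product on the left hand side into multiplication of characters on the right. Upgrading $F$ to a topological isomorphism of Fréchet algebras rests on an Amice-type density argument: one shows that the $K$-linear span of the locally $L$-analytic characters is dense in $C^{\text{an}}(G,K)$, which by duality yields injectivity of $F$, while surjectivity is obtained by matching a Mahler-type expansion on the $D(G,K)$-side with a Taylor expansion on the $\mathcal{O}_K(\widehat{G})$-side.

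For the second component, under the assumption $\Omega_L \in K$, I would write down the candidate universal character $\kappa_0(a) := \exp(a\Omega_L \log_{LT}(T))$ for a coordinate $T$ on $\mathbb{B}$, thereby producing a map $\mathbb{B} \to \widehat{G}_K$. The main obstacle here is the integrality claim $\eta(a,T) \in o_K\llbracket T\rrbracket$: its validity hinges on the precise $p$-adic valuation of $\Omega_L$, which is calibrated exactly so that the denominators appearing in $\log_{LT}(T)$ are cancelled after multiplication by $\Omega_L$, making the exponential series converge to a power series with integral coefficients. Once the universal character is in hand, bijectivity on $K$-points is verified by observing that any locally $L$-analytic character factors through $\log_{LT}$ on a small neighbourhood of $0$ and is therefore uniquely encoded by a point of $\mathbb{B}(K)$; this bijection then upgrades to an isomorphism of rigid analytic groups via the universal property, and combining it with the abstract Fourier isomorphism produces the explicit formula $\delta_a \mapsto \eta(a,T)$.
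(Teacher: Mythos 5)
The paper does not actually reprove this statement: its proof is a one-line citation, combining Theorem 2.3 and Corollary 3.7 of Schneider--Teitelbaum \cite{schneider2001p}. Your proposal instead re-derives both ingredients, and while your overall architecture (abstract Fourier isomorphism for the character variety, then uniformization of $\widehat{G}$ by $\mathbb{B}$ over $K \ni \Omega_L$) matches theirs, one step as you describe it would fail. The integrality $\eta(a,T)\in o_K\llbracket T\rrbracket$ does \emph{not} follow from the valuation of $\Omega_L$ ``cancelling the denominators'' of $\log_{LT}$: no valuation condition on a scalar $\lambda$ makes $\exp(a\lambda\log_{LT}(T))$ integral. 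Already for $L=\QQ_p$ one has $\exp(\lambda\log(1+T))=\sum_n\binom{\lambda}{n}T^n$, and for a unit $\lambda\in o_{\CC_p}\setminus\ZZ_p$ (say $\lambda=1+p^{1/2}$) the coefficient $\binom{\lambda}{p}$ has negative valuation, even though $v(\lambda)=0=v(\Omega_{\QQ_p})$. In \cite{schneider2001p} integrality is a consequence of the \emph{construction} of $\Omega_L=\Omega_t$: a generator $t$ of the Tate module of the dual Lubin--Tate group yields, via Tate's theory of $p$-divisible groups, a homomorphism of formal groups $LT\to\widehat{\mathbb{G}}_m$ over $o_{\CC_p}$ whose effect on coordinates is exactly $\exp(\Omega_t\log_{LT}(T))-1$; both the integrality and the value of $v(\Omega_t)$ are read off from this identification, not the other way around.

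Two further steps are glossed over. Surjectivity of the Fourier transform for the $L$-analytic algebra $D(G,K)$ is not obtained by a direct ``Mahler versus Taylor'' matching --- that argument proves the Amice isomorphism for the $\QQ_p$-analytic algebra $D_{\QQ_p}(o_L,K)$ and the $d$-dimensional polydisc form $\mathfrak{X}$; the $L$-analytic case is then deduced by exhibiting $\widehat{G}$ as the closed subvariety of $\mathfrak{X}$ cut out by the ideal corresponding to the kernel of $D_{\QQ_p}(G,K)\to D(G,K)$. And bijectivity of $\mathbb{B}(\CC_p)\to\widehat{G}(\CC_p)$ needs, besides surjectivity of $\log_{LT}$ on the open disc, the fact that the locally constant characters of $o_L$ are precisely the $\kappa_z$ for $z$ a torsion point of $LT$, since a character agreeing with some $\kappa_z$ near $0$ differs from it by such a finite-order character; descent of the resulting isomorphism to $K\ni\Omega_L$ is a separate (Galois-equivariance) argument. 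Given that the paper treats all of this as a black box, the cleanest course is to do the same and cite \cite{schneider2001p} directly.
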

\begin{proof}
	This follows by combining Corollary 3.7 and Theorem 2.3. in \cite{schneider2001p}.
\end{proof}
We remark that the above isomorphism is in fact an isomorphism of group varieties when $\mathbb{B}$ is identified with the group object attached to the formal Lubin-Tate group.
Henceforth we assume that $K$ contains a period $\Omega_L$ as above.
\begin{rem}
	\label{bem:directsumdistribution}
	If $H\subset G$ is an open normal subgroup then the decomposition $G= \bigcup_{g \in G/H} gH$ induces $D(G,K) \cong \bigoplus_{g \in G/H} \delta_gD(H,K) \cong \ZZ[G] \otimes_{\ZZ[H]}D(H,K)$ algebraically and topologically. 
\end{rem}
We are mostly interested in the case where $G$ is abelian and contains an open subgroup isomorphic to $o_L$ (like $\Gamma_L$).
For technical purposes it is important that $C^{an}(G,K)$ can be written as a compactoid inductive limit (cf. \cite{rusti} for details). It then follows from \cite[11.3.5]{PGS} that $C^{an}(G,K)$ and $D(G,K)$ are reflexive, strictly of countable type and satisfy Hahn-Banach. As a consequence one obtains the following:

\begin{rem} \label{rem:hahnbanach}
	Mapping $g$ to $\delta_g$ induces an injective group homomorphism $G \to D(G,K)^\times$ and an injection $K[G]\hookrightarrow D(G,K).$ This injection has dense image.
\end{rem}

\begin{defn}
	Let $G=o_L.$ After fixing a coordinate $T$ on $\mathbb{B}$ we denote by $Z\in D(o_L,K)$ the preimage of $T\in \mathcal{O}_K(\mathbb{B})$ with respect to the isomorphism from Theorem \ref{thm:Fourier}.
\end{defn}
In the classical theory (assuming $p \neq 2$) we can choose explicitly $Z=\gamma-1$ with a topological generator of  $\Gamma_{\QQ_p}.$ In our situation this variable $Z$ serves a similar purpose but is more elusive in its description. The main difficulty is reversing $\eta$ since there is no (evident) connection between the exponential and the Lubin-Tate logarithm unless $L=\QQ_p$ and $LT=\mathbb{G}_m.$

\begin{rem}\label{rem:augmentation} Let $\operatorname{Aug}\colon D(o_L,K)\to K$ be the augmentation map induced by mapping each Dirac distribution to $1$ and denote by $\operatorname{ev_0}$ the map that evaluates a power series at $T=0.$ Then the following diagram is commutative $$\begin{tikzcd}
		{D(o_L,K)} \arrow[r, "\operatorname{Aug}"] \arrow[d, "{\cong}"]& K \arrow[d, no head,equal ] \\
		\mathcal{O}_K(\mathbb{B}) \arrow[r, "{\operatorname{ev_0}}"]                        & K              
	\end{tikzcd}.$$
	In particular 
	$$\operatorname{ker}(\operatorname{Aug}) = \overline{\operatorname{span}(\delta_a-1, a \in G)} = ZD(o_L,K).$$
	
\end{rem}
\begin{proof}
	The vertical arrows are topological isomorphisms for the Fréchet-topology on the left side (resp. the valuation topology on the right side) and the Dirac distributions span a dense subspace of $D(o_L,K).$ We may therefore check the commutativity of said diagram on the Dirac distributions, where we have $$\operatorname{Aug}(\delta_a)=1= \exp(a\Omega_{L}\log_{LT}(T))_{\mid T=0} = \operatorname{ev_0}(\eta(a,T)).$$ The value at $T=0$ is independent of the choice of coordinate.  For the second statement we first remark that both maps are surjective and their kernels are mapped isomorphically to one another. Evidently $\operatorname{ker}(\operatorname{ev}_0) = T\mathcal{O}_K(\mathbb{B})\cong ZD(o_L,K).$ Due to continuity the inclusion \linebreak $\operatorname{ker}(\operatorname{Aug}) \supseteq \overline{\operatorname{span}(\delta_a-1)}$ is clear. For the other inclusion consider the decomposition $K[o_L]\cong K\delta_0 \oplus \operatorname{span}(\delta_a-1, a \in o_L)$ with the augmentation map restricted to the first factor mapping $\lambda \delta_0$ to $\lambda$. The left factor is clearly complete and passing to completions shows the desired result.
\end{proof}
\begin{rem}\label{rem:gammacomm} Let $a\in o_L$ and denote by $a^*$ the map induced from the multiplication-by-$a$-map $a:o_L \to o_L.$ Then the following diagram is commutative $$\begin{tikzcd}
		{D(o_L,K)} \arrow[r, "a^*"] \arrow[d, no head, equal ] & {D(o_L,K)} \arrow[d, no head,equal ] \\
		\mathcal{O}_K(\mathbb{B}) \arrow[r, "{[a]}"]                        & \mathcal{O}_K(\mathbb{B})                
	\end{tikzcd},$$
	where the vertical arrows arise from the Fourier-isomorphism. 
\end{rem}
\begin{proof}
	Using $\log_{LT}([a](T))=a\log_{LT}(T)= [a]\log_{LT}(T)$ we obtain $$a^*(\delta_b)=\exp(\Omega_L ab\log_{LT}(T))=[a](\exp(\Omega_Lb\log_{LT}(T))) = [a](\delta_b),$$ proving the result for Dirac distributions. The general statement follows from continuity considerations. 
\end{proof}
\begin{lem}
	The kernel of the natural map $\operatorname{proj}\colon D(o_L,K)\to K[o_L/\pi_L^no_L]$ is generated by $Z_n:=[\pi_L]^n(Z) \in D(o_L,K).$
\end{lem}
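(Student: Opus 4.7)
The plan is to combine Remark \ref{bem:directsumdistribution}, Remark \ref{rem:augmentation}, and Remark \ref{rem:gammacomm} to reduce the statement to the known description of the augmentation ideal. Concretely, we pull the direct sum decomposition
$$D(o_L,K) = \bigoplus_{\bar a \in o_L/\pi_L^n o_L} \delta_a \cdot D(\pi_L^n o_L,K)$$
through $\operatorname{proj}$ and identify the component of $\operatorname{proj}$ on each summand with the augmentation of $D(\pi_L^n o_L, K).$ That component-wise description holds because for $\mu \in D(\pi_L^n o_L,K)$ the image $\operatorname{proj}(\delta_a \mu)$ equals $\delta_{\bar a} \cdot \operatorname{proj}(\mu),$ and $\operatorname{proj}$ restricted to $D(\pi_L^n o_L,K)$ lands in $K \cdot \delta_{\bar 0} = K$ since the quotient $o_L \to o_L/\pi_L^n o_L$ kills $\pi_L^n o_L$, forcing this restriction to coincide with $\operatorname{aug}_{\pi_L^n o_L}.$ Consequently
$$\ker(\operatorname{proj}) = \bigoplus_{\bar a} \delta_a \cdot \ker(\operatorname{aug}_{\pi_L^n o_L}).$$

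Next I would describe $\ker(\operatorname{aug}_{\pi_L^n o_L})$ by transporting Remark \ref{rem:augmentation} along the isomorphism $(\pi_L^n)^* \colon D(o_L,K) \xrightarrow{\cong} D(\pi_L^n o_L, K)$ induced from the multiplication-by-$\pi_L^n$ map $o_L \to \pi_L^n o_L.$ This map is an algebra isomorphism, and by Remark \ref{rem:gammacomm} corresponds under Fourier to $T \mapsto [\pi_L]^n(T),$ hence sends $Z$ to $[\pi_L]^n(Z) = Z_n.$ Compatibility of the augmentations, i.e.\ $\operatorname{aug}_{\pi_L^n o_L}\circ (\pi_L^n)^* = \operatorname{aug}_{o_L},$ is immediate on Dirac distributions and extends by density (Remark \ref{rem:hahnbanach}); transporting Remark \ref{rem:augmentation} then yields
$$\ker(\operatorname{aug}_{\pi_L^n o_L}) = Z_n \cdot D(\pi_L^n o_L,K).$$

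Combining the two displays and using that $D(o_L,K)$ is commutative, one concludes
$$\ker(\operatorname{proj}) = \bigoplus_{\bar a} \delta_a \cdot Z_n \cdot D(\pi_L^n o_L,K) = Z_n \cdot \bigoplus_{\bar a} \delta_a \cdot D(\pi_L^n o_L,K) = Z_n \cdot D(o_L,K),$$
which is the claim. The only real subtlety is bookkeeping—confirming that $(\pi_L^n)^*$ really is an algebra isomorphism onto the closed subalgebra $D(\pi_L^n o_L,K)\subset D(o_L,K)$ and that the decomposition of Remark \ref{bem:directsumdistribution} is compatible with $\operatorname{proj}$; everything else is formal manipulation.
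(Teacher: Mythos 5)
Your argument is correct, but it takes a genuinely different route from the paper's. Both proofs get the inclusion $Z_nD(o_L,K)\subseteq\ker(\operatorname{proj})$ in essentially the same way, by using Remark \ref{rem:gammacomm} to place $Z_n$ in the (closure of the) augmentation ideal of $D(\pi_L^no_L,K)$. For the reverse inclusion the paper counts dimensions: by Weierstrass theory $\mathcal{O}_K(\mathbb{B})/\varphi_L^n(T)$ is free of rank $q^n$ over $K$ (the number of $\pi_L^n$-torsion points of $LT$), so the induced surjection $D(o_L,K)/Z_n\to K[o_L/\pi_L^no_L]$ is a surjection of $K$-vector spaces of the same finite dimension and hence an isomorphism. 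You instead split $\operatorname{proj}$ along the coset decomposition of Remark \ref{bem:directsumdistribution}, identify it summand-wise with the augmentation of $D(\pi_L^no_L,K)$, and transport Remark \ref{rem:augmentation} along the isomorphism $(\pi_L^n)^*\colon D(o_L,K)\to D(\pi_L^no_L,K)$. Your route avoids zero-counting entirely and is purely formal once $\ker(\operatorname{Aug})=ZD(o_L,K)$ is known; it also yields slightly more, namely the explicit description $\ker(\operatorname{proj})=\bigoplus_{\bar a}\delta_a\,Z_nD(\pi_L^no_L,K)$. The paper's route is shorter and sidesteps the bookkeeping you rightly flag as the only delicate points: that $(\pi_L^n)^*$ is an algebra isomorphism onto $D(\pi_L^no_L,K)$ intertwining the two augmentations, and that $\operatorname{proj}$ is a continuous algebra homomorphism, both of which one checks on Dirac distributions and extends by density via Remark \ref{rem:hahnbanach}. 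On the other hand the paper's argument uses the finiteness of the quotient in an essential way, whereas yours would survive in settings where the target is not finite-dimensional.
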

\begin{proof}	Since $Z$ lies in the closure of the augmentation ideal generated by the $\delta_g-1$ with $g \in o_L$ using \ref{rem:gammacomm} we see that $Z_n$ lies in the closure of the augmentation ideal of $D(\pi_L^no_L,K)\subset D(o_L,K).$ We conclude $Z_n\subset \operatorname{ker}(\operatorname{proj}).$ By transporting the structure to $\mathcal{O}_K(\mathbb{B})$ we see that  $\mathcal{O}_K(\mathbb{B})/\varphi_L^n(T)$ is free of rank $q^n$ over $K$ by counting the number of zeros of $\varphi_L^n,$ i.e., the number of $\pi_L^n$-torsion points of the LT group. We conclude that the surjective map $\operatorname{proj}$ has to be injective modulo $[\pi_L]^n(Z).$
\end{proof} So far we only needed to choose a variable for $D(o_L,K).$ Analogously we can choose variables for any subgroup $\pi_L^no_L$ since they are isomorphic to $o_L.$ The following corollary shows that the ideal generated by such a variable is independent of any such choices. We state the result on the level of $\Gamma_L$ since we use it in this particular context later on. Let $\Gamma_0:= \Gamma_L$ and set $\Gamma_n := \chi_{LT}^{-1}(1+\pi_L^no_L)$ for $n \geq 1.$
\begin{cor}
	\label{kor:modZgruppenring}
	Let $n_0$ be minimal with the property that $\chi_{LT}(\Gamma_{n_0})=1+\pi_L^{n_0}o_L$ is isomorphic to $\pi_L^{n_0}o_L$ via $\log_p.$ 
	For $n \geq n_0$ denote by $Z_n$ the preimage of $T$ under the sequence of isomorphisms $$D(\Gamma_n,K)\cong D(o_L,K)\cong \mathcal{O}_K(\mathbb{B})$$ viewed as an element of $D(\Gamma_{n_0},K).$ Then there is a canonical $\Gamma_{n_0}/\Gamma_n$ equivariant isomorphism
	$D(\Gamma_{n_0},K)/Z_n \cong K[\Gamma_{n_0}/\Gamma_n]$ induced by mapping $\delta_g$ to $g.$
\end{cor}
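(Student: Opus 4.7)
The plan is to reduce the statement to the previous lemma combined with Remark \ref{bem:directsumdistribution}. Since $\Gamma_{n_0}$ is abelian, $\Gamma_n$ is an open normal subgroup of $\Gamma_{n_0}$ of finite index, so Remark \ref{bem:directsumdistribution} yields a decomposition
$$D(\Gamma_{n_0},K) \cong \bigoplus_{g \in \Gamma_{n_0}/\Gamma_n} \delta_g \, D(\Gamma_n,K).$$
Because $Z_n$ lies in $D(\Gamma_n,K)$ and $D(\Gamma_{n_0},K)$ is commutative, this decomposition descends to
$$D(\Gamma_{n_0},K)/Z_n \cong \bigoplus_{g \in \Gamma_{n_0}/\Gamma_n} \delta_g \bigl(D(\Gamma_n,K)/Z_n\bigr),$$
so it suffices to identify $D(\Gamma_n,K)/Z_n$ with $K$ in a canonical way.

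For this I would transport Remark \ref{rem:augmentation} along the chain of isomorphisms $D(\Gamma_n,K) \cong D(o_L,K) \cong \mathcal{O}_K(\mathbb{B})$ that sends $Z_n$ to $T$. Under these isomorphisms the augmentation map $\operatorname{Aug}\colon D(\Gamma_n,K)\to K$, characterised by $\delta_h\mapsto 1$ for $h\in \Gamma_n$, corresponds to evaluation at $T=0$, and Remark \ref{rem:augmentation} tells us that its kernel is exactly $Z_n\,D(\Gamma_n,K)$. This gives the canonical isomorphism $D(\Gamma_n,K)/Z_n \xrightarrow{\cong} K$, $\delta_h \mapsto 1$.

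Plugging this into the decomposition above produces
$$D(\Gamma_{n_0},K)/Z_n \cong \bigoplus_{g \in \Gamma_{n_0}/\Gamma_n} K\cdot \overline{\delta_g} \cong K[\Gamma_{n_0}/\Gamma_n],$$
with the second arrow sending $\overline{\delta_g} \mapsto g$. Independence of the choice of coset representatives $g$ is automatic, since two lifts $g,g'$ of the same class differ by an element of $\Gamma_n$, and $\delta_g - \delta_{g'} = \delta_g(1-\delta_{g^{-1}g'}) \in \delta_g\cdot \ker(\operatorname{Aug}_{D(\Gamma_n,K)}) = \delta_g Z_n D(\Gamma_n,K) \subset Z_n D(\Gamma_{n_0},K)$. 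For $\Gamma_{n_0}/\Gamma_n$-equivariance, I would let the source group act on $D(\Gamma_{n_0},K)/Z_n$ by left multiplication with $\delta_h$ for any lift $h$ (again well-defined by the same coset argument and commutativity), and observe that this translates precisely to left translation on $K[\Gamma_{n_0}/\Gamma_n]$ under the above map.

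There is no real obstacle; the only point that needs a brief sanity check is the commutation $Z_n D(\Gamma_{n_0},K) = \bigoplus_g \delta_g Z_n D(\Gamma_n,K)$ used to push $Z_n$ through the direct sum decomposition, which is immediate from the commutativity of $D(\Gamma_{n_0},K)$, itself a consequence of $\Gamma_{n_0}$ being abelian.
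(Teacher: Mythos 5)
Your proof is correct, but it follows a different route from the paper's. The paper deduces the corollary purely by transport of structure along the charts $\chi_{LT}$ and $\log_p$ from the immediately preceding lemma, which identifies $\ker\bigl(D(o_L,K)\to K[o_L/\pi_L^m o_L]\bigr)$ with the ideal $([\pi_L]^m(Z))$; that lemma in turn is proved by a rank count, namely that $\mathcal{O}_K(\mathbb{B})/\varphi_L^m(T)$ is free of rank $q^m$ over $K$ because $\varphi_L^m$ has exactly $q^m$ zeros (the $\pi_L^m$-torsion points of $LT$). You instead bypass that lemma entirely: you use the coset decomposition $D(\Gamma_{n_0},K)\cong\bigoplus_{g}\delta_g D(\Gamma_n,K)$ of Remark \ref{bem:directsumdistribution}, push the principal ideal $Z_nD(\Gamma_{n_0},K)$ through it by commutativity, and reduce to the identification $D(\Gamma_n,K)/Z_n\cong K$ coming from the augmentation-kernel statement of Remark \ref{rem:augmentation} transported to $\Gamma_n$. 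Your argument is somewhat more structural: it makes the map $\delta_g\mapsto g$, its independence of coset representatives, and the $\Gamma_{n_0}/\Gamma_n$-equivariance completely explicit, avoids the zero-counting argument, and in effect gives an alternative proof of the paper's preceding lemma as a byproduct. The paper's route buys brevity (a one-line deduction once the lemma is in place) at the cost of leaving the equivariance and canonicity checks implicit. Both arguments are sound; the only point worth keeping in mind in your version is that the ideal $Z_nD(\Gamma_{n_0},K)$ is closed (being a finite direct sum of translates of the closed augmentation kernel), so the quotient is well behaved, which you implicitly use.
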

\begin{proof}
	This follows from transport of structure along the isomorphisms $\chi_{LT}: \Gamma_L \to o_L^\times$ and $\log_p: 1+\pi_L^no_L\to \pi_L^no_L \cong o_L.$ The isomorphism is canonical in the sense that neither its definition nor its kernel depends on the particular choice of $Z_n.$
\end{proof}
Note that in the above construction $Z_n$ is just the preimage of $Z$ under the map induced by the chart $\Gamma_n \cong o_L.$ Choosing our charts in a specific way leads us to the following compatible choice of variables, which we fix for the remainder of the article. 
\begin{defn} \label{def:variable}
	Consider for $n\ge n_0$ the system of commutative diagrams 
	$$	\begin{tikzcd}
		\Gamma_n \arrow[r, "\chi_{LT}"]                 & 1+\pi_L^n \arrow[r, "\log_p"]                 & \pi_L^no_L                 \\
		\Gamma_m \arrow[u, hook] \arrow[r, "\chi_{LT}"] & 1+\pi_L^m \arrow[u, hook] \arrow[r, "\log_p"] & \pi_L^mo_L \arrow[u, hook]
	\end{tikzcd}
	$$
	We fix as before the variable $Z_{m} \in D(\Gamma_m,K)$ for every $m\geq n_0$ (induced by the charts $\Gamma_m \cong \pi_L^mo_L\cong o_L$ above and a fixed choice of $Z \in D(o_L,K)$).  Since $\pi_L^{n}o_L = \pi_L^{m-n} (\pi_L^no_L)$ we obtain the relationship \begin{equation}\label{eq:Znbeziehung} Z_m = \varphi_L^{m-n}(Z_n)\end{equation} for every $m\geq n.$ Here $Z_m$ is viewed as an element of $D(\Gamma_n,K)$ via the map induced by the canonical inclusion.
\end{defn}

\subsection{$L$-Analyticity}
In this section we discuss the question of $L$-analyticity in families. 
Abstractly $L$-analyticity can be defined for any $K$-Banachspace with a continuous $K$-linear $\Gamma_L$-action where $L\subset K$ is a complete field extension. In our application families of $(\varphi_L,\Gamma_L)$-modules will be LF-spaces over $K$ with an $\cR_A$-semi-linear (hence a fortiori $K$-linear) $\Gamma_L$-action. 
\begin{defn}	Let $X$ be a $d$-dimensional $L$-analytic manifold and let $F$ be a complete subfield of $\CC_p.$ 
	\begin{itemize}
		\item Let $V$ be a $F$-Banach space. A map $f:X \to V$ is called \textbf{locally $L$-analytic} if for every $x \in X$ there exists an open neighbourhood $U$ homeomorphic to $B_{\varepsilon}(0)^d$ and $(v_\mathbf{n})\in V^{\NN_0^d}$ such that $$\lim_{\lvert \mathbf{n}\vert \to \infty}\varepsilon^{\abs{\mathbf{n}}} \lvert \lvert v_\mathbf{n}\rvert \rvert=0$$ and
		$$f(x)=\sum_{\mathbf{n}\in\NN_0^d}v_\mathbf{n}(x_1,\dots,x_d)^\mathbf{n}$$
		for every $x \in U.$ Where $x_i$ are local coordinates of $x$ and $(x_1,\dots,x_d)^\mathbf{n} = \prod_i x_i^{n_i}.$ 
		\item For a $F$-Fréchet space $\varprojlim_j V_j$ with Banachspaces $V_j$ a map $f:X \to \varprojlim_j V_j$ is called \textbf{pro-$L$-analytic} if each induced map $X \to V_j$ is locally $L$-analytic. 
		\item For an LF-space $\varinjlim_i\varprojlim_j V_{i,j}$ with Banachspaces $V_{i,j}$ a map $f: X \to \varinjlim_i \varprojlim_j V_{i,j}$ is called \textbf{pro-$L$-analytic} if it factors over some $\varprojlim_jV_{i,j}$ and the induced map is pro-$L$-analytic in the Fréchet-sense.
	\end{itemize}
\end{defn}
\begin{defn}	Let $V=\varinjlim_i\varprojlim_j V_{i,j}$ be a $F$-LF-space and let $G$ be a $p$-adic Lie group over $L$ acting on $V.$
	The action is called \textbf{$L$-analytic} if for each $v \in V$ the orbit map $g \mapsto gv$ is pro-$L$-analytic.
\end{defn}
In order to treat tensor products of $(\varphi_L,\Gamma_L)$-modules we introduce the following auxiliary definition. This property will be satisfied in our applications.
\begin{defn}	Let $V=\varinjlim_{i}\varprojlim_j V_{i,j}$ be a $K$-LF-space with an $L$-analytic action of an $L$-analytic group $G.$ We say that $V$ \textbf{admits a model} if there exists a $m_0$ such that the orbit map of each $v \in \varprojlim_jV_{m,j}$ already factors over $\varprojlim V_{m,j}$ whenever $m \geq m_0.$
\end{defn}

\begin{lem}
	\label{def:anal}
	Let $F/L$ be a complete field extension and $V$ be a $K$-Banach space with a continuous $F$-linear $\Gamma_L$-action. The $\Gamma_L$-action on $V$ is locally $L$-analytic if and only if the following conditions are satisfied:
	\begin{itemize}
		\item There exists $m \geq2$ such that $\lvert \lvert \gamma-1 \rvert \rvert_V < p^{-1/(p-1)}$ for any $\gamma \in \Gamma_m.$
		\item The derived action $\operatorname{Lie}(\Gamma_L) \to \operatorname{End}_F(V)$ is $L$-linear.
	\end{itemize}
\end{lem}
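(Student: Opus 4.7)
The plan is to prove both directions via the exponential and logarithm correspondence on $\Gamma_L$. Fix $m \geq n_0$ large enough so that the chart $\iota := \log_p \circ \chi_{LT}$ identifies $\Gamma_m$ with $\pi_L^m o_L$, and regard $\operatorname{Lie}(\Gamma_L)$ as one-dimensional over $L$ with a preferred basis vector $Z_0$ dual to this chart, so that $\gamma = \exp_G(\iota(\gamma) Z_0)$ for $\gamma \in \Gamma_m$.

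For the forward implication, assume that the orbit map $\gamma \mapsto \gamma v$ is locally $L$-analytic for every $v \in V$. After possibly enlarging $m$ we obtain uniformly on $\Gamma_m$ an expansion
$$\gamma v = \sum_{n \geq 0} v_n(v)\, \iota(\gamma)^n$$
with $V$-valued coefficients satisfying $\lvert\pi_L\rvert^{mn}\lVert v_n(v)\rVert \to 0$. The estimate $\lVert\gamma v - v\rVert \leq \lvert\pi_L\rvert^m \sup_{n \geq 1}\lvert\pi_L\rvert^{m(n-1)}\lVert v_n(v)\rVert$ combined with a closed-graph argument applied to the linear maps $v \mapsto v_n(v)$ yields an operator-norm bound, and enlarging $m$ further makes $\lVert\gamma-1\rVert_V$ strictly smaller than $p^{-1/(p-1)}$. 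The derived action of $Z_0$ on $v$ equals $v_1(v)$, and the $L$-linearity of the derived action in $Z_0$ is exactly the statement that the coefficient of $\iota(\gamma)$ in the $L$-analytic expansion is an $L$-linear functional of the chosen $L$-coordinate, which is part of the definition of local $L$-analyticity.

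For the converse, assume both conditions. The hypothesis $\lVert\gamma-1\rVert_V < p^{-1/(p-1)}$ places the operator $\gamma - 1$ strictly inside the radius of convergence of both $\log(1+x)$ and $\exp(x)$, so $\log(\gamma) = -\sum_{n\geq 1}\tfrac{(1-\gamma)^n}{n}$ converges on $V$ and is inverse to $\exp$ on $\Gamma_m$. A standard identification (see \cite[\S 18]{Schneider2017}) shows that $\log(\gamma)$ acting on $V$ coincides with $d\rho(\iota(\gamma)\cdot Z_0)$, and by the assumed $L$-linearity of $d\rho$ this equals $\iota(\gamma)\, d\rho(Z_0)$. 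Hence for $\gamma \in \Gamma_m$ one obtains
$$\gamma v \;=\; \exp(\log(\gamma))(v) \;=\; \sum_{n \geq 0} \frac{\iota(\gamma)^n}{n!}\bigl(d\rho(Z_0)\bigr)^n(v),$$
which is an honest power series in the single $L$-analytic coordinate $\iota(\gamma)$ with convergence controlled by the operator norm of $d\rho(Z_0)$. Translating recovers local $L$-analyticity on all of $\Gamma_L$.

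The main technical obstacle is the passage from $\mathbb{Q}_p$-analyticity to $L$-analyticity: a priori the orbit map can only be expanded as a $\mathbb{Q}_p$-analytic function in the $[L:\mathbb{Q}_p]$-many coordinates on $\Gamma_L$, and the content of the $L$-linearity of $d\rho$ is precisely the Cauchy--Riemann-type relation forcing this multi-variable expansion to collapse onto the single coordinate $\iota(\gamma)$. In the formula above, it is $L$-linearity of $d\rho$ that makes $d\rho(Z_0)$ a single well-defined operator independent of a $\mathbb{Q}_p$-basis expansion of $Z_0$, which is what ensures the resulting series is genuinely $L$-analytic rather than merely $\mathbb{Q}_p$-analytic.
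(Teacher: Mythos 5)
Your overall strategy --- the $\exp$/$\log$ dictionary on a small subgroup $\Gamma_m\cong\pi_L^m o_L$ --- is exactly the mechanism behind the two results the paper cites (Lemma 2.3.1 of Berger for the equivalence of the norm condition with local $\QQ_p$-analyticity, and Folgerung 2.2.10 of Max's thesis for the upgrade to $L$-analyticity via $L$-linearity of the derived action); the paper's own proof is a two-line citation, so you are in effect reproving the cited lemmas directly. Your converse direction is sound: the bound $\lVert\gamma-1\rVert_V<p^{-1/(p-1)}$ makes $\log(\gamma)$ converge with $\lVert\log\gamma\rVert<p^{-1/(p-1)}$, additivity and continuity identify $\log\gamma$ with $d\rho(\iota(\gamma))$, and $L$-linearity of $d\rho$ collapses $\exp(\log\gamma)$ into a power series in the single coordinate $\iota(\gamma)$, which is the desired $L$-analyticity.

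The forward direction, however, has a genuine gap at the uniformity step. Local $L$-analyticity gives, for each individual $v$, an expansion converging on some $\Gamma_{m(v)}$ with $m(v)$ depending on $v$; your phrase ``after possibly enlarging $m$ we obtain uniformly on $\Gamma_m$ an expansion'' is precisely the statement that needs proof. The closed-graph theorem only yields boundedness of each coefficient map $v\mapsto v_n(v)$ separately; to get $\sup_{n\geq1}\lvert\pi_L\rvert^{m(n-1)}\lVert v_n\rVert_{\mathrm{op}}<\infty$ you would need Banach--Steinhaus for the family $\{\lvert\pi_L\rvert^{m(n-1)}v_n\}_{n}$, but its hypothesis --- pointwise boundedness of $n\mapsto\lvert\pi_L\rvert^{m(n-1)}\lVert v_n(v)\rVert$ for every $v$ with one fixed $m$ --- is exactly the uniform radius of convergence you are trying to establish, so the argument as written is circular. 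The standard repair (and the real content of Berger's lemma) is a Baire-category/open-mapping argument on the exhaustion $V=\bigcup_m V_{\Gamma_m\text{-an}}$ of the Banach space $V$ by the Banach spaces of $\Gamma_m$-analytic vectors, which produces a single $m$ with $V=V_{\Gamma_m\text{-an}}$ and equivalent norms; once that is in place, your estimate does give $\lVert\gamma-1\rVert_V\to0$ as $m\to\infty$, and the remainder of your forward direction (identifying $v_1(v)$ with the derived action and reading off its $L$-linearity from $L$-analyticity of the orbit maps) goes through.
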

\begin{proof}
	The first condition in \ref{def:anal} asserts that the $\Gamma_L$-action is locally $\QQ_p$-analytic by \cite[Lemma 2.3.1]{Berger}. The second condition then implies $L$-analyticity (cf. \cite[Folgerung 2.2.10]{Max}).
\end{proof}
\begin{lem} \label{lem:analytictensor} Let $V= \varinjlim_{i \in \N} V_i,W= \varinjlim_{i \in \N} W_i$ be two $F$-LF-spaces with pro $L$-analytic actions (with Fréchet spaces $V_i,$ $W_i$), that both admit models. Then the tensor product action on $E:=\varinjlim_{i}V_i\hat{\otimes}_{F,i}W_i$ is pro $L$-analytic.
\end{lem}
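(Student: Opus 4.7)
The plan is to first use the model hypothesis to reduce the statement to a purely Fréchet-space assertion, then verify pro-$L$-analyticity of the tensor product action by combining local expansions on simple tensors with a density/continuity argument to extend across the completion.

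First I would reduce to the Fréchet case. Pick $m_0$ larger than the indices witnessing the models for $V$ and $W$. For $m \geq m_0$ the Fréchet subspaces $V_m=\varprojlim_j V_{m,j}$ and $W_m=\varprojlim_k W_{m,k}$ are $G$-stable, and the action on each of them is pro-$L$-analytic. By Remark \ref{rem:lfcomplete} we have $V_m \hat{\otimes}_{F,i} W_m = V_m \hat{\otimes}_{F,\pi} W_m$, and setting $g(v\otimes w)=gv\otimes gw$ on elementary tensors extends continuously to a $G$-action on this Fréchet tensor product. Since every element of $E$ lies in some $V_m\hat{\otimes}_{F,i}W_m$ with $m\geq m_0$ and its orbit stays there, it is enough to show that for Fréchet spaces $V', W'$ (playing the roles of $V_m$, $W_m$) carrying pro-$L$-analytic $G$-actions, the action on $V'\hat{\otimes}_{F,\pi} W'$ is pro-$L$-analytic.

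Next I would reduce to the Banach level. Using Remark \ref{rem:Frechetlim} (or a direct $\varprojlim$ argument), one has $V'\hat{\otimes}_{F,\pi}W' = \varprojlim_{j,k} V'_j\hat{\otimes}_{F,\pi}W'_k$, so it suffices to check that for each $(j,k)$ and each element of $V'\hat{\otimes}_{F,\pi}W'$ the orbit map into the Banach space $V'_j\hat{\otimes}_{F,\pi}W'_k$ is locally $L$-analytic. For a simple tensor $v\otimes w$ this is immediate: choose a chart near the identity of $G$ such that the pro-$L$-analytic expansions
\[
gv = \sum_{\mathbf{n}} v_{\mathbf{n}}\,(x_1,\dots,x_d)^{\mathbf{n}}, \qquad gw = \sum_{\mathbf{m}} w_{\mathbf{m}}\,(x_1,\dots,x_d)^{\mathbf{m}}
\]
are valid in $V'_j$, respectively $W'_k$, with $\varepsilon^{|\mathbf{n}|}\|v_{\mathbf{n}}\|\to 0$ and $\varepsilon^{|\mathbf{m}|}\|w_{\mathbf{m}}\|\to 0$. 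Multiplying the series and regrouping,
\[
g(v\otimes w) = \sum_{\mathbf{l}}\Bigl(\sum_{\mathbf{n}+\mathbf{m}=\mathbf{l}} v_{\mathbf{n}}\otimes w_{\mathbf{m}}\Bigr)(x_1,\dots,x_d)^{\mathbf{l}},
\]
and the submultiplicativity $\|v_{\mathbf{n}}\otimes w_{\mathbf{m}}\|_{\pi}\leq\|v_{\mathbf{n}}\|\|w_{\mathbf{m}}\|$ combined with a standard ultrametric estimate gives $\varepsilon^{|\mathbf{l}|}\|\cdot\|_{\pi}\to 0$, hence local $L$-analyticity. By linearity the same holds for every element of the algebraic tensor product $V'\otimes W'$.

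The main obstacle is extending this from the algebraic tensor product to the completion $V'\hat{\otimes}_{F,\pi}W'$. For this I would exploit that the locally $L$-analytic functions on a sufficiently small chart $U$ with values in a Banach space themselves form a Banach space under the norm $\sup_{\mathbf{n}}\varepsilon^{|\mathbf{n}|}\|v_{\mathbf{n}}\|$. The pro-$L$-analytic orbit map on $V'$ factors through a continuous linear map $V'\to C^{\mathrm{an}}(U,V'_j)$ (and similarly for $W'$); this continuity is the Banach--Steinhaus type statement that needs to be invoked, and it is essentially what makes pro-$L$-analyticity compatible with functional-analytic constructions. Given this, the bilinear map
\[
V'\times W' \longrightarrow C^{\mathrm{an}}(U,\,V'_j\hat{\otimes}_{F,\pi} W'_k), \qquad (v,w)\mapsto \bigl(g\mapsto gv\otimes gw\bigr)
\]
is separately, hence jointly, continuous and therefore factors through $V'\hat{\otimes}_{F,\pi}W'$. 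Because the target is complete and the simple-tensor case is already handled, the orbit map of any $v\in V'\hat{\otimes}_{F,\pi}W'$ lies in $C^{\mathrm{an}}(U,V'_j\hat{\otimes}_{F,\pi}W'_k)$, which is exactly the required pro-$L$-analyticity.
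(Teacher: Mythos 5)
Your proposal is correct, and the overall skeleton (use the models to drop to a single Fr\'echet step, then test orbit maps on the Banach quotients $V'_j\hat{\otimes}W'_k$) coincides with the paper's reduction. Where you genuinely diverge is at the Banach level: the paper does not multiply power series at all. It invokes F\'eaux de Lacroix (3.3.11) for the statement that the tensor product of two locally $\QQ_p$-analytic Banach representations is again locally $\QQ_p$-analytic, and then upgrades to $L$-analyticity via the criterion of Lemma \ref{def:anal}, using the product formula $D(\rho_{V\otimes W})=D(\rho_V)\otimes\id+\id\otimes D(\rho_W)$ (3.3.12) to see that the derived action is $L$-linear. Your route instead stays entirely on the $L$-analytic side: you convolve the two local expansions on simple tensors and propagate to the completion by continuity. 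What your approach buys is self-containedness and a direct handle on $L$-analyticity without passing through $\QQ_p$-analyticity; what it costs is that the entire difficulty is concentrated in the uniformity claim you flag, namely that the orbit map assignment gives a \emph{continuous linear} map $V'\to C^{\mathrm{an}}(U,V'_j)$ for a single chart $U$ independent of the vector. That statement is true but is not a formal consequence of pointwise local analyticity: it needs either a Baire-category/closed-graph argument or the explicit operator-norm criterion $\lVert\gamma-1\rVert<p^{-1/(p-1)}$ on an open subgroup (which is exactly what Lemma \ref{def:anal} and the F\'eaux de Lacroix references encode). So your proof is sound provided you cite or prove that input; it is essentially the content of the external result the paper outsources, rather than something you get for free.
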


\begin{proof} Write $V= \varinjlim_i \varprojlim_j V_{ij}$ (resp. $W=\varinjlim_m\varprojlim_nW_{mn}$) with Banach spaces $V_{ij}$ (resp. $W_{mn}$) with dense transition maps \footnote{Given a Fréchet space $E$ one can always write $E$ as a projective limit of Banach spaces $E_n$ with dense transition maps by taking an increasing sequence of semi-norms $q_n$ inducing the topology on $E$ and taking $E_n$ to be the Hausdorff completion of $E$ with respect to $q_n.$ }. For $E$ we have $E = \varinjlim_{m,i}\varprojlim_{j,n} V_{ij}\hat{\otimes}W_{mn}$ thus
	using that $V$ and $W$ admit models we see that $E$ also admits a model. 
	We may therefore without loss of generality assume that $V,W$ are Banach spaces. In this case the $\QQ_p$-analyticity of the action follows from \cite[3.3.11.]{Feaux}. By \cite[3.3.12]{Feaux} the derived action is given by $D(\rho_V) \otimes \id_W + \id_V \otimes D(\rho_W),$ where $D(\rho_X)$ denotes the derived action on $X\in \{V,W\}.$ This action is $L$-linear as a sum of $L$-linear maps.
\end{proof}


For certain technical arguments we require an analogue (on the distribution side) of $r$-convergent power series.
\begin{defn}
	\label{def:rkonvergenteDist}
	Let $G \cong o_L,$ let $g_1,\dots,g_d$ be a $\ZZ_p$-basis of $G$ and let $r \in [\abs{p},1).$ We define $D_{\QQ_p,r}(G,K)$ to be the completion of $D_{\QQ_p}(G,K)$ with respect to the norm $$\abs{ \sum_{\mathbf{k} \in \NN_0^d} a_\mathbf{k} (\delta_{g_i}-1)_i^\mathbf{k}}_{{D_{\QQ_p,r}(G,K)}}  = \sup_\mathbf{k} \lvert a_\mathbf{k}\rvert r^{\lvert \mathbf{k}\rvert}$$ with the usual conventions for multi-indices, i.e., $((\delta_{g_i}-1)_i)^{(k_1,\dots,k_d)} = \prod_{i=1}^d(\delta_{g_i}-1)^{k_i}$ and $\lvert \mathbf{k}\rvert = \sum_{i=1}^d k_i.$ We further define $D_{r}(G,K)$ as the completion of $D(G,K)$ with respect to the quotient norm with respect to the natural projection $D_{\QQ_p}(G,K) \to D(G,K).$
\end{defn}
\begin{rem} Choose an ordered $\ZZ_p$-basis $h_1,\dots,h_d$ of $o_L$ and let $b_i:= \delta_{h_i}-1 \in D_{\QQ_p}(o_L,K)$ and $\mathbf{b}= (b_1,\dots,b_d).$
	Any element $\lambda \in D_{\QQ_p}(o_L,K)$ admits a unique convergent expansion 
	$$\lambda = \sum_{\mathbf{k} \in \NN_0^d}a_\mathbf{k} \mathbf{b}^\mathbf{k}$$
	such that $\abs{\lambda}_{D_{\QQ_p,r}(G,K)}=\sup \abs{a_k}r^{\abs{\mathbf{k}}}$ is bounded for any $0<r<1.$ The norms $\abs{-}_{D_{\QQ_p,r}(G,K)}$ for $r \in p^{\QQ}, \lvert p\rvert<r<1$ are sub-multiplicative and independent of the choice of ordered basis. 
\end{rem}
\begin{proof}
	See \cite[4.2 and discussion after 4.10]{schneider2003algebras}.	
\end{proof}
\begin{lem}
	\label{lem:extendscalaraction}
	Let $I =[r,s] \subset [0,1)$ and let $\mathfrak{r}>s\geq\lvert p\rvert$ then the homomorphism induced by the composition of the natural projection and the LT-isomorphism
	$$D_{\QQ_p}(o_L,K) \to D(o_L,K) \to \mathcal{O}_K(\mathbb{B})$$
	extends to a continuous homomorphism
	$$D_{\QQ_p,\mathfrak{r}}(o_L,K) \to D_{\mathfrak{r}}(o_L,K) \to \mathcal{O}_K(\mathbb{B}^{[0,s]}) \subset \mathcal{O}_K(\mathbb{B}^I)$$ of operator norm $1.$
\end{lem}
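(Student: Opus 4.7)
The plan is to extend the Fourier map by establishing an operator-norm bound on the generators $b_i = \delta_{h_i}-1$ (for a fixed $\ZZ_p$-basis $h_1,\ldots,h_d$ of $o_L$) and then invoking the universal property of the completion $D_{\QQ_p,\mathfrak{r}}(o_L,K)$. By the Remark preceding the lemma, every $\lambda \in D_{\QQ_p}(o_L,K)$ admits a convergent expansion $\lambda = \sum_{\mathbf{k}\in \NN_0^d} a_\mathbf{k} \mathbf{b}^\mathbf{k}$ with $\lvert \lambda \rvert_{D_{\QQ_p,\mathfrak{r}}} = \sup_\mathbf{k} \lvert a_\mathbf{k}\rvert \mathfrak{r}^{\lvert \mathbf{k}\rvert}$, so by the ultrametric inequality it suffices to bound the image of each $\mathbf{b}^\mathbf{k}$ in $\mathcal{O}_K(\mathbb{B}^{[0,s]})$ by $\mathfrak{r}^{\lvert \mathbf{k}\rvert}$.

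The crucial input is the integrality of the Fourier image provided by Theorem \ref{thm:Fourier}: the Dirac distribution $\delta_{h_i}$ is sent to $\eta(h_i,T) = \exp(h_i\Omega_L\log_{LT}(T)) \in o_K\llbracket T \rrbracket$ and $\eta(h_i,0)=1$, so the image of $b_i$, namely $\eta(h_i,T)-1$, lies in $T\cdot o_K\llbracket T \rrbracket$. On the closed disk $\mathbb{B}^{[0,s]}$ every such series has Gauss sup-norm at most $s\leq \mathfrak{r}$, and hence the image of $\mathbf{b}^\mathbf{k}$ has norm at most $s^{\lvert \mathbf{k}\rvert}\leq \mathfrak{r}^{\lvert \mathbf{k}\rvert}$. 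Combining this with the ultrametric inequality on $\mathcal{O}_K(\mathbb{B}^{[0,s]})$ yields
\[
\lvert \lambda\rvert_{[0,s]} \leq \sup_\mathbf{k} \lvert a_\mathbf{k}\rvert \cdot s^{\lvert \mathbf{k}\rvert} \leq \sup_\mathbf{k}\lvert a_\mathbf{k}\rvert \mathfrak{r}^{\lvert \mathbf{k}\rvert} = \lvert \lambda\rvert_{D_{\QQ_p,\mathfrak{r}}},
\]
so the map extends continuously with operator norm at most $1$. Evaluating at $\lambda = \delta_0$, whose Fourier image is $\eta(0,T)=1$ and whose $D_{\QQ_p,\mathfrak{r}}$-norm equals $1$, shows that equality is attained.

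For the factorization through $D_\mathfrak{r}(o_L,K)$: the original map factors over $D(o_L,K)$ by construction, and $D_\mathfrak{r}(o_L,K)$ is by definition the completion of $D(o_L,K)$ with respect to the quotient norm induced from $D_{\QQ_p,\mathfrak{r}}(o_L,K)$, so the universal property of quotients and completions produces the desired continuous homomorphism. The inclusion $\mathcal{O}_K(\mathbb{B}^{[0,s]}) \subset \mathcal{O}_K(\mathbb{B}^I)$ is just the restriction of analytic functions from the closed disk of radius $s$ to the subannulus $I=[r,s]$ and is norm-non-increasing. No serious obstacle is expected: the whole argument rests on the integrality $\eta(a,T)\in o_K\llbracket T\rrbracket$, which is precisely the content of the period $\Omega_L$ encoded in Theorem \ref{thm:Fourier}; once this is in hand the remaining steps reduce to routine ultrametric bookkeeping.
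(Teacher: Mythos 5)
Your proof is correct and follows essentially the same route as the paper: expand $\lambda=\sum_{\mathbf{k}}a_{\mathbf{k}}\mathbf{b}^{\mathbf{k}}$, observe that each $\eta(h_i,T)-1$ has no constant term and integral coefficients so its $[0,s]$-Gauss norm is at most $s<\mathfrak{r}$, and conclude boundedness by $1$ from the ultrametric inequality and completeness of the target, with equality because scalars map to scalars. The only cosmetic difference is that you spell out the factorization through $D_{\mathfrak{r}}(o_L,K)$ via the quotient-norm universal property, which the paper leaves implicit.
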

\begin{proof}
	
	Consider the composite map $$D_{\QQ_p}(o_L,K) \to D(o_L,K) \to \mathcal{O}_K(\mathbb{B}) \to \mathcal{O}_K(\mathbb{B}^{[0,s]}),$$ where the last arrow is the canonical inclusion. Since the target is complete (with respect to the $s$-Gauß norm) it suffices to show that the map is continuous with respect to the $\mathfrak{r}$-norm on the left-hand side. 	The series $\eta(a,T)-1 = a\Omega T+\dots$ has no constant term and $\abs{a\Omega}<1$ for every $a \in o_L.$ In particular $\abs{\eta(a,T)-1}_{I}<s$ by assumption. Choose as before a $\ZZ_p$-basis $h_1,\dots,h_d$ of $o_L$ and let $\mathbf{e} = (\eta(h_1,T)-1,\dots,\eta(h_d,T)-1).$
	Then $\lambda =\sum_{\mathbf{k} \in \NN_0^d}a_{\mathbf{k}} \mathbf{b}^{\mathbf{k}}$ is mapped to
	$\sum_{\mathbf{k} \in \NN_0^d}a_\mathbf{k} \mathbf{e}^\mathbf{k}$ and 
	
	$${\abs*{\sum_{\mathbf{k} \in \NN_0^d} a_\mathbf{k} \mathbf{e}^{\mathbf{k}}}}_I \leq \sup \abs{a_\mathbf{k}}s^{\abs{\mathbf{k}}} \leq \abs{\lambda}_{D_{\QQ_p,\mathfrak{r}}(o_L,K)}.$$
	
	This shows that the operator norm is bounded by $1.$ It has to be equal to $1$ because the scalars are mapped to themselves.
\end{proof}

\subsection{$(\varphi_L,\Gamma_L)$-modules over the Robba ring.}
When studying $(\varphi_L,\Gamma_L)$-modules over the Robba ring $\cR_K$ it turns out that they admit a so-called model over some half-open interval $[r,1),$ meaning that it arises as a base extension from a module over $\cR_K^{[r,1)}.$ When working with families of such modules, we enforce the existence of such a model, which in turn allows us to view the modules (more precisely their models) as vector bundles on $\operatorname{Sp}(A) \times_K \mathbb{B}^{[r,1)},$ where $\operatorname{Sp}(A)$ denotes the rigid analytic variety attached to $A$.  A suitable frame work to do so is the theory of coadmissible modules in the sense of Schneider and Teitelbaum. 
\begin{defn}
	A commutative $K$-Fréchet algebra $\mathcal{A}$ is called \textbf{Fréchet-Stein algebra} if there is a sequence of continuous algebra seminorms $q_1 \leq \dots \leq  q_n \leq \dots $ that define the Fréchet-topology such that 
	\begin{enumerate}[(i)]
		\item The completion $\mathcal{A}_n$ of $\mathcal{A}/\{a \mid q_n(a)=0\}$ with respect to $q_n$ is Noetherian and
		\item $\mathcal{A}_{n}$ is flat as an $\mathcal{A}_{n+1}$-module for any $n \in \NN.$
	\end{enumerate}
	A \textbf{coherent sheaf} is a family $(M_n)_{n \in \NN}$ of finitely generated $\mathcal{A}_n$-modules endowed with their respective canonical topologies such that $$\mathcal{A}_{n+1} \otimes_{\mathcal{A}_n} M_n \cong M_{n+1}.$$ The \textbf{global sections} of a coherent sheaf are defined to be the Fréchet-module $$\Gamma(M_n):=\varprojlim_{n \in \NN} M_n.$$ An $\mathcal{A}$-module is called \textbf{coadmissible} if it arises as the global sections of a coherent sheaf.
\end{defn}
\begin{lem}
	\label{lem:frechetstein}
	Let $\mathcal{A}$ be a Fréchet-Stein algebra and let $M$ be coadmissible. Then 
	\begin{enumerate}[(i)]
		\item $\mathcal{A}_n$ is flat as an $\mathcal{A}$-module.
		\item The canonical map $M \to M_n$ has dense image and $\mathcal{A}_n \otimes_A M \cong M_n.$
		\item $\varprojlim^{i}_{n\geq1} M_n =0$ for any $i \geq 1.$
		\item Kernels, cokernels, images and coimages of $\mathcal{A}$-linear maps between coadmissible modules are coadmissible.
		\item Finitely generated submodules of coadmissible modules are coadmissible.
		\item Finitely presented modules are coadmissible.
		\item Let $A$ be $K$-affinoid, then $\cR_A^{[r,1)}$ is a Fréchet-Stein algebra for any $r \in [0,1).$
	\end{enumerate}
\end{lem}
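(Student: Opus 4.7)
The strategy is to defer most of the list to the foundational paper of Schneider and Teitelbaum, where coadmissible modules were introduced, and to concentrate on the actual new content, which is part \textit{(vii)}. Concretely, the statements \textit{(i)}--\textit{(vi)} are precisely the properties of coadmissible modules over a Fréchet--Stein algebra proved in the original Schneider--Teitelbaum paper on locally analytic distributions (Corollary 3.1, Lemmas 3.4--3.6 and Sections 3.8--3.9 there), so after stating that $\mathcal{R}_A^{[r,1)}$ is Fréchet--Stein we simply invoke these. Nothing conceptually new needs to be done for them.

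The real work is \textit{(vii)}. The plan is to exhibit the Fréchet--Stein structure on $\mathcal{R}_A^{[r,1)}$ by using its presentation as the projective limit
\[
\mathcal{R}_A^{[r,1)} \;=\; \varprojlim_{r<s<1} \mathcal{R}_A^{[r,s]},
\]
where $s$ ranges over a cofinal countable sequence in $|\overline{\mathbb{Q}_p}|\cap (r,1)$. The seminorms are the associated Banach norms on $\mathcal{R}_A^{[r,s]} = \mathcal{R}_K^{[r,s]}\hat{\otimes}_K A$. I would then verify the two axioms separately:
\begin{itemize}
	\item \emph{Noetherianity.} Under our running assumption that $r,s\in |\overline{\mathbb{Q}_p}|$, the annulus algebra $\mathcal{R}_K^{[r,s]}$ is a $K$-affinoid algebra (by the cited Example in L\"utkebohmert's book). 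Since $A$ is $K$-affinoid, the completed tensor product $\mathcal{R}_K^{[r,s]}\hat{\otimes}_K A$ is again $K$-affinoid by the standard coproduct construction in rigid geometry, hence Noetherian.
	\item \emph{Flatness of the transition maps.} For $r<s<s'<1$ the map $\mathcal{R}_A^{[r,s']}\to \mathcal{R}_A^{[r,s]}$ is the restriction map for the affinoid open immersion of annuli $\mathrm{Sp}(A)\times\mathbb{B}^{[r,s]}\hookrightarrow \mathrm{Sp}(A)\times\mathbb{B}^{[r,s']}$. Such restriction maps are flat: this is the standard fact that the inclusion of an affinoid subdomain into an affinoid space induces a flat ring homomorphism.
\end{itemize}

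Combining these two points gives the Fréchet--Stein structure. The choice of a countable cofinal sequence $s_n \nearrow 1$ in $|\overline{\mathbb{Q}_p}|$ matches the countability requirement in the definition. I expect the only subtle point to be making sure the presentation of $\mathcal{R}_A^{[r,s]}$ as a $K$-affinoid algebra is compatible with the Banach topology coming from the tensor product seminorm (so that the sequence of seminorms actually defines the Fréchet topology); this is resolved by the equivalence of any two residue norms on an affinoid algebra, which was already recalled in the definition of affinoid algebras above. Once \textit{(vii)} is established, the remaining items are quoted from Schneider--Teitelbaum.
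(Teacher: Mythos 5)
Your proposal is correct and follows essentially the same route as the paper: items \textit{(i)}--\textit{(vi)} are quoted from Section 3 of Schneider--Teitelbaum, and \textit{(vii)} is exactly the argument behind the references the paper cites (KPX, Section 2.1, extended to general $K$), namely Noetherianity of the affinoid annulus algebras $\mathcal{R}_K^{[r,s]}\hat{\otimes}_K A$ together with flatness of the restriction maps for affinoid subdomain inclusions. The paper merely defers to these citations, whereas you spell the argument out; there is no substantive difference.
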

\begin{proof}
	For \textit{(i)-(vi)} see section 3 in \cite{schneider2003algebras}. \textit{(vii)} is well known for finite extensions $K/\Q_p$ (cf. \cite[Section 2.1]{KPX}). The same arguments apply in the general case  (cf. \cite[Appendix]{rusti}).
\end{proof}
Using the theory of coadmissible modules we can deduce the following useful result.
\begin{lem}
	\label{lem:finitecomplete} Let $\mathfrak{m} \in \operatorname{Max}(A)$ be a maximal ideal, then the natural map $$\cR_A \otimes_A A/\mathfrak{m} \to \cR_{A/\mathfrak{m}}$$ is an isomorphism.
\end{lem}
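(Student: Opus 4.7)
The plan is to establish the isomorphism first at the Banach level $\cR_A^{[r,s]}$, then pass to the Fréchet limit $s\nearrow 1$, and finally to the colimit in $r$. Two general facts will be used throughout: the quotient $A/\mathfrak{m}$ is a finite field extension of $K$ (Nullstellensatz for affinoid algebras), in particular a finite-dimensional $K$-Banach space; and each $\cR_A^{[r,s]}$ is itself an affinoid $K$-algebra as a completed tensor product of two affinoid algebras, hence Noetherian with all ideals closed. By Lemma \ref{lem:affbasechange} we may assume $F = K$.

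For the Banach step I would pick a $K$-linear splitting $V \hookrightarrow A$ of the surjection $A \twoheadrightarrow A/\mathfrak{m}$, which exists because $A/\mathfrak{m}$ is finite-dimensional over $K$. This yields a decomposition $A = \mathfrak{m}\oplus V$ of $K$-Banach spaces, and applying $\cR_K^{[r,s]}\hat{\otimes}_K(-)$, which distributes over direct sums, one obtains
\[
\cR_A^{[r,s]} \;=\; (\cR_K^{[r,s]} \hat{\otimes}_K \mathfrak{m}) \;\oplus\; (\cR_K^{[r,s]} \hat{\otimes}_K V).
\]
The second summand maps isomorphically onto $\cR_{A/\mathfrak{m}}^{[r,s]}$ via the natural projection. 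To identify the first summand with $\mathfrak{m}\cR_A^{[r,s]}$ I would argue by a two-sided containment: the ideal $\mathfrak{m}\cR_A^{[r,s]}$ is closed, contains $\mathfrak{m}$, and therefore contains the closure $\cR_K^{[r,s]}\hat{\otimes}_K\mathfrak{m}$; conversely, this closure is itself a closed $\cR_A^{[r,s]}$-submodule containing $\mathfrak{m}$, and hence contains $\mathfrak{m}\cR_A^{[r,s]}$. I expect this mild identification to be the only real technical point. It follows that $\cR_A^{[r,s]}\otimes_A A/\mathfrak{m} = \cR_A^{[r,s]}/\mathfrak{m}\cR_A^{[r,s]} \cong \cR_{A/\mathfrak{m}}^{[r,s]}$.

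To pass to the Fréchet level, I would observe that $A$ is Noetherian, so $A/\mathfrak{m}$ is finitely presented over $A$; consequently $M := \cR_A^{[r,1)}\otimes_A A/\mathfrak{m}$ is finitely presented, hence coadmissible by Lemma \ref{lem:frechetstein}(vi), over the Fréchet-Stein algebra $\cR_A^{[r,1)}$. Lemma \ref{lem:frechetstein}(ii) then yields $M \cong \varprojlim_s (\cR_A^{[r,s]}\otimes_{\cR_A^{[r,1)}}M) = \varprojlim_s (\cR_A^{[r,s]}\otimes_A A/\mathfrak{m})$, which by the Banach step equals $\varprojlim_s \cR_{A/\mathfrak{m}}^{[r,s]} = \cR_{A/\mathfrak{m}}^{[r,1)}$. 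Finally, since ordinary tensor products commute with filtered colimits, $\cR_A \otimes_A A/\mathfrak{m} = \varinjlim_r (\cR_A^{[r,1)}\otimes_A A/\mathfrak{m}) \cong \varinjlim_r \cR_{A/\mathfrak{m}}^{[r,1)} = \cR_{A/\mathfrak{m}}$, completing the proof.
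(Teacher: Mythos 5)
Your proof is correct, but it takes a genuinely different route from the paper's. The paper stays at the Fr\'echet level $[r,1)$ throughout: it observes that $\cR_{A/\mathfrak{m}}^{[r,1)}$ is by definition the Hausdorff completion of $\cR_A^{[r,1)}\otimes_A A/\mathfrak{m}=\cR_A^{[r,1)}/\mathfrak{m}\cR_A^{[r,1)}$, and then shows this quotient is already complete, because $\mathfrak{m}\cR_A^{[r,1)}$ is a finitely generated submodule of a coadmissible module, hence coadmissible, so the quotient is complete by \cite[Lemma 3.6]{schneider2003algebras}. You instead compute the quotient explicitly on each Banach slice $[r,s]$ via a topological splitting $A=\mathfrak{m}\oplus V$ (available precisely because $A/\mathfrak{m}$ is finite over $K$ by the affinoid Nullstellensatz, and because $\mathfrak{m}$ is closed), and then reassemble via coadmissibility of the finitely presented module $\cR_A^{[r,1)}\otimes_A A/\mathfrak{m}$ together with Lemma \ref{lem:frechetstein}(ii). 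Both arguments lean on the Fr\'echet--Stein formalism, but at different points. The paper's version is shorter and, since it only uses that $\mathfrak{m}$ is finitely generated, would apply verbatim to an arbitrary ideal $I\subset A$; yours genuinely needs the residue field to be finite-dimensional over $K$ for the splitting, but in exchange it makes the identifications $\mathfrak{m}\cR_A^{[r,s]}=\cR_K^{[r,s]}\hat{\otimes}_K\mathfrak{m}$ and $\cR_A^{[r,s]}/\mathfrak{m}\cR_A^{[r,s]}\cong\cR_{A/\mathfrak{m}}^{[r,s]}$ completely explicit. The one step you should spell out is why the closure $\cR_K^{[r,s]}\hat{\otimes}_K\mathfrak{m}$ is stable under multiplication by all of $\cR_A^{[r,s]}$ (density of $\cR_K^{[r,s]}\otimes_K A$ plus continuity of multiplication and closedness of the subspace); you correctly flag this as the only real technical point, and it goes through.
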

\begin{proof} 
	By a limit argument it suffices to show that  $\cR_A^{[r,1)}\otimes_A A/\mathfrak{m} \to \cR^{[r,1)}_{A/\mathfrak{m}}$ is an isomorphism for every $0<r<1.$ The right-hand side is by definition the Hausdorff completion of the left-hand side. Hence we are done if we can show that ${\cR_A^{[r,1)}\otimes_A A/\mathfrak{m}}= \cR_A^{[r,1)}/\mathfrak{m}\cR_A^{[r,1)}$ is complete. Since $A$ is Noetherian the ideal $\mathfrak{m}\cR_A^{[r,1)}$ is a finitely generated submodule of a coadmissible module hence itself coadmissible. By \cite[Lemma 3.6]{schneider2003algebras} the quotient in question is complete.
\end{proof}
The definition of coherent sheaves so far is too restrictive because we can not change the lower boundary of a given half-open interval $[r,1).$ Recall that a collection of subsets of a topological space is called locally finite if every point admits an open neighbourhood intersecting only finitely many members of the collection.
\begin{rem}
	\label{rem:glueing}
	Let $\{[r_i,s_i], i\in \NN_0\}$ be an admissible cover of $[r,1)$ i.e. a cover by closed intervals that admits a locally finite refinement with $r_i,s_i \in \sqrt{\lvert K\rvert^\times}$. For each $i$ let  $M^{[r_i,s_i]}$ be a finitely generated $\cR_A^{[r_i,s_i]}$-module together with isomorphisms 
	$$\cR_A^{I\cap J} \otimes_{\cR_A^I}M^I\cong \cR_A^{I\cap J} \otimes_{\cR_A^J}M^J$$
	for any pair of intervals with non-empty intersection, satisfying the obvious compatibility conditions. Then for each $s \in [r,1)$ there exists a unique coadmissible $\cR_A^{[s,1)}$-module together with morphisms $M^{[s,1)} \to \cR_A^{I\cap [s,1)}\otimes_{\cR_A^{I}}M^I$ inducing 
	$$\cR_A^{I \cap [s,1)}\otimes_{\cR_A^{[s,1)}}M^{[s,1)}\cong \cR_A^{I \cap [s,1)}\otimes_{\cR_A^I}M^I$$
	for any interval appearing in the cover above. In particular a coadmissible $\cR_A^{[r,1)}$-module is uniquely determined by its sections along an admissible cover.
\end{rem}
\begin{proof}We only give a sketch of the proof.
	Reordering the intervals and refining the cover allows us to assume without loss of generality $r=s$ and assume $r_0=r\leq r_1\leq s_0\leq r_2\leq s_1< \dots.$ In order to construct a coadmissible $\cR_A^{[r,1)}$-module we need to construct a compatible chain of $\cR_A^{[r,t_i]}$-modules, with $t_i$ converging to $1.$  We shall explain how to extend the module $M^{[r_0,s_0]}$ to a module $M^{[r_0,s_1]}$ satisfying $M^{I}\cong \cR_A^I \otimes_{\cR_A^{I \cap [r_0,s_1]}}M^{[r_0,s_1]}$ for $I=[r_0,s_0]$ or $I=[r_1,s_1].$ By assumption $M^{[r_0,s_0]}$ and $M^{[r_1,s_1]}$ can be glued along the isomorphism $$\cR_A^{[s_1,r_1]} \otimes_{\cR_A^{[r_0,s_0]}} M^{[r_0,s_0]}\cong \cR_A^{[s_1,r_1]} \otimes_{\cR_A^{[r_1,s_1]}} M^{[r_1,s_1]}$$ to a coherent $\cR_A^{[r_0,s_1]}$-module, which by Kiehl's theorem (cf. \cite[6.1 Theorem 4]{bosch2014lectures}) is (associated to) a finitely generated $\cR_A^{[r_0,s_1]}$-module, that we denote $M^{[r_0,s_1]}.$ By construction the sections along $I \in \{[r_0,s_0],[r_1,s_1]\}$ are $\cR_A^I\otimes_{\cR_A^I} M^{[r_0,s_1]}.$ Iterating this construction produces a sequence $M^{[r_0,s_n]}$ of $\cR_A^{[r_0,s_n]}$-modules and a compatible sequence of isomorphisms \linebreak $\cR_A^{[r_0,s_{n}]} \otimes_{\cR_A^{[r_0,s_{n+1}]}} M^{[r_0,s_{n+1}]} \cong M^{[r_0,s_{n}]}$ passing to global sections gives the desired result.
\end{proof} 

\begin{lem}
	\label{lem:coadmissibleuniform}
	Let $M^{[r,1)}$ be a coadmissible $\cR_A^{[r,1)}$-module and let \linebreak$\mathfrak{U}=\{[r_i,s_i],i \in \NN\}$ be an admissible cover of $[r,1).$
	\begin{enumerate}[(i)]
		\item $M^{[r,1)}$ is finitely generated if and only if there exists $n \in \NN$ independent of $I \in \mathfrak{U}$ such that each $M^I$ is generated by at most $n$ elements.
		\item $M^{[r,1)}$ is finitely presented if and only if there exist $(m,n) \in \NN^2$ independent of $I\in \mathfrak{U}$ such that each $M^I$ admits a presentation $$(\cR_A^I)^m \to (\cR_A^I)^{n} \to M^I \to 0.$$
		\item $M^{[r,1)}$ is finitely generated projective if there exists $n \in \NN$ independent of $I \in \mathfrak{U}$ such that each $M^I$ is generated by at most $n$ elements and each $M^I$ is flat over $\cR_A^I.$
	\end{enumerate}
\end{lem}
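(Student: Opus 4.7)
The plan is to treat the three statements separately. The forward implications in (i) and (ii) are immediate from base change: if $v_1,\dots,v_n \in M^{[r,1)}$ generate (resp.\ give a presentation with $m$ relations), then their images in $M^I = \cR_A^I \otimes_{\cR_A^{[r,1)}} M^{[r,1)}$ generate (resp.\ present) $M^I$ for every $I \in \mathfrak{U}$, producing the uniform bound.

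For the reverse direction of (i), I would refine $\mathfrak{U}$ via Remark \ref{rem:glueing} to a nested form $[r_0,s_0], [r_1,s_1],\dots$ with $r_0=r$, $r_{k+1}\leq s_k\leq s_{k+1}$, $s_k\to 1$, and then construct inductively $n$-element generating systems $(v_1^{(k)},\dots,v_n^{(k)})$ of $M^{[r,s_k]}$ that converge in $M^{[r,1)}=\varprojlim_k M^{[r,s_k]}$. In the inductive step the hypothesis yields $n$ generators $(w_1,\dots,w_n)$ of $M^{[r_{k+1},s_{k+1}]}$; on the overlap $[r_{k+1},s_k]$ both systems generate the same $\cR_A^{[r_{k+1},s_k]}$-module, providing a change-of-basis matrix. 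Using that $\cR_A^{[r,s_{k+1}]}$ is dense in $\cR_A^{[r_{k+1},s_k]}$, I approximate this matrix and correct $(w_i)$ so that the corrected system can be glued (via Remark \ref{rem:glueing}) to a generating system of $M^{[r,s_{k+1}]}$ that is arbitrarily close to $(v_i^{(k)})$ in the Banach norm on $M^{[r,s_k]}$. Passing to the limit gives $n$ global generators.

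The reverse direction of (ii) reduces to (i) applied twice. First (i) furnishes a surjection $\phi \colon (\cR_A^{[r,1)})^n \twoheadrightarrow M^{[r,1)}$, whose kernel $K$ is coadmissible by Lemma \ref{lem:frechetstein}(iv). Schanuel's lemma, applied to the hypothesised presentation of $M^I$ and the presentation arising from $\phi_I$, shows that $K^I$ can be generated by at most $m+n$ elements, uniformly in $I$; a second application of (i) to $K$ then produces finitely many global relations. For (iii), (i) again supplies a surjection $\phi$ as above. Each $M^I$ is finitely generated and flat over the Noetherian ring $\cR_A^I$, hence finitely generated projective, so each local restriction of $\phi$ splits; by Kiehl's theorem combined with Remark \ref{rem:glueing}, each finite-interval restriction $M^{[r,s_n]}$ is finitely generated projective over $\cR_A^{[r,s_n]}$. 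The vanishing of $\varprojlim^1$ on coadmissible modules (Lemma \ref{lem:frechetstein}(iii)) then permits assembling the compatible local splittings into a global splitting of $\phi$, exhibiting $M^{[r,1)}$ as a direct summand of $(\cR_A^{[r,1)})^n$.

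The main obstacle is the Mittag-Leffler step in (i): one must balance approximation errors at each inductive stage so that the sequence of generating systems converges in the Fréchet topology without losing the generating property. Ensuring the latter should follow from a Nakayama-type argument, guaranteeing that sufficiently small perturbations of a finite generating system over an affinoid Robba ring remain generating.
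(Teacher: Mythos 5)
The paper does not actually prove this lemma: it cites \cite[Proposition~2.1.13 and Remark~2.1.14]{KPX} and observes that the argument is insensitive to the base field. So you are in effect reconstructing that proof, and your architecture matches it: the ``only if'' directions by base change, (ii) reduced to (i) via Schanuel, and (iii) reduced to (i) via local projectivity plus the vanishing of $\varprojlim^1$ for the coherent sheaf $I\mapsto\operatorname{Hom}_{\cR_A^I}(M^I,\ker(\phi)^I)$, which is exactly what makes the local splittings compatible in the limit. Those two reductions are sound (note only that (i) produces \emph{some} finite number $N$ of global generators, not necessarily $n$, so Schanuel gives the uniform bound $m+N$).

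The genuine gap sits in the reverse direction of (i), precisely at the step you flag, and your proposed mechanism does not close it. First, your induction assumes each $M^{[r,s_{k+1}]}$ is generated by $n$ elements, but the hypothesis only bounds the generators of the cover members $[r_i,s_i]$; gluing two modules each generated by $n$ elements over overlapping affinoids can require more than $n$ generators (the minimal number of generators is not local --- think of a nontrivial line bundle trivialized on two pieces), so without a uniform bound along the exhausting family $[r,s_k]$ your tuples do not live in a fixed $(M^{[r,1)})^n$ and there is nothing for them to converge in. This is why \cite{KPX} state the proposition with the bound imposed on \emph{all} closed subintervals and relegate the cover formulation to a separate remark. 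Second, the matrix $A$ with $v^{(k)}_i=\sum_j A_{ij}w_j$ on the overlap merely expresses one generating system through another and need not be invertible; after approximating it by $A'$ over the larger ring, $A'\mathbf{w}$ is close to $\mathbf{v}^{(k)}$ on the overlap (so it generates there), but nothing forces it to generate $M^{[r_{k+1},s_{k+1}]}$ --- your Nakayama/openness argument only controls perturbations measured in a norm where the unperturbed tuple already generates. The standard repair is stabilization: pad both tuples with zeros and relate $(\mathbf{v}^{(k)},0)$ to $(0,\mathbf{w})$ by a product of block-unipotent elementary matrices built from $A$ and from $B$ with $\mathbf{w}=B\mathbf{v}^{(k)}$; these stay invertible after approximating their off-diagonal blocks over the larger ring, at the price of having to control the size of the tuples, which brings back the first issue. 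Finally, gluing an element of $M^{[r,s_{k+1}]}$ from sections over $[r,s_k]$ and $[r_{k+1},s_{k+1}]$ requires \emph{exact} agreement on $[r_{k+1},s_k]$, not approximate agreement, so a further correction via the open mapping theorem applied to the \v{C}ech surjection $M^{[r,s_k]}\oplus M^{[r_{k+1},s_{k+1}]}\to M^{[r_{k+1},s_k]}$ is needed. None of this is unfixable, but the inductive step as you describe it does not go through.
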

\begin{proof}
	See Proposition 2.1.13 in \cite{KPX}, whose proof works for any base field, and the subsequent Remark 2.1.14.
\end{proof}
We are now able to define $L$-analytic $(\varphi_L,\Gamma_L)$-modules. In the context of $\varphi_L$-modules over Robba rings we always assume $r_L<r_0.$ Recall $r_L^{1/q}=\lvert u \rvert$ for $0 \neq u \in LT[\pi_L],$ which ensures that $\varphi_L$ is well-defined on $\cR_L^{I}$ for any interval $I \subset [r_0,1).$
\begin{defn} Let $r_L<r_0.$
	A (projective) \textbf{$\varphi_L$-module} over $\mathcal{R}_A^{r_0}=\mathcal{R}_A^{[r_0,1)}$ is a finite (projective)  $\mathcal{R}^{r_0}_A$-module $M^{r_0}$ equipped with an isomorphism $\varphi_L^*M^{r_0}:=\cR_A^{r_0^{1/q}} \otimes_{\cR_A^{r_0},\varphi_L}M^{r_0} \cong M^{r_0^{1/q}}:=  \mathcal{R}_A^{r_0^{1/q}}\otimes_{\mathcal{R}_A^{r_0}}M^{r_0}.$ A $\varphi_L$-module $M$ over $\mathcal{R}_A$ is defined to be the base change of a  $\varphi_L$-module $M^{r_0}$ over some $\mathcal{R}_A^{r_0}.$ We call $M^{r_0}$ a \textbf{model} of $M$ over $[r_0,1).$
	A \textbf{$(\varphi_L,\Gamma_L)$-module} over the above rings is a $\varphi_L$-module whose model is projective and endowed with a semilinear continuous action of $\Gamma_L$ that commutes with $\varphi_L.$ Here continuous means that for every $m \in M^{[r,s]} :=  \mathcal{R}_A^{[r,s]}\otimes_{\mathcal{R}_A^{r_0}}M^{r_0}$ the orbit map $\Gamma_L \to M^{[r,s]}$ is continuous for the profinite topology on the left side and the Banach topology on the right-hand side.
\end{defn}
We will sometimes use the notation $M^{[r_0,1)}$ instead of $M^{r_0}$ for added clarity.
\begin{rem}
	A projective $\varphi_L$-module over $\mathcal{R}_A^{r_0}$ is coadmissible. Furthermore given a coadmissible $\cR^{r_0}$-module $M^{r_0}$ together with an isomorphism  $\varphi_L^*M^{r_0} \cong M^{r_0^{1/q}}$ it is projective if and only if it is flat.
\end{rem}
\begin{proof} The first statement follows from the fact that every finitely generated projective module is finitely presented and any finitely presented module is coadmissible. For every closed interval $[r,s]$ the sections $M^{[r,s]}$ are finitely generated over the Noetherian ring $\cR_A^{[r,s]}$ and hence finitely presented. The isomorphism $\varphi_L^*M^{r_0} \cong M^{r_0^{1/q}}$ restricts to an isomorphism $\varphi_L^*(M^I) \cong M^{I^{1/q}},$ which allows us to shift a given interval $I= [r_0,s_0]$ with $s_0>r_0^{1/q}$ and conclude that $M^{r_0}$ is uniformly finitely presented. Hence by \ref{lem:coadmissibleuniform}(iii) $M^{r_0}$ is projective if and only if it is flat.
\end{proof}
So far we worked with $\varphi_L$-modules over $\cR_A^{[r,1)}.$ In order to reduce the computation of cohomology to the level $[r,s]$ we introduce the notion of a $\varphi_L$-module over $[r,s].$ Contrary to the case $[r,1),$ where $\varphi(\cR_A^{[r,1)})$ can naturally be viewed as a subring of $\cR_A^{[r^{1/q},1)},$ the interval $[r,s]$ gets shifted to $[r^{1/q},s^{1/q}],$ which means that we can only compare a $\varphi_L$-module and its pullback after restricting to the overlap $[r^{1/q},s]$ assuming $s \geq r^{1/q}$. This makes the following definition rather artificial.
\begin{defn} Let $0<r<s<1$ with $s \geq r^{1/q}.$ A $\varphi_L$-module over $\cR_A^{[r,s]}$ is a finitely generated $\cR_A^{[r,s]}$ module $M^{[r,s]}$ together with an isomorphism
	$$\varphi_M^{\text{lin}}\colon \cR_A^{[r^{1/q},s]} \otimes_{\cR_A^{[r^{1/q},s^{1/q}]}} \varphi^*(M^{[r,s]}) \xrightarrow{\cong} M^{[r^{1/q},s]}:= \cR_A^{[r^{1/q},s]}\otimes_{\cR_A^{[r,s]}}M^{[r,s]}.$$ A morphism  $f: M^{[r,s]} \to M'^{[r,s]}$ of $\varphi_L$-modules is a $\cR_A^{[r,s]}$-linear morphism of the underlying modules such that the diagram 
	$$\begin{tikzcd}
		{\cR_A^{[r^{1/q},s]} \otimes_{\cR_A^{[r,s]}}M^{[r,s]}} \arrow[r, "\id \otimes f"] \arrow[d, "\cong"]                   & {\cR_A^{[r^{1/q},s]} \otimes_{\cR_A^{[r,s]}}M'^{[r,s]}} \arrow[d, "\cong"]     \\
		{\cR_A^{[r^{1/q},s]} \otimes_{\cR_A^{[r^{1/q},s^{1/q}]}}\varphi^*(M^{[r,s]})} \arrow[r, "\id \otimes (\id \otimes f)"] & {\cR_A^{[r^{1/q},s]} \otimes_{\cR_A^{[r^{1/q},s^{1/q}]}}\varphi^*(M'^{[r,s]})}
	\end{tikzcd}$$ commutes.
	We denote by $\varphi_M:M^{[r,s]} \to M^{[r^{1/q},s]}$ the semi-linear map induced by the isomorphism $\varphi_M^{\text{lin}}.$ When there is no possibility of confusion we simply write $\varphi_L$ instead of $\varphi_M.$
\end{defn}

\begin{lem}
	\label{lem:phieq} Let $r \in [0,1)$ and let $s \in (r^{1/q},1).$
	The functor that assigns to a projective $\varphi_L$-module over $\cR_A^{[r,1)}$ its section $M^{[r,s]}= \cR_A^{[r,s]}\otimes_{\cR_A^{[r,1)}}M^{[r,1)}$ is an exact equivalence of categories between projective $\varphi_L$-modules over $\cR_A^{[r,1)}$ and projective $\varphi_L$-modules over $\cR_A^{[r,s]}.$ 
\end{lem}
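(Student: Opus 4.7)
The plan is to construct a quasi-inverse via iterated pullback along $\varphi_L$ together with the glueing criterion of Remark~\ref{rem:glueing}. Set $r_n := r^{1/q^n}$ and $s_n := s^{1/q^n}$. The hypothesis $s \geq r^{1/q}$ forces $r_{n+1} \leq s_n$ for every $n \geq 0$, and both sequences increase to $1$, so $\{[r_n,s_n]\}_{n \geq 0}$ is a locally finite admissible cover of $[r,1)$ by closed intervals with endpoints in $\sqrt{\lvert K^\times\rvert}$.

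Given a projective $\varphi_L$-module $M^{[r,s]}$ over $\cR_A^{[r,s]}$, I would set $N^{[r_n,s_n]} := (\varphi^n)^* M^{[r,s]}$ and use the base change of the structural isomorphism $\varphi_M^{\text{lin}}$ along $\varphi^n$ as glueing datum on the overlap $[r_{n+1}, s_n]$. Remark~\ref{rem:glueing} then produces a unique coadmissible $\cR_A^{[r,1)}$-module $N^{[r,1)}$. Each $N^{[r_n,s_n]}$ has the same rank as $M^{[r,s]}$ because $\varphi^*$ preserves projectivity and rank, so Lemma~\ref{lem:coadmissibleuniform}(iii) promotes $N^{[r,1)}$ to a finitely generated projective module over $\cR_A^{[r,1)}$. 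The iterated glueing isomorphisms together assemble into a $\varphi_L$-structure $\varphi^* N^{[r,1)} \cong N^{[r^{1/q},1)}$ on $N^{[r,1)}$, and the construction is manifestly functorial in $M^{[r,s]}$.

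To see that this yields a quasi-inverse of the restriction functor, I would note that for a projective $\varphi_L$-module $M^{[r,1)}$, the $\varphi_L$-structure provides canonical identifications $(\varphi^n)^*(M^{[r,s]}) \cong M^{[r,1)} \otimes_{\cR_A^{[r,1)}} \cR_A^{[r_n,s_n]}$ compatible with the glueing data, so uniqueness in Remark~\ref{rem:glueing} gives a canonical isomorphism $N^{[r,1)} \cong M^{[r,1)}$. Conversely, the section on $[r,s]$ of the module produced by the construction is $N^{[r_0,s_0]} = M^{[r,s]}$ by definition. Exactness of the restriction functor is immediate from Lemma~\ref{lem:frechetstein}(i): $\cR_A^{[r,s]}$ appears as one of the Noetherian Banach algebras in the Fréchet-Stein presentation of $\cR_A^{[r,1)}$ and is therefore flat over it.

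The main technical step will be verifying that the successive pullbacks of $\varphi_M^{\text{lin}}$ genuinely satisfy the cocycle condition of Remark~\ref{rem:glueing} and that the induced $\varphi_L$-structure on $N^{[r,1)}$ recovers the original one after restricting to $[r,s]$. Both assertions reduce to the functoriality of pullback along powers of $\varphi_L$ and the relation $\varphi^{n+1} = \varphi \circ \varphi^n$, but one must unwind the tensor product identifications carefully to check that the two ways of identifying $\cR_A^{[r_{n+1},s_n]} \otimes_{\cR_A^{[r,s]},\,\varphi^{n+1}} M^{[r,s]}$ coming from the glueing data at adjacent levels agree.
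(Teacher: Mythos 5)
Your proposal is correct and follows essentially the same route as the paper: the paper also constructs the quasi-inverse by iterated $\varphi_L$-pullback, glueing $M^{[r,s]}$ with $\varphi^*(M^{[r,s]})$ on the overlap $[r^{1/q},s]$ via Kiehl's theorem (which is exactly the content of Remark~\ref{rem:glueing}), deduces uniform finite generation and projectivity from Lemma~\ref{lem:coadmissibleuniform}, and gets exactness of restriction from flatness of $\cR_A^{[r,1)}\to\cR_A^{[r,s]}$. The only point you leave implicit that the paper spells out is that the quasi-inverse is also exact — a short exact sequence of projective $\varphi_L$-modules over $\cR_A^{[r,s]}$ stays exact after $\varphi$-pullback (flatness of the modules) and after passing to global sections (the $\varprojlim^1$-vanishing of Lemma~\ref{lem:frechetstein}(iii)) — together with a slightly more careful full-faithfulness check via coadmissibility.
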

\begin{proof}
	We first show essential surjectivity.
	Let $M^{[r,s]}$ be a $\varphi_L$-module over $\cR_A^{[r,s]}.$ By assumption $\varphi^*(M^{[r,s]})=\cR_A^{[r^{1/q},s^{1/q}]} \otimes_{\cR_A^{[r,s]},\varphi_L} M^{[r,s]}$ is a finitely generated $\cR_A^{[r^{1/q},s^{1/q}]}$-module and we have an isomorphism $$\cR_A^{[r^{1/q},s]} \otimes_{\cR_A^{[r^{1/q},s^{1/q}]}} \varphi^*(M^{[r,s]}) \cong \cR_A^{[r^{1/q},s]}\otimes_{\cR_A^{[r,s]}}M^{[r,s]}=M^{[r^{1/q},s]}.$$ The right-hand side being the restriction of $M^{[r,s]}$ to $\operatorname{Sp}(A) \times_K \mathbb{B}^{[r^{1/q},s]},$ which is precisely the overlap $\operatorname{Sp}(A) \times_K \mathbb{B}^{[r,s]} \cap\operatorname{Sp}(A) \times_K \mathbb{B}^{[r^{1/q},s^{1/q}]}$ by our assumption on $s,$ and thus  allows us to glue $M^{[r,s]}$ and $\varphi^*(M^{[r,s]})$ to a coherent sheaf on $\operatorname{Sp}(A) \times_K \mathbb{B}^{[r,s^{1/q}]},$ which by Kiehl's theorem is associated to a finitely generated $\cR_A^{[r,s^{1/q}]}$-module that we denote by $M^{[r,s^{1/q}]}.$
	It remains to construct an isomorphism $$\cR_A^{[r^{1/q},s^{1/q}]}\otimes_{\cR_A^{[r^{1/q},s^{1/q^2}]}} \varphi^*(M^{[r,s^{1/q}]}) \cong \cR_A^{[r^{1/q},s^{1/q}]}\otimes_{\cR_A^{[r,s^{1/q}]}} M^{[r,s^{1/q}]}.$$
	Restricting $M^{[r,s^{1/q}]}$ to $[r^{1/q},s^{1/q}]$ gives us 
	\begin{equation}
		\label{eq:glueing}\varphi^*(M^{[r,s]}) \cong \cR_A^{[r^{1/q},s^{1/q}]} \otimes_{\cR_A^{[r,s^{1/q}]} } M^{[r,s^{1/q}]}
	\end{equation}
	by construction.
	To simplify notation let $I:=[r,s]$ and $J:=[r,s^{1/q}].$ Consider the commutative diagram 
	$$ \begin{tikzcd}
		\cR_A^{J^{1/q}} \arrow[r]              & \cR_A^{I^{1/q}}                                              \\
		\cR_A^J \arrow[u, "\varphi_L"] \arrow[r] & \cR_A^I  \arrow[u, "\varphi_L"]                         
	\end{tikzcd}$$ with the horizontal arrows being the natural maps. We can interpret restriction as a pullback along the canonical inclusion. The diagram tells us that the restriction to $I^{1/q}$ of the $\varphi_L$-pullback of $M^{J}$ is the $\varphi_L$-pullback of the restriction of $M^J$ to $I.$ In formulae
	$$\cR_A^{I^{1/q}} \otimes_{\cR_A^{J^{1/q}}}\varphi^*(M^J) = \varphi^*(M^I)$$
	and plugging in \eqref{eq:glueing} gives the desired $$\cR_A^{I^{1/q}} \otimes_{\cR_A^{J^{1/q}}}\varphi^*(M^J) \cong \cR^{I^{1/q}}\otimes_{\cR_A^{J}}M^J.$$
	Iterating this construction we obtain a $\varphi_L$-module over $\cR_A^{[r,1)}.$ Note that this module is finitely generated by \ref{lem:coadmissibleuniform} since each $M^{[r^{1/q^n},s^{1/q^n}]}$ is generated by the same number of elements as $M^{[r,s]}$ by construction. To conclude the projectivity of $M^{[r,1)}$ one uses \ref{lem:coadmissibleuniform}(iii). \\
	We next show that the functor is fully faithful. 
	Given a morphism $M^{[r,s]} \to N^{[r,s]}$ we can apply the previous construction to both modules at the same time and take as a morphism between their $\varphi$-pullbacks the $\varphi$-pullback of $f.$ These morphisms glue together due to the $\varphi$-compatibility of $f.$ If $f=f^{[r,s]}$ is the restriction of a morphism $f^{[r,1)}:M^{[r,1)}\to N^{[r,1)},$ then this construction yields a morphism $g^{[r,1)}$ such that $g-f=0$ on every $M^{[r^{q^{-k}},s^{q^{-k}}]}$ but then $g=f$ by the coadmissiblity of $M^{[r,1)}$ and the condition on $r,s$ that ensures that these intervals cover $[r,1).$ 
	Because $\cR_A^{[r,1)} \to \cR_A^{[r,s]}$ is flat, the functor $M^{[r,1)}\mapsto M^{[r,s]}$ is exact. Consider an exact sequence of projective $\varphi_L$-modules over $\cR_A^{[r,s]}$
	$$0 \to M_1^{[r,s]} \to M_2^{[r,s]} \to M_3^{[r,s]} \to 0.$$ Taking the pullback along $\varphi:\cR_A^{[r,s]} \to \cR_A^{[r^{1/q},s^{1/q}]}$ remains exact because the $M_i^{[r,s]}$ are flat. Hence the induced sequence $$0 \to M_1^{[r,1)}\to M_2^{[r,1)}\to M_3^{[r,1)}\to 0$$ is exact when restricted to each $[r^{q^{-k}},s^{q^{-k}}].$ Finally the global section sequence remains exact by Lemma \ref{lem:frechetstein}\textit{(iii)}.
\end{proof}
\begin{defn} \label{def:psi}
	Following \cite[Section 2]{SV15} we define $\psi_{col}\colon o_L \llbracket T\rrbracket \to o_L \llbracket T\rrbracket$ to be the unique $o_L$-linear endomorphism satisfying $$\varphi_L \circ \psi_{col}(f)(T) = \sum_{a \in LT[\pi_L]} f(a +_{LT} T)$$ for all $f \in o_L\llbracket T \rrbracket,$ where $+_{LT}$ denotes the addition via the Lubin-Tate group law. This endomorphism can be extended to a continuous endomorphism of the Robba ring $\cR_L$  (cf. \cite[Section 2.1]{FX12}), which we denote by the same symbol. We define $ \psi_{LT}:= \pi_L^{-1}\psi_{col}.$ We use the same symbol for the endomorphism $1 \otimes \psi_{LT}$ of $\cR_A = A\hat{\otimes}_L \cR_L.$ 
\end{defn}
Note that we have $\psi_{LT}\circ\varphi_L = \frac{q}{\pi}.$ In particular $\Psi := \frac{\pi}{q}\psi_{LT}$ is a continuous left-inverse of $\varphi_L.$
\begin{defn}Using the isomorphism $\varphi_M^*M^{r_0} \cong M^{r_0^{1/q}}$ we define $$\psi_M: M^{r_0^{1/q}} \cong \mathcal{R}_A^{r_0^{1/q}} \otimes_{\mathcal{R}_A^{r_0},\varphi_L}M^{r_0} \to M^{r_0}$$ by mapping $f \otimes m$ to $\psi_{LT}(f)m.$
	
\end{defn}
\begin{defn}
	A $(\varphi_L,\Gamma_L)$-module over $\cR_A$ is called \textbf{$L$-analytic} if its $\Gamma_L$-action is $L$-analytic.
\end{defn}
The following example is a sanity check.
\begin{rem} \label{ex:ringactionanalytic} The action of $\Gamma_L$ induced by $\gamma(T)= [\chi_{LT}(\gamma)](T)$ on the relative Robba ring
	$\cR_A= \varinjlim_{0\leq r<1}\varprojlim_{r<s<1}\cR_A^{[r,s]}$ is $L$-analytic.
\end{rem}
\begin{proof}
	Since $T \mapsto [\chi_{LT}(\gamma)](T)$ is an automorphism of each annulus $[r,s]$, this reduces to studying the action on $\cR_A^{[r,s]}= A \widehat{\otimes}\cR_K^{[r,s]}.$ Here the left tensor factor carries the trivial $\Gamma_L$-action and $\cR_K$ carries the usual action. By \cite[Proposition 2.3.4]{Berger} the latter is $L$-analytic . Since the trivial action is $L$-analytic the statement follows from \ref{lem:analytictensor}.
\end{proof}
\begin{lem} Let $A$ be $K$-affinoid and
	let $M$ be an $L$-analytic $(\varphi_L,\Gamma_L)$-module over $\cR_A.$ The $\Gamma_L$ action on $M$ extends uniquely to a separately continuous action of $D(\Gamma_L,K)$ satisfying $\delta_gm=gm$ and each morphism of $L$-analytic $(\varphi_L,\Gamma_L)$-modules is $D(\Gamma_L,K)$-equivariant.
\end{lem}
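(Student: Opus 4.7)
The plan is to define the $D(\Gamma_L,K)$-action by integrating pro-$L$-analytic orbit maps against distributions, working first at the level of Banach quotients and then globalising.

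First I would reduce to the Banach case. By definition $M$ admits a model $M^{[r_0,1)}$ which is a coadmissible $\cR_A^{[r_0,1)}$-module, and may therefore be written as a Fréchet space $\varprojlim_{r_0<s<1} M^{[r_0,s]}$ with Banach pieces $M^{[r_0,s]}$ each carrying a continuous $\Gamma_L$-action. Since the $\Gamma_L$-action on $M$ is pro-$L$-analytic, the orbit map $o_m\colon \gamma\mapsto \gamma m$ of any $m\in M^{[r_0,s]}$ is locally $L$-analytic as a map $\Gamma_L\to M^{[r_0,s]}$, hence defines an element of $C^{an}(\Gamma_L,M^{[r_0,s]})\cong C^{an}(\Gamma_L,K)\hat{\otimes}_K M^{[r_0,s]}$, the identification being the standard description of Banach-valued locally analytic functions.

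Next I would use this to define the action. For $\lambda\in D(\Gamma_L,K)$ and $m\in M^{[r_0,s]}$ set $\lambda\cdot m := (\lambda\hat{\otimes}\id_{M^{[r_0,s]}})(o_m)$, interpreting $\lambda$ as a continuous functional on $C^{an}(\Gamma_L,K)$. This is separately continuous because the pairing $D(\Gamma_L,K)\times C^{an}(\Gamma_L,V)\to V$ is jointly continuous for a Banach space $V$ and the assignment $m\mapsto o_m$ is continuous. Associativity $\lambda\cdot(\mu\cdot m)=(\lambda\ast\mu)\cdot m$ reformulates the compatibility of the orbit map with convolution, and $\delta_g\cdot m = gm$ holds by construction. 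Because the transition maps $M^{[r_0,s']}\to M^{[r_0,s]}$ for $s\leq s'$ are continuous and $\Gamma_L$-equivariant, these actions glue to a separately continuous action of $D(\Gamma_L,K)$ on the Fréchet space $M^{[r_0,1)}$, and then on $M=\varinjlim_r M^{[r,1)}$, using that in the terminology of the preceding subsection $M$ admits a model, so every orbit map factors through some $M^{[r,1)}$.

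Uniqueness then follows from Remark~\ref{rem:hahnbanach}: since $K[\Gamma_L]$ is dense in $D(\Gamma_L,K)$ and any extension must be separately continuous, the values $\lambda\cdot m$ are determined by $\delta_g\cdot m = gm$. The same density argument, applied to a morphism $f\colon M\to N$ of $L$-analytic $(\varphi_L,\Gamma_L)$-modules which is continuous and $\Gamma_L$-equivariant by hypothesis, immediately yields $D(\Gamma_L,K)$-equivariance. The main technical obstacle is organising the interplay between the Fréchet and LF-space structures, in particular verifying that the orbit map genuinely sits inside $C^{an}(\Gamma_L,K)\hat{\otimes}_K V$ compatibly with the transition maps in $r$ and $s$; once that identification is established the construction, associativity, continuity and uniqueness are formal consequences of the standard integration pairing between distributions and Banach-valued locally analytic functions.
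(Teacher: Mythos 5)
Your argument is correct and follows the same route as the paper: reduce to the Banach pieces $M^{[r,s]}$ of a model and extend the $\Gamma_L$-action there, then glue over the Fréchet limit and the LF colimit, with uniqueness and equivariance of morphisms coming from the density of $K[\Gamma_L]$ in $D(\Gamma_L,K)$ together with separate continuity. The only difference is that the paper delegates the Banach-space step to the proof of Schneider--Venjakob's Proposition 4.3.10 (which expands a distribution in the monomials $(\delta_{\gamma_i}-1)^{\mathbf{k}}$ and checks operator-norm convergence, in the spirit of Proposition \ref{prop:extensionSV}), whereas you realise the same extension by viewing the orbit map as an element of $C^{an}(\Gamma_L,K)\hat{\otimes}_K M^{[r,s]}$ and integrating against the distribution --- two standard and equivalent constructions of the same action.
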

\begin{proof} After reducing to the corresponding statement for the Banach space $M^I$ for a closed interval this follows from the proof of \cite[Proposition 4.3.10]{SchneiderVenjakobRegulator}, which was proved for general $K$-Banach spaces. 
\end{proof}
\subsection{A standard estimate for $(\varphi_L,\Gamma_L)$-modules} A recurring theme in \cite{KPX} is the fact than upon restricting $M$ to a closed interval $I=[r,s]$ we have for the operator norm $$\lvert\lvert \gamma-1\rvert \rvert_{M^I}\xrightarrow{\gamma \to 1} 0.$$ This remains true for our case but in order to study the action of the distribution algebra via the operators $Z_n = \varphi^{n-n_0}(Z_{n_0}) \in D(\Gamma_n,K)$ we need to estimate the operator norm of these variables. Note that since $D(\Gamma_n,K)$ is a Fréchet space and $M^I$ is a Banach space the a priori separately continuous action $D(\Gamma_n,K) \times M^I$ is in fact jointly continuous.
\begin{rem}
	\label{rem:continuousoperator}
	The induced map $\rho: D(\Gamma_n,K) \to \cEnd_K(M^I),$ that maps $\lambda$ to the map mapping $x \to \lambda x,$ is continuous with respect to the operator norm on $\cEnd_K(M^I)$.
\end{rem}
\begin{proof} Let $\varepsilon>0.$
	Since the multiplication map $D(\Gamma_n,K) \hat{\otimes}_K M^I \to M^I$ is continuous with respect to the projective tensor product, there is a continuous semi-norm $p$ and a constant $c$ such that the ball $\{v \in D(\Gamma_n,K) \hat{\otimes}_K M^I, p\otimes\lvert -\rvert_I(v)\leq c\}$ is mapped into $\{m \in M^I \mid \lvert m\rvert\leq\varepsilon.\}$ If $\lambda \in D(\Gamma_n,K)$ satisfies $p(\lambda)\leq c,$ then $p\otimes\lvert -\rvert_I (\lambda \otimes n)\leq c$ for any $n\in M^I$ with $\lvert n\rvert_{M^I} \leq 1.$ In conclusion the ball $\{\lambda \in D(\Gamma_n,K) \mid p(\lambda)\leq c \}$ is mapped into $\{F \in \cEnd_K(M^I)\mid \lvert \lvert F\rvert \rvert_{M^I}\leq \varepsilon\},$ because for any $x \in M^I$ with $\lvert x\rvert \leq 1$ we have $\lvert \rho(\lambda)(x)\rvert \leq \varepsilon.$
\end{proof}
\begin{rem}
	\label{rem:phipowers}
	The sequence $T,\varphi(T),\varphi^2(T),\dots$ converges to $0$ with respect to the Fréchet-topology on $\cR_K^{[0,1)}.$
\end{rem}
\begin{proof}
	By \cite[Lemma 1.7.7]{Schneider2017} we have $\varphi^{2k}(T) \in T\pi_L^ko_L\llbracket T\rrbracket+T^ko_L\llbracket T\rrbracket.$ A small calculation further shows $\varphi^{2k+1}(T) \in T\pi_L^ko_L\llbracket T\rrbracket+T^ko_L\llbracket T\rrbracket.$
	Observe that the $r$-Gauß norm of any element in $T\pi_L^ko_L\llbracket T\rrbracket+T^ko_L\llbracket T\rrbracket$ is at most $\max(\lvert \pi_L\rvert^kr,r^k)$ and hence tends to $0$ for every $r \in (0,1).$
\end{proof}
\begin{lem}\label{lem:operatornormestimates}
	Let $M$ be a finitely generated module over $\cR_A^{r}$ with an $L$-analytic semi-linear $\Gamma_L$-action. Fix any closed interval $I=[r,s]\subset[r,1)$ and any Banach norm on $M^I.$   
	\begin{enumerate}[(i)]
		\item We have $\lvert\lvert \gamma-1\rvert \rvert_{M^I}\xrightarrow{\gamma \to 1} 0.$
		\item Furthermore $\lvert\lvert Z_n\rvert \rvert_{M^I}\xrightarrow{n \to \infty} 0.$
	\end{enumerate}
\end{lem}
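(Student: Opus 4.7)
The plan is to reduce both statements to a convergence claim inside the distribution algebra $D(\Gamma_{n_0},K)$ and then invoke Remark \ref{rem:continuousoperator}. Every open neighbourhood of $1\in\Gamma_L$ contains some $\Gamma_n$, so for $\gamma$ sufficiently close to $1$ the operator $\gamma-1$ on $M^I$ is $\rho(\delta_\gamma-1)$ with $\rho\colon D(\Gamma_{n_0},K)\to \mathcal{E}nd_K(M^I)$; and by Definition \ref{def:variable}, $Z_n$ acts on $M^I$ via its image $\rho(Z_n)$ after viewing $Z_n\in D(\Gamma_{n_0},K)$ through the canonical inclusion $\Gamma_n\hookrightarrow\Gamma_{n_0}$. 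Since $\rho$ is continuous with respect to the Fréchet topology on the source and the operator norm on the target, it therefore suffices to establish that $\delta_\gamma-1\to 0$ as $\gamma\to 1$ and $Z_n\to 0$ as $n\to\infty$, both inside $D(\Gamma_{n_0},K)$.

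For (i), I transport the question across the Fourier isomorphism $D(\Gamma_{n_0},K)\cong\mathcal{O}_K(\mathbb{B})$ of Theorem \ref{thm:Fourier}. Using the chart $\Gamma_{n_0}\xrightarrow{\chi_{LT}}1+\pi_L^{n_0}o_L\xrightarrow{\log_p}\pi_L^{n_0}o_L\xrightarrow{\pi_L^{-n_0}}o_L$ of Definition \ref{def:variable}, $\delta_\gamma$ corresponds to $\eta(a(\gamma),T)=\exp(a(\gamma)\Omega_L\log_{LT}(T))$, where $a(\gamma):=\pi_L^{-n_0}\log_p(\chi_{LT}(\gamma))\in o_L$ depends continuously on $\gamma$ and tends to $0$ as $\gamma\to 1$. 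A direct inspection of the exponential series shows that $\eta(a,T)-1\to 0$ in every $r$-Gauß norm as $a\to 0$ in $o_L$, which gives the claim after applying $\rho$.

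For (ii), I use the identity $Z_n=\varphi_L^{n-n_0}(Z_{n_0})$ from \eqref{eq:Znbeziehung} together with the fact that $Z_{n_0}$ is by construction the preimage of the coordinate $T$ under the Fourier isomorphism $D(\Gamma_{n_0},K)\cong\mathcal{O}_K(\mathbb{B})=\cR_K^{[0,1)}$. Hence $Z_n$ corresponds to $\varphi_L^{n-n_0}(T)\in\cR_K^{[0,1)}$, and Remark \ref{rem:phipowers} guarantees $\varphi_L^k(T)\to 0$ in the Fréchet topology; applying $\rho$ concludes the proof. The only mildly technical point is the continuity of $a\mapsto\eta(a,T)$ at $a=0$, but this is immediate from the explicit formula, so the real content of the lemma lies in the continuity statement of Remark \ref{rem:continuousoperator} that was already available.
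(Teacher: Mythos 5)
Your part (ii) is exactly the paper's argument: combine $Z_n=\varphi_L^{n-n_0}(Z_{n_0})$, Remark \ref{rem:phipowers} and the continuity of $\rho$ from Remark \ref{rem:continuousoperator}. For part (i) you take a genuinely different route. The paper argues directly on the module: it picks generators $m_1,\dots,m_d$ of $M^I$, uses the Leibniz-type identity $(\gamma-1)(f_im_i)=(\gamma-1)(f_i)m_i+\gamma(f_i)(\gamma-1)(m_i)$ to reduce to the case $M=\cR_A$, and then settles that case by a tensor-norm argument on $\cR_K^I\otimes_K A$ together with \cite[Lemma 2.3.5]{Berger} for $\cR_K$. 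You instead push everything into the distribution algebra and reuse the same continuity mechanism as in (ii): $\gamma-1=\rho(\delta_\gamma-1)$ and $\delta_\gamma-1\to 0$ in $D(\Gamma_{n_0},K)$. This is valid and has the merit of unifying the two parts; note that it is not circular, since Remark \ref{rem:continuousoperator} rests only on the joint continuity of a separately continuous Fr\'echet--Banach pairing and not on (i).

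The one step you should not wave through is the claim that $\eta(a,T)-1\to 0$ in every Gau\ss{} norm as $a\to 0$, which is \emph{not} immediate from the exponential series: the individual terms $(\Omega_L\log_{LT}(T))^k/k!$ have unbounded $r$-Gau\ss{} norms because of the denominators in $k!$ and in $\log_{LT}$, so you cannot simply factor out $\abs{a}$ and bound the tail. The honest argument writes $a=\pi_L^m b$ with $b\in o_L$ and uses the functional equation $\eta(\pi_L^m b,T)=\eta(b,[\pi_L^m](T))=\eta(b,\varphi_L^m(T))$ together with the integrality $\eta(b,S)-1\in S\,o_K\llbracket S\rrbracket$, giving $\abs{\eta(a,T)-1}_{[0,s]}\leq\abs{\varphi_L^m(T)}_{[0,s]}$ uniformly in $b$; this tends to $0$ by Remark \ref{rem:phipowers}. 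With that substitution your proof of (i) is complete.
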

\begin{proof}For the first statement let $\varepsilon>0$ and
	let $m_1,\dots,m_d$ be a system of generators of $M^I.$ We will show that there exists an open subgroup $U$ such that $\lvert\lvert \gamma-1 \rvert \rvert<\varepsilon$ for every $\gamma \in U.$ By the continuity of the action we have $\lim_{\gamma \to 1} (\gamma-1)m_i = 0$ for every $i$. Furthermore given $m= \sum_i f_im_i \in M^I$ we can treat each factor $f_im_i$ separately and get
	$$(\gamma-1)f_im_i = (\gamma-1)(f_i)m_i + \gamma(f_i)(\gamma-1)(m_i).$$
	We know that $(\gamma-1)(m_i)$ can be made arbitrarily small. It remains to show that $(\gamma-1)(f_i)$ converges to $0$ uniformly as $\gamma \to 1$ and that $\gamma(f_i)$ is bounded. 
	We have $\lvert \gamma(f_i) \rvert = \lvert \gamma(f_i)-f_i +f_i\rvert \leq \max(\lvert \gamma(f_i)-f_i\rvert,\lvert f_i\rvert).$ It thus suffices to show the statement for $M = \cR_A.$ Since $\cR_K\otimes_KA$ is dense in $\cR_A$ it suffices to check the corresponding statement there. The case $\cR_K$ is in \cite[Lemma 2.3.5]{Berger} hence we may find an open subgroup $U$ such that the result holds for $A=K$ and every $\gamma \in U.$ Let $v = \sum_i f_i \otimes a_i$ be some representation of $v \in \cR_K^I \otimes_K A.$ We get $(\gamma-1)v = \sum_i (\gamma-1)f_i \otimes a_i$  and thus $\lvert (\gamma-1)v \rvert \leq \max \varepsilon \lvert f_i\rvert \lvert a_i\rvert.$ Since this holds for any representation of $v$ we get $\lvert (\gamma-1)v\rvert \leq \varepsilon \lvert v\rvert,$ which proves the statement.
	For the second statement we combine Remark \ref{rem:phipowers} and Remark \ref{rem:continuousoperator} to conclude that given $\varepsilon >0$ there exists $k_0$ such that $\lvert\lvert \varphi^k(Z_{n_0}) \rvert\rvert_{M^I}<\varepsilon$ for any $k\geq k_0.$ Since $Z_n = \varphi^{n-n_0}(Z_{n_0})$ the conclusion follows.
\end{proof}
\begin{rem}
	In the classical case one works with the variable $\gamma-1$ and \ref{lem:operatornormestimates}(ii) is an immediate consequence of the continuity of the $\Gamma_L$ action since $\varphi_{cyc}(\gamma-1) = (1+(\gamma-1))^p-1=\gamma^p-1.$ 
\end{rem}

\section{Action on the Kernel of $\Psi.$}
Throughout this section let $M$ be an $L$-analytic $(\varphi_L,\Gamma_L)$-module over $\cR_A$ with a model over $\cR_A^{[r_0,1)}$ for some $r_0 \in (r_L,1).$
\subsection{Some technical preparation.}
\begin{lem}
	\label{lem:banachinvertierbar}
	Let $V$ be a $K$-Banach space and let $F,G \in \cEnd_K(V)$ such that $G$ is invertible and 
	$$\lvert\lvert F-G\rvert\rvert< \lvert \lvert G^{-1}\rvert\rvert^{-1}.$$
	Then $F$ is invertible.
\end{lem}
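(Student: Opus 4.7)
The plan is to reduce the statement to the invertibility of a perturbation of the identity and then invoke a Neumann-series argument, which is the standard way such an estimate is exploited in non-archimedean functional analysis.

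First I would factor $F$ through $G$ by writing
\[
F = G - (G-F) = G\bigl(\id_V - G^{-1}(G-F)\bigr).
\]
Since $G$ is invertible by hypothesis, $F$ is invertible if and only if $H := \id_V - G^{-1}(G-F)$ is. The point of the factorisation is that the sub-multiplicativity of the operator norm gives
\[
\lVert G^{-1}(G-F)\rVert \leq \lVert G^{-1}\rVert \cdot \lVert G-F\rVert < \lVert G^{-1}\rVert \cdot \lVert G^{-1}\rVert^{-1} = 1,
\]
using precisely the hypothesis $\lVert F-G\rVert < \lVert G^{-1}\rVert^{-1}$.

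Next I would construct the inverse of $H$ as a Neumann series. Set $T := G^{-1}(G-F)$, so $\lVert T\rVert < 1$. The partial sums $S_N := \sum_{n=0}^{N} T^n$ form a Cauchy sequence in $\mathcal{E}nd_K(V)$ because $\lVert T^n\rVert \leq \lVert T\rVert^n$ and $\mathcal{E}nd_K(V)$, equipped with the operator norm, is a Banach space (as $V$ is). Hence $S := \sum_{n=0}^{\infty} T^n$ converges in $\mathcal{E}nd_K(V)$, and a routine telescoping argument gives $(\id_V - T)S = S(\id_V - T) = \id_V$, so $H = \id_V - T$ is invertible with inverse $S$.

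Finally, combining the two steps, $F = G H$ is a composition of invertible endomorphisms and is therefore invertible, with explicit inverse
\[
F^{-1} = \left(\sum_{n=0}^{\infty} \bigl(G^{-1}(G-F)\bigr)^n\right) G^{-1}.
\]
There is no real obstacle here; the only thing to be a bit careful about is that we are working over a non-archimedean field, but this is actually favourable, since the convergence of the geometric series $\sum \lVert T\rVert^n$ (and hence of the Neumann series) follows immediately from $\lVert T\rVert < 1$ without any further estimate.
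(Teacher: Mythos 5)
Your proof is correct and follows essentially the same route as the paper: both reduce to a perturbation of the identity whose norm is $<1$ by sub-multiplicativity and then invert it by a Neumann series (the paper works with $FG^{-1}=1-(1-FG^{-1})$ while you work with $G^{-1}F$, an immaterial left/right difference).
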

\begin{proof}
	By assumption the operator $$(1-FG^{-1})=-(F-G)G^{-1}$$ has operator norm $<1$ hence the series $$\sum_{k\geq 0} (1-FG^{-1})^k$$ converges to an inverse of $F\circ G^{-1}$ with respect to the operator norm. Using that $G$ is invertible we conclude that $F$ has to be invertible as well.
\end{proof}
\begin{lem}
	\label{lem:extendgeneral} Let $R$ be an $A$-Banach algebra, i.e. a complete normed $A$-Algebra. 
	Let $B$ be a $K$-Banach algebra and let $H: B \to R$ be a continuous $K$-algebra homomorphism. Then it extends to a continuous $A$-linear homomorphism 
	$$A \hat{\otimes}_KB \to R.$$
\end{lem}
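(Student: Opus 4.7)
The plan is to build the map directly from the universal property of the projective tensor product. First I would define the $K$-bilinear map
\[ \mu \colon A \times B \to R, \qquad (a,b) \mapsto a \cdot H(b), \]
where the multiplication on the right uses the $A$-module structure of $R$. This is separately continuous by continuity of $H$ and of scalar multiplication in the Banach algebra $R$, and even jointly continuous because a straightforward estimate gives $\norm{a \cdot H(b)}_R \le \norm{a}_A \cdot \norm{H}\cdot \norm{b}_B$, where $\norm{H}$ is an operator bound for the continuous map $H$.

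Next, by the defining universal property of the tensor product seminorm, $\mu$ factors through a continuous $K$-linear map $A \otimes_K B \to R$ with norm bounded by $\norm{H}$. Because $R$ is complete, this extends uniquely by continuity to a continuous $K$-linear map
\[ \widetilde H \colon A \hat{\otimes}_K B \to R. \]
It remains to verify that $\widetilde H$ is $A$-linear and multiplicative. Both properties hold on the dense subspace $A \otimes_K B$ essentially by construction: $A$-linearity is built into the definition of $\mu$, and multiplicativity on elementary tensors reads $(a_1 \otimes b_1)\cdot(a_2 \otimes b_2) \mapsto a_1a_2 H(b_1)H(b_2) = a_1a_2 H(b_1b_2)$ since $H$ is an algebra homomorphism.

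The remaining step is then purely formal: both sides of the identities expressing $A$-linearity and multiplicativity are continuous in their arguments (for the product topology on $A\hat{\otimes}_K B \times A \hat{\otimes}_K B$), and they agree on the dense subset $A \otimes_K B \times A \otimes_K B$, hence everywhere. There is no real obstacle here; the argument is essentially a repackaging of the universal property of $\hat{\otimes}_{K,\pi}$ together with the completeness of $R$, and the only point deserving mild care is checking that joint continuity of $\mu$ (not merely separate continuity) is available — which it is, since $A$ and $B$ are Banach spaces and $H$ is continuous, so the norm estimate above applies.
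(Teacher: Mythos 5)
Your proposal is correct and follows essentially the same route as the paper: define the bilinear map $(a,b)\mapsto aH(b)$, factor it through the (projective) tensor product, and extend to the completion using completeness of $R$; the paper invokes the coincidence of inductive and projective tensor topologies for Banach spaces where you instead verify joint continuity directly via a norm estimate, but these are the same idea. Your additional density argument for $A$-linearity and multiplicativity matches the paper's (more tersely stated) verification.
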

\begin{proof}
	Let $a \in A$ and define a $K$-bilinear map $A \times B \to R$ by mapping $(a,b)$ to $aH(b).$ Using that $R$ is a topological $A$-module and the map $H$ is continuous one verifies that this map is separately continuous. Since $A$ and $B$ are Banach-spaces the inductive and projective tensor product topology agree and due to the completeness of $R$ we obtain an extension $$A \hat{\otimes}_K B \to R.$$ This extension is a $K$-algebra homomorphism because $\lambda H(b)=H(\lambda b)$ for any $b \in K$ and furthermore it is $A$-linear by construction. 
\end{proof}
\begin{rem}
	\label{rem:EndAclosed} Let $I \subset[r_0,1)$ be a closed interval. Then $\cEnd_A(M^I)$ is an $A$-Banach algebra
\end{rem}
\begin{proof} It suffices to prove that $\cEnd_A(M^I)$ is a closed subspace of the $K$-Banach algebra $\cEnd_K(M^I).$ For any $a \in A$ denote by the same symbol the multiplication-by-$a$-map. Then $\theta_a: \cEnd_K(M^I) \to \cEnd_K(M^I)$ mapping $f$ to $af-fa$ is continuous with respect to the operator norm and an endomorphism is $A$-linear if and only if it lies in the closed subspace $\bigcap_{a \in A}\ker(\theta_a).$
\end{proof}
\subsection{The group Robba ring and the structure of $M^{\psi=0}$}
A key observation is the following decomposition.
\begin{lem}
	\label{lem:colmezdecomp}
	We have $$\mathcal{R}_K^{[r,s]} \cong \bigoplus _{a \in o_L/\pi_L^n} \varphi_L^n\left(\mathcal{R}_K^{[r^{q^n},s^{q^n}]}\right)\eta(a,T).$$
\end{lem}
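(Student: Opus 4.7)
The plan is to exploit the translation action of the finite group $LT[\pi_L^n]$ on $\mathcal{R}_K^{[r,s]}$, defined by $u\cdot f(T) := f(u+_{LT}T)$, together with $p$-adic Fourier duality, in order to decompose the ring into eigenspaces indexed by the characters of $LT[\pi_L^n]$. The starting observation is that the subring of invariants is precisely the image of $\varphi_L^n$. The inclusion $\varphi_L^n(\mathcal{R}_K^{[r^{q^n},s^{q^n}]})\subseteq (\mathcal{R}_K^{[r,s]})^{LT[\pi_L^n]}$ is immediate, since $\varphi_L^n$ corresponds to $[\pi_L^n]$, which annihilates $LT[\pi_L^n]$-torsion. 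For the reverse inclusion one iterates the identity from Definition \ref{def:psi} to obtain
$$\operatorname{Tr}(f):=\sum_{u\in LT[\pi_L^n]} f(u+_{LT}T) = \varphi_L^n(\psi_{col}^n(f)) \in \varphi_L^n(\mathcal{R}_K^{[r^{q^n},s^{q^n}]}),$$
so that for fixed $f$ one has $q^n f=\operatorname{Tr}(f)$ and hence $f\in \varphi_L^n(\mathcal{R}_K^{[r^{q^n},s^{q^n}]})$, using that $q^n$ is invertible in $K$.

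The second ingredient is that each $\eta(a,T)$ is a unit in $\mathcal{R}_K^{[r,s]}$ and a simultaneous eigenvector for the translation action. Unitality follows from $\eta(a,T)\eta(-a,T)=\eta(0,T)=1$ as a formal power series, both factors lying in $o_K\llbracket T\rrbracket\subset \mathcal{R}_K^{[r,s]}$ by Theorem \ref{thm:Fourier}. The eigenvector property rests on the formal identity
$$\eta(a, u+_{LT}T)=\eta(a,u)\cdot \eta(a,T),$$
which reflects the fact that $\eta(a,\cdot)$ is a character of the formal Lubin-Tate group and yields $u\cdot \eta(a,T)=\eta(a,u)\,\eta(a,T)$ upon specialising $u\in LT[\pi_L^n]$. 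By perfectness of the $p$-adic Fourier pairing the characters $\chi_a:=\eta(a,\cdot)$ for $a\in o_L/\pi_L^n$ exhaust the character group of $LT[\pi_L^n]$.

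With these ingredients in hand I would base-change to a finite Galois extension $\tilde K$ of $K$ containing $LT[\pi_L^n]$ and form the orthogonal idempotents $e_a:=\frac{1}{q^n}\sum_{u\in LT[\pi_L^n]}\chi_a(u)^{-1}u$ acting on $\mathcal{R}_{\tilde K}^{[r,s]}$. Standard character orthogonality gives $e_ae_b=\delta_{a,b}e_a$ and $\sum_a e_a=\operatorname{id}$, hence a decomposition $\mathcal{R}_{\tilde K}^{[r,s]}=\bigoplus_a e_a(\mathcal{R}_{\tilde K}^{[r,s]})$ into $\chi_a$-isotypical components. Multiplication by the unit $\eta(a,T)$ carries the $\chi_b$-isotypical component to the $\chi_{a+b}$-one, so restricts to an isomorphism $e_0(\mathcal{R}_{\tilde K}^{[r,s]})\xrightarrow{\sim} e_a(\mathcal{R}_{\tilde K}^{[r,s]})$; combined with the first paragraph this gives the decomposition over $\tilde K$. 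Since both sides of the asserted equality are $\operatorname{Gal}(\tilde K/K)$-stable and $\eta(a,T)\in o_K\llbracket T\rrbracket$ is Galois-invariant, the decomposition descends to $K$. I expect the main technical nuisance to be the correct bookkeeping in the iterated Colmez trace identity (tracking the coset decomposition $LT[\pi_L^n]/LT[\pi_L^{n-1}]\cong LT[\pi_L]$ through the composition with $\varphi_L$); the remaining steps are a clean consequence of Fourier duality and of $\eta(a,T)$ being a unit.
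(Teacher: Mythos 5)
The paper does not actually prove this lemma; it simply cites \cite[Proposition 1.4]{colmez2016representations}. Your argument is a correct, self-contained reconstruction of essentially the standard proof: the idempotents $e_a=\frac{1}{q^n}\sum_{u}\chi_a(u)^{-1}u$ are, up to the twist by the unit $\eta(a,T)$, exactly the operators $f\mapsto \eta(a,T)\,\varphi_L^n\psi_{col}^n(\eta(-a,T)f)$ that Colmez uses, so the content is the same even though you package it via character theory of $LT[\pi_L^n]$ and Galois descent. Two points deserve explicit mention. First, the translation operators $f\mapsto f(u+_{LT}T)$ only preserve $\mathcal{R}_{\tilde K}^{[r,s]}$ when $r>\lvert u\rvert$ for all $u\in LT[\pi_L^n]$, i.e.\ when $r^{q^n}>r_L$; this is precisely the standing hypothesis under which $\varphi_L^n$ is defined on $\mathcal{R}_K^{[r^{q^n},s^{q^n}]}$, so nothing is lost, but it should be said (it is also what makes $u+_{LT}T$ a unit multiple of $T$ on the annulus, so that substitution into Laurent series converges). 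Second, be careful that $\eta(a,u)$ must be read as the evaluation at $T=u$ of the power series $\eta(a,T)\in o_K\llbracket T\rrbracket$, not as $\exp(a\Omega_L\log_{LT}(u))$ computed termwise — the latter would be $1$ since $\log_{LT}$ kills torsion points; your use of the formal identity $\eta(a,X+_{LT}Y)=\eta(a,X)\eta(a,Y)$ specialised at $X=u$ is the correct reading, and perfectness of the Schneider--Teitelbaum pairing then gives that the $\chi_a$, $a\in o_L/\pi_L^n$, exhaust the dual of $LT[\pi_L^n]$, with values automatically in $\tilde K$. With these caveats the eigenspace decomposition over $\tilde K$ and the descent to $K$ (each summand being stable because $\eta(a,T)$ and $\varphi_L$ have coefficients in $o_L$) go through as you describe.
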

\begin{proof} See \cite[Proposition 1.4]{colmez2016representations}.
\end{proof}
We now define the relative Robba group ring $\cR_A(\Gamma_L).$
\begin{defn} \label{def:robbagroup}
	From the isomorphism $\chi_{LT}: \Gamma_L \to o_L^\times$ we get a canonical filtration $\Gamma_L = \Gamma_0 \supset \Gamma_1 \supset \Gamma_2 \supset \dots$ by defining $\Gamma_n:= \chi_{LT}^{-1}(1+\pi_L^no_L)$ for $n\geq1.$ For $n$ large enough we have an isomorphism $\Gamma_n \cong \pi_L^no_L$ by mapping $\gamma$ to $\log(\chi_{LT}(\gamma)).$ Let $n_0$ be minimal with this property. Define charts $l_n: \Gamma_n \xrightarrow{\log(\chi_{LT}(\cdot))} \pi_L^no_L \xrightarrow{\cong} o_L,$ where the second arrow is given by dividing by $\pi_L^n.$ This induces an isomorphim of Fréchet algebras $$D(o_L,K) \cong D(\Gamma_n,K).$$ Using the isomorphism from \ref{thm:Fourier} we can view the right-hand side as a ring of convergent power series in the variable $Z_n$ from \ref{def:variable}. By transport of structure we define the ring extensions $\cR_K(\Gamma_n) \cong \cR_K$ and $\cR_K^I(\Gamma_n) \cong \cR_K^I$ of $D(\Gamma_n,K),$ i.e., $\cR_K^I(\Gamma_n)$ is defined as the $I$-convergent power series in the variable $Z_n.$ 
\end{defn}	

To extend this definition to $n<n_0$ one first has to understand the interplay between inclusion of subgroups and radii of convergence. This requires some bookkeeping.
	Denoting by $i_{n+m}: \Gamma_{n+m} \to \Gamma_n$ the natural inclusion for $m \geq 0$ we get a commutative diagram
	$$\begin{tikzcd}
		o_L \arrow{r}{l_{n+m}^{-1}} \arrow[hook]{d}{\pi_L^m}  &\Gamma_{n+m} \arrow[hook]{d}{\iota_{n+m}} \\ o_L \arrow{r}{l_{n}^{-1}} &\Gamma_n,
	\end{tikzcd}$$ which induces a commutative diagram 
	$$
	\begin{tikzcd}
		D(o_L,K) \arrow{r}{(l_{n+m}^{-1})_*} \arrow[hook]{d}{(\pi_L^m)_*}  &D(\Gamma_{n+m},K) \arrow[hook]{d}{(\iota_{n+m})_*} \\ D(o_L,K) \arrow{r}{(l_{n}^{-1})_*} &D(\Gamma_n,K).
	\end{tikzcd}$$
	Taking the isomorphism $\cR^+ \cong D(o_L,K)$ as an identification and using the fact that $(\pi_L)_*$ corresponds to the map $\varphi_L$ we get a commutative diagram
	$$
	\begin{tikzcd}
		\cR_K^{I^{q^m}} \arrow{r}{(l_{n+m}^{-1})_*} \arrow[hook]{d}{\varphi_L^m}  &\cR_K^{I^{q^m}}(\Gamma_{n+m}) \arrow[hook]{d}{(\iota_{n+m})_*} \\ \cR_K^I \arrow{r}{(l_{n}^{-1})_*} &\cR_K^I(\Gamma_{n})
	\end{tikzcd}$$
	and by taking limits a corresponding diagram
	$$
	\begin{tikzcd}
		\cR_K \arrow{r}{(l_{n+m}^{-1})_*} \arrow[hook]{d}{\varphi_L^m}  &\cR_K(\Gamma_{n+m}) \arrow[hook]{d}{(\iota_{n+m})_*} \\ \cR_K \arrow{r}{(l_{n}^{-1})_*} &\cR_K(\Gamma_{n}).
	\end{tikzcd}$$
	By mapping $\gamma \in \Gamma_n$ to its Dirac distribution we obtain a canonical map $\Gamma_n \to D(\Gamma_n,K)^\times.$ By transport of structure from \ref{lem:colmezdecomp} we see that the natural maps induce topological\footnote{We endow the left-hand side with the maximum norm with respect to the decomposition $\ZZ[\Gamma_n] = \bigoplus_{\gamma \in \Gamma_n/\Gamma_{n+m}} \gamma \ZZ[\Gamma_{n+m}].$} isomorphisms $$\cR_K^{I^{q^m}}(\Gamma_{n+m}) \otimes_{\ZZ[\Gamma_{n+m}]} \ZZ[\Gamma_n] \to \cR_K^I(\Gamma_n)$$ and $$\cR_K(\Gamma_{n+m}) \otimes_{\ZZ[\Gamma_{n+m}]} \ZZ[\Gamma_n] \to \cR_K(\Gamma_n).$$ 
\begin{defn}
We extend definition \ref{def:robbagroup} to $0 \leq n \leq n_0$ by setting $$\cR_K^I(\Gamma_n):= \cR_K^{I^{q^{n_0-n}}}(\Gamma_{n_0}) \otimes_{\ZZ[\Gamma_{n_0}]} \ZZ[\Gamma_n]$$ and $$\cR_K(\Gamma_n):= \cR_K(\Gamma_{n_0}) \otimes_{\ZZ[\Gamma_{n_0}]} \ZZ[\Gamma_n],$$
	where the topology is given by the product topology with respect to the decomposition of $\ZZ[\Gamma_n].$
	Finally in the relative case we define $\cR_A^I(\Gamma_n)$ as the completed tensor product $\cR_K^I(\Gamma_n)\hat{\otimes}_K A$ endowed with the tensor product norm and $\cR_A(\Gamma_n)$ via $$\cR_A(\Gamma_n) := \varinjlim_{0\leq r<1}\varprojlim_{r<s<1}\cR_A^{[r,s]}(\Gamma_n).$$
	
\end{defn}
\begin{rem}
	\label{rem:RobbagroupFrechetStein} $\cR_A^{[r,1)}(\Gamma_n)$ is a Fréchet-Stein algebra.
\end{rem}
\begin{proof}
	If $n\geq n_0$ the statement is clear by transport of structure. If $n<n_0$ we recall the decomposition $$\cR_A^I(\Gamma_n)= \cR_A^{I^{q^{n_0-n}}}(\Gamma_{n_0}) \otimes_{\ZZ[\Gamma_{n_0}]} \ZZ[\Gamma_n].$$  Take a sequence $r=r_0,r_1,\dots$ converging to $1,$ let $I_k:=[r_0,r_k]$ and let $\tilde{I}_k:=I_k^{q^{n_0-n}}.$  We know that $\cR_A^{[r,1)^{q^{n_0-n}}}$ is Fréchet-Stein and hence the maps $\cR_A^{\tilde{I}_{k+1}} \to \cR_A^{\tilde{I}_{k}}$ are flat with dense image. Since $\ZZ[\Gamma_n]$ is free over $\ZZ[\Gamma_{n_0}]$ and hence flat these properties remain for the induced maps $$\cR_A^{\tilde{I}_{k+1}}(\Gamma_{n_0}) \otimes_{\ZZ[\Gamma_{n_0}]} \ZZ[\Gamma_n] \to \cR_A^{\tilde{I}_{k}}(\Gamma_{n_0}) \otimes_{\ZZ[\Gamma_{n_0}]} \ZZ[\Gamma_n].$$ The resulting rings are finite modules over the Noetherian rings $\cR_A^{\tilde{I}_{k+1}}(\Gamma_{n_0})$ hence themselves Noetherian.
\end{proof}
Note that we change the radius of convergence while also changing the group. This stems from the fact that, using suitable charts for $o_L \cong \Gamma_n \subset \Gamma_L$, the subgroup $\Gamma_{n+1}$ corresponds to the index $q$ subgroup $\pi_Lo_L$ and multiplication by $\pi_L$ corresponds to $\varphi_L$ via the Fourier isomorphism. This rule of thumb does not apply at the level $\Gamma_1 \subset \Gamma_L =\Gamma_0,$ where $[\Gamma_L:\Gamma_1]=q-1.$ In Proposition \ref{prop:kerpsizerl} it will become apparent, why this convention on radii makes sense for studying the action on $\ker(\psi).$ Another caveat, we would like to point out, is the fact that contrary to the cyclotomic case the notions of $r$-convergent distributions (introduced in \ref{def:rkonvergenteDist}) and ``$I$-convergent" distributions $\cR_A^I(\Gamma_L)$ are not related in an obvious way (outside of certain special cases). See \cite[Section 1.3]{Berger} for a precise description of the relationship.

\begin{lem}
	\label{lem:zer1}
For $r,s \geq r_0^{1/q^n}$ we have $$M^{[r,s]} \cong \bigoplus_{a \in o_L /\pi_L^n} \eta(a,T)\varphi_M^n{M^{[r^{q^n},s^{q^n}]}}.$$
	
\end{lem}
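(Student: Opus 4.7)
The plan is to deduce the module-level decomposition from the ring-level decomposition in Lemma \ref{lem:colmezdecomp} together with the $\varphi_L$-module structure of $M$.

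First I would reduce to the linearisation statement
\[
M^{[r,s]} \;\cong\; \cR_A^{[r,s]} \otimes_{\cR_A^{[r^{q^n},s^{q^n}]},\,\varphi_L^n} M^{[r^{q^n},s^{q^n}]},
\]
where the tensor map sends $f \otimes m$ to $f \cdot \varphi_M^n(m)$. This is obtained by iterating the isomorphism $\varphi_M^{\text{lin}}$ from the definition of a $\varphi_L$-module $n$ times. The hypothesis $r,s \ge r_0^{1/q^n}$ ensures that all intermediate intervals $[r^{q^i},s^{q^i}]$ lie in $[r_0,1)$, so every intermediate module $M^{[r^{q^i},s^{q^i}]}$ is defined as a section of the model, and the successive applications of $\varphi_M^{\text{lin}}$ are legitimate isomorphisms. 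One has to be a bit careful here because $\varphi_M^{\text{lin}}$ on an interval $[r',s']$ only directly compares $\varphi^*M^{[r',s']}$ with $M^{[r',s']}$ on the overlap $[r'^{1/q},s']$; but working globally with the model over $[r_0,1)$ (where $\varphi(\cR_A^{[r_0,1)})$ sits canonically in $\cR_A^{[r_0^{1/q},1)}$) and then restricting to $[r,s]$ circumvents this.

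Next I would tensor the decomposition of Lemma \ref{lem:colmezdecomp} with $A$ over $K$. Since the direct sum is finite, the completed tensor product commutes with it, giving
\[
\cR_A^{[r,s]} \;\cong\; \bigoplus_{a \in o_L/\pi_L^n} \eta(a,T)\,\varphi_L^n\!\left(\cR_A^{[r^{q^n},s^{q^n}]}\right),
\]
as $\varphi_L^n(\cR_A^{[r^{q^n},s^{q^n}]})$-modules (each summand being free of rank one, with basis $\eta(a,T)$). Substituting this into the linearisation isomorphism and using that $\varphi_L^n\colon \cR_A^{[r^{q^n},s^{q^n}]} \to \varphi_L^n(\cR_A^{[r^{q^n},s^{q^n}]})$ is an isomorphism, each summand becomes
\[
\eta(a,T)\,\varphi_L^n\!\left(\cR_A^{[r^{q^n},s^{q^n}]}\right) \otimes_{\cR_A^{[r^{q^n},s^{q^n}]},\,\varphi_L^n} M^{[r^{q^n},s^{q^n}]} \;\cong\; \eta(a,T)\,\varphi_M^n M^{[r^{q^n},s^{q^n}]},
\]
and assembling the summands yields the claimed decomposition.

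The main obstacle I expect is the bookkeeping around completed tensor products: one must check that the finite direct sum decomposition of $\cR_K^{[r,s]}$ really does survive $\hat{\otimes}_K A$ with the tensor product norm (which is straightforward for finite sums but deserves a line), and that the various change-of-base identifications behave well across the completion. Everything else is formal manipulation of the $\varphi_L$-module structure.
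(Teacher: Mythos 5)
Your proposal is correct and follows the same route as the paper: iterate the linearised isomorphism to get $M^{[r,s]} \cong \cR_A^{[r,s]} \otimes_{\cR_A^{[r^{q^n},s^{q^n}]},\varphi_L^n} M^{[r^{q^n},s^{q^n}]}$, substitute the decomposition of Lemma \ref{lem:colmezdecomp} (base-changed to $A$), and identify each summand. The paper states this in three lines without the bookkeeping you flag; your extra care about the intermediate intervals and the finite direct sum surviving $\hat{\otimes}_K A$ is harmless and fills in what the paper leaves implicit.
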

\begin{proof}
	Because the linearised map is an isomorphism we get 
	\begin{align*} M^{[r,s]} &\cong \mathcal{R}_A^{[r,s]} \otimes_{\mathcal{R}_A^{[r,s]}, \varphi_L^n} M^{[r^{q^n},s^{q^n}]} \\ &\cong  (\bigoplus _{a \in o_L/\pi_L^n} \varphi_L^n(\mathcal{R}^{[r^q,s^q]})\eta(a,T))\otimes_{\mathcal{R}_A^{[r^{q^n},s^{q^n}]},\varphi_L^n} M^{[r^{q^n},s^{q^n}]} \\ &\cong  \bigoplus_{a \in o_L /\pi_L^n} \eta(a,T)\varphi_M^n{M^{[r^{q^n},s^{q^n}]}}.
	\end{align*}
\end{proof}
In order to prove Theorem \ref{thm:kerpsiprojective} we will need a several base change formulae. These allow us, roughly speaking, to change the interval $[r,s]$ to an interval $[r,s]^{q^n},$ by replacing $M^{[r,s]}$ with $\varphi^nM^{[r^{q^n},s^{q^n}]}.$ 
\begin{prop}
	\label{prop:kerpsizerl}
	Let $I = [r,s]$ be an interval such that $\psi_M$ is defined on $M^{I^{1/q^n}}.$ We have
	$$(M^{I^{1/q^n}})^{\psi=0}\cong \ZZ[\Gamma_L] \otimes_{\ZZ[\Gamma_n]}\eta(1,T)\varphi_M^nM^I$$
	and 
	$$(M^{[r^{1/q^n},1)})^{\psi=0}\cong \ZZ[\Gamma_L] \otimes_{\ZZ[\Gamma_n]}\eta(1,T)\varphi_M^nM^{[r,1)}.$$
\end{prop}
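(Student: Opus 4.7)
The plan is to exploit Lemma~\ref{lem:zer1} at two different depths, determine which summands are killed by $\psi_M$, and then recognise the surviving piece as a module induced from $\Gamma_n$ to $\Gamma_L$. Applying the lemma with depth $n$ and with depth $1$ to $M^{I^{1/q^n}}$ yields the two decompositions
\[
M^{I^{1/q^n}} \cong \bigoplus_{a \in o_L/\pi_L^n} \eta(a,T)\varphi_M^n M^I \cong \bigoplus_{b \in o_L/\pi_L} \eta(b,T)\varphi_M M^{I^{1/q^{n-1}}}.
\]
The identity $\eta(b + \pi_L c,T) = \eta(b,T)\varphi_L(\eta(c,T))$ makes these compatible: the fine summand labelled by $a \in o_L/\pi_L^n$ lands in the coarse summand labelled by $a \bmod \pi_L \in o_L/\pi_L$.

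Next I would compute $\psi_M$ on the coarse decomposition. Via the isomorphism $M^{I^{1/q^n}} \cong \cR_A^{I^{1/q^n}}\otimes_{\cR_A^{I^{1/q^{n-1}}},\varphi_L} M^{I^{1/q^{n-1}}}$ defining $\psi_M$ one has $\psi_M(\eta(b,T)\varphi_M(m')) = \psi_{LT}(\eta(b,T))\cdot m'$. For $b \in \pi_L o_L$ the factorisation $\eta(b,T) = \varphi_L(\eta(b/\pi_L,T))$ combined with $\psi_{LT}\circ\varphi_L = q/\pi_L$ shows $\psi_M$ acts non-trivially on the $b = 0$ summand. For $b \notin \pi_L o_L$, the defining identity $\varphi_L\psi_{col}(f)(T) = \sum_{u \in LT[\pi_L]} f(T+_{LT}u)$ together with the character property $\eta(b,T+_{LT}u) = \eta(b,T)\eta(b,u)$ reduces matters to the sum $\sum_{u \in LT[\pi_L]}\eta(b,u)$. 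Since $b \mapsto \eta(b,\cdot)$ identifies $o_L/\pi_L$ with the Pontryagin dual of the finite abelian group $(LT[\pi_L],+_{LT})$, this character is non-trivial precisely when $b \not\equiv 0 \bmod \pi_L$, so orthogonality forces the sum to vanish and $\psi_{LT}(\eta(b,T)) = 0$. Combining with the compatibility of the two decompositions gives
\[
(M^{I^{1/q^n}})^{\psi=0} = \bigoplus_{b \in o_L/\pi_L \setminus\{0\}} \eta(b,T)\varphi_M M^{I^{1/q^{n-1}}} = \bigoplus_{a \in (o_L/\pi_L^n)^\times} \eta(a,T)\varphi_M^n M^I.
\]

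Finally I would identify this with the induced module. The formula $\gamma \cdot \eta(a,T) = \eta(a\chi_{LT}(\gamma),T)$ shows that $\Gamma_L$ permutes the $a$-summands, the orbit of $a=1$ being all of $(o_L/\pi_L^n)^\times \cong \Gamma_L/\Gamma_n$, and a short check using $\eta(\pi_L^n x,T) = \varphi_L^n(\eta(x,T))$ verifies that the summand at $a = 1$ is stabilised by $\Gamma_n$. The natural $\Gamma_L$-equivariant map $\gamma \otimes v \mapsto \gamma v$ from $\ZZ[\Gamma_L]\otimes_{\ZZ[\Gamma_n]}\eta(1,T)\varphi_M^n M^I$ to $(M^{I^{1/q^n}})^{\psi=0}$ is then an isomorphism by orbit--stabiliser. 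The statement for $[r^{1/q^n},1)$ follows by passing to the inverse limit over closed sub-intervals, using Lemma~\ref{lem:frechetstein}(iii) together with the fact that $\ZZ[\Gamma_L]$ is finite free over $\ZZ[\Gamma_n]$, so induction commutes with the inverse limit.

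The main technical obstacle is the character orthogonality statement $\psi_{LT}(\eta(b,T)) = 0$ for units $b$; once this is in place the rest is bookkeeping between the two decompositions and an orbit--stabiliser argument for the $\Gamma_L$-action.
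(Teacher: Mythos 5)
Your proof is correct and follows essentially the same route as the paper: decompose $M^{I^{1/q^n}}$ via Lemma \ref{lem:zer1}, identify $\ker\psi_M$ as the direct sum of the summands indexed by $(o_L/\pi_L^n)^\times$, and recognise this as the module induced from $\Gamma_n$ to $\Gamma_L$, with the half-open case obtained by passing to the limit. The only real difference is that you re-derive the key vanishing $\psi_{LT}(\eta(b,T))=0$ for units $b$ via character orthogonality on $LT[\pi_L]$, whereas the paper simply recalls this formula from the literature.
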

\begin{proof}
	From \ref{lem:zer1} we get a decomposition 	$$M^{I^{1/q^n}}\cong \bigoplus_{a \in o_L /\pi_L^n}\eta(a,T)\varphi_M^nM^I.$$
	Recall that  $\frac{\pi_L}{q}\psi_{LT}(\eta(i,T)) = \eta(\frac{i}{\pi_L})$ if $i \in \pi_Lo_L$ and $0$ otherwise. Furthermore for $a \in o_L^\times \cong \Gamma_L$ we have $\eta(a,T) = \eta(\chi_{LT}(\chi_{LT}^{-1}(a)),T) = \chi_{LT}^{-1}(a) \eta(1,T).$
	Because $\chi_{LT}^{-1}(a) \in \Gamma_L$ induces an automorphism of $M^I$ and commutes with $\varphi_M$ we get $\chi_{LT}^{-1}(a)(\eta(1,T)\varphi^n_MM^I) = \eta(a,T)\varphi^nM^I.$ Combining everything we get \begin{align*}(M^{I^{1/q^n}})^{\psi=0}&\cong \bigoplus_{a \in( o_L/ \pi_L^no_L)^\times}\eta(a,T)\varphi^nM^I\\& = \bigoplus_{a \in (o_L/ \pi_L^no_L)^\times}\chi_{LT}^{-1}(a)(\eta(1,T)\varphi^nM^I)\\ &= \bigoplus_{\gamma \in \Gamma_L/\Gamma_n} \gamma(\eta(1,T)\varphi^nM^I) \\ \\&\cong \ZZ[\Gamma_L] \otimes_{\ZZ[\Gamma_n]}\eta(1,T)\varphi_M^nM^I. \end{align*}
	This proves the first formula. The second formula follows by passing to the limit $s \to 1.$ 
\end{proof}

These results show that we have to understand the $\Gamma_n$-action on \linebreak  $\eta(1,T)\varphi^n(M^I).$ Let $\gamma \in \Gamma_n$ such that $(\chi_{LT}(\gamma)-1)$ is divisible by $\pi_L^n.$ We compute 
$$\gamma(\eta(1,T)\varphi^n(m))  = \eta(1,T)\varphi^n\left(\eta\left(\frac{\chi_{LT}(\gamma)-1}{\pi_L^n},T\right)\gamma m\right).$$
We may thus equivalently study the action of $\Gamma_n$ on $M^I$ given by 
\begin{align}
	H_n:\Gamma_n &\to \cEnd_A(M^I) \\
	\gamma &\mapsto \left[m \mapsto \eta\left(\frac{\chi_{LT}(\gamma)-1}{\pi_L^n},T\right)\gamma m\right]
\end{align}
Leaning on the results of \cite{SchneiderVenjakobRegulator} we shall extend this action to an action of $A \hat{\otimes}_K D(\Gamma_n,K).$ Note that we deviate from their notation. We always use $T$ for the variable of $\cR_A$ acting via multiplication on $M$ and use $Z_n$ for the variable of $D(\Gamma_n,K)$ whenever $n \geq n_0$ acting on $M$ via continuous extension of the $K[\Gamma_L]$-action. 

\subsection{The action via $H_n.$}
We explain how to extend the $H_n$-action to $A \hat{\otimes}_K D(\Gamma_n,K).$ Since the $\Gamma_L$-action is $A$ linear it is natural to expect that this extension arises as a base change of an extension to $D(\Gamma_n,K).$ Such an extension is constructed in \cite[p. 69, Discussion after 4.3.13]{SchneiderVenjakobRegulator} for the case $A=K.$ If we forget the $A$-action and think of modules over $\cR_A$ by considering their underlying topological vector space we arrive at a similar situation. Strictly speaking the underlying $\cR_K$-module of a $(\varphi_L,\Gamma_L)$-module over $\cR_A$ is not a $(\varphi_L,\Gamma_L)$-module over $\cR_K$ since it is not finitely generated. For this reason the results of \cite{SchneiderVenjakobRegulator} are not directly applicable and we sketch the construction for the convenience of the reader.\\

In the following $I$ denotes one of the intervals $[r_0,r_0],[r_0,r_0^{1/q}].$ For  $m\geq 0$ set $\mathfrak{r_m}:= p^{{\frac{-1}{p^m}}}.$  We fix an integer $m_0$ such that for any $m\geq m_0$ we have $r_0^{1/q}<\mathfrak{r}_m$ and \begin{equation}\label{eq:estimateSV1}\lvert \eta(x,T)-1\rvert_I <\mathfrak{r}_m.
\end{equation}This is possible because the series has coefficients in $o_L$ and constant term $1.$   Furthermore Lemma \ref{lem:operatornormestimates} (i) allows us to choose $n_1\geq n_0$ such that
\begin{equation}\label{eq:estimateSV2} \lvert\lvert \gamma-1 \rvert \rvert_{M^I} < \mathfrak{r}_{m_0}.
\end{equation}
for any $\gamma \in \Gamma_{n_1}$ with respect to a fixed Banach-module norm on $M^I.$

\begin{prop}
	\label{prop:extensionSV}
	The $\Gamma_n$-action on $M^I$ via $H_n$ extends to a continuous ring homomorphism $$H_n: D_{\mathfrak{r}_m}(\Gamma_n,K) \to \cEnd_A(M^I)$$ for any $m \geq m_0$ and $n \geq n_1.$
\end{prop}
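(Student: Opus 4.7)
The plan is to mirror the construction of \cite[Proposition 4.3.13]{SchneiderVenjakobRegulator}, adapted to the relative and twisted setting. I fix a $\ZZ_p$-basis $\gamma_1,\dots,\gamma_d$ of $\Gamma_n$ coming from the chart $\Gamma_n\cong o_L$, and set $b_i := \delta_{\gamma_i}-1 \in D_{\QQ_p}(\Gamma_n,K)$. Since $\Gamma_n$ is abelian, the endomorphisms $H_n(\gamma_i)-1 \in \cEnd_A(M^I)$ commute pairwise, so for every polynomial $P = \sum_{\mathbf{k}} a_{\mathbf{k}}\mathbf{b}^{\mathbf{k}}$ the expression $P\bigl(H_n(\gamma_\bullet)-1\bigr) := \sum_{\mathbf{k}} a_{\mathbf{k}}\prod_i (H_n(\gamma_i)-1)^{k_i}$ is an unambiguous element of $\cEnd_A(M^I)$.

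The heart of the argument is the operator-norm estimate $\lVert H_n(\gamma_i) - 1\rVert_{M^I} < \mathfrak{r}_m$. Writing
\[
H_n(\gamma_i)(m) - m = \bigl(\eta(c_i,T)-1\bigr)\gamma_i m + (\gamma_i-1)m,
\]
with $c_i := (\chi_{LT}(\gamma_i)-1)/\pi_L^n \in o_L$, I would bound the first summand by $\mathfrak{r}_m\lvert m\rvert$ using \eqref{eq:estimateSV1} together with the fact that $\gamma_i$ acts as an isometry (since $\lVert\gamma_i-1\rVert < 1$ by \eqref{eq:estimateSV2}), and the second by $\mathfrak{r}_{m_0}\lvert m\rvert \leq \mathfrak{r}_m\lvert m\rvert$ directly from \eqref{eq:estimateSV2}; the non-archimedean max of these strict bounds gives the claim. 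Consequently, for every $P \in K[\Gamma_n]$ one has $\lVert P(H_n(\gamma_\bullet)-1)\rVert_{M^I} \leq \lvert P\rvert_{D_{\QQ_p,\mathfrak{r}_m}}$. Since $K[\Gamma_n]$ is dense in $D_{\QQ_p,\mathfrak{r}_m}(\Gamma_n,K)$ and $\cEnd_A(M^I)$ is a Banach algebra (Remark \ref{rem:EndAclosed}), this extends uniquely to a continuous $K$-linear map $\widetilde H_n: D_{\QQ_p,\mathfrak{r}_m}(\Gamma_n,K) \to \cEnd_A(M^I)$. Multiplicativity follows from density and the group-homomorphism identity $H_n(\gamma\gamma')=H_n(\gamma)H_n(\gamma')$ for $\gamma,\gamma'\in\Gamma_n$, which is a direct consequence of Remark \ref{rem:gammacomm} (unwinding to $\gamma(\eta(a,T))=\eta(\chi_{LT}(\gamma)a,T)$).

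It then remains to descend $\widetilde H_n$ through the projection $D_{\QQ_p,\mathfrak{r}_m}(\Gamma_n,K) \twoheadrightarrow D_{\mathfrak{r}_m}(\Gamma_n,K)$. The orbit map $\gamma \mapsto H_n(\gamma)(m)$ is $L$-analytic for every $m \in M^I$: the $\Gamma_L$-action on $M$ is $L$-analytic by hypothesis, and the twist $\gamma \mapsto \eta(c_\gamma,T)$ is itself $L$-analytic, since $c_\gamma$ depends $L$-linearly on $\gamma$ through the chart $\Gamma_n \cong o_L$ while $\eta(a,T) = \exp(a\Omega_L\log_{LT}(T))$ is manifestly $L$-analytic in $a$. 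Consequently the derived Lie-algebra action of $\widetilde H_n$ is $L$-bilinear, so $\widetilde H_n$ annihilates the closed kernel of the projection to $D_{\mathfrak{r}_m}(\Gamma_n,K)$ and descends to the desired continuous ring homomorphism. I expect this last descent to be the main technical obstacle: the extension to $D_{\QQ_p,\mathfrak{r}_m}$ is largely formal once the norm estimate is in place, but the factorisation through the $L$-analytic quotient genuinely requires the interplay of both $L$-analyticity inputs—the action on $M^I$ and the cocycle correction by $\eta$.
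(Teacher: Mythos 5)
Your proposal is correct and follows essentially the same route as the paper: the same two assumptions \eqref{eq:estimateSV1} and \eqref{eq:estimateSV2} yield the key bound $\lVert H_n(\gamma)-1\rVert_{M^I}\leq \mathfrak{r}_m$ (the paper groups the two summands as $\eta(\cdot)(\gamma m-m)+(\eta(\cdot)-1)m$ rather than your $(\eta(\cdot)-1)\gamma m+(\gamma-1)m$, which is equivalent), the extension to $D_{\QQ_p,\mathfrak{r}_m}(\Gamma_n,K)$ is obtained by convergence of the series $\sum a_{\mathbf{k}}H_n(\mathbf{b})^{\mathbf{k}}$ in the Banach algebra $\cEnd_A(M^I)$, and the factorisation through $D_{\mathfrak{r}_m}(\Gamma_n,K)$ comes from $L$-analyticity of the twisted action. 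The only (immaterial) difference is the order: the paper invokes the $L$-analytic factorisation up front and then does the norm estimate, whereas you extend first and descend afterwards.
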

\begin{proof}
	We first construct an extension that we denote by the same symbol $$H_n: D_\mathfrak{r_m}(\Gamma_n,K) \to \cEnd_K(M^I),$$ which is induced by mapping a $\gamma-1$ to $H_n(\gamma)-1.$ 
	Since $M$ is assumed to be $L$-analytic and the map $\gamma \mapsto \pi_L^{-n}(\eta(\chi_{LT}(\gamma)-1),T)$ (as a function $\Gamma_n \to \cR_K^+$) is locally $L$-analytic one checks that the action via $H_n$ is $L$-analytic. It thus suffices to extend the action to an action of $D_\mathfrak{\QQ_p,r_m}(\Gamma_n,K)$ as the latter will factor as desired $$H_n: D_\mathfrak{\QQ_p,r_m}(\Gamma_n,K) \xrightarrow{\mathrm{can}} D_\mathfrak{r_m}(\Gamma_n,K) \to \cEnd_K(M^I).$$
	Let $\lambda   \in D_\mathfrak{\QQ_p,r_m}(\Gamma_n,K)$ and let $\mathbf{b} = (\gamma_1-1,\dots,\gamma_d-1),$ where $\gamma_1,\dots,\gamma_d$ is a $\ZZ_p$-basis of $\Gamma_n.$
	Recall that $\lambda$ admits a convergent expansion $$\lambda = \sum_{\mathbf{k} \in \NN_0^d} a_{\mathbf{k}}\mathbf{b}^{\mathbf{k}}.$$ We are reduced to showing that the operator $$H_n(\lambda) := \sum_{\mathbf{k} \in \NN_0^d} a_{\mathbf{k}}H_n(\mathbf{b}^{\mathbf{k}})$$ converges with respect to the operator norm on $M^I.$ Knowing that $\abs{a_\mathbf{k}}\mathfrak{r}_m^{\abs{\mathbf{k}}}$ tends to zero it suffices due to the sub-multiplicativity of the operator norm to show $\norm{H_n(\mathbf{b}^{\mathbf{k}})}_{M^I} \leq \mathfrak{r}_m$ for any $\mathbf{k}$ with $\abs{\mathbf{k}}=1,$ in particular, it suffices to show 
	$$\norm{H_n(\gamma-1)}_{M^I} \leq \mathfrak{r}_m$$ for any $\gamma \in \Gamma_m.$
	We write out $$H_n(\gamma-1)m = \eta\left(\frac{\chi_{LT}(\gamma)-1}{\pi_L^n},T\right)(\gamma m -m) +  (\eta\left(\frac{\chi_{LT}(\gamma)-1}{\pi_L^n},T\right)-1)m.$$
	The assumptions \eqref{eq:estimateSV1} and \eqref{eq:estimateSV2} assert that both summands are bounded above by $\mathfrak{r}_m \norm{m}_{M^I}.$ We conclude that the series defining $H_n(\lambda)$ converges with respect to the operator norm. Our proof also shows that $\lambda \mapsto H_n(\lambda)$ is bounded with operator norm bounded by $1,$ which shows that the extension we constructed is continuous.
	The assumption that $\gamma$ acts $\cR_A$-semi-linearly guarantees in particular that $\gamma-1$ acts $A$-linearly for any $\gamma \in \Gamma_L$ using \ref{rem:EndAclosed} we conclude that the image of this extension is contained in $\cEnd_A(M^I).$
\end{proof}
\begin{cor}
	\label{cor:basechangeaction}
	The $\Gamma_n$-action on $M^I$ via $H_n$ extends to a continuous ring homomorphism $$H_n: A \hat{\otimes}_K D_{\mathfrak{r}_m}(\Gamma_n,K) \to \cEnd_A(M^I)$$ for any $m \geq m_0$ and $n \geq n_1.$ Passing to the limit with respect to $m$ we obtain the desired extension
	$$H_n:A \hat{\otimes}_K D(\Gamma_n,K) \to \cEnd_A(M^I).$$
\end{cor}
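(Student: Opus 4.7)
The plan is to invoke the abstract extension result of Lemma \ref{lem:extendgeneral} and then pass to a projective limit. The input we have from Proposition \ref{prop:extensionSV} is a continuous $K$-algebra homomorphism
\[H_n\colon D_{\mathfrak{r}_m}(\Gamma_n,K) \to \cEnd_A(M^I)\]
where the source is a $K$-Banach algebra (by Definition \ref{def:rkonvergenteDist}) and the target is an $A$-Banach algebra by Remark \ref{rem:EndAclosed}. These are exactly the hypotheses of Lemma \ref{lem:extendgeneral} with $B=D_{\mathfrak{r}_m}(\Gamma_n,K)$ and $R=\cEnd_A(M^I)$, so the lemma yields immediately a continuous $A$-linear algebra homomorphism $A\hat{\otimes}_KD_{\mathfrak{r}_m}(\Gamma_n,K) \to \cEnd_A(M^I)$ extending $H_n$. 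The only thing to verify is that the resulting map is still an algebra homomorphism; this is automatic because elements of $A$ act $A$-linearly (hence commute with every operator in $\cEnd_A(M^I)$), so multiplicativity on the dense subspace $A\otimes_K D_{\mathfrak{r}_m}(\Gamma_n,K)$ reduces to multiplicativity of $H_n$ together with $A$-linearity.

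For the passage to the limit, recall that $D(\Gamma_n,K) = \varprojlim_m D_{\mathfrak{r}_m}(\Gamma_n,K)$ as a Fréchet algebra, and the transition maps have dense image since the group algebra $K[\Gamma_n]$ is dense in every $D_{\mathfrak{r}_m}(\Gamma_n,K)$ by Remark \ref{rem:hahnbanach}. Since $A$ is a Banach space, Remark \ref{rem:Frechetlim} gives a topological isomorphism
\[A\hat{\otimes}_K D(\Gamma_n,K) \;\cong\; \varprojlim_{m\geq m_0} A\hat{\otimes}_K D_{\mathfrak{r}_m}(\Gamma_n,K).\]
The extensions obtained in the first step are compatible with the transition maps: on the dense subspace $A\otimes_K K[\Gamma_n]$ they all restrict to the original $\Gamma_n$-action via $H_n$, and continuity together with density forces compatibility everywhere. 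Composing the canonical projection with the extension for any fixed $m$ yields the desired continuous $A$-algebra homomorphism $A\hat{\otimes}_KD(\Gamma_n,K)\to\cEnd_A(M^I)$.

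There is no real obstacle here; the work was already done in Proposition \ref{prop:extensionSV}. The main point to be careful about is that compatibility across different $m$ is used implicitly, and this is where the density of $K[\Gamma_n]$ (Remark \ref{rem:hahnbanach}) combined with continuity of all maps involved does the job.
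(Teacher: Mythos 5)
Your proposal is correct and follows exactly the paper's route: Lemma \ref{lem:extendgeneral} applied to the output of Proposition \ref{prop:extensionSV} for each fixed $m$, followed by Remark \ref{rem:Frechetlim} to identify $A\hat{\otimes}_K D(\Gamma_n,K)$ with $\varprojlim_m A\hat{\otimes}_K D_{\mathfrak{r}_m}(\Gamma_n,K)$. The paper states this more tersely; your added remarks on multiplicativity and on compatibility of the extensions across $m$ via density of $K[\Gamma_n]$ are correct elaborations of steps the paper leaves implicit.
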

\begin{proof} For the first part
	apply \ref{lem:extendgeneral} in conjunction with \ref{prop:extensionSV}. We have $D(\Gamma_n,K) = \varprojlim_m D_{\mathfrak{r}_m}(\Gamma_n,K)$ and using \ref{rem:Frechetlim} we conclude that $$A \hat{\otimes}_KD(\Gamma_n,K) \cong \varprojlim_m A \hat{\otimes} D_{\mathfrak{r}_m}(\Gamma_n,K).$$ 
\end{proof}
We may increase $m$ such that $\mathfrak{r}_m>r_0^{1/q}.$ In this case using \ref{lem:extendscalaraction} we can extend the scalar action of $\mathcal{O}_K(\mathbb{B})$ to an action of $D_{\QQ_p,\mathfrak{r}_m}(\Gamma_n,K)$ that we call scalar action via
$$\mathfrak{S}_n:D_{\QQ_p,\mathfrak{r}_m}(\Gamma_n,K) \xrightarrow{l_n*} D_{\QQ_p,\mathfrak{r}_m}(o_L,K) \xrightarrow{\operatorname{proj}} D_{\mathfrak{r}_m}(o_L,K) \xrightarrow{LT}\mathcal{O}_K(\mathbb{B}^I) .$$
If we denote by $Z_n$ a preimage of $T$ in $D(\Gamma_n,K)$ and by $X_n$ a lift to $D_{\QQ_p,\mathfrak{r}_m}(\Gamma_n,K)$ then by construction $\mathfrak{S}_n(X_n)$ acts as multiplication by $T$ on $M^I.$
Since $0 \notin I$ we know that the action of $T$ on $M^I$ is invertible and the goal is to compare the action of $H_n(X_n) = H_n(Z_n)$ with $\mathfrak{S}_n(X_n) = T.$ The following lemma allows us to choose a sequence of lifts whose $\mathfrak{r}_m$-norms do not depend on $n.$
\begin{lem}
	\label{lem:compatiblelifts}
	Fix a lift $X_{n_1}$ of $Z_{n_1}$ to $D_{\QQ_p,\mathfrak{r}_m}(\Gamma_{n_1},K).$ Then there exists a sequence $X_{n_1+l} \in D_{\QQ_p,\mathfrak{r}_m}(\Gamma_{n_1+l},K)$ such that $X_{n_1+l}$ is a lift of $Z_{n_1+l}\in D(\Gamma_{n_1+l},K)$ and $$\abs{X_{n_1+l}}_{D_{\QQ_p,\mathfrak{r}_m}(\Gamma_{n_1+l},K)} = \abs{X_{n_1}}_{D_{\QQ_p,\mathfrak{r}_m}(\Gamma_{n_1},K)}$$ for every $l\geq0.$ 
\end{lem}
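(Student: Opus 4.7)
The plan is to use transport of structure along the charts $l_n : \Gamma_n \xrightarrow{\sim} o_L$ from Definition \ref{def:variable}. The whole point is that for every $n \geq n_0$ the chart is an isomorphism of $L$-analytic groups (being the composition of $\chi_{LT}$, the $p$-adic logarithm, and multiplication by $\pi_L^{-n}$), so it induces an isomorphism of $K$-algebras $(l_n)_* \colon D_{\QQ_p}(\Gamma_n,K) \xrightarrow{\sim} D_{\QQ_p}(o_L,K)$ sending $\delta_g$ to $\delta_{l_n(g)}$, and compatibly a ring isomorphism $D(\Gamma_n,K) \xrightarrow{\sim} D(o_L,K)$ on the $L$-analytic quotients. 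By definition, $Z_n \in D(\Gamma_n,K)$ is precisely the preimage of $Z \in D(o_L,K)$ under the latter.

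First I will verify that $(l_n)_*$ is an isometry for the $\mathfrak{r}_m$-norm of Definition \ref{def:rkonvergenteDist}. Choosing a $\ZZ_p$-basis $g_1, \dots, g_d$ of $\Gamma_n$, its image $l_n(g_1), \dots, l_n(g_d)$ is a $\ZZ_p$-basis of $o_L$, and the defining formula of the $\mathfrak{r}_m$-norm in terms of the expansion in the monomials $\prod(\delta_{g_i}-1)^{k_i}$ (resp.\ $\prod(\delta_{l_n(g_i)}-1)^{k_i}$) is preserved coefficient-wise under $(l_n)_*$. Passing to completions, the isomorphism extends to an isometric isomorphism
$$(l_n)_* \colon D_{\QQ_p,\mathfrak{r}_m}(\Gamma_n,K) \xrightarrow{\sim} D_{\QQ_p,\mathfrak{r}_m}(o_L,K),$$
and by construction this is compatible with the projections onto the $L$-analytic quotients $D_{\mathfrak{r}_m}(-,K)$.

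With this in place, the construction is immediate. Set $\tilde{X} := (l_{n_1})_*(X_{n_1}) \in D_{\QQ_p,\mathfrak{r}_m}(o_L,K)$; since $X_{n_1}$ lifts $Z_{n_1}$ and $(l_{n_1})_*(Z_{n_1}) = Z$, the element $\tilde{X}$ is a lift of $Z \in D(o_L,K)$. For each $l \geq 0$ define
$$X_{n_1+l} := (l_{n_1+l})_*^{-1}(\tilde{X}) \in D_{\QQ_p,\mathfrak{r}_m}(\Gamma_{n_1+l},K).$$
Compatibility with the augmentation yields that $X_{n_1+l}$ projects to $(l_{n_1+l})_*^{-1}(Z) = Z_{n_1+l}$ in $D(\Gamma_{n_1+l},K)$, i.e.\ it is the required lift. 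The isometry property gives
$$\abs{X_{n_1+l}}_{D_{\QQ_p,\mathfrak{r}_m}(\Gamma_{n_1+l},K)} = \abs{\tilde{X}}_{D_{\QQ_p,\mathfrak{r}_m}(o_L,K)} = \abs{X_{n_1}}_{D_{\QQ_p,\mathfrak{r}_m}(\Gamma_{n_1},K)},$$
which is the claimed norm equality.

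The only potential obstacle is the isometry step, but it is only mildly technical: what one must check is that the norm used on $D_{\QQ_p,\mathfrak{r}_m}$ is defined in terms of a $\ZZ_p$-basis of Dirac-difference generators in a way that a group isomorphism transports directly. Everything else—namely the lifting property, the independence of $l$, and the norm identity—is then built into the construction by transport of structure along the charts.
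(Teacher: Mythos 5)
Your proposal is correct and is essentially the paper's own argument: both reduce to the fact that the $\mathfrak{r}_m$-norm of Definition \ref{def:rkonvergenteDist} is independent of the chosen ordered $\ZZ_p$-basis, so that any isomorphism between groups isomorphic to $o_L$ (here the charts $l_n$, or equivalently the multiplication-by-$\pi_L$ maps, since $l_n = \pi_L l_{n+1}$) induces an isometry of the completed $\QQ_p$-distribution algebras. The paper phrases this as an induction on $l$ via $(\pi_L)_*$, whereas you give the closed form $X_{n_1+l} = (l_{n_1+l})_*^{-1}(l_{n_1})_*(X_{n_1})$; these produce the same lifts.
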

\begin{proof}
	The charts satisfy $\l_{n}=\pi_L\l_{n+1}$ for every $n\geq n_0.$ Transporting the problem to $o_L$ and arguing inductively it suffices to show that given $\lambda \in D_{\QQ_p,\mathfrak{r}_m}(o_L,K)$ there exists an element $\tilde{\lambda} \in D_{\QQ_p,\mathfrak{r}_m}(\pi_Lo_L,K),$ whose projection to $D(\pi_Lo_L,K)$ is equal to ${\pi_L}_*(\overline{\lambda}),$ such that $\tilde{\lambda}$ satisfies $$\abs{\tilde{\lambda}}_{D_{\QQ_p,\mathfrak{r}_m}(\pi_Lo_L,K)} = \abs{\lambda}_{D_{\QQ_p,\mathfrak{r}_m}(o_L,K)}.$$ We claim that ${\pi_L}_*(\lambda)$ has the desired properties. Given a $\ZZ_p$-basis $b_1,\dots,b_d$ of $o_L$ the elements $\pi_Lb_1,\dots,\pi_Lb_d$ form a basis $\pi_Lo_L.$ Since the $\mathfrak{r}_m$-norm is independent of the choice of basis we see that the isomorphism $$D_{\QQ_p}(o_L,K) \to D_{\QQ_p}(\pi_Lo_L,K)$$ induced by the isomorphism $o_L \cong \pi_Lo_L$ given by multiplication-by-$\pi_L$ is isometric with respect to the respective $\mathfrak{r}_m$-norms hence extends to an isometric isomorphism of the respective completions.
\end{proof}

\begin{lem} 
	\label{lem:robbaextension}There exists $n_2\geq n_1$ such that
	the map $H_n$ constructed above extends to continuous ring homomorphism $$\cR_A^I(\Gamma_n) \to \cEnd_A(M^I)$$ for any $n\geq n_2.$
\end{lem}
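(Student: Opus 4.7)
The plan is to extend the ring homomorphism from Corollary \ref{cor:basechangeaction} (which already lands in $\cEnd_A(M^I)$) to the group Robba ring $\cR_A^I(\Gamma_n)$. Under the Fourier identification $\cR_K^I(\Gamma_n)\cong \cR_K^I$, the content beyond $D(\Gamma_n,K)$ is precisely the inversion of $Z_n$ together with convergent Laurent series in $Z_n^{\pm 1}$ on $I.$ My strategy therefore has three steps: first, show that for $n$ sufficiently large the operator $H_n(Z_n)\in\cEnd_A(M^I)$ is invertible with controlled inverse; second, define $H_n$ on a general Laurent series $\sum_k c_k Z_n^k$ by $\sum_k c_k H_n(Z_n)^k$ and check convergence in the Banach algebra $\cEnd_A(M^I)$; third, promote the result to an $A$-linear map via Lemma \ref{lem:extendgeneral}.

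For the first step, the crux will be a comparison of $H_n$ with the scalar action $\mathfrak{S}_n$. On a group element one has the identity
\[
H_n(\gamma)-\mathfrak{S}_n(\gamma)=\eta\bigl(\tfrac{\chi_{LT}(\gamma)-1}{\pi_L^n},T\bigr)\cdot(\gamma-1)
\]
as operators on $M^I$, and since $\lvert\eta(a,T)\rvert_I\leq 1$ while $\lVert \gamma-1\rVert_{M^I}$ tends to zero uniformly on $\Gamma_n$ as $n$ grows by Lemma \ref{lem:operatornormestimates}(i), the difference $H_n-\mathfrak{S}_n$ should be small on the augmentation ideal after the continuous extensions provided by Proposition \ref{prop:extensionSV} and Lemma \ref{lem:extendscalaraction}. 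Evaluating at a lift $X_n\in D_{\QQ_p,\mathfrak{r}_m}(\Gamma_n,K)$ of $Z_n$ of the sort supplied by Lemma \ref{lem:compatiblelifts}, and recalling that $\mathfrak{S}_n(X_n)$ acts as multiplication by $T$, this should yield
\[
H_n(Z_n)=T+E_n,\qquad \lVert E_n\rVert_{M^I}\xrightarrow{n\to\infty}0.
\]
Since $0\notin I,$ multiplication by $T$ is invertible on $M^I$, and Lemma \ref{lem:banachinvertierbar} then furnishes an integer $n_2\geq n_1$ beyond which $H_n(Z_n)$ is invertible in $\cEnd_A(M^I)$ with operator norm of its inverse controlled uniformly in $n.$

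For the second step, given such uniform invertibility, a Laurent series $\sum_k c_k Z_n^k$ convergent on $I$ is mapped to a series converging in the operator norm by sub-multiplicativity, producing a continuous $K$-linear map $\cR_K^I(\Gamma_n)\to \cEnd_A(M^I)$ that agrees with $H_n$ on the dense subring $D(\Gamma_n,K)$; multiplicativity transfers from the dense subalgebra $D(\Gamma_n,K)[Z_n^{-1}]$ by separate continuity of composition in $\cEnd_A(M^I).$ Lemma \ref{lem:extendgeneral} then produces the desired continuous $A$-linear ring homomorphism from $\cR_A^I(\Gamma_n)=A\hat{\otimes}_K\cR_K^I(\Gamma_n)$. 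The main technical hurdle will be the uniform estimate $H_n(Z_n)=T+E_n$ of the first step: the trivial bound on group-like elements $\gamma-1$ must be promoted to a bound on the distribution $Z_n$, which is not itself group-like, and the norm-preserving compatible system of lifts from Lemma \ref{lem:compatiblelifts} is tailored precisely so that this passage can be made uniformly in $n$.
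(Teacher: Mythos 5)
Your overall strategy coincides with the paper's: compare $H_n$ with the scalar action $\mathfrak{S}_n$, show $H_n(Z_n)$ is a small perturbation of multiplication by $T$, invert via Lemma \ref{lem:banachinvertierbar}, and then sum Laurent series in the Banach algebra $\cEnd_A(M^I)$. However, there is one concrete error at the heart of your first step. The identity
\[
H_n(\gamma)-\mathfrak{S}_n(\gamma)=\eta\bigl(\tfrac{\chi_{LT}(\gamma)-1}{\pi_L^n},T\bigr)\,(\gamma-1)
\]
is false, because the two sides of the comparison use \emph{different charts} $\Gamma_n\to o_L$. The variable $Z_n$, and hence $\mathfrak{S}_n$, is defined through the logarithmic chart $\alpha(\gamma)=\log(\chi_{LT}(\gamma))/\pi_L^n$ (this is what makes $\mathfrak{S}_n(X_n)=T$), whereas $H_n$ intrinsically involves the chart $\beta(\gamma)=(\chi_{LT}(\gamma)-1)/\pi_L^n$. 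The correct decomposition is
\[
H_n(\gamma-1)-\mathfrak{S}_n(\gamma-1)=\eta(\beta(\gamma),T)(\gamma-1)+\bigl(\eta(\beta(\gamma),T)-\eta(\alpha(\gamma),T)\bigr),
\]
and the second summand does not vanish and is not controlled by $\lVert\gamma-1\rVert_{M^I}$ alone. The paper handles it by writing $\eta(\beta(\gamma),T)=[u(\gamma)]\bigl(\eta(\alpha(\gamma),T)\bigr)$ with $u(\gamma)=\beta(\gamma)/\alpha(\gamma)$, arranging $n_2$ so that $u(\gamma)\in 1+\pi_L^l o_L$ (its assumption A.3), and then invoking $\lVert [a]-\id\rVert_{\cR_A^I}<\varepsilon$ for $a\in 1+\pi_L^l o_L$ (assumption A.2), which uses Lemma \ref{lem:operatornormestimates} applied to $\cR_A^I$ rather than to $M^I$. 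Without this extra input your estimate $H_n(Z_n)=T+E_n$ with $\lVert E_n\rVert$ small does not follow.

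Two smaller remarks. First, you correctly flag that the bound on $\gamma-1$ must be promoted to the non-group-like element $X_n$; the paper does this by a telescoping identity and an induction on multi-indices, and the quantitative target is not $\lVert E_n\rVert\to 0$ but the single inequality $\lVert H_n(X_n)-T\rVert_{M^I}<\varepsilon C<r_0=\lvert T^{-1}\rvert_I^{-1}$, where $C$ is the common $\mathfrak{r}_m$-norm of the lifts — this is exactly why the norm-preserving system of lifts from Lemma \ref{lem:compatiblelifts} is needed, as you anticipate. Second, your steps two and three (summing Laurent series using $\lVert H_n(Z_n)^{\pm1}\rVert\leq\lVert T^{\pm1}\rVert$ and handling $A$-coefficients) match the paper's conclusion and are fine once the key estimate is repaired.
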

\begin{proof} Lemma \ref{lem:compatiblelifts} allows us to fix a sequence of elements $X_n$ lifting $Z_n$ such that $C = \abs{X_n}_{D_{\QQ_p,\mathfrak{r}_m(\Gamma_n,K)}}$ is independent of $n\geq n_1.$
	Let $0<\varepsilon < \min (r_0/C,1).$ Having fixed such a sequence we assume that $n_2$ is large enough such that the following assumptions are satisfied: 
	\begin{enumerate}[label=\textbf{A.\arabic*}]
		\item \label{ass:a1}$\norm{\gamma-1}_{M^I}<\varepsilon r_0^{1/q}$ for every $\gamma \in \Gamma_{n_2}.$
		\item \label{ass:a2} Choose $l=l(\varepsilon)$ such that $\norm{[a]-\id}_{\cR_A^I}< \varepsilon$ for every $a \in 1+\pi_L^lo_L.$
		\item \label{ass:a3} $\frac{1+\pi_L^{n_2}x}{x\pi_L^{n_2}} = 1-\frac{\pi_L^{n_2}x}{2}+\dots$ belongs to $1+ \pi_L^lo_L.$
	\end{enumerate}	
	The first two conditions can be achieved by using \ref{lem:operatornormestimates} applied to $M^I$ and $\cR_A^I$ respectively. The third one can be achieved by making $n_2$ large enough after having chosen $l.$
	Let $n\geq n_2$ and fix a $\ZZ_p$-basis $\gamma_1,\dots,\gamma_d$ of $\Gamma_n.$ Write $$X_n = \sum_{\mathbf{k}\in \NN_0^d}a_{\mathbf{k}}(\delta_{\gamma_i}-1)^{\mathbf{k}}$$
	by construction we have $C = \sup_{\mathbf{k}} \abs{a_\mathbf{k}}\mathfrak{r}_m^{\abs{\mathbf{k}}}.$
	We claim $$\norm{H_n(X_n)-\mathfrak{S}_n(X_n)}_{M^I}<r_0 = \abs{T^{-1}}_I^{-1} \leq \norm{T^{-1}}_{M^I}^{-1}.$$
	We abbreviate $\alpha(\gamma) = l_n(\gamma)= \log(\chi_{LT}(\gamma))/\pi_L^n$ and denote by $\beta$ the chart $\beta(\gamma) = \frac{\chi_{LT}(\gamma)-1}{\pi_L^n}.$ By construction $$H_n(\gamma-1) = \eta(\beta(\gamma),T)(\gamma-1)+(\eta(\beta(\gamma),T)-1)$$
	and $$\mathfrak{S}_n(\gamma-1) = \eta(\alpha(\gamma),T)-1.$$
	We first show $$\norm{H_n(\gamma-1)^k-\mathfrak{S}_n(\gamma-1)^k}_{M^I}<\varepsilon ({r_0^{1/q}})^k.$$
	We have  $$\norm{H_n(\gamma-1)}_{M^I} \leq \sup (\norm{\eta(\beta(\gamma),T)(\gamma-1)}_{M^I}, \norm{\eta(\beta(\gamma),T)-1}_{M^I})<r_0^{1/q}$$
	and $$\norm{\eta(\alpha(\gamma),T)-1}_{M^I}\leq \abs{\eta(\alpha(\gamma),T)-1}_I <r_0^{1/q}$$ using that $\abs{\eta(a,T)-1}_I<r_0^{1/q}$ and $\abs{\eta(a,T)}_I=1$ for any $a \in o_L,$ the assumption \ref{ass:a1} together with $\varepsilon<1.$ This allows us to reduce the claim to the case $k=1$ by writing for $x = \gamma-1$
	\begin{align}
		\label{eq:estimatetrick}
		\nonumber &\mbox{\hspace{16pt}}H_n(x)^k-\mathfrak{S}_n(x)^k \\
		&= H_n(x)(H_n(x)^{k-1}-\mathfrak{S}_n(x)^{k-1})+ \mathfrak{S}_n(x)^{k-1}(H_n(x)-\mathfrak{S}_n(x)).
	\end{align}
	If $k=1$ we have 
	\begin{align}
		&H_n(\gamma-1)-\mathfrak{S}_n(\gamma-1) \\
		&= \eta(\beta(\gamma),T)(\gamma-1)+ (\eta(\alpha(\gamma),T)-1)-(\eta(\beta(\gamma),T)-1)\\
		&= \eta(\beta(\gamma),T)(\gamma-1)+ ([u(\gamma)]-1)(\eta(\alpha(\gamma),T-1).
	\end{align}
	Where $u(\gamma) = \beta(\gamma)/\alpha(\gamma),$ which due \ref{ass:a3} belongs to $1+\pi_L^lo_L$ using that $\eta(\beta(\gamma),T) = \eta(\alpha(\gamma)u(\gamma),T) = [u(\gamma)](\eta(\alpha(\gamma),T))$ and the fact that $1$ is fixed by the $\Gamma_L$-action.
	The assumptions \ref{ass:a1} and \ref{ass:a2} ensure that both terms can be estimated by $\varepsilon r_0^{1/q}.$
	Let $\mathbf{b} = (\gamma_1-1,\dots,\gamma_d-1).$ We next prove
	$$\norm{H_n(\mathbf{b})^{\mathbf{k}}-\mathfrak{S}_n(\mathbf{b})^\mathbf{k}}<\varepsilon (r_0^{1/q})^{\abs{\mathbf{k}}}$$
	for any multi-index $\mathbf{k} \in \NN_0^d$ by induction on the number $h$ of non-zero components of $\mathbf{k}.$ We already treated the case $h=1$ and may therefore split $\mathbf{k} = (k_1,0,\dots,0)+ (0,k_2,k_3,\dots,k_d)$ and assume that the corresponding estimate holds for $\mathbf{i} =   (k_1,0,\dots,0)$ and $\mathbf{j} = (0,k_2,k_3,\dots,k_d).$ Using the same trick as in \eqref{eq:estimatetrick} we rewrite 
	\begin{align*}
		&H_n(\mathbf{b})^\mathbf{k}-\mathfrak{S}_n(\mathbf{b})^\mathbf{k} \\
		&= H_n(\mathbf{b})^{\mathbf{j}}(H_n(\mathbf{b})^\mathbf{i}-\mathfrak{S}_n(\mathbf{b})^\mathbf{i})+ \mathfrak{S}_n(\mathbf{b})^\mathbf{i}(H_n(\mathbf{b})^\mathbf{j}-\mathfrak{S}_n(\mathbf{b})^\mathbf{j})
	\end{align*}
	and use the estimates $\norm{H_n(\mathbf{b})^{\mathbf{l}}}<(r_0^{1/q})^{\abs{\mathbf{l}}}$ (resp.$\norm{\mathfrak{S}_n(\mathbf{b})^{\mathbf{l}}}<(r_0^{1/q})^{\abs{\mathbf{l}}}$) that can be obtained in the same way as in the case $h=1.$ 
	Putting everything together we obtain the final estimate 
	\begin{align}
		\norm{H_n(X_n)-\mathfrak{S}_n(X_n)} &\leq \sup_{\mathbf{k}} \abs{a_{\mathbf{k}}} \norm{H_n(\mathbf{b})^{\mathbf{k}}-\mathfrak{S}_n(\mathbf{b})^\mathbf{k}}\\
		&< \varepsilon \sup_{\mathbf{k}} \abs{a_{\mathbf{k}}} (r_0^{1/q})^{\abs{\mathbf{k}}}\\
		\label{eq:estimateHnT}
		&< \varepsilon  \sup_{\mathbf{k}}  \abs{a_{\mathbf{k}}}\mathfrak{r}_m^{\abs{\mathbf{k}}} =\varepsilon C<r_0.
	\end{align}
	Using \ref{lem:banachinvertierbar} we conclude that $H_n(Z_n)$ is invertible and its inverse given by $$H_n(Z_n)^{-1}=T^{-1}((T^{-1}H_n(Z_n)-1)+1)^{-1} =T^{-1}\sum_{k\geq 0}(1-T^{-1}H_n(Z_n))^k$$ has operator norm $$\norm{H(Z_n)^{-1}}_{M^I}\leq \norm{T^{-1}}_{M^I}$$ and satisfies $$\norm{H_n(Z_n)^{-1}-T^{-1}}_{M^I}<\norm{T^{-1}}_{M^I},$$ which follows from the estimate \eqref{eq:estimateHnT}, which asserts that the expression in the geometric series has operator norm less than $1.$ From \eqref{eq:estimateHnT} and the strict triangle inequality we further conclude $$ \norm{H_n(Z_n)}_{M^I}= \norm {H_n(Z_n)-T+T}_{M^I} \leq \norm{T}_{M^I},$$ which means that given $f(T) \in \cR_A^I$ the operator $f(H_n(Z_n))$ converges to an operator on $M^I$ of operator norm bounded by $ \abs{f}_{I}.$ In particular we obtained the desired continuous homomorphism $$\cR_A^I(\Gamma_n) \to \cEnd_A(M^I)$$ given by mapping $Z_n$ to $H_n(Z_n).$ 
\end{proof}
\begin{rem} 
	\label{rem:sharpendestimates}
	Let $n\geq n_2$ and let $f \in \cR_A^I.$ 
	We have $$\norm{f(H_n(Z_n))-f(T)}_{M^I}<\norm{f(T)}_{I}.$$
\end{rem}
\begin{proof}
	We show that $$\norm{H_n(Z_n)^{\pm k}-T^{\pm k}}_{M^I}< \norm{T^{\pm k}}_{I}$$ holds for every $k \in \NN.$ The case $k=0$ is trivial and the case $k=1$ has been treated in the proof of \ref{lem:robbaextension}. We proceed inductively by expressing 
	\begin{align*}&H_n(Z)^{{\pm1}^k}-T^{{\pm1}^k} \\
		&= H_n(Z)^{{\pm1}(k-1)}(H_n(Z)^{\pm1}-T^{\pm1})+ T^{\pm1}(H_n(Z)^{{\pm1}(k-1)}-T^{{\pm1}(k-1)})\end{align*}
	and using the estimates $\norm{H_n(Z_n)^{\pm j}}_{M^I} \leq \norm{T^{\pm}}_{M^I}^j \leq \abs{T^{\pm}}_{I}^j = \abs{T^{\pm j}}_I$ for $j \in \{1,k-1\}$ that were obtained implicitly in the proof of $\ref{lem:robbaextension}.$ Note that $\abs{T^{\pm j}}_I = \abs{T^\pm}_I^j$ by definition of the $I$-norm.
\end{proof}

\begin{lem}
	\label{lem:projHn}
	$M^I$ is finite projective with respect to the $\cR_A^I(\Gamma_n)$-module structure induced by $H_n$ of the same rank as $M^I$ over $\cR_A^I.$  Any system of generators of $M^I$ as a $\cR_A^I$-module also generates $M^I$ as a $\cR_A^I(\Gamma_n)$-module (via $H_n$).
\end{lem}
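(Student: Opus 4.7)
The plan is to first sharpen Remark \ref{rem:sharpendestimates} into a uniform contractivity bound: inspecting the proofs of \ref{lem:robbaextension} and \ref{rem:sharpendestimates}, the quantitative estimate
\[
\norm{f(H_n(Z_n)) - f(T)}_{\cEnd_A(M^I)} \leq \delta\, \abs{f}_I \qquad \text{for all } f \in \cR_A^I
\]
holds with $\delta := \varepsilon C / \abs{T}_I \in (0,1)$, where $\varepsilon$ and $C$ are the constants appearing in \eqref{eq:estimateHnT}. This upgrades the strict qualitative inequality of \ref{rem:sharpendestimates} into a genuine contraction with $\delta < 1$, which drives everything below.

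The generator statement is then proved by successive approximation. Fix $\cR_A^I$-generators $m_1,\ldots,m_d$ of $M^I$ and endow $M^I$ with the quotient Banach norm from $\Psi\colon (\cR_A^I)^d \twoheadrightarrow M^I$, $e_i \mapsto m_i$, so that $\norm{e_i} = 1$. Choose $\varepsilon' > 0$ with $\delta(1+\varepsilon') < 1$. Given $m \in M^I$, lift to $(f_i^{(0)}) \in (\cR_A^I)^d$ with $m = \sum_i f_i^{(0)}(T)\, m_i$ and $\max_i \abs{f_i^{(0)}}_I \leq (1+\varepsilon')\norm{m}$; the residual
\[
r_1 := m - \sum_i H_n(f_i^{(0)})(m_i) = \sum_i \bigl(f_i^{(0)}(T) - f_i^{(0)}(H_n(Z_n))\bigr)(m_i)
\]
then satisfies $\norm{r_1} \leq \delta(1+\varepsilon')\norm{m}$ by the contractivity bound. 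Iterating produces residuals $r_k$ decaying geometrically and coefficients $f_i^{(k)} \in \cR_A^I$ whose $\abs{\cdot}_I$-norms also decay geometrically, so that $g_i := \sum_{k \geq 0} f_i^{(k)}$ converges in $\cR_A^I \cong \cR_A^I(\Gamma_n)$; continuity of the $H_n$-extension (\ref{lem:robbaextension}) gives $\sum_i H_n(g_i)(m_i) = m$. This establishes the generator assertion and the surjectivity of the $\cR_A^I(\Gamma_n)$-linear map $\Phi\colon (\cR_A^I(\Gamma_n))^d \twoheadrightarrow M^I$, $(g_i) \mapsto \sum_i H_n(g_i)(m_i)$.

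For finite projectivity of rank $r := \operatorname{rank}_{\cR_A^I}(M^I)$, Noetherianness of the affinoid algebra $\cR_A^I(\Gamma_n) \cong \cR_A^I$ combined with $\Phi$ immediately yields finite presentation, and finite projectivity of a finitely presented module over a Noetherian ring is equivalent to freeness of every localization at a maximal ideal. The main obstacle is precisely the rank matching: a priori the perturbation from the usual $\cR_A^I$-action to the $H_n$-action might increase the minimal number of local generators or introduce local torsion. I plan to address this by localizing the preceding iteration at the maximal ideal $\mathfrak{m} \subset \cR_A^I$ corresponding, under the identification $T \leftrightarrow Z_n$, to a given $\mathfrak{n} \in \operatorname{Max}(\cR_A^I(\Gamma_n))$: starting from an $(\cR_A^I)_\mathfrak{m}$-basis of the free module $M^I_\mathfrak{m}$ of length $r$, and noting that the contractivity bound is preserved upon completing the local ring, the same successive-approximation procedure produces a surjection $(\cR_A^I(\Gamma_n))_\mathfrak{n}^r \twoheadrightarrow M^I_\mathfrak{n}$. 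A Nakayama/Cramer-style argument, combined with the observation that $H_n(Z_n)$ and $T$ agree modulo $\mathfrak{n}$ up to the uniformly small perturbation so that the residue-field fibres coincide, upgrades this surjection to an isomorphism and yields freeness of rank exactly $r$, completing the proof.
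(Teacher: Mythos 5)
Your treatment of the generator statement is sound: sharpening Remark \ref{rem:sharpendestimates} to a uniform contraction $\norm{f(H_n(Z_n))-f(T)}_{M^I}\leq \delta\abs{f}_I$ with $\delta<1$ is legitimate (the inductive estimates in \ref{lem:robbaextension} and \ref{rem:sharpendestimates} do yield such a $\delta$, though for negative powers of $T$ the correct normalising constant is $\varepsilon C/r$ with $r$ the lower endpoint of $I$, not $\varepsilon C/\abs{T}_I$), and the successive-approximation argument then correctly produces a surjection $(\cR_A^I(\Gamma_n))^d\twoheadrightarrow M^I$. This is an honest alternative to the first half of the paper's argument, which achieves the same end by showing that a single comparison operator is an automorphism via the geometric series of Lemma \ref{lem:banachinvertierbar}.

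The projectivity and rank part, however, has a genuine gap. The localization strategy cannot be carried out as described: the contraction estimate lives in the Banach topology of $M^I$, and neither the localization $(\cR_A^I)_{\mathfrak m}$ nor its $\mathfrak m$-adic completion carries a norm for which $H_n(Z_n)-T$ remains a small operator, so "the same successive-approximation procedure" is not available there. Moreover, even granting a surjection $(\cR_A^I(\Gamma_n))_{\mathfrak n}^r\twoheadrightarrow M^I_{\mathfrak n}$ together with $\dim_{\kappa(\mathfrak n)}M^I\otimes\kappa(\mathfrak n)=r$, Nakayama does not force this surjection to be injective: writing $0\to K\to R_{\mathfrak n}^r\to M^I_{\mathfrak n}\to 0$ and tensoring with $\kappa(\mathfrak n)$, the vanishing of $K\otimes\kappa(\mathfrak n)$ is obstructed by $\operatorname{Tor}_1(M^I_{\mathfrak n},\kappa(\mathfrak n))$, whose vanishing is essentially the flatness you are trying to prove. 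The paper avoids all of this with one additional device you are missing: choose $N^I$ with $M^I\oplus N^I=(\cR_A^I)^d$ and equip $N^I$ with the \emph{tautological} $\cR_A^I(\Gamma_n)$-structure in which $Z_n$ acts as $T$; then $H_n(Z_n)-T$ acts as zero on $N^I$, the contraction estimate holds on all of $M^I\oplus N^I$, and your own argument (or the paper's operator-norm perturbation) shows that $(\cR_A^I(\Gamma_n))^d\to M^I\oplus N^I$ is an isomorphism respecting the decomposition. This exhibits $M^I$ directly as a direct summand of a free $\cR_A^I(\Gamma_n)$-module, and the rank statement follows from additivity of ranks, with no localization needed. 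You should replace your third paragraph by this argument.
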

\begin{proof}
	Choose $N^I$ (and a Banach norm on $N^I$) such that $M^I \oplus N^I = (\cR_A^I)^d= \bigoplus_{i=1}^d \cR_A^Ie_i$ endowed with the $\sup$-norm of the norms on $M^I$ and $N^I$. We endow $N^I$ with a tautological $\cR_A^I(\Gamma_n)$-module structure by letting the variable $Z=Z_n \in D(\Gamma_n,K)$ act as multiplication by $T \in \cR_A^I.$ Then the estimate from Remark \ref{rem:sharpendestimates} remains valid for $N^I,$ since $H_n(Z)-T$ acts as zero on $N^I$ by construction. Fix a basis of $M^I \oplus N^I$ and define
	\begin{align}
		\Phi: M^I\oplus N^I &\to (\cR_A^I)^d\\
		\sum f_i(T)v_i &\mapsto f_i(T)e_i 
	\end{align}
	and 
	\begin{align*}\Psi: (\cR^I_A)^d &\to (\cR^I_A(\Gamma_n))^d \to M^I \oplus N^I \\
		\sum f_i(T)e_i &\mapsto \sum f_i(Z_n)e_i \mapsto \sum f_i(H_n(Z))(v_i).
	\end{align*} 
	By construction $\Phi$ is a topological isomorphism and $\Psi \circ \Phi$ is an endomorphism of $M^I \oplus N^I$ leaving both $M^I$ and $N^I$ invariant. We claim
	$$\lvert \lvert  \Psi\circ \Phi-1\rvert \rvert_{M^I\oplus N^I}<1.$$ This implies that $\Psi \circ \Phi$ is an automorphism, but then $\Psi$ has to be an isomorphism. In particular the map $\cR_A^I(\Gamma_n)^d  \to M^I \oplus N^I$ has to be an isomorphism, which shows the projectivity and the second part of the statement.
	For the estimation observe that $\Psi\circ\Phi-1$ is $0$ on $N^I,$ hence we only need to concern ourselves with $M^I,$ where the estimate follows from \ref{rem:sharpendestimates}. Regarding the rank we compute
	\begin{align*}
		\operatorname{rank}_{\cR_A^I(\Gamma_n)}(M^I) &= \operatorname{rank}_{\cR_A^I(\Gamma_n)}(M^I\oplus N^I)-\operatorname{rank}_{\cR_A^I(\Gamma_n)}(N^I)\\
		&=\operatorname{rank}_{\cR_A^I}(M^I\oplus N^I)-\operatorname{rank}_{\cR_A^I}( N^I)\\
		&=\operatorname{rank}_{\cR_A^I}( M^I),
	\end{align*}
	using additivity of ranks in the first and third equation. The second equality follows by construction since on the one hand $M^I \oplus N^I$ is isomorphic to $\cR_A^I(\Gamma_n)^d$ and on the other hand $N^I$ is viewed as a $\cR_A^I(\Gamma_n)$-module via transport of structure along the isomorphism $\cR_A^I(\Gamma_n)\cong \cR_A^I.$ The statement about the generators follows from the fact that $\Psi\circ \Phi$ respects the decomposition $M^I \oplus N^I$ in the sense that $M^I$ (resp. $N^I$) is mapped into itself. Hence if $M^I$ admits a system of generators that lifts to a basis $v_i$ of $M^I \oplus N^I$ the statement becomes clear since we have shown that these form a basis of the $\cR_A^I(\Gamma_n)$-module $M^I \oplus N^I$ (with action via $H_n$). Having chosen some system of generators $(m_1,\dots,m_d)$ of $M^I$ we can always find a suitable $N^I$ such that the $m_i$ are projections of a basis of $(\cR_A^I)^d$ by taking the surjection $(\cR_A^I)^d \to M^I$ given by mapping $e_i \to m_i$ and splitting it using the projectivity of $M^I.$
\end{proof}
\subsection{Interpreting the results}
We now explain how these results translate to the original module. We found it convenient to introduce the following abstract notation.
\begin{defn}
	Let $I \in \{[r,s], [r,1)\}$ with $r \geq r_0$ and let $n \in \NN.$ We say that $M$ satisfies \textbf{property $\mathcal{P}(n,I)$ with respect to a system of generators $m_1,\dots,m_d \in M^I$} if the $A[\Gamma_n]$-action on  $\eta(1,T)\varphi^n(M^I)$ extends to an action of $\cR_A^I(\Gamma_n)$ with respect to which $M^I$ is projective and finitely generated by the elements $\eta(1,T)\varphi^n(m_i), i=1,\dots,d.$
\end{defn}
This notion depends on the choice of system of generators. Since we assumed that $M$ admits a model over $[r_0,1)$ for some (fixed) $r_0 \in (0,1)$ we may fix a system of generators $m_1,\dots,m_d$ of $M^{[r_0,1)}.$ For any $I \subset  [r_0,1)$ we take the images of $m_i$ as a choice of system of generators for $M^I.$ To keep notation simple in the following we refer to $\mathcal{P}(n,I)$ with respect to this choice of generating system.
\begin{prop}
	\label{prop:abstractproperties}
	Let $I \subset [r_0,1)$ be an interval, $l,n \in \NN$ and let $(I_k)_k$ be an admissible covering of $[r_0,1).$ Then 
	\begin{enumerate}
		\item[1.)] $\mathcal{P}(n+l,I)$ implies $\mathcal{P}(n,I^{1/q^l}).$
		\item[2.)] If $M$ satisfies $\mathcal{P}(n,I_k)$ for every $k$ then $M$ satisfies $\mathcal{P}(n,[r_0,1)).$
		
	\end{enumerate}
\end{prop}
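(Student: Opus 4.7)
The plan is to treat the two parts with different ingredients. For (1.), we exploit two compatible decompositions. On the ring side, the isomorphism
$$\cR_A^{I^{1/q^l}}(\Gamma_n) \;\cong\; \cR_A^I(\Gamma_{n+l}) \otimes_{\ZZ[\Gamma_{n+l}]} \ZZ[\Gamma_n]$$
is inherited from the identity for $\cR_K$ recorded in the discussion preceding Definition \ref{def:robbagroup} (via the completed base change along $K \hat{\otimes}_L A$). On the module side, applying Lemma \ref{lem:zer1} with its $n$ equal to $l$, multiplying through by $\eta(1,T)\varphi^n$, and using the multiplicativity $\eta(a+b,T)=\eta(a,T)\eta(b,T)$ together with $\varphi^n(\eta(a,T))=\eta(\pi_L^n a,T)$, yields
$$\eta(1,T)\varphi^n(M^{I^{1/q^l}}) \;\cong\; \bigoplus_{a\in o_L/\pi_L^l o_L} \eta(1+\pi_L^n a,T)\varphi^{n+l}(M^I) \;\cong\; \ZZ[\Gamma_n]\otimes_{\ZZ[\Gamma_{n+l}]} \eta(1,T)\varphi^{n+l}(M^I),$$
the second step being $\Gamma_n$-equivariant once one checks that $\Gamma_{n+l}$ stabilises each summand while $\Gamma_n/\Gamma_{n+l}$ permutes them freely through its identification with $o_L/\pi_L^l o_L$ via $\chi_{LT}$. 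Granted $\mathcal{P}(n+l,I)$, the $\cR_A^I(\Gamma_{n+l})$-structure on $\eta(1,T)\varphi^{n+l}(M^I)$ then induces, by base change along the ring isomorphism above, a finitely generated projective $\cR_A^{I^{1/q^l}}(\Gamma_n)$-structure on $\eta(1,T)\varphi^n(M^{I^{1/q^l}})$ extending the $A[\Gamma_n]$-action; the generating system $\{\eta(1,T)\varphi^{n+l}(m_i)\}=\{\eta(1,T)\varphi^n(\varphi^l(m_i))\}$ supplied by induction can be swapped for the prescribed $\{\eta(1,T)\varphi^n(m_i)\}$ because $\{\varphi^l(m_i)\}$ and $\{m_i\}$ both generate $M^{I^{1/q^l}}$ as an $\cR_A^{I^{1/q^l}}$-module (the former by $M^{I^{1/q^l}}\cong\cR_A^{I^{1/q^l}}\otimes_{\cR_A^I,\varphi^l}M^I$, the latter by the standing convention), and $\cR_A^{I^{1/q^l}}\subset\cR_A^{I^{1/q^l}}(\Gamma_n)$ absorbs the resulting change of basis.

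For (2.), we invoke the Fréchet--Stein machinery. By Remark \ref{rem:RobbagroupFrechetStein}, $\cR_A^{[r_0,1)}(\Gamma_n)$ is Fréchet--Stein, and after refining to a defining cover we may assume the $\cR_A^{I_k}(\Gamma_n)$ serve as its local sections. Property $\mathcal{P}(n,I_k)$ provides, for each $k$, a finitely generated projective $\cR_A^{I_k}(\Gamma_n)$-module structure on $\eta(1,T)\varphi^n(M^{I_k})$ with the common $d$-element generating set $\{\eta(1,T)\varphi^n(m_i)\}$; compatibility across intersections $I_k\cap I_{k'}$ is automatic from the uniqueness of the continuous extension constructed in Lemma \ref{lem:robbaextension}. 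Gluing via Remark \ref{rem:glueing} assembles these into a coadmissible $\cR_A^{[r_0,1)}(\Gamma_n)$-module, and the uniform bound $d$ on the number of generators together with the pointwise flatness coming from projectivity lets us invoke Lemma \ref{lem:coadmissibleuniform}(iii) to conclude that the global sections form a finitely generated projective $\cR_A^{[r_0,1)}(\Gamma_n)$-module generated by the $\eta(1,T)\varphi^n(m_i)$, which is precisely $\mathcal{P}(n,[r_0,1))$.

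The main obstacle lies in (1.), specifically in verifying that the $\cR_A^{I^{1/q^l}}(\Gamma_n)$-action produced by induction of scalars coincides with the intrinsic $H_n$-extension of Lemma \ref{lem:robbaextension} on $\eta(1,T)\varphi^n(M^{I^{1/q^l}})$. This amounts to matching the actions of $Z_n$ and $Z_{n+l}$ via the compatibility $Z_{n+l}=\varphi^l(Z_n)$ recorded in Definition \ref{def:variable}, together with the relation $H_{n+l}(\varphi^l(Z_n))=\varphi^l(H_n(Z_n))$ on the overlap where both operators are defined, and is ultimately forced by the uniqueness of such a continuous extension. Part (2.) is by comparison purely formal, once the Fréchet--Stein structure on the group Robba ring is in place.
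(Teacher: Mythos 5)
Your argument follows the paper's own route on both points: part (1.) is the induction-of-scalars argument matching the decomposition $\eta(1,T)\varphi^n(M^{I^{1/q^l}}) \cong \ZZ[\Gamma_n]\otimes_{\ZZ[\Gamma_{n+l}]}\eta(1,T)\varphi^{n+l}(M^I)$ (obtained from Lemma \ref{lem:zer1} exactly as in the proof of Proposition \ref{prop:kerpsizerl}) against the corresponding decomposition of $\cR_A^{I^{1/q^l}}(\Gamma_n)$, and part (2.) is the coadmissibility argument via Remark \ref{rem:RobbagroupFrechetStein} and Lemma \ref{lem:coadmissibleuniform}. You supply considerably more detail than the paper, which disposes of (1.) in one sentence; your part (2.) is fine, and your closing observation that the induced action agrees with the intrinsic one by uniqueness of the continuous extension of the $A[\Gamma_n]$-action is the right way to settle that compatibility (the displayed identity $H_{n+l}(\varphi^l(Z_n))=\varphi^l(H_n(Z_n))$ is not literally meaningful, as the two sides act on different spaces, but uniqueness renders it unnecessary).

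The one step that does not hold up as written is the generator swap in (1.). Writing $\varphi^l(m_i)=\sum_j g_{ij}(T)m_j$ with $g_{ij}\in\cR_A^{I^{1/q^l}}$ and applying $\eta(1,T)\varphi^n$ expresses $\eta(1,T)\varphi^{n+l}(m_i)$ in terms of the $\eta(1,T)\varphi^n(m_j)$ with coefficients $g_{ij}(\varphi^n(T))$ acting by \emph{ring multiplication}, i.e.\ via the transported scalar action of $T$; but $\cR_A^{I^{1/q^l}}(\Gamma_n)$ acts on $\eta(1,T)\varphi^n(M^{I^{1/q^l}})$ through $Z_n$, i.e.\ through $H_n(Z_n)$. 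These two operators are close but not equal --- that is precisely the content of Lemma \ref{lem:robbaextension} and Remark \ref{rem:sharpendestimates}, and those estimates are moreover only established for the two base intervals, not for $I^{1/q^l}$. So the multiplication-by-$T$ operator is not known to lie in the image of $\cR_A^{I^{1/q^l}}(\Gamma_n)$, and "the subring absorbs the change of basis" is not available. Repairing this requires either rerunning the perturbation argument of Lemma \ref{lem:projHn} to show that the $\cR_A^{I^{1/q^l}}(\Gamma_n)$-span is insensitive to replacing a scalar change of generators by its $Z_n$-approximation, or formulating $\mathcal{P}$ with respect to the $\varphi$-twisted generating system. To be fair, the paper's one-line proof of (1.) is silent on exactly this point, so you have correctly isolated the genuine difficulty; it is only your proposed resolution of it that is too quick.
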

\begin{proof}
	The first statement follows from the decomposition $$\eta(1,T)\varphi^n (M^{I^{1/q^l}})\ =\eta(1,T)\varphi^{n+l}(M^I) \otimes_{\ZZ[\Gamma_{n+l}]}\ZZ[\Gamma_n].$$
	For the second statement our assumptions guarantee that each $\eta(1,T)\varphi^n M^{I_k}$ is flat and finitely generated by at most $d$ elements. If $n \geq n_0$ such that $\cR_A^{[r_0,1)}(\Gamma_n)\cong \cR_A^{[r_0,1)}$ the statement follows from Lemma \ref{lem:coadmissibleuniform}. In the case $n<n_0$ one can adapt Lemma \ref{lem:coadmissibleuniform} since any covering of $[r_0,1)$ provides a system of algebras defining the Fréchet-Stein structure on $\cR_A^{[r_0,1)}(\Gamma_n)$ as explained in Remark \ref{rem:RobbagroupFrechetStein}.
\end{proof}
Our results so far translate as follows.
\begin{lem}
	\label{lem:coreargumentkerpsi}
	Let $I = [r_0,r_0]$ or $I=[r_0,r_0^{1/q}].$ Then there exists $n_1 \in \NN$ such that for any $n \geq n_1$ the property $\mathcal{P}(n,I)$ is satisfied.
\end{lem}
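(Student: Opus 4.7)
The plan is to simply repackage the work already done in Lemmas \ref{lem:robbaextension} and \ref{lem:projHn} by transport of structure along the natural $A$-linear bijection $\iota_n \colon M^I \to \eta(1,T)\varphi^n(M^I)$, $m \mapsto \eta(1,T)\varphi^n(m)$. This map is injective because $\varphi^n$ is injective and $\eta(1,T)$ is a unit in $\cR_A^I$ (it has constant term $1$ and $\abs{\eta(1,T)-1}_I<1$), and surjective by construction, so it is an $A$-linear bijection.

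First I would choose $n_1 := n_2$ with $n_2$ as in Lemma \ref{lem:robbaextension}. For $n \geq n_1$ and $\gamma \in \Gamma_n$ the identity computed directly before the definition of $H_n$,
\[ \gamma(\eta(1,T)\varphi^n(m)) = \eta(1,T)\varphi^n\bigl(\eta(\tfrac{\chi_{LT}(\gamma)-1}{\pi_L^n},T)\,\gamma m\bigr) = \eta(1,T)\varphi^n(H_n(\gamma)m), \]
shows that $\iota_n$ intertwines the $A[\Gamma_n]$-action on $M^I$ twisted via $H_n$ with the natural restricted $\Gamma_n$-action on the direct summand $\eta(1,T)\varphi^n(M^I) \subset M^{I^{1/q^n}}$ from Lemma \ref{lem:zer1}.

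Next, Lemma \ref{lem:robbaextension} provides a continuous $A$-algebra homomorphism $\cR_A^I(\Gamma_n) \to \cEnd_A(M^I)$ extending the $H_n$-action of $A[\Gamma_n]$. Transporting this along $\iota_n$ yields an extension of the natural $A[\Gamma_n]$-action on $\eta(1,T)\varphi^n(M^I)$ to a $\cR_A^I(\Gamma_n)$-action, which is the first requirement of $\mathcal{P}(n,I)$. Then Lemma \ref{lem:projHn} asserts that, under this $\cR_A^I(\Gamma_n)$-module structure via $H_n$, the module $M^I$ is finitely generated projective of the same rank as $M^I$ over $\cR_A^I$, and any generating system of $M^I$ as an $\cR_A^I$-module generates it over $\cR_A^I(\Gamma_n)$. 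Since $M^I = \cR_A^I \otimes_{\cR_A^{[r_0,1)}} M^{[r_0,1)}$ is generated by the (images of the) fixed $m_1,\dots,m_d$, transport along $\iota_n$ shows that $\eta(1,T)\varphi^n(M^I)$ is finite projective as $\cR_A^I(\Gamma_n)$-module, generated by $\eta(1,T)\varphi^n(m_i)$, $i=1,\dots,d$. This is precisely $\mathcal{P}(n,I)$.

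There is no real obstacle here beyond bookkeeping: the genuine work (the invertibility estimate $\norm{H_n(Z_n)-T}_{M^I}<r_0$, the construction of the ring homomorphism, and the splitting argument producing projectivity) has already been carried out in Lemmas \ref{lem:robbaextension} and \ref{lem:projHn}, and the restriction to the two specific intervals $I=[r_0,r_0]$ and $I=[r_0,r_0^{1/q}]$ matches exactly the hypothesis fixed at the beginning of the subsection. The only point deserving a brief remark is that the intertwining identity above depends on $\eta(1,T)$ being $\Gamma_L$-invariant modulo the twist described by $\beta(\gamma)$, which is built into the definition of $H_n$.
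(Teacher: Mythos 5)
Your proof is correct and is essentially the paper's argument written out in full: the paper's proof of this lemma is the single sentence ``This translates to the assertion of \ref{lem:projHn},'' and your transport of structure along $m \mapsto \eta(1,T)\varphi^n(m)$, using the intertwining identity $\gamma(\eta(1,T)\varphi^n(m)) = \eta(1,T)\varphi^n(H_n(\gamma)m)$ together with Lemmas \ref{lem:robbaextension} and \ref{lem:projHn}, is exactly the translation the paper has in mind.
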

\begin{proof}
	This translates to the assertion of \ref{lem:projHn}.
\end{proof}
\begin{lem}
	\label{lem:gamma1action}
	There exists $r_2 \in [r_0,1)$ such that $M$ satisfies $\mathcal{P}(1,[r,1))$ for any $r\geq r_2.$ 
\end{lem}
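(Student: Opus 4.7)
The plan is to bootstrap from Lemma \ref{lem:coreargumentkerpsi}, which supplies $\mathcal{P}(n, [r_0, r_0^{1/q}])$ for \emph{all} $n \geq n_1$, by combining (1) the Frobenius--lowering transfer of Proposition \ref{prop:abstractproperties}(1) and (2) the gluing principle of Proposition \ref{prop:abstractproperties}(2). The key observation is that the ``for all $n \geq n_1$'' aspect of Lemma \ref{lem:coreargumentkerpsi} lets me produce \emph{infinitely many} intervals at Frobenius level $1$ from a single input interval, and these intervals chain together to cover a half-open annulus.

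Concretely, I would fix $n_1 \geq 1$ as produced by Lemma \ref{lem:coreargumentkerpsi}, set $r_2 := r_0^{1/q^{n_1-1}}$, and define the sequence $t_j := r_0^{1/q^{n_1-1+j}}$ for $j \geq 0$, so that $t_0 = r_2$ and $t_j \nearrow 1$. Given $j \geq 0$, Lemma \ref{lem:coreargumentkerpsi} gives $\mathcal{P}(n_1+j, [r_0, r_0^{1/q}])$; feeding this into Proposition \ref{prop:abstractproperties}(1) with $n = 1$ and $l = n_1+j-1$ yields
\[
\mathcal{P}\bigl(1,\, [r_0^{1/q^{n_1+j-1}},\, r_0^{1/q^{n_1+j}}]\bigr) \;=\; \mathcal{P}(1, [t_j, t_{j+1}]).
\]
The family $\{[t_j, t_{j+1}]\}_{j \geq 0}$ is a locally finite (hence admissible) cover of $[r_2, 1)$, so Proposition \ref{prop:abstractproperties}(2) assembles these pieces into $\mathcal{P}(1, [r_2, 1))$.

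For arbitrary $r \in [r_2, 1)$, I would refine the above cover to an admissible cover of $[r, 1)$ of the form $\{[\max(r, t_j), t_{j+1}]\}_{j \geq j_0}$ where $j_0$ is the smallest index with $t_{j_0+1} > r$. Each such section $\eta(1,T)\varphi(M^{[\max(r,t_j),\, t_{j+1}]})$ arises from $\eta(1,T)\varphi(M^{[t_j, t_{j+1}]})$ by base change along the flat homomorphism $\cR_A^{[t_j, t_{j+1}]}(\Gamma_1) \to \cR_A^{[\max(r, t_j),\, t_{j+1}]}(\Gamma_1)$, which preserves finite projectivity and uniform generation by $d$ elements. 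A final application of Proposition \ref{prop:abstractproperties}(2) then delivers $\mathcal{P}(1, [r, 1))$.

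I do not expect a serious obstacle here, since all the hard estimates have been done to establish Lemma \ref{lem:coreargumentkerpsi} and the ring-theoretic machinery to establish Proposition \ref{prop:abstractproperties}; the only subtlety is to notice that Lemma \ref{lem:coreargumentkerpsi} gives more than a single $\mathcal{P}(n_1, [r_0, r_0^{1/q}])$, namely the whole tower $\mathcal{P}(n_1 + j, [r_0, r_0^{1/q}])$ indexed by $j$, which is exactly the data needed to populate an admissible cover of $[r_2, 1)$ at level $n = 1$.
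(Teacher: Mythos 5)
Your proof is correct and is essentially the paper's argument: both rest on the fact that Lemma \ref{lem:coreargumentkerpsi} gives $\mathcal{P}(n,I)$ for \emph{all} $n\geq n_1$, combined with parts (1) and (2) of Proposition \ref{prop:abstractproperties}; you merely lower each interval to level $1$ before gluing over $[r_2,1)$, whereas the paper glues at level $n_1$ over $[r_0,1)$ and then applies part (1) once at the end, arriving at the same $r_2=r_0^{1/q^{n_1-1}}$. Your final restriction step for arbitrary $r\geq r_2$ is a routine flat base change that the paper leaves implicit, and is fine.
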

\begin{proof}
	By Lemma \ref{lem:coreargumentkerpsi} we have $\mathcal{P}(n,I)$ for any $n\geq n_1.$	Applying Proposition \ref{prop:abstractproperties} 1.) we obtain $\mathcal{P}(n_1,I^{1/q^l})$ for any $l \geq 0.$ Notice that the intervals $I^{1/q^{l}}$ with $I$ as in Lemma \ref{lem:coreargumentkerpsi} cover $[r_0,1).$ Using Proposition \ref{prop:abstractproperties} 2.) we conclude that $\mathcal{P}(n_1,[r_0,1))$ is satisfied and applying Proposition \ref{prop:abstractproperties} 1.) yet again we conclude that $\mathcal{P}(1,[r_2,1))$ holds with $r_2 = r_0^{1/q^{n_1-1}}.$
\end{proof}
\begin{thm}
	\label{thm:kerpsiprojective}
	Let $M$ be an $L$-analytic $(\varphi_L,\Gamma_L)$-module over $\cR_A$ admitting a model over $[r_0,1),$ then there exists $r_1 \in [r_0,1)$ such that for any $r\geq r_1$ the $\Gamma_L$-action on $(M^{[r,1)})^{\psi=0}$ extends to an action of $\cR_A^{[r,1)}(\Gamma_L)$ with respect to which $(M^{[r,1)})^{\psi=0}$ is finite projective of rank $\operatorname{rank}_{\cR_A}(M).$ If $m_1,\dots,m_d$ generate $M^{[r,1)}$ then the elements $\eta(1,T)\varphi(m_1),\dots, \eta(1,T)\varphi(m_d)$ generate $(M^{[r,1)})^{\psi=0}$ as a $\cR_A^{[r,1)}(\Gamma_L)$-module. 
\end{thm}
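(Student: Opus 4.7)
The theorem is obtained by assembling the already-established Lemma~\ref{lem:gamma1action} (which settles the case of $\Gamma_1$) with the decomposition from Proposition~\ref{prop:kerpsizerl} and the base-change relationship between the group Robba rings $\cR_A^{[r,1)}(\Gamma_L)$ and $\cR_A^{[r^q,1)}(\Gamma_1)$. First, apply Lemma~\ref{lem:gamma1action} to obtain $r_2 \in [r_0, 1)$ such that $\mathcal{P}(1,[s,1))$ holds for every $s \geq r_2$, and set $r_1 := \max(r_0, r_2^{1/q})$. Then for any $r \geq r_1$ we have $r^q \geq r_2$, so $\mathcal{P}(1,[r^q,1))$ is satisfied: the $A[\Gamma_1]$-action on $\eta(1,T)\varphi M^{[r^q,1)}$ extends to a continuous $\cR_A^{[r^q,1)}(\Gamma_1)$-module structure under which it is finite projective of rank $d := \operatorname{rank}_{\cR_A}(M)$, generated by $\eta(1,T)\varphi(m_1),\dots,\eta(1,T)\varphi(m_d)$ for any system of generators $m_1,\dots,m_d$ of $M^{[r^q,1)}$.

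Next I apply Proposition~\ref{prop:kerpsizerl} with $n=1$ and $I=[r^q,1)$ (so that $I^{1/q}=[r,1)$) to obtain the $\Gamma_L$-equivariant isomorphism
\begin{equation*}
(M^{[r,1)})^{\psi=0} \;\cong\; \ZZ[\Gamma_L]\otimes_{\ZZ[\Gamma_1]}\eta(1,T)\varphi M^{[r^q,1)}.
\end{equation*}
On the other hand, the discussion preceding Definition~\ref{def:robbagroup} (taking $n=0$, $m=1$, then completing coefficients in the $A$-direction via Lemma~\ref{lem:affbasechange}) furnishes a canonical bimodule isomorphism
\begin{equation*}
\cR_A^{[r,1)}(\Gamma_L) \;\cong\; \cR_A^{[r^q,1)}(\Gamma_1)\otimes_{\ZZ[\Gamma_1]}\ZZ[\Gamma_L].
\end{equation*}
Substituting the latter into the former rewrites $(M^{[r,1)})^{\psi=0}$ as the base change
\begin{equation*}
\cR_A^{[r,1)}(\Gamma_L)\otimes_{\cR_A^{[r^q,1)}(\Gamma_1)}\eta(1,T)\varphi M^{[r^q,1)}
\end{equation*}
of a finite projective module of rank $d$ along a ring homomorphism, hence itself finite projective of rank $d$ over $\cR_A^{[r,1)}(\Gamma_L)$. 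The images of the $\eta(1,T)\varphi(m_i)$ then generate it.

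No genuine analytic obstacle remains at this stage; the real difficulty was already absorbed in Lemma~\ref{lem:projHn} (and consequently in Lemma~\ref{lem:gamma1action} and Proposition~\ref{prop:abstractproperties}). The only notational subtlety to watch is that the phrasing ``$m_1,\dots,m_d$ generate $M^{[r,1)}$'' in the statement should be understood as referring to a generating system coming from the fixed generators of the model $M^{[r_0,1)}$; by base change such a system simultaneously generates every $M^{[s,1)}$ with $s\geq r_0$, in particular it generates $M^{[r^q,1)}$, so that the generator assertion above applies directly.
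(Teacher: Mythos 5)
Your proposal is correct and follows essentially the same route as the paper: Lemma~\ref{lem:gamma1action} supplies $\mathcal{P}(1,[r^q,1))$ for $r\geq r_2^{1/q}$, Proposition~\ref{prop:kerpsizerl} gives the decomposition $(M^{[r,1)})^{\psi=0}\cong \ZZ[\Gamma_L]\otimes_{\ZZ[\Gamma_1]}\eta(1,T)\varphi(M^{[r^q,1)})$, and the identification $\cR_A^{[r,1)}(\Gamma_L)\cong\cR_A^{[r^q,1)}(\Gamma_1)\otimes_{\ZZ[\Gamma_1]}\ZZ[\Gamma_L]$ built into the definition of the group Robba ring turns this into a base change of a finite projective module. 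You spell out the induction step that the paper leaves implicit, which is fine; the only cosmetic slip is using $d$ simultaneously for the number of generators and the rank.
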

\begin{proof}
	Using the decomposition $(M^{[r,1)})^{\psi=0} = \ZZ[\Gamma_L] \otimes_{\ZZ[\Gamma_1]} \eta(1,T)\varphi(M^{{[r^q,1)}})$ this follows from Lemma \ref{lem:gamma1action} by taking $r_1 = r_2^{1/q}.$
\end{proof}
We have implicitly proved the following result.
\begin{thm}
	\label{thm:Zinvertible}
	Let $M$ be an $L$-analytic $(\varphi_L,\Gamma_L)$-module over $\cR_A$ admitting a model over $[r_0,1).$ Let $n \geq n_0$ then there exists $r_1 \in [r_0,1)$ such that the action of $Z_n \in D(\Gamma_n,K)$ is invertible on $(M^{[r,1)})^{\psi=0}$ for any $r\in [r_1,1).$
\end{thm}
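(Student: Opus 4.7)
The plan is to deduce the result by observing that $Z_n$ becomes a unit in the ring $\cR_A^{[r,1)}(\Gamma_L)$ acting on $(M^{[r,1)})^{\psi=0}$, once $r$ is taken large enough. Theorem \ref{thm:kerpsiprojective} already furnishes an $r^* \in [r_0,1)$ such that, for $r \geq r^*$, $(M^{[r,1)})^{\psi=0}$ is a finite projective $\cR_A^{[r,1)}(\Gamma_L)$-module; therefore it will suffice to exhibit $r_1 \geq r^*$ beyond which $Z_n$ is a unit of $\cR_A^{[r,1)}(\Gamma_L)$, since then its inverse in the ring automatically provides an inverse operator on the module.

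I would identify $Z_n$ inside this ring via the Lubin-Tate dictionary. By Definition \ref{def:robbagroup} one has $\cR_A^{[r,1)}(\Gamma_L) = \cR_A^{[r^{q^{n_0}},1)}(\Gamma_{n_0}) \otimes_{\ZZ[\Gamma_{n_0}]}\ZZ[\Gamma_L]$, and the Fourier isomorphism identifies $\cR_A^{[r^{q^{n_0}},1)}(\Gamma_{n_0})$ with the Robba ring $\cR_A^{[r^{q^{n_0}},1)}$ carrying coordinate $T = Z_{n_0}$. The relation \eqref{eq:Znbeziehung} asserts $Z_n = \varphi_L^{n-n_0}(Z_{n_0})$, so under this identification $Z_n$ corresponds to the Lubin-Tate series $[\pi_L^{n-n_0}](T)$. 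Since $\ZZ[\Gamma_L]$ is free over $\ZZ[\Gamma_{n_0}]$, the inclusion $\cR_A^{[r^{q^{n_0}},1)}(\Gamma_{n_0}) \hookrightarrow \cR_A^{[r,1)}(\Gamma_L)$ preserves units, so $Z_n$ being a unit in the $\Gamma_L$-Robba ring reduces to $[\pi_L^{n-n_0}](T)$ being a unit in $\cR_A^{[r^{q^{n_0}},1)}$ — that is, having no zeros on the half-open annulus $[r^{q^{n_0}},1)$.

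Finally I would translate the unit condition into an inequality on $r$. For $n = n_0$ the series is simply $T$ and the condition is automatic. For $n > n_0$ the non-zero zeros of $[\pi_L^{n-n_0}](T)$ are the non-trivial $\pi_L^{n-n_0}$-torsion points of the Lubin-Tate group, whose absolute values are bounded above by $r_L^{1/q^{n-n_0}}$; hence the unit condition translates to $r^{q^{n_0}} > r_L^{1/q^{n-n_0}}$, equivalently $r > r_L^{1/q^n}$. Choosing $r_1 := \max(r^*, r_L^{1/q^n})$ then completes the argument. The main subtlety I anticipate is carefully bookkeeping the various radius shifts built into the definitions of $\cR_A^{[r,1)}(\Gamma_?)$, so that under the identification with the ordinary Robba ring the element $Z_n$ is correctly matched with $[\pi_L^{n-n_0}](T)$ and the geometric condition on avoidance of torsion zeros yields the threshold for $r$.
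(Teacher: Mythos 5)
Your proposal is correct and follows essentially the paper's own argument: invoke Theorem \ref{thm:kerpsiprojective} to extend the action to the group Robba ring, and then observe that $Z_n$ is a unit there. The only (harmless) difference is that the paper obtains the unit property tautologically, $Z_n$ being by definition the coordinate of the subring $\cR_A^{J}(\Gamma_n)$ for any $J\subset(0,1)$, whereas you derive the explicit threshold $r>r_L^{1/q^n}$ from the torsion-point zeros of $[\pi_L^{n-n_0}](T)$; since $r_1$ is allowed to depend on $n$, this is perfectly adequate.
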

\begin{proof}
	We have seen that the action of $D(\Gamma_n,K)$ extends to an action of $\cR_A^{[r,1)}(\Gamma_n)$ $r \geq r_1$ with a suitable $r_1.$ Note that the variable $Z_n$ is a unit in every $\cR_A^J(\Gamma_n)$ for any interval $J \subset (0,1).$
\end{proof}
If $M$ is free we can further sharpen the results.
\begin{cor}
	\label{cor:freecase}
	Let $M$ be a free $L$-analytic $(\varphi_L,\Gamma_L)$-module with a model over $[r_0,1)$ such that $m_1,\dots,m_d$ are a basis of $M^{[r_0,1)}$. Then there exists $r_1 \in [r_0,1)$ such that the action of $\Gamma_L$ on $(M^{[r,1)})^{\psi=0}$ extends to a $\cR_A^{[r_1,1)}(\Gamma_L)$-action with respect to which $\eta(1,T)\varphi(m_1),\dots ,\eta(1,T)\varphi(m_d)$ form a basis.
\end{cor}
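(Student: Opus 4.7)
The plan is to retrace the proof of Theorem \ref{thm:kerpsiprojective} keeping track of a basis instead of a mere generating set. The starting observation is that the proof of Lemma \ref{lem:projHn} admits a cleaner form in the free case: if $M^I$ is free of rank $d$ with basis the images of $m_1,\dots,m_d$, we may take $N^I=0$ and $M^I=(\cR_A^I)^d$ directly with $m_i=e_i$. Then $\Phi$ becomes the identity, so $\Psi\circ\Phi=\Psi$, and the estimate $\norm{\Psi-\id}_{M^I}<1$ established in that proof (as a consequence of Remark \ref{rem:sharpendestimates}) forces $\Psi\colon (\cR_A^I(\Gamma_n))^d\to M^I$ to be an isomorphism via Lemma \ref{lem:banachinvertierbar}. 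Transporting through the $\Gamma_n$-equivariant bijection $m\mapsto \eta(1,T)\varphi^n(m)$ that intertwines $H_n$ with the natural $\Gamma_n$-action, this says that $\eta(1,T)\varphi^n(m_1),\dots,\eta(1,T)\varphi^n(m_d)$ form a $\cR_A^I(\Gamma_n)$-basis of $\eta(1,T)\varphi^n(M^I)$. This is the free-case refinement of Lemma \ref{lem:coreargumentkerpsi}: property $\mathcal{P}(n,I)$ holds with ``generated by'' replaced by ``freely generated by''.

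I would then rerun the proofs of Proposition \ref{prop:abstractproperties} and Lemma \ref{lem:gamma1action}, verifying that both steps preserve the stronger ``free of rank $d$'' property. For Proposition \ref{prop:abstractproperties}(2), the glueing step, a compatible system of rank-$d$ free modules on an admissible cover $\{I_k\}$ glues to a free rank-$d$ coadmissible module because $\cR_A^{[r_0,1)}(\Gamma_n)^d=\varprojlim_k \cR_A^{I_k}(\Gamma_n)^d$ and the transition maps send the globally defined basis $\eta(1,T)\varphi^n(m_i)$ to itself. The tensor step (1) is the subtle one: the identification
\[
\cR_A^{I^{1/q^l}}(\Gamma_n)\cong \cR_A^I(\Gamma_{n+l})\otimes_{\ZZ[\Gamma_{n+l}]}\ZZ[\Gamma_n]
\]
realises the propagation as base change along a free ring extension, so a free module of rank $d$ remains free of rank $d$. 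The only caveat is that the basis of $\eta(1,T)\varphi^n(M^{I^{1/q^l}})$ produced this way is $\eta(1,T)\varphi^{n+l}(m_i)=\eta(1,T)\varphi^n(\varphi^l(m_i))$ rather than $\eta(1,T)\varphi^n(m_i^{I^{1/q^l}})$. To sidestep this bookkeeping issue I would instead apply the free-case variant of Lemma \ref{lem:projHn} directly to every interval $J$ in a finite admissible cover of some $[r_2,1)$, choosing $n$ large enough uniformly (possible since all relevant estimates in Lemma \ref{lem:robbaextension}, which involve operator norms bounded by Lemma \ref{lem:operatornormestimates}, can be made uniformly small across a finite cover by a single choice of $n$), and using the restrictions of the global $m_i$ as basis from the outset. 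Gluing these then gives $\mathcal{P}(n,[r_2,1))$ with basis $\eta(1,T)\varphi^n(m_i)$ directly.

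Finally, passing from $n$ down to $1$, I would apply Proposition \ref{prop:kerpsizerl} in the form
\[
(M^{[r_2^{1/q^{n-1}},1)})^{\psi=0}\cong \ZZ[\Gamma_L]\otimes_{\ZZ[\Gamma_1]}\eta(1,T)\varphi(M^{[r_2^{1/q^{n-2}},1)}),
\]
or more directly apply it with $\mathcal{P}(n,[r_2,1))$ and the relation $\cR_A^{[r_2^{1/q^{n-1}},1)}(\Gamma_L)\cong \cR_A^{[r_2,1)}(\Gamma_n)\otimes_{\ZZ[\Gamma_n]}\ZZ[\Gamma_L]$, which together with the freeness established above yields an isomorphism
\[
\cR_A^{[r_1,1)}(\Gamma_L)^d\xrightarrow{\cong}(M^{[r_1,1)})^{\psi=0},
\]
where $r_1=r_2^{1/q^{n-1}}$, sending standard basis to $\eta(1,T)\varphi^n(m_i)$ (after unravelling all decompositions). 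A short final cleanup replaces $\eta(1,T)\varphi^n(m_i)$ by $\eta(1,T)\varphi(m_i)$: having established freeness of rank $d$, Theorem \ref{thm:kerpsiprojective} provides a surjection $\cR_A^{[r_1,1)}(\Gamma_L)^d\twoheadrightarrow (M^{[r_1,1)})^{\psi=0}$ sending $e_i$ to $\eta(1,T)\varphi(m_i)$; identifying target with source this becomes a surjective endomorphism of $\cR_A^{[r_1,1)}(\Gamma_L)^d$, and since $\cR_A^{[r_1,1)}(\Gamma_L)$ is a commutative Fréchet–Stein algebra (as $\Gamma_L\cong o_L^\times$ is abelian), checking injectivity reduces to checking it on each Noetherian step $\cR_A^{I_k}(\Gamma_L)^d$, where Vasconcelos' theorem that surjective endomorphisms of finitely generated modules over commutative rings are injective applies. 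This shows $\eta(1,T)\varphi(m_i)$ is a basis, completing the proof. The main obstacle throughout is the bookkeeping of basis elements under the Frobenius pullback in the propagation step (1) of Proposition \ref{prop:abstractproperties}, which the uniform application of Lemma \ref{lem:projHn} to all intervals of a single admissible cover circumvents.
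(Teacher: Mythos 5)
Your core step --- taking $N^I=0$ in Lemma \ref{lem:projHn} so that $\Psi$ itself becomes an isomorphism $(\cR_A^I(\Gamma_n))^d\to M^I$ --- is exactly how the paper argues; the paper then propagates freeness through Proposition \ref{prop:abstractproperties}, identifies the basis by ``tracing through the definitions'', and glues via coadmissibility. Your Vasconcelos endgame is a genuinely different and arguably cleaner finish: once $(M^{[r_1,1)})^{\psi=0}$ is known to be free of rank $d$, the generating family $\eta(1,T)\varphi(m_i)$ supplied by Theorem \ref{thm:kerpsiprojective} must be a basis, since a surjective endomorphism of a finitely generated module over a commutative ring is injective. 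This makes the bookkeeping of basis elements under Frobenius pullback irrelevant. (In fact you do not even need freeness as an input: by Theorem \ref{thm:kerpsiprojective} the module is projective of constant rank $d$ and generated by $d$ elements, so the kernel of $\cR_A^{[r_1,1)}(\Gamma_L)^d\twoheadrightarrow (M^{[r_1,1)})^{\psi=0}$ is a finitely generated projective direct summand of rank $0$, hence zero.)

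The one step you should delete is the shortcut of applying Lemma \ref{lem:projHn} ``directly to every interval $J$ in a finite admissible cover of some $[r_2,1)$ with $n$ chosen uniformly''. First, $[r_2,1)$ admits no finite cover by closed subintervals of $[0,1)$, so any admissible cover is infinite. Second, and more seriously, the estimates of Proposition \ref{prop:extensionSV} and Lemma \ref{lem:robbaextension} are only established for the two base intervals $[r_0,r_0]$ and $[r_0,r_0^{1/q}]$, and their constants (e.g.\ the bound $\norm{H_n(X_n)-\mathfrak{S}_n(X_n)}_{M^I}<r_0=\abs{T^{-1}}_I^{-1}$ and condition \ref{ass:a1}) depend on $I$ and on the chosen Banach norm on $M^I$; nothing in the paper gives uniformity over an infinite family of intervals tending to $1$, and the pullback step $\mathcal{P}(n+l,I)\Rightarrow\mathcal{P}(n,I^{1/q^l})$ exists precisely so that one never has to prove such uniformity. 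Fortunately the detour is unnecessary: your first route --- rerunning Proposition \ref{prop:abstractproperties} and Lemma \ref{lem:gamma1action} with ``free of rank $d$'' in place of ``projective'', which works because base change along the free extension $\ZZ[\Gamma_{n+l}]\to\ZZ[\Gamma_n]$ and the glueing of a compatible system of free rank-$d$ pieces both preserve freeness --- already supplies the freeness your final step needs, and the Vasconcelos argument then pins down the basis without any tracking of generators along the way.
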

\begin{proof}
	We use the notation of the proof of Lemma \ref{lem:projHn}. Using that $M$ is free one can choose $N=0.$ This shows that $M^I$ is free over $\cR_A(\Gamma_n)$ with basis $\eta(1,T)\varphi^n(m_i).$ Tracing through the definitions we conclude that $\eta(1,T)\varphi(m_1),\dots,\eta(1,T)\varphi(m_d) $ are global sections of the projective and hence coadmissible $\cR_A^{[r,1)}(\Gamma_L)$-module $(M^{[r,1)})^{\psi=0}$ that form a basis of the module $(M^{J})^{\psi=0}$ for $J$ in a suitable cover of $[r,1).$ Then the map \linebreak  $\cR_A^{[r,1)}(\Gamma_L)^d \to (M^{[r,1)})^{\psi=0}$ mapping $e_i$ to $\eta(1,T)\varphi(m_i)$ is an isomorphism of coadmissible modules, hence the claim.
\end{proof}
\section{Analytic cohomology via Herr complexes}
In this chapter we introduce the analytic Herr complex, which serves as an analogue of the classical Herr complex with the operator $\gamma^{p^n}-1$ replaced by $Z_{n}$ and prove finiteness and base change properties similar to \cite{KPX}. In order to obtain the operators $Z_n$ on $M$ we need to choose an open subgroup of $\Gamma_L$ that is isomorphic to $o_L$, which in general is not a direct summand in $\Gamma_L.$ Before circumventing this difficulty we discuss the split case. 
\subsection{Analytic Herr complex for $e<p-1$}
Assume for the moment $e(L/\QQ_p)<p-1$\footnote{This also forces $p \neq 2.$}. Therefore we have 
$$\Gamma_L \cong o_L^\times \cong \kappa_L^\times \times U_1 \cong \text{torsion} \times o_L,$$ where the isomorphism $o_L \cong U_1$ is induced by $\exp(\pi_L\cdot).$
We denote by $\Delta \subset \Gamma_L$ the torsion subgroup.
\begin{defn}
	Let $M$ be an $L$-analytic $(\varphi_L,\Gamma_L)$-module over $\cR_A$ and let $f$ be an $A$-linear continuous operator that commutes with the action of $\Gamma_L.$ We define 
	$$C_{f,D(\Gamma_L,A)}(M):= [ 0 \xrightarrow{} M^\Delta \xrightarrow{(f-1,Z)} M^\Delta\oplus M^\Delta \xrightarrow{Z\oplus 1-f} M^\Delta \xrightarrow{} 0]$$
	concentrated in $[0,2].$ We denote by $H^{*}_{f,D(\Gamma_L,A)}(M)$ the cohomology of this complex.
\end{defn}
\begin{rem}
	\label{bem:herrquasi}
	The morphism 
	$$	\begin{tikzcd}
		M^\Delta \arrow[d,"{\operatorname{id}}"] \arrow[r ]      & M^\Delta \oplus M^\Delta \arrow[d, "-\frac{\pi_L}{q}\psi_{LT}\oplus\operatorname{id}"] \arrow[r] & M^\Delta \arrow[d, "-\frac{\pi_L}{q}\psi_{LT}"]   \\
		M^\Delta \arrow[r] & M^\Delta \oplus M^\Delta \arrow[r]                                          & M^\Delta                                                 
	\end{tikzcd}
	$$
	is a quasi-isomorphism between $C_{\varphi_L,D(\Gamma_L,A)}(M)$ and $C_{\frac{\pi_L}{q}\psi_{LT},D(\Gamma_L,A)}(M).$
\end{rem}
\begin{proof}
	The cokernel complex is $0$ because $\frac{\pi_L}{q}\psi_{LT}$ is surjective since it is the left inverse of $\varphi_L.$ The kernel complex is given by 
	$${M^{\Delta}}^{\psi_{LT}=0} \xrightarrow{Z} {M^{\Delta}}^{\psi_{LT}=0},$$ which is quasi-isomorphic to $0$ since by $\ref{thm:Zinvertible}$ the action of $Z$ is bijective on the kernel of $\psi_{LT}.$
\end{proof}
\subsection{The case of general $e.$}
Let $M$ be an $L$-analytic $(\varphi_L,\Gamma_L)$-module over $\cR_A.$
\begin{defn}
	Let $n \geq n_0$ such that $\chi_{LT}\circ \log$ induces an isomorphism $\Gamma_{n_0} \cong \pi_L^{n_0}o_L \cong o_L.$ We define
	$$C_{f,Z_n}(M):= [  M \xrightarrow{(f-1,Z_n)} M\oplus M \xrightarrow{Z_n\oplus 1-f} M].$$
	We denote the cohomology of this complex (concentrated in degrees $[0,2]$) by $H^i_{f,Z_n}(M).$
\end{defn} 
\begin{rem}
	The complexes $C_{\varphi_L,Z_n}(M)$ and $C_{\frac{\pi_L}{q}\psi_{LT},Z_n}(M)$ are quasi-isomorphic.
\end{rem}
\begin{proof}
	We may define the quasi-isomorphism analogously to $\ref{bem:herrquasi}$ and invoke Theorem \ref{thm:Zinvertible} to deduce that the action of $Z_n$ is invertible on the kernel of $\psi_{LT}.$ 
\end{proof}
\begin{defn}
	Let $m\geq n \geq n_0.$ We define the restriction $\res_{n,m}:C_{f,Z_n}(M) \to C_{f,Z_m}(M)$ as 
	$$	\begin{tikzcd}
		M\arrow[d,"{\operatorname{id}}"] \arrow[r ]      & M \oplus M \arrow[d, "\operatorname{id}\oplus \mathfrak{Q}_{m-n}"] \arrow[r] & M \arrow[d, "\mathfrak{Q}_{m-n}"]   \\
		M \arrow[r] & M \oplus M \arrow[r]                                          & M,                                                 
	\end{tikzcd}
	$$
	where $\mathfrak{Q}_{m-n}$ is defined using $Z_m = \varphi_L^{m-n}(Z_n)= \mathfrak{Q}_{m-n}Z_n.$ 
\end{defn}
\begin{lem}
	\label{lem:resbasic}
	Let $m \geq m'\geq n \geq n_0.$ Then 
	\begin{enumerate}
		\item We have $\res_{n,m} = \res_{m',m}\circ\res_{n,m'}.$
		\item For each $i$ we have $\operatorname{im}(\res_{n,m})(H^i_{f,Z_n}(M)) \subset (H^i_{f,Z_m}(M))^{Z_n=0}.$
	\end{enumerate}

\end{lem}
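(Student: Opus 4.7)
The plan for part 1 is to reduce the claim to the identity $\mathfrak{Q}_{m-n} = \mathfrak{Q}_{m-m'} \mathfrak{Q}_{m'-n}$ in $D(\Gamma_{n_0},K)$. Using functoriality of Frobenius and \eqref{eq:Znbeziehung} one has $Z_m = \varphi_L^{m-n}(Z_n) = \varphi_L^{m-m'}(\varphi_L^{m'-n}(Z_n)) = \varphi_L^{m-m'}(Z_{m'})$, so that $\mathfrak{Q}_{m-n} Z_n = Z_m = \mathfrak{Q}_{m-m'} Z_{m'} = \mathfrak{Q}_{m-m'} \mathfrak{Q}_{m'-n} Z_n$. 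Since under the Fourier isomorphism $Z_n$ corresponds to the coordinate $T$ on $\mathbb{B}$ and is therefore a non-zero-divisor in $\mathcal{O}_K(\mathbb{B})$, the multiplicativity $\mathfrak{Q}_{m-n} = \mathfrak{Q}_{m-m'} \mathfrak{Q}_{m'-n}$ follows. A direct inspection then shows that $\res_{m',m} \circ \res_{n,m'}$ acts as $\operatorname{id}$ in degree $0$, as $\operatorname{id} \oplus (\mathfrak{Q}_{m-m'} \mathfrak{Q}_{m'-n})$ in degree $1$, and as $\mathfrak{Q}_{m-m'} \mathfrak{Q}_{m'-n}$ in degree $2$, which by the identity just established agrees with $\res_{n,m}$.

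For part 2, the plan is to check in each cohomological degree that $Z_n$ annihilates the image of $\res_{n,m}$ by exhibiting explicit coboundaries in $C_{f,Z_m}(M)$. In degree zero a class is represented by $x \in M$ with $(f-1)x = Z_n x = 0$, and $\res_{n,m}(x)=x$, so $Z_n \cdot \res_{n,m}(x)=0$ already at the cocycle level. In degree one, a cocycle $(a,b) \in M \oplus M$ satisfies $Z_n a = (f-1)b$ and maps to $(a, \mathfrak{Q}_{m-n} b)$; then $Z_n \cdot (a, \mathfrak{Q}_{m-n} b) = (Z_n a, \mathfrak{Q}_{m-n} Z_n b) = ((f-1) b, Z_m b)$, which is precisely the coboundary of $b$ in $C_{f,Z_m}(M)$. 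In degree two, the class of $c \in M$ maps to the class of $\mathfrak{Q}_{m-n} c$, and $Z_n \mathfrak{Q}_{m-n} c = Z_m c$ lies in $Z_m M$, hence vanishes in $H^2_{f,Z_m}(M) = M/((f-1)M + Z_m M)$.

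The only genuine input is the cocycle identity $Z_n a = (f-1)b$ in degree one, which is exactly what is needed to rewrite $((f-1)b, Z_m b)$ as a coboundary; apart from this, all manipulations are formal consequences of the commutativity of $f$ with the $D(\Gamma_{n_0},K)$-action on $M$ (which is built into the definition of an $L$-analytic $(\varphi_L,\Gamma_L)$-module, using that $\psi_{LT}$ also commutes with $\Gamma_L$). I do not anticipate any serious obstacle beyond carefully tracking signs and the factorisation $\mathfrak{Q}_{m-n} Z_n = Z_m$.
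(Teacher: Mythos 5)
Your proof is correct and follows essentially the same route as the paper: part 1 reduces to the factorisation $\mathfrak{Q}_{m-n}=\mathfrak{Q}_{m-m'}\mathfrak{Q}_{m'-n}$, obtained by transporting the identity $\varphi_L^{a+b}(T)=Q_a(\varphi_L^b(T))\varphi_L^b(T)$ to $D(\Gamma_n,K)\cong\mathcal{O}_K(\mathbb{B})$ and cancelling the non-zero-divisor $Z_n$, and part 2 is verified degree by degree with the same explicit coboundaries. No gaps.
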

\begin{proof}
	The first statement follows by transport of structure from the corresponding computation in $\cR_K^+ \cong D(\Gamma_n,K).$ There we have for any pair $a,b \in \mathbb{N}$ $$\varphi_L^{a+b}(T) = \varphi_L^a(\varphi_L^b(T))= Q_a(\varphi_L^b(T)) \varphi_L^b(T).$$
	For the second statement we consider each degree individually.  $Z_m$ is divisible by $Z_n$ in the distribution algebra, which implies the statement in degree $0.$ In degree $1$ consider a class $\overline{(a,b)} \in H^1_{f,Z_n}$ with $Z_na=(f-1)b.$ We compute $$Z_n\res_{n,m}(a,b)=Z_n (a,\mathfrak{Q}_{m-n}b) = (Z_na,Z_mb) = (f-1b,Z_mb),$$ which lies in the image of the first differential of $C_{f,Z_m}(M).$
	In degree $2$ let $b \in M.$ Then $Z_n\mathfrak{Q}_{m-n}b = Z_mb = \partial(b,0) \equiv 0.$
\end{proof}
\begin{cor}
	Applying the above for $m=n$ shows that $H^i_{f,Z_n}(M)$ has trivial $\Gamma_n$-action.
\end{cor}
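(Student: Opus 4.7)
The plan is to specialise Lemma \ref{lem:resbasic}(2) to the diagonal case $m=n$ and then translate the resulting statement ``$Z_n$ annihilates cohomology'' into triviality of the $\Gamma_n$-action by invoking the description of the augmentation ideal of $D(\Gamma_n,K)$ obtained in Corollary \ref{kor:modZgruppenring}.

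First I would observe that the restriction map $\res_{n,n}$ is literally the identity on the complex $C_{f,Z_n}(M)$. Indeed, by Definition \ref{def:variable} we have $Z_n = \varphi_L^{0}(Z_n)$, so the factor $\mathfrak{Q}_{0}$ entering the middle and right vertical arrows of the definition of $\res_{n,m}$ equals $1$. Consequently Lemma \ref{lem:resbasic}(2) applied with $m=n$ yields
\[
H^i_{f,Z_n}(M) \;=\; \operatorname{im}(\res_{n,n})(H^i_{f,Z_n}(M)) \;\subset\; H^i_{f,Z_n}(M)^{Z_n=0},
\]
i.e.\ $Z_n$ acts as zero on $H^i_{f,Z_n}(M)$ for every $i$.

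Next I would upgrade this to triviality of the whole $\Gamma_n$-action. Because $f$ and $Z_n$ commute with $\Gamma_n$, the profinite $\Gamma_n$-action on $M$ (and its extension to $D(\Gamma_n,K)$ guaranteed by the last lemma of Section~1.4) descends to an action of $D(\Gamma_n,K)$ on each $H^i_{f,Z_n}(M)$. By Corollary \ref{kor:modZgruppenring} applied with $n_0 = n$ we have a canonical isomorphism $D(\Gamma_n,K)/Z_n \cong K[\Gamma_n/\Gamma_n] = K$, which in particular says that the augmentation ideal of $D(\Gamma_n,K)$ is generated by $Z_n$. Combining this with the vanishing of the action of $Z_n$ shows that every element $\delta_\gamma - 1$ with $\gamma \in \Gamma_n$ acts as zero on $H^i_{f,Z_n}(M)$, which is exactly the statement that $\Gamma_n$ acts trivially.

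I do not expect any real obstacle here: the only subtle point is to make sure that one is allowed to apply Corollary \ref{kor:modZgruppenring} in this range of $n$, which is the standing assumption $n\ge n_0$ in the definition of $C_{f,Z_n}(M)$.
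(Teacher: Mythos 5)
Your proposal is correct and is exactly the argument the paper intends: specialising Lemma \ref{lem:resbasic}(2) to $m=n$ (where $\mathfrak{Q}_0=1$, so $\res_{n,n}=\id$) shows $Z_n$ kills $H^i_{f,Z_n}(M)$, and since every $\delta_\gamma-1$ with $\gamma\in\Gamma_n$ lies in $Z_nD(\Gamma_n,K)$ (Remark \ref{rem:augmentation} transported via the chart, equivalently your reading of Corollary \ref{kor:modZgruppenring}), the $\Gamma_n$-action on cohomology is trivial. The only cosmetic point is that one cites the generation of the augmentation ideal by $Z_n$ for the group $\Gamma_n\cong o_L$ directly rather than "setting $n_0=n$" in a statement whose $n_0$ is defined as a minimal index, but this changes nothing.
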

Contrary to the classical theory we can not define the analytic Herr complex for $\Gamma_L$ directly unless $e<p-1.$ Our observations so far show that the cohomology groups of the restricted Herr complex for any subgroup $\Gamma_n$ isomorphic to $o_L$ carry a residual $\Gamma_L/\Gamma_n$ action. This allows us to define the Herr cohomology for $\Gamma_L$ after choosing such a subgroup.
\begin{defn}Choose $n \geq n_0.$ We define 
	$H^i_{f,D(\Gamma_L,K)}(M):= (H^i_{f,Z_n}(M))^{\Gamma_L}.$ 
\end{defn}
\begin{lem} \label{lem:indep}
	$H^i_{f,D(\Gamma_L,K)}(M)$ is independent of the choice of $n \geq n_0.$
\end{lem}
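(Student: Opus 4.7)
The plan is to show that for every $m \geq n \geq n_0$ the restriction map $\res_{n,m}$ induces an isomorphism $H^i_{f,Z_n}(M)^{\Gamma_L} \xrightarrow{\sim} H^i_{f,Z_m}(M)^{\Gamma_L}$; chaining these for $n_0 \leq n \leq m$ identifies all definitions. First I would observe that $\res_{n,m}$ is $\Gamma_L$-equivariant: its non-trivial component is multiplication by $\mathfrak{Q}_{m-n} = Q_{m-n}(Z_n) \in D(\Gamma_n,K) \subset D(\Gamma_L,K)$, and commutativity of $D(\Gamma_L,K)$ (since $\Gamma_L$ is abelian) forces this operator to commute with the $\Gamma_L$-action on $M$.

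The crux is the finer claim that $\res_{n,m}$ induces an isomorphism
\[
H^i_{f,Z_n}(M) \;\xrightarrow{\sim}\; H^i_{f,Z_m}(M)^{\Gamma_n/\Gamma_m}.
\]
I would prove this by interpreting $C_{f,Z_n}(M)$ as $\mathbf{R}\mathrm{Hom}_{D(\Gamma_n,K)}(K,[M \xrightarrow{f-1} M])$ via the projective resolution $D(\Gamma_n,K) \xrightarrow{Z_n} D(\Gamma_n,K) \twoheadrightarrow K$, and comparing with the analogous resolution built from $Z_m$. By Corollary \ref{kor:modZgruppenring} the latter resolves $K[\Gamma_n/\Gamma_m]$ instead, and since $[\Gamma_n:\Gamma_m]$ is invertible in $K$ the averaging section $1 \mapsto \tfrac{1}{[\Gamma_n:\Gamma_m]}\sum_{g \in \Gamma_n/\Gamma_m}[g]$ furnishes a $D(\Gamma_n,K)$-linear splitting of the augmentation $K[\Gamma_n/\Gamma_m] \twoheadrightarrow K$, exhibiting $K$ as the $\Gamma_n/\Gamma_m$-invariant direct summand. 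Hence
\[
\mathbf{R}\mathrm{Hom}_{D(\Gamma_n,K)}(K,[M \xrightarrow{f-1} M]) \;\simeq\; C_{f,Z_m}(M)^{\Gamma_n/\Gamma_m},
\]
and passing to cohomology---using exactness of $(-)^{\Gamma_n/\Gamma_m}$ for a finite group in characteristic zero---yields the displayed iso. The map of resolutions
\[
\begin{tikzcd}
D(\Gamma_n,K) \arrow[r,"Z_n"] \arrow[d,"\mathfrak{Q}_{m-n}"'] & D(\Gamma_n,K) \arrow[r] \arrow[d,no head,equal] & K \arrow[d,hook] \\
D(\Gamma_n,K) \arrow[r,"Z_m"] & D(\Gamma_n,K) \arrow[r] & K[\Gamma_n/\Gamma_m]
\end{tikzcd}
\]
lifting the canonical inclusion identifies this abstract iso with the one induced by $\res_{n,m}$.

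To conclude I would take $\Gamma_L/\Gamma_n$-invariants of the displayed iso, again an exact functor. Using the Corollary immediately above the lemma---which asserts that $\Gamma_n$ acts trivially on $H^i_{f,Z_n}(M)$---the left hand side becomes $H^i_{f,Z_n}(M)^{\Gamma_L}$, while the right becomes $(H^i_{f,Z_m}(M)^{\Gamma_n/\Gamma_m})^{\Gamma_L/\Gamma_n} = H^i_{f,Z_m}(M)^{\Gamma_L/\Gamma_m} = H^i_{f,Z_m}(M)^{\Gamma_L}$, as desired.

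The main technical subtlety is to verify that the averaging section is genuinely $D(\Gamma_n,K)$-linear (which holds because $D(\Gamma_n,K)$ acts on $K[\Gamma_n/\Gamma_m]$ through the finite quotient, whose trivial isotypic component is spanned by $\sum_g [g]$) and that the resulting abstract iso on cohomology is really realised by $\res_{n,m}$. Once the resolution comparison is unwound, this reduces to a routine sign check; everything else is formal manipulation of the type already implicit in the paper's use of the resolution of Remark \ref{rem:augmentation}.
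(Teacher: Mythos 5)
Your argument is correct in substance but follows a genuinely different, more structural route than the paper. The paper proves the key claim --- that $\res_{n,m}$ identifies $H^i_{f,Z_n}(M)$ with $H^i_{f,Z_m}(M)^{Z_n=0}$, which coincides with your $H^i_{f,Z_m}(M)^{\Gamma_n/\Gamma_m}$ because the image of $Z_n$ generates the augmentation ideal of $K[\Gamma_n/\Gamma_m]\cong D(\Gamma_n,K)/Z_m$ --- by explicit cocycle manipulations separately in degrees $i=0,1,2$, the only non-formal input being that $\mathfrak{Q}_{m-n}$ acts invertibly on classes killed by $Z_n$ (its constant term is a unit). Your version replaces this by the splitting of $K$ off $K[\Gamma_n/\Gamma_m]$ via the averaging idempotent; the two uses of characteristic zero are parallel, and your argument has the advantage of being uniform in $i$ and of making the derived-category meaning of $C_{f,Z_n}(M)$ explicit, at the cost of the bookkeeping needed to match the abstract isomorphism with $\res_{n,m}$.

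That matching is precisely where your write-up has an error: the displayed map of resolutions does not commute. Since $Z_m=\mathfrak{Q}_{m-n}Z_n$, one has $Z_m\circ\mathfrak{Q}_{m-n}\neq Z_n$ in general, and there is no map $K\to K[\Gamma_n/\Gamma_m]$ completing the right-hand square, because $Z_nD(\Gamma_n,K)\not\subseteq Z_mD(\Gamma_n,K)$. The correct comparison goes the other way: the pair $(\operatorname{id},\mathfrak{Q}_{m-n})$ defines a map from the $Z_m$-resolution to the $Z_n$-resolution lifting the augmentation $K[\Gamma_n/\Gamma_m]\twoheadrightarrow K$ (not its section), and $\res_{n,m}$ is obtained by applying the \emph{contravariant} functor $\operatorname{Hom}_{D(\Gamma_n,K)}(-,[M\xrightarrow{f-1}M])$ to it. Since the augmentation is split by your averaging map, $\res_{n,m}$ is then the inclusion of the direct summand cut out by the idempotent $\tfrac{1}{[\Gamma_n:\Gamma_m]}\sum_{g}[g]$, whose image on cohomology is exactly the $\Gamma_n/\Gamma_m$-invariants, as you claim. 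With this variance corrected, the remaining steps (the $\Gamma_L$-equivariance of $\res_{n,m}$ and the final passage to $\Gamma_L$-invariants) go through as you describe.
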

\begin{proof}
	
	We are reduced to showing that $$\res_{n,m}:(H^i_{f,Z_n}(M)) \to (H^i_{f,Z_m}(M))^{Z_n=0}$$ is an isomorphism.
	The case $i=0$ follows from $H^0_{f,Z_n}(M) = M^{f=1,\Gamma_n}=(M^{f=1,\Gamma_m})^{\Gamma_n}.$
	For $i=1$ consider a pair $(a,b)$ with $Z_na = (f-1)b$ that is mapped to $0$ in $H^1_{f,Z_m}(M).$ Meaning that there exists a $v \in M$ satisfying $((f-1)v,Z_mv)=(a,\mathfrak{Q}_{m-n}b).$ We compute the image of $v$ in $H^1_{f,Z_n}(M)$ and obtain $((f-1)v,Z_nv)-(a,b)= (0,Z_nv-b),$ which is an element in the kernel of the multiplication-by-$\mathfrak{Q}_{m-n}$-map. But on the cohomology $Z_n$ acts as zero and hence $\mathfrak{Q}_{m-n}$ is invertible as it has a non-zero constant term, which proves injectivity. 
	For surjectivity consider $(a,b) \in M^2$ satisfying $Z_ma = (f-1)b.$ By the same argument as above the operator $\mathfrak{Q}_{m-n}$ is invertible on $H^1_{f,Z_m}(M)^{Z_n=0}$ and we may find a cocycle $(c,d)$ such that $\mathfrak{Q}_{m-n}(c,d) \equiv (a,b)$ in $H^1_{f,Z_m}(M).$ Then $(\mathfrak{Q}_{m-n}c,d)$ satisfies $Z_n(\mathfrak{Q_{m-n}}c) = Z_mc =(f-1)d$ and is mapped to the class of $(a,b)$ by the restriction map. \\
	It remains to treat the case $i=2.$  We first prove injectivity. Let $c \in M$ such that $\mathfrak{Q}_{m-n}c$ vanishes in $H^2_{f,Z_m}(M)$ i.e. $\mathfrak{Q}_{m-n}c \in Z_mM +(f-1)(M).$ This means $\mathfrak{Q}_{m-n}m \in Z_nM +(f-1)M$ and hence vanishes in $H^2_{f,Z_n}(M),$ but then $c$ already has to vanish in $H^2_{f,Z_n}(M).$ For surjectivity let $\overline{d} \in H^2_{f,Z_m}(M)^{Z_n=0}.$ By the preceding arguments we may find an element $\overline{c} \in H^2_{f,Z_m}(M)$ satisfying $\mathfrak{Q}_{m-n}(\overline{c})=\overline{d}.$ We can lift $\overline{c}$ to an element of $M$ and take its projection to $H^2_{f,Z_n}(M)$ in order to find a preimage of $d$ in $H^2_{f,Z_n}(M).$
\end{proof}
\begin{rem} Let $n \geq n_0.$
	The action of $\Gamma_L$ on $M$ induces a natural action on $C_{f,Z_n}(M),$ given by letting $\gamma \in \Gamma_L$ act in the usual way on each component in each degree. If $\gamma \in \Gamma_n$ then this action is homotopic to the identity. In other words, the image of $C_{f,Z_n}(M)$ in the derived category $\mathbf{D}(A)$ carries an action of $\Gamma_L/\Gamma_n.$
\end{rem}
\begin{proof} The action is well-defined because $f$ commutes with $\gamma \in \Gamma_L.$
	If $\gamma \in \Gamma_n$ then the action of $\gamma-\id$ is given by $\eta(a,Z_n)-1 = Z_nH(Z_n)$ with some $a \in o_L$ and $H(Z_n) \in o_L\llbracket Z_n\rrbracket.$ A small calculation shows that the maps 
	\begin{align*}
		M^2 &\to M\\
		(m,m') &\mapsto H(Z_n)m'
	\end{align*}
	and
	\begin{align*}
		M &\to M^2\\
		m &\mapsto (0,H(Z_n)m)
	\end{align*} 
	define a homotopy between $\gamma$ and $\id.$
\end{proof}
\subsection{Finiteness of analytic Herr cohomology} The results of the previous section allow us to flexibly change between the open subgroups $\Gamma_n$ used to define the analytic Herr complex and to simplify the notation we fix some $n\geq n_0$ and write $Z:=Z_n.$
The goal of this section is to prove that for any analytic $(\varphi_L,\Gamma_L)$-module $M$ over $\cR_A$ the cohomology groups $H^i_{\varphi_L,Z}(M)$ are finitely generated over $A$. We follow the strategy of \cite{bellovin2021cohomology} using the result from the previous chapter regarding the $Z$ action on the kernel of $\psi$ to arrive at a situation that allows us to apply results from \cite{kedlaya2016finiteness}. This approach differs from \cite{KPX} who first prove the finiteness of the Iwasawa cohomology of $M$ and compare it to the $(\varphi,\Gamma)$-cohomology of the cyclotomic deformation of $M.$ The finiteness of the cohomology of $M$ is obtained in \cite{KPX} as a corollary by writing $M$ as a base change of its deformation. Similar arguments to the ones of Bellovin already appear in \cite{kedlaya2018categories}. We denote by $\mathbf{D}^b_{perf}(A),\mathbf{D}^-_{perf}(A),\mathbf{D}_{perf}^{[a,b]}(A)$ the full subcategory of the derived category consisting of objects which are quasi-isomorphic to bounded (resp. bounded above, resp. concentrated in degree $[a,b]$) complexes of finite projective $A$-modules. 

\begin{defn}
	Let $M$ be an $L$-analytic $(\varphi_L,\Gamma_L)$-module over $\cR_A$ with model over $[r_0,1).$ For any $1>r\geq r_0$ we define 
	$$C_{\varphi_L,Z,[r,1)}(M):M^{[r,1)} \xrightarrow{(\varphi_L-1,Z)} M^{[r^{1/q},1)}\oplus M^{[r,1)} \xrightarrow{Z \oplus (1-\varphi_L)} M^{[r^{1/q},1)}.$$ For $r<s<1$ we define the complex
	$$C_{\varphi_L,Z,[r,s]}(M):M^{[r,s]} \xrightarrow{(\varphi_L-1,Z)} M^{[r^{1/q},s]}\oplus M^{[r,s]} \xrightarrow{Z \oplus (1-\varphi_L)} M^{[r^{1/q},s]}$$ for any $s \in [r^{1/q},1).$
	We obtain canonical morphisms of complexes (of $A$-modules)
	$$C_{\varphi_L,Z,[r,1)}(M) \to C_{\varphi_L,Z}(M)$$
	and 
	$$C_{\varphi_L,Z,[r,1)}(M) \to C_{\varphi_L,Z,[r,s]}(M).$$ We say that the cohomology of $M$ is computed on the level of $C_{\varphi_L,Z,[r,1)}(M)$ (resp. $C_{\varphi_L,Z,[r,s]}(M)$) if the first (resp. both) maps are quasi-isomorphisms.
\end{defn}

When working with $\varphi_L$-modules over $\cR_A$ we can identify the cohomology of the complex $[M \xrightarrow{\varphi_L-1} M]$ with the Yoneda extension groups in the category of modules over the twisted polynomial ring $\cR_A[X;\varphi_L]$ with $X$ acting as $\varphi_L$ on $\cR_A$ (cf. \cite[Definition 1.5.4]{kedlaya2013relative}). The analogous result holds for $\varphi_L$-modules over $[r,1)$ (resp. $[r,s]$) but requires some care, since $\varphi_L:M^{[r,1)} \to M^{[r^{1/q},1)}$ changes the ring over which the module is defined. 
\begin{defn}
	\label{def:phiinterval}
	Let $M$ be a $\varphi_L$-module over $\cR_A^I,$ where $I=[r,1)$ or $I=[r,s]$ with $s \in [r^{1/q},1).$ We denote by $\operatorname{Ext}^1_\varphi(\cR_A^I,M)$ the group of extensions (as $\varphi_L$-modules) of $\cR_A^I$ by $M.$
\end{defn}
\begin{rem}
	\label{rem:phiext0}
	Let $M$ be a $\varphi_L$-module over $\cR_A^I$ with $I$ as in \ref{def:phiinterval}. The natural map 
	
	\begin{align*}\ker(\varphi_M-1)&\to \operatorname{Hom}_\varphi(\cR_A^{I},M^{I})\\
		x &\mapsto (f \mapsto fx)
	\end{align*} is an isomorphism.
\end{rem}
\begin{proof}
	The map is well-defined because $\varphi_M(fx)=\varphi_L(f)\varphi_M(x)=\varphi_L(f)x$ and an inverse is given by mapping $\alpha \in \operatorname{Hom}_\varphi(\cR_A^{I},M^{I})$ to $\alpha(1).$
\end{proof}
\begin{lem}\label{lem:ext1phi}
	Let $M$ be a $\varphi_L$-module over $\cR_A^I,$ with $I$ as in \ref{def:phiinterval} and let $J:=I^{1/q}\cap I.$
	The map that assigns to $E\in \operatorname{Ext}^1_\varphi(\cR_A^I,M)$ the element $\varphi_E(e)-e\in M^{J},$ where $e$ is any preimage of $1 \in \cR_A^I,$ induces an isomorphism $$\operatorname{Ext}^1_\varphi(\cR_A^I,M) \cong M^{J}/(\varphi_M-1)(M^{I}).$$
\end{lem}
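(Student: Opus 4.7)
The plan is to establish the isomorphism by constructing the inverse map explicitly and checking mutual inverseness. First, I would verify that the given map is well-defined. Given an extension $0 \to M \to E \to \cR_A^I \to 0$ and a lift $e \in E^I$ of $1 \in \cR_A^I$, the element $\varphi_E(e) - e$ lies in $E^J$ and its image in $\cR_A^J$ is $\varphi_L(1) - 1 = 0$, so it lies in $M^J$. If $e' = e + m$ is another lift with $m \in M^I$, then $\varphi_E(e') - e' = (\varphi_E(e) - e) + (\varphi_M(m) - m)$, so the class modulo $(\varphi_M - 1)(M^I)$ does not depend on the chosen lift. Analogously, any morphism of extensions $\phi \colon E \to E'$ sends $e$ to another lift of $1$, and since $\phi$ restricts to the identity on $M$, equivalent extensions yield the same class.

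Next, I would construct the inverse. Given $m \in M^J$, set $E^I := M^I \oplus \cR_A^I \cdot e$ as an $\cR_A^I$-module, declare $e$ to be a lift of $1$, and define $\varphi_E$ semi-linearly by $\varphi_E(v) = \varphi_M(v)$ for $v \in M^I$ and $\varphi_E(e) = e + m \in E^J$. The crucial verification is that the linearization
$$\varphi_E^{\mathrm{lin}} \colon \cR_A^J \otimes_{\cR_A^{I \cap J^{1/q}}, \varphi_L} E^{I \cap J^{1/q}} \to E^J$$
is an isomorphism. With respect to the decompositions on both sides, it is represented by the upper triangular matrix $\begin{pmatrix} \varphi_M^{\mathrm{lin}} & \mu \\ 0 & \mathrm{id}\end{pmatrix}$, where $\mu\colon \cR_A^J \to M^J$ is multiplication by $m$. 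Since $\varphi_M^{\mathrm{lin}}$ is an isomorphism by hypothesis, so is $\varphi_E^{\mathrm{lin}}$, hence $E$ is a genuine $\varphi_L$-module over $\cR_A^I$ fitting into a short exact sequence with the obvious maps.

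Finally, I would check that the two constructions are mutually inverse. Going from $m$ to $E$ and back clearly recovers $m$ since $\varphi_E(e) - e = m$. Conversely, starting from an extension $E$, the $\cR_A^I$-module $\cR_A^I$ is free hence projective, so the underlying sequence of $\cR_A^I$-modules splits: pick any $\cR_A^I$-module section $s$ and set $e = s(1)$. The resulting decomposition identifies $E^I$ with $M^I \oplus \cR_A^I \cdot e$, and the semi-linear map $\varphi_E$ is then determined entirely by the image of $e$, which is $e + (\varphi_E(e) - e)$. This shows $E$ is isomorphic as an extension to the one built from the class $[\varphi_E(e) - e]$. The main subtlety lies in checking that the linearization in the inverse construction is truly an isomorphism over the (slightly asymmetric) intervals $I$ and $J$, but this reduces to the triangular matrix observation above.
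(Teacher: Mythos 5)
Your proposal is correct and follows essentially the same route as the paper: well-definedness via the difference of two lifts, an explicit extension $M^I\oplus\cR_A^I$ with $\varphi_E(m,r)=(\varphi_M(m)+\varphi_L(r)v,\varphi_L(r))$ realising a given class, and the splitting of the underlying $\cR_A^I$-module sequence (using freeness of $\cR_A^I$) to show every extension is of this form. Your upper-triangular-matrix check that the linearisation is an isomorphism is just the five-lemma argument the paper uses, so the two proofs coincide in substance.
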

\begin{proof}
	One checks that $\varphi_E(e)-e$ is mapped to zero in $\cR_A^J$ and hence $\varphi_E(e)-e$ indeed belongs to $M^J.$ Let $\tilde{e}$ be another preimage of $1 \in \cR_A^I,$ then $e-\tilde{e}$ lies in $M^I$ and therefore $\varphi_E(e)-e=\varphi_E(\tilde{e})-\tilde{e} +\varphi_M(e-\tilde{e})-(e-\tilde{e}).$ This proves that the map in question is well-defined. Let $v \in  M^{J}/(\varphi_M-1)(M^{I})$ and define $E_v$ to have $M^I \times \cR_A^I$ as its underlying $\cR_A^I$-module with $\varphi_{E_v}(m,r):= (\varphi_M(m)+\varphi_L(r)v,\varphi_L(r)).$ The module $E_v$ is finitely generated over $\cR_A^I$ and $\varphi_{E_v}$ is $\varphi_L$-semi-linear. The linearised map is an isomorphism by the five lemma and hence $E_v \in \operatorname{Ext}^1_\varphi(\cR_A^I,M).$ The element $e_1:=(0,1)$ is a preimage of $1$ and satisfies $\varphi(e_1)-e_1 = (v,0),$ which proves the surjectivity. It remains to show injectivity. A computation shows that $E_v$ is the trivial extension if $v\equiv 0$ and hence it suffices to show that any extension $E$ is isomorphic to $E_v$ with $v = \varphi_E(e)-e.$ Since $\cR_A^{I}$ is free and hence projective any extension of $\cR_A^I$ by $M^I$ is split as a $\cR_A^I$-module. Let $s:\cR_A^I \to E$ be any $\cR_A^I$-linear section. And write $x \in E$ as $x=m+s(f) = m+fs(1).$ Then $\varphi_E(x)=\varphi_M(m)+\varphi_L(f)\varphi_E(s(1))= \varphi_M(m)+\varphi_L(f)(\varphi_E(s(1))-s(1)) +\varphi_L(f)s(1).$ In particular, $E$ is isomorphic to $E_v$ with $v= \varphi_E(s(1))-s(1).$
\end{proof}
\begin{lem}
	\label{lem:phiquasiiso}
	Let $M$ be a projective $\varphi_L$-module over $\cR_A$ with model over $[r,1)$ and let $s \in [r^{1/q},1).$ Then the canonical morphism $$[M^{[r,1)}\xrightarrow{\varphi_L-1} M^{[r^{1/q},1)}] \to[ M^{[r,s]}\xrightarrow{\varphi_L-1} M^{[r^{1/q},s]}] $$ induced by the restrictions $M^{[r,1)} \to M^{[r,s]}$ (resp. $M^{[r^{1/q},1)} \to M^{[r^{1/q},s]}$ ) is a quasi-isomorphism. 
\end{lem}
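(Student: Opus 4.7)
The plan is to interpret the cohomology of both complexes as $\operatorname{Hom}$ and $\operatorname{Ext}^1$ in the categories of projective $\varphi_L$-modules over $\cR_A^{[r,1)}$ and $\cR_A^{[r,s]}$ respectively, and then deduce the claim from the equivalence of categories established in Lemma \ref{lem:phieq}.

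First I would invoke Remark \ref{rem:phiext0} with $I=[r,1)$ and $I=[r,s]$ to identify $H^0$ of the two complexes with $\operatorname{Hom}_\varphi(\cR_A^{[r,1)}, M^{[r,1)})$ and $\operatorname{Hom}_\varphi(\cR_A^{[r,s]}, M^{[r,s]})$ respectively. For $H^1$, Lemma \ref{lem:ext1phi} applies with $J = I \cap I^{1/q}$, which equals $[r^{1/q}, 1)$ for $I=[r,1)$ and $[r^{1/q}, s]$ for $I=[r,s]$ (using $s<1$); these match exactly the codomains of the differential $\varphi_L - 1$ in each complex and yield $H^1 \cong \operatorname{Ext}^1_\varphi(\cR_A^I, M^I)$. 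These identifications are functorial under the restriction of intervals, so the canonical morphism of complexes induces on $H^i$ precisely the restriction maps between the corresponding $\operatorname{Hom}$ and $\operatorname{Ext}^1$ groups.

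Next I would apply Lemma \ref{lem:phieq}: restriction is an equivalence of categories between projective $\varphi_L$-modules over $\cR_A^{[r,1)}$ and $\cR_A^{[r,s]}$, hence induces a bijection on $\operatorname{Hom}$ sets and on isomorphism classes of extensions. The relevant extensions are those of $\cR_A^I$ by $M^I$; by the construction in Lemma \ref{lem:ext1phi} the underlying $\cR_A^I$-module of $E_v$ is $M^I \oplus \cR_A^I$, which is projective, so we remain inside the subcategory to which the equivalence applies. Combined with the identifications from the previous paragraph, this shows that the canonical morphism is an isomorphism on both $H^0$ and $H^1$.

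The one point that deserves care is the naturality of the identification in Lemma \ref{lem:ext1phi}: one needs to check that restricting $E_v$ from $\cR_A^{[r,1)}$ to $\cR_A^{[r,s]}$ yields $E_{v|_{[r^{1/q},s]}}$ and that this agrees with the degree-one component of the canonical restriction morphism of complexes. This is immediate from unwinding the explicit formula for the Frobenius on $E_v$, since both the underlying module $M^I \oplus \cR_A^I$ and the semilinear map $(m,r) \mapsto (\varphi_M(m)+\varphi_L(r)v, \varphi_L(r))$ commute with base change of intervals. I do not expect this bookkeeping to present a serious obstacle, since the genuine technical content has already been absorbed into Lemma \ref{lem:phieq}.
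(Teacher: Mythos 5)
Your proposal is correct and follows exactly the route the paper takes: the paper's proof is the one-line citation of Remark \ref{rem:phiext0}, Lemma \ref{lem:ext1phi} and Lemma \ref{lem:phieq}, identifying $H^0$ and $H^1$ of both complexes with $\operatorname{Hom}_\varphi$ and $\operatorname{Ext}^1_\varphi$ and transporting them through the equivalence of categories. Your additional checks (that $J=I\cap I^{1/q}$ matches the codomains of $\varphi_L-1$, that the extensions $E_v$ are projective $\varphi_L$-modules, and that the identifications are natural in the interval) are precisely the bookkeeping the paper leaves implicit.
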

\begin{proof}
	The statement follows from \ref{lem:phieq} together with \ref{rem:phiext0} and \ref{lem:ext1phi}.
\end{proof}
\begin{lem}
	\label{lem:complexcomputecohomology}
	Let $M$ be an $L$-analytic $(\varphi_L,\Gamma_L)$-module over $\cR_A$ with model over $[r_0,1).$ Then there exists $1>r_1\geq r_0$ such that for any $1>r \geq r_1$ the cohomology $H^i_{\varphi_L,Z}(M)$ is computed by the complex 
	$$C_{\varphi_L,Z,[r,s]}:M^{[r,s]} \xrightarrow{(\varphi_L-1,Z)} M^{[r^{1/q},s]}\oplus M^{[r,s]} \xrightarrow{Z \oplus (1-\varphi_L)} M^{[r^{1/q},s]}$$ for any $s \in [r^{1/q},1).$
\end{lem}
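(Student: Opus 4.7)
The plan is to verify that both natural restriction maps
\[
C_{\varphi_L,Z,[r,1)}(M) \to C_{\varphi_L,Z}(M) \quad\text{and}\quad C_{\varphi_L,Z,[r,1)}(M) \to C_{\varphi_L,Z,[r,s]}(M)
\]
are quasi-isomorphisms for $r$ sufficiently large; composing (resp.\ factorising through) these then yields the desired statement.

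The key observation is that each of these Herr-style complexes is by construction the total complex of a $2\times 2$ bicomplex whose horizontal differential is $\varphi_L-1$ and whose vertical differential is multiplication by $Z$ (with the standard sign arranged so that squares anticommute). Since $Z\in D(\Gamma_L,K)$ acts through the $\Gamma_L$-action, it commutes with $\varphi_L$ and with all interval restrictions, so the restriction maps descend to morphisms of bicomplexes. A standard homological algebra fact -- either the spectral sequence of the bicomplex filtered by columns, or equivalently a direct mapping cone computation -- states that a morphism of such bicomplexes which is a row-wise quasi-isomorphism induces a quasi-isomorphism on total complexes; this collapse is automatic since the bicomplexes here have only two rows. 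Thus the problem reduces to showing that the two restrictions in question are quasi-isomorphisms on each of the $\varphi_L-1$ rows.

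For the restriction from $[r,1)$ to $[r,s]$, each row is precisely the map $[M^{[r,1)}\xrightarrow{\varphi_L-1} M^{[r^{1/q},1)}] \to [M^{[r,s]}\xrightarrow{\varphi_L-1} M^{[r^{1/q},s]}]$, and this is a quasi-isomorphism by Lemma \ref{lem:phiquasiiso}. For the restriction from $[r,1)$ to $\cR_A$, each row becomes $[M^{[r,1)}\xrightarrow{\varphi_L-1} M^{[r^{1/q},1)}] \to [M\xrightarrow{\varphi_L-1} M]$. By Remark \ref{rem:phiext0} and Lemma \ref{lem:ext1phi}, the $H^0$ of such a complex is the group of $\varphi_L$-equivariant homomorphisms and the $H^1$ is the group of extensions in the relevant category of $\varphi_L$-modules. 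Using the equivalence from Lemma \ref{lem:phieq} together with the definition of $(\varphi_L,\Gamma_L)$-modules over $\cR_A$ as base changes of models, these Hom and Ext groups match at the two levels.

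The main obstacle is the second row-wise quasi-isomorphism, essentially the statement that $\varphi_L$-cohomology of $M$ is computed just as well by any of its models over $[r,1)$. This requires matching the base change $\cR_A\otimes_{\cR_A^{[r,1)}}-$ with the equivalence of categories from Lemma \ref{lem:phieq}, using compatibility of $\mathrm{Hom}_\varphi$ and $\mathrm{Ext}^1_\varphi$ with filtered colimits of compatible models $M=\varinjlim_{r'\to 1^-} M^{[r',1)}$. A sufficient choice of threshold is $r_1=r_0$, since all the cited results apply throughout $r\geq r_0$.
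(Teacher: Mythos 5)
Your treatment of the comparison $C_{\varphi_L,Z,[r,1)}(M)\to C_{\varphi_L,Z,[r,s]}(M)$ is fine and is exactly what the paper does: view both sides as total complexes of two-row bicomplexes and reduce, via the acyclic assembly lemma, to the row-wise quasi-isomorphism supplied by Lemma \ref{lem:phiquasiiso}. The gap is in the other half. For $C_{\varphi_L,Z,[r,1)}(M)\to C_{\varphi_L,Z}(M)$ your row-wise reduction requires $[M^{[r,1)}\xrightarrow{\varphi_L-1}M^{[r^{1/q},1)}]\to[M\xrightarrow{\varphi_L-1}M]$ to be a quasi-isomorphism, and nothing you cite delivers this: Lemma \ref{lem:phieq} compares $\varphi_L$-modules over $\cR_A^{[r,1)}$ with those over $\cR_A^{[r,s]}$, not with $\cR_A$, and ``compatibility of $\operatorname{Hom}_\varphi$ and $\operatorname{Ext}^1_\varphi$ with filtered colimits'' only gives $H^i(\varinjlim C_n)=\varinjlim H^i(C_n)$; what you actually need is that the transition maps $H^i_\varphi(M^{[r^{1/q^n},1)})\to H^i_\varphi(M^{[r^{1/q^{n+1}},1)})$ are isomorphisms, which is precisely the assertion to be proved, not a formal consequence. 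Worse, the $\varphi_L$-rows alone should not be expected to be quasi-isomorphic across the change of interval: after the standard homotopy identifying restriction with $\varphi_L$, the defect is measured by the summand $(M^{[r^{1/q},1)})^{\psi=0}$ in the decomposition $M^{[r^{1/q},1)}=\varphi_L(M^{[r,1)})\oplus(M^{[r^{1/q},1)})^{\psi=0}$, and this summand is large. It only becomes cohomologically invisible in the \emph{total} complex, because $Z$ acts invertibly on $(M^{[r,1)})^{\psi=0}$ for $r\geq r_1$ --- that is Theorem \ref{thm:Zinvertible}, which you never invoke and which is the entire reason the statement carries a threshold $r_1$. Your closing claim that $r_1=r_0$ suffices is therefore unjustified and in general false.

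The paper's argument for this half is: write $C_{\varphi_L,Z}(M)$ as the filtered colimit of the $C_{\varphi_L,Z,[r_0^{1/q^n},1)}(M)$ and show each transition map $C_{\varphi_L,Z,[r,1)}\to C_{\varphi_L,Z,[r^{1/q},1)}$ is a quasi-isomorphism. This is done by exhibiting an explicit homotopy between the restriction and the map $\varphi_L$ applied in each degree, and then showing $\varphi_L$ is a quasi-isomorphism using the left inverse $\Psi=\tfrac{\pi_L}{q}\psi_{LT}$: the complement of $\varphi_L(C_{\varphi_L,Z,[r,1)})$ is built out of $(M^{[\cdot,1)})^{\psi=0}$, where $Z$ is invertible for $r\geq r_1$, and a direct check then kills $H^0$, $H^1$ and $H^2$ of that complement. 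If you want to salvage your bicomplex framework for this step, you would have to do the column-wise (i.e.\ $Z$-direction) analysis on the $\psi=0$ part, which is exactly where Theorem \ref{thm:Zinvertible} enters.
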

\begin{proof}
	The complex computing the cohomology of $M$ is the direct limit of the corresponding complexes for $M^{[r,1)}$ as $r \to 1.$ By a cofinality argument we may take the colimit over ${r_0}^{1/q^n}$ with $n \to \infty.$ We hence need to show that the restriction maps (labelled as $\id$ below)
	$$	\begin{tikzcd}
		{C_{\varphi,Z,[r,1)}:} \arrow[d, "\id"] & {M^{[r,1)}} \arrow[r] \arrow[d, "\id"] & {M^{[r^{1/q},1)}\oplus M^{[r,1)}} \arrow[r] \arrow[d, "\id"] & {M^{[r^{1/q},1)}} \arrow[d, "\id"] \\
		{C_{\varphi,Z,[r^{1/q},1)}:}                & {M^{[r^{1/q},1)}} \arrow[r]                & {M^{[r^{1/q^2},1)}\oplus M^{[r^{1/q},1)}} \arrow[r]                    & {M^{[r^{1/q^2},1)}}                   
	\end{tikzcd}$$
	induce quasi-isomorphisms. Following \cite{bellovin2021cohomology} we do so in two steps. We show that the restriction above is homotopic to the map 
	$$\begin{tikzcd}
		{C_{\varphi,Z,[r,1)}:} \arrow[d, "\varphi_L"] & {M^{[r,1)}} \arrow[r] \arrow[d, "\varphi_L"] & {M^{[r^{1/q},1)}\oplus M^{[r,1)}} \arrow[r] \arrow[d, "\varphi_L"] & {M^{[r^{1/q},1)}} \arrow[d, "\varphi_L"] \\
		{C_{\varphi,Z,[r^{1/q},1)}:}                & {M^{[r^{1/q},1)}} \arrow[r]                & {M^{[r^{1/q^2},1)}\oplus M^{[r^{1/q},1)}} \arrow[r]                    & {M^{[r^{1/q^2},1)}}                   
	\end{tikzcd}$$
	and the latter induces the desired quasi-isomorphism. One can check that the maps $$\operatorname{pr}_1\colon M^{[r^{1/q},1)}\oplus M^{[r,1)} \to M^{[r^{1/q}]}$$ and $$(0,-\id)\colon M^{[r^{1/q},1)}\to M^{[r^{1/q},1)}\oplus M^{[r,1)}$$ induce a homotopy between $\varphi_L$ and $\id$. In order to see that $\varphi_L$ induces a quasi-isomorphism consider the left-inverse $\Psi:C_{\varphi_L,Z,[r^{1/q,1)})} \to C_{\varphi_L,Z,[r,1)}$ obtained by applying $\frac{\pi_L}{q}\psi_{LT}$ in each degree of the complex. By \ref{thm:Zinvertible} there exists $r_1$ such that for any $r \geq r_1$ the action of $Z$ on the kernel of $\Psi$ is invertible with continuous inverse (note that the constant $\frac{\pi_L}{q}$ does not change the kernel). In particular we obtain a decomposition $C_{\varphi_L,Z,[r^{1/q},1)} = \varphi_L(C_{\varphi_L,Z,[r,1)}) \oplus \ker(\Psi)$ as complexes of $A$-modules. We claim that the $(Z,\varphi_L)$-cohomology of the second summand vanishes, which implies the desired result. The vanishing of $H^0$ and $H^2$ is an immediate consequence of $Z$ being invertible on the kernel of $\psi_{LT}.$ We prove that $H^1_{\varphi_L,Z}((M^{[r,1)})^{\psi_{LT}=0})$ vanishes. Consider $(a,b)$ that satisfy $Za +(1-\varphi_L)b = 0$ then $a = Z^{-1}(\varphi_L-1)b=(\varphi_L-1)Z^{-1}b$ and obviously $b=ZZ^{-1}b.$ Setting $x:=Z^{-1}b$ we see that $(a,b) = ((\varphi_L-1)x,Zx)$ vanishes in $H^1.$ So far we proved that the cohomology is computed by the complex $C_{\varphi_L,Z,[r,1)}$ $r \geq r_1.$ Consider the canonical morphism $C_{\varphi_L,Z,[r,1)} \to C_{\varphi_L,Z,[r,s]}.$ Up to signs these complexes are the total complexes of the double complexes 
	$$\begin{tikzcd}
		M^{[r,1)} \arrow[r, "\varphi_L-1"] \arrow[d, "Z"] & M^{[r^{1/q},1)} \arrow[d, "-Z"] \\
		M^{[r,1)} \arrow[r, "\varphi_L-1"]                & M^{[r^{1/q},1)}               
	\end{tikzcd}$$
	(resp. for $M^{[r,s]}$)).
	By the Acyclic Assembly Lemma \cite[Lemma 2.7.3]{Weibel} together with \ref{lem:phiquasiiso} we conclude that the total complexes are quasi-isomorphic.
\end{proof}
\begin{defn}
	Let $R$ be a topological ring complete with respect to a submultiplicative semi-norm containing a topologically nilpotent unit. A map between two $R$-Banach modules $f\colon M \to N$ is called \textbf{completely continuous} if there exists a sequence of finitely generated $R$-submodules $N_i \subset N$ such that operator norms of $f_i\colon M \xrightarrow{f} N \to N/N_i$ converge to zero with respect to fixed Banach norms on $M,N$ and the quotient seminorm on $N/N_i.$
\end{defn}
\begin{lem}
	\label{lem:compcont}
	Let $R \to S$ be a bounded morphism of Banach-algebras over $A,$ that is completely continuous when viewing $R,S$ as Banach modules over $A.$ Let $M$ be a finitely generated $R$-Banach module such that $S \otimes_RM$ is an $S$-Banach module, then $$M \to M\otimes_RS$$ is completely continuous as a morphism of $A$-Banach modules.
\end{lem}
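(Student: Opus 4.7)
The strategy is to reduce the assertion for $M$ to the assumed complete continuity of $R \to S$ by choosing a finite set of generators of $M$ and using them to tensor the witnessing submodules of $S$ into $M \otimes_R S$.

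First I would fix a finite generating set $m_1,\dots,m_d$ of $M$ as an $R$-module. The induced surjection $R^d \twoheadrightarrow M$ is continuous and surjective between $R$-Banach modules, so by the non-archimedean open mapping theorem there exists a constant $C>0$ such that every $m \in M$ admits a representation $m = \sum_{j=1}^d r_j m_j$ with $\max_j \|r_j\|_R \leq C\|m\|_M$. Next I would invoke the hypothesis: complete continuity of the structure morphism $\phi\colon R \to S$ yields finitely generated $A$-submodules $N_i \subset S$ such that the operator norms $\|\bar\phi_i\|$ of the composites $\bar\phi_i\colon R \xrightarrow{\phi} S \twoheadrightarrow S/N_i$ tend to zero. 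Picking finite $A$-generators $n_{i,1},\dots,n_{i,k_i}$ of each $N_i$, I define
$$\widetilde{N}_i := \sum_{a=1}^{k_i}\sum_{j=1}^d A\cdot (m_j \otimes n_{i,a}) \subset M \otimes_R S,$$
which is manifestly a finitely generated $A$-submodule.

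It remains to estimate the operator norm of the composite $q_i\colon M \to M\otimes_R S \to (M\otimes_R S)/\widetilde{N}_i$. For $m\in M$, fix a decomposition $m=\sum r_j m_j$ as above, so that
$$m\otimes 1 = \sum_{j=1}^d m_j \otimes \phi(r_j)\quad \in M\otimes_R S.$$
For each $j$ and each $\varepsilon>0$ I can choose a lift $\tilde s_j \in S$ of $\phi(r_j) \bmod N_i$ with $\|\tilde s_j\|_S \leq \|\bar\phi_i(r_j)\|_{S/N_i}+\varepsilon \leq (\|\bar\phi_i\|+\varepsilon)\|r_j\|_R$. Since $\phi(r_j)-\tilde s_j \in N_i$, the element $m_j\otimes(\phi(r_j)-\tilde s_j)$ lies in $\widetilde{N}_i$, so modulo $\widetilde{N}_i$ we may replace $m\otimes 1$ by $\sum_j m_j\otimes \tilde s_j$. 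Using the tensor product norm this yields
$$\|q_i(m)\|_{(M\otimes_R S)/\widetilde{N}_i} \leq \max_j \|m_j\otimes 1\|\cdot (\|\bar\phi_i\|+\varepsilon)\cdot C\|m\|_M.$$
Letting $\varepsilon\to 0$ gives $\|q_i\| \leq C\cdot (\max_j\|m_j\otimes 1\|)\cdot \|\bar\phi_i\|$, which tends to zero, proving complete continuity.

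The only non-trivial input is the open mapping theorem, which is standard for non-archimedean Banach modules over a Banach algebra such as $A$; everything else is bookkeeping with the tensor product seminorm. The main conceptual point, and perhaps the easiest to miss, is that the natural candidate $\widetilde{N}_i$ must be taken as an $A$-submodule (not an $R$-submodule) so that its finite generation over $A$ is manifest from finite generation of $N_i$ over $A$ together with finite generation of $M$ over $R$.
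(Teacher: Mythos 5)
Your argument is correct and is essentially the standard reduction that the paper delegates to its citation of \cite[Remark 1.7]{kedlaya2016finiteness}: choose finitely many $R$-generators of $M$, apply the open mapping theorem to control the coefficients, and push the witnessing $A$-submodules $N_i\subset S$ forward to $\sum_j m_j\otimes N_i$. The only cosmetic imprecisions are that the norm on $M\otimes_RS$ is the given $S$-Banach module norm rather than a tensor product norm (so you should invoke boundedness of the $S$-action to estimate $\lVert m_j\otimes \tilde s_j\rVert$), and the $\varepsilon$ in your lift estimate should be additive rather than multiplicative; neither affects the conclusion.
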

\begin{proof}
	See \cite[Remark 1.7]{kedlaya2016finiteness}.
\end{proof}
\begin{lem}
	
	Let $[r',s'] \subset [r,s] \subset (0,1)$ with $r'>r,$ $s'<s$ and $r,r',s,s' \in \lvert K^\times \rvert.$ The natural inclusions $$\cR_A^{[0,s]} \to \cR_A^{[0,s']}$$ and $$\cR_A^{[r,s]} \to \cR_A^{[r',s']}$$ are completely continuous.  \footnote{The assumption $r,s,r',s' \in \lvert K^\times\rvert$ can be weakened to $r,s,r',s' \in \sqrt{\lvert K^\times\rvert}$ without any difficulties. In our application $K$ carries a non-discrete valuation and cofinality arguments allow us to choose suitable intervals.}
\end{lem}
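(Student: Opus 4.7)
The plan is to produce the required approximating submodules explicitly via truncation of Laurent/power series expansions, and bound the operator norms of the resulting quotient maps by a geometric sequence.

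First I would write out the concrete description of $\cR_A^I$ for a closed interval $I=[r,s]\subset(0,1)$ (resp.\ $I=[0,s]$): using $\cR_A^I = \cR_K^I \hat\otimes_F A$ together with Lemma \ref{lem:affbasechange}, and the fact that $\{T^n\}_{n\in\ZZ}$ (resp.\ $\{T^n\}_{n\geq0}$) is an orthonormal Schauder basis of $\cR_K^I$ for the Gauss-type norm $|\sum a_nT^n|_I = \sup_n|a_n|\max(r^n,s^n)$, every $f\in\cR_A^I$ has a unique convergent expansion $f=\sum_n a_nT^n$ with $a_n\in A$ and $|f|_I = \sup_n|a_n|\max(r^n,s^n)$. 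Under the inclusion, the coefficients of $f$ are unchanged; only the norm against which convergence is measured becomes stricter.

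Next I would define for each $k\in\NN$ the finitely generated $A$-submodule
\[
N_k := \bigoplus_{-k\leq n\leq k} A\cdot T^n \subset \cR_A^{[r',s']}
\]
(and $N_k := \bigoplus_{0\leq n\leq k}A\cdot T^n$ in the disk case), and estimate the operator norm of the composition $\iota_k\colon \cR_A^{[r,s]}\to\cR_A^{[r',s']}\to\cR_A^{[r',s']}/N_k$. The image of $f=\sum a_nT^n$ is represented by the tail $\sum_{|n|>k}a_nT^n$, so its quotient seminorm is at most $\sup_{|n|>k}|a_n|\max((r')^n,(s')^n)$. For $n>k$ one uses $(s')^n=s^n(s'/s)^n\le (s'/s)^k s^n$, and for $n<-k$ one uses $(r')^n=r^n(r'/r)^n\le(r/r')^k r^n$, both ratios being strictly less than $1$ by assumption. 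Hence
\[
\|\iota_k\|_{\mathrm{op}} \;\le\; \max\bigl((s'/s)^k,(r/r')^k\bigr)\ \xrightarrow[k\to\infty]{}\ 0,
\]
which is exactly the condition that $\iota$ is completely continuous. The disk case is identical but only involves the $(s'/s)^k$ factor.

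The only genuinely non-routine point is to justify the orthonormal-basis description of $\cR_A^I$ in the relative setting; I would handle this by noting that $\cR_K^I$ has $\{T^n\}$ as an orthonormal Schauder basis, and that completed projective tensor product of a Banach space with an orthonormal Schauder basis against $A$ produces precisely the $A$-valued series with coefficient norms weighted by the basis norms — a standard fact for Banach spaces with orthonormal bases in the non-archimedean setting. The hypothesis $r,r',s,s'\in |K^\times|$ ensures the Gauss norms in question are attained, so the identifications are strictly isometric, making the operator norm estimates above sharp rather than merely up to bounded distortion.
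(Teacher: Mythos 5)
Your proposal is correct and coincides with the paper's own argument: the paper likewise takes $N_i$ to be the $A$-submodule spanned by $T^{-i},\dots,T^i$ and bounds the quotient norm by $\max(r/r',s'/s)^{i+1}$ times the source norm, writing $\cR_A^{[r,s]}=A\langle T/\sigma,\rho/T\rangle$ for the explicit coefficientwise description you justify via the Schauder basis. The only cosmetic difference is that the paper obtains the description by choosing scalars $\rho,\sigma$ realising the radii rather than spelling out the tensor-product argument.
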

\begin{proof}
	The first case is proved analogously to the second case with a slightly simpler proof. By assumption there exist elements $\rho,\sigma,\rho',\sigma' \in K$ attaining the absolute values $r,s,r',s'$ respectively.   
	Write $\cR_A^{[r,s]} = A\langle T/\sigma,\rho/T \rangle$ and take as $N_i \subset \cR_A^{[r',s']}$ the subspace generated by the monomials $T^{-i},\dots, T^i.$ By expressing a series $f \in \cR_A^{[r,s]}$ as an element of $A\langle T/\sigma',\rho'/T \rangle$ and projecting modulo $N_i$ we obtain the estimate $$\lvert f\rvert_{A\langle T/\sigma',\rho'/T \rangle/N_i} \leq \max\left(\frac{r}{r'},\frac{s'}{s}\right)^{i+1} \lvert f\rvert_{A\langle T/\sigma,\rho/T \rangle}.$$ By our assumption on the intervals $\max\left(\frac{r}{r'},\frac{s'}{s}\right)<1$ and therefore the operator norms of the composed maps $\cR_A^{[r,s]} \to \cR_A^{[r',s']}/N_i$ tend to zero.
\end{proof}

\begin{lem}
	\label{lem:interval}
	Let $M^{[r,1)}$ be a $(\varphi_L,\Gamma_L)$-module over $\cR_A^{[r,1)}$ and let $0<r < r' \leq s'< s <1.$ Then the restriction $M^{[r,s]} \to M^{[r',s']}$ is completely continuous. 
\end{lem}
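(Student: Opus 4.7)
The plan is to reduce the statement to the preceding lemma about complete continuity of the inclusion of Robba rings by invoking Lemma \ref{lem:compcont}. Writing $M^{[r,s]} = \cR_A^{[r,s]} \otimes_{\cR_A^{[r,1)}} M^{[r,1)}$ and similarly for $M^{[r',s']}$, the transitivity of base change gives
\[
M^{[r',s']} = \cR_A^{[r',s']} \otimes_{\cR_A^{[r,s]}} M^{[r,s]},
\]
and the restriction map $M^{[r,s]} \to M^{[r',s']}$ is precisely the canonical base-change morphism $m \mapsto 1 \otimes m$ along the inclusion $R := \cR_A^{[r,s]} \hookrightarrow \cR_A^{[r',s']} =: S$.

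First I would observe that $M^{[r,s]}$ is a finitely generated $R$-module (by \ref{lem:coadmissibleuniform} applied to the coadmissible $\cR_A^{[r,1)}$-module $M^{[r,1)}$, or directly from the fact that projective $\varphi_L$-modules admit uniform generation on closed intervals) and that both $M^{[r,s]}$ and $M^{[r',s']}$ are Banach modules over $R$ and $S$ respectively (endowed with their canonical topologies as finitely generated modules over affinoid algebras). By a cofinality argument, it suffices to treat the case where $r,r',s',s$ lie in $\abs{K^\times}$; indeed, one can always find a closed intermediate interval $[r'',s''] \subset [r,s]$ with $[r',s'] \subset [r'',s''] \subset [r,s]$ and all endpoints in $\abs{K^\times}$, so that the restriction factors as $M^{[r,s]} \to M^{[r'',s'']} \to M^{[r',s']}$.

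The preceding lemma then asserts that the ring inclusion $R \to S$ is completely continuous as a morphism of $A$-Banach modules, and it is visibly bounded. All hypotheses of Lemma \ref{lem:compcont} are verified, so the conclusion that $M^{[r,s]} \to M^{[r,s]} \otimes_R S = M^{[r',s']}$ is completely continuous as a morphism of $A$-Banach modules follows directly. I do not anticipate any genuine obstacle: the only subtle point is matching the setup of Lemma \ref{lem:compcont}, namely ensuring that the target is naturally an $S$-Banach module (which is immediate since $M^{[r',s']}$ is a finitely generated module over the affinoid algebra $S$) and that the source is finitely generated over $R$ (which follows from the definition of a $(\varphi_L,\Gamma_L)$-module as being projective, hence finitely presented and coadmissible, with uniformly finite generation on closed subintervals).
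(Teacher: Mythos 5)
Your proof is correct and follows the same route as the paper: identify $M^{[r',s']}$ with $\cR_A^{[r',s']}\otimes_{\cR_A^{[r,s]}}M^{[r,s]}$, note finite generation over the Banach algebra $\cR_A^{[r,s]}$, and apply Lemma \ref{lem:compcont} together with the complete continuity of the inclusion of Robba rings. Your extra care about the endpoints lying in $\lvert K^\times\rvert$ (handled by factoring through intermediate intervals, where one really wants the completely continuous step sandwiched between bounded restrictions) addresses a point the paper's proof passes over silently.
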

\begin{proof}
	By definition $M^{[r,s]}$ is a finitely generated module over the Banach algebra $\cR_A^{[r,s]}$ and $M^{[r',s']} = \cR_A^{[r',s']} \otimes_{\cR_A^{[r,s]}} M^{[r,s]}.$ The result follows from \ref{lem:compcont} because the natural inclusion $\cR_A^{[r,s]} \to \cR_A^{[r',s']}$ is completely continuous. 
\end{proof}
\begin{thm}
	\label{thm:cohfinite}
	Let $A$ be $K$-affinoid and let $M$ be an $L$-analytic $(\varphi_L,\Gamma_L)$-module over $\cR_A.$ Then the cohomology groups 
	$$H^i_{\varphi_L,Z}(M)$$ are finitely generated over $A.$
\end{thm}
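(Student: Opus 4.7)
The plan is to reduce the computation of $H^i_{\varphi_L,Z}(M)$ to a situation to which the finiteness results of \cite{kedlaya2016finiteness} can be applied, mimicking the strategy of \cite{bellovin2021cohomology}. By Lemma \ref{lem:complexcomputecohomology} we may fix some $r_1 \in [r_0,1)$ and replace $C_{\varphi_L,Z}(M)$ by the bounded complex $C_{\varphi_L,Z,[r,s]}(M)$ for any $r \in [r_1,1)$ and any $s \in [r^{1/q},1)$. The terms of this complex are finitely generated $A$-Banach modules, so cohomology finiteness is a question about a bounded complex of Banach modules over $A$.

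Next I would choose strictly nested intervals $[r',s'] \subset [r,s] \subset (r_L,1)$ with $r \geq r_1$, $r < r'$, $s' < s$, and $r^{1/q} \leq s'$, with all endpoints in $\abs{K^\times}$. The restriction maps on sections induce a morphism of bounded complexes of $A$-Banach modules
\[
\operatorname{res}\colon C_{\varphi_L,Z,[r,s]}(M) \longrightarrow C_{\varphi_L,Z,[r',s']}(M),
\]
and by Lemma \ref{lem:complexcomputecohomology} both sides compute $H^i_{\varphi_L,Z}(M)$, so $\operatorname{res}$ is a quasi-isomorphism. Lemma \ref{lem:interval} tells us that each component of $\operatorname{res}$ is a completely continuous morphism of $A$-Banach modules, since all relevant restrictions $M^{[r,s]} \to M^{[r',s']}$ and $M^{[r^{1/q},s]} \to M^{[r'^{1/q},s']}$ come from strictly nested intervals.

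At this point I would invoke the finiteness machinery of \cite{kedlaya2016finiteness}: given a quasi-isomorphism between bounded complexes of $A$-Banach modules all of whose components are completely continuous, the cohomology is a finite $A$-module (this is essentially the content of \cite[Theorem 1.1.9 / Proposition 1.2.6]{kedlaya2016finiteness}, applied degree by degree). Since $A$ is Noetherian, the cohomology is then automatically finitely generated.

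The step I expect to be the main obstacle is verifying that the abstract hypotheses of the Kedlaya finiteness theorem are literally satisfied in our framework: namely that the $A$-Banach structures on $M^{[r,s]}$ (induced from the affinoid Banach norm on $A$ and the Banach structure on $\cR_A^{[r,s]}$) fit the axiomatic setup (submultiplicative, existence of a topologically nilpotent unit, compatibility of the Banach module structures on source and target of $\operatorname{res}$), and that the differential and the comparison map jointly satisfy the ``fredholm-type'' conditions. Once this bookkeeping is in place, the finiteness is immediate.
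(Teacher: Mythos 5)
Your proposal is correct and follows essentially the same route as the paper: reduce via Lemma \ref{lem:complexcomputecohomology} to the complexes $C_{\varphi_L,Z,[r,s]}(M)$, compare two nested intervals by a restriction map that is simultaneously a quasi-isomorphism and (by Lemma \ref{lem:interval}) completely continuous in each degree, and then apply the finiteness criterion of \cite{kedlaya2016finiteness} (the paper cites Lemma 1.10 there) together with the Noetherianity of $A$. The bookkeeping you flag as a potential obstacle is exactly what the preceding lemmas on $A$-Banach module structures and complete continuity of $\cR_A^{[r,s]} \to \cR_A^{[r',s']}$ are set up to handle, so no further work is needed.
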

\begin{proof}
	By Lemma \ref{lem:complexcomputecohomology} the cohomology can be computed on the level of $M^{[r,s]}$ for $r\geq r_1$ and any $s \in [r^{1/q},1).$ Choose any subinterval $[r',s'] \subset [r,s]$ like in Lemma \ref{lem:interval} satisfying in addition $s \in [r'^{1/q},1).$
	The restriction $M^{[r,s]} \to M^{[r',s']}$ induces completely continuous maps in each degree of $C_{\varphi_L,Z,[r,s]} \to C_{\varphi_L,Z,[r',s']}$ that are quasi-isomorphisms by Lemma \ref{lem:complexcomputecohomology}.  By Lemma 1.10 in \cite{kedlaya2016finiteness} the cohomology groups are contained in a finitely generated $A$-module and hence themselves finitely generated because $A$ is Noetherian.
	
\end{proof}
\begin{thm}
	\label{thm:perfect}
	Let $A,B$ be $K$-affinoid and let $M$ be an $L$-analytic $(\varphi_L,\Gamma_L)$-module over $\cR_A.$ Let $f\colon A  \to B$ be a morphism of $K$-affinoid algebras. Then:
	\begin{enumerate}[(1)]
		\item $C_{\varphi_L,Z}(M) \in \mathbf{D}^{[0,2]}_{\text{perf}}(A).$
		\item The natural morphism $C_{\varphi_L,Z}(M) \otimes_{A}^\mathbb{L} B \to C_{\varphi_L,Z}(M \hat{\otimes}_A B)$ is a quasi-isomorphism.
	\end{enumerate}
\end{thm}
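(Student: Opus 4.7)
For (1), the plan is to combine the reduction of Lemma \ref{lem:complexcomputecohomology} with the general perfectness machinery of Kedlaya from \cite{kedlaya2016finiteness}, extracting the stronger conclusions of that framework beyond the mere finiteness already used in Theorem \ref{thm:cohfinite}. I would pick $r \geq r_1$ and $s \in [r^{1/q},1)$ as permitted by Lemma \ref{lem:complexcomputecohomology}, and then a strictly smaller sub-interval $[r',s'] \subset [r,s]$ with $r < r'$, $s' < s$, $s' \geq (r')^{1/q}$, and with boundary values in $\sqrt{\lvert K\rvert^\times}$; such a choice is possible by cofinality. By Lemma \ref{lem:complexcomputecohomology} both $C_{\varphi_L,Z,[r,s]}(M)$ and $C_{\varphi_L,Z,[r',s']}(M)$ compute $H^*_{\varphi_L,Z}(M)$, so the restriction is a quasi-isomorphism of complexes of finitely generated Banach modules over $A$ whose components are completely continuous in each degree by Lemma \ref{lem:interval}. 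Applying the perfectness criterion of \cite{kedlaya2016finiteness} (of which the finiteness half was already invoked in Theorem \ref{thm:cohfinite}) then yields that $C_{\varphi_L,Z}(M)$ is quasi-isomorphic to a bounded complex of finite projective $A$-modules concentrated in degrees $[0,2]$.

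For (2), I would first observe that $M\hat{\otimes}_AB$ inherits the structure of an $L$-analytic $(\varphi_L,\Gamma_L)$-module over $\cR_B$ with model over the same $[r_0,1)$, so that the same choice of $r,s$ as in (1) is permitted by Lemma \ref{lem:complexcomputecohomology} over $B$. By Lemma \ref{lem:affbasechange} one has $\cR_A^{[r,s]}\hat{\otimes}_AB \cong \cR_B^{[r,s]}$, and the finite generation of $M^{[r,s]}$ over the Noetherian Banach algebra $\cR_A^{[r,s]}$ yields $M^{[r,s]}\hat{\otimes}_AB \cong (M\hat{\otimes}_AB)^{[r,s]}$, hence an identification $C_{\varphi_L,Z,[r,s]}(M)\hat{\otimes}_AB \cong C_{\varphi_L,Z,[r,s]}(M\hat{\otimes}_AB)$ of complexes of Banach $B$-modules. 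The base-change half of Kedlaya's framework, applied to the completely continuous quasi-isomorphism already used in (1), then identifies the derived base change $C_{\varphi_L,Z}(M)\otimes_A^{\mathbb{L}}B$ with this underived completed base change, which by Lemma \ref{lem:complexcomputecohomology} over $B$ coincides with $C_{\varphi_L,Z}(M\hat{\otimes}_AB)$ in $\mathbf{D}(B)$.

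The main obstacle will be to match our concrete setup cleanly with the axiomatic hypotheses of \cite{kedlaya2016finiteness}, in particular to verify that the perfect representative produced in (1) has the property that underived completed base change computes $\otimes_A^{\mathbb{L}}B$. This should follow from the flatness of $M^{[r,s]}$ inherited from projectivity as a $\varphi_L$-module together with the Noetherian hypothesis on $A$ and $B$, but the bookkeeping between the analytic and derived-algebraic formalisms (Banach completions versus algebraic tensor products, and passage from the interval-level complex to $C_{\varphi_L,Z}(M)$) will require careful verification against Kedlaya's axioms.
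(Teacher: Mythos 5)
Your reduction to the interval-level complexes and the completely continuous restriction map is exactly how the paper obtains finiteness (Theorem \ref{thm:cohfinite}), but the jump from there to perfectness in degrees $[0,2]$ is a genuine gap. The tool actually available from \cite{kedlaya2016finiteness} (Lemma 1.10, the one invoked in Theorem \ref{thm:cohfinite}) only yields that the cohomology groups are contained in finitely generated $A$-modules; there is no black-box ``perfectness criterion'' there that produces a representative concentrated in $[0,2]$. Over the Noetherian ring $A$, finitely generated cohomology of a bounded complex only gives that $C_{\varphi_L,Z}(M)$ lies in $\mathbf{D}^-_{\text{perf}}(A)$, i.e., is quasi-isomorphic to a bounded-above complex of finite projectives. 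To truncate this to degrees $[0,2]$ one must know the complex has finite Tor-dimension, and the paper gets this from the flatness of $\cR_A$ over $A$ (the analogue of \cite[Corollary 2.1.5]{KPX}): the terms of $C_{\varphi_L,Z}(M)$ are flat $A$-modules, so by \cite[Lemma 4.1.3]{KPX} the complex is quasi-isomorphic to $X \to P_1 \to P_2$ with $P_1,P_2$ finite projective and $X=\ker(P_1\to P_2)$ flat and finitely presented, hence projective. You never use this flatness, and without it the concentration in $[0,2]$ does not follow from finiteness alone.

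For (2) your interval-level identification $C_{\varphi_L,Z,[r,s]}(M)\hat{\otimes}_AB\cong C_{\varphi_L,Z,[r,s]}(M\hat{\otimes}_AB)$ is reasonable, but the step you yourself flag as ``requiring careful verification'' --- that the derived algebraic base change $\otimes_A^{\mathbb{L}}B$ agrees with the completed base change --- is precisely the content of the proof, and it is not supplied by any base-change statement in \cite{kedlaya2016finiteness}. The paper resolves it by the d\'evissage of \cite[Lemma 4.1.5]{KPX}: both sides lie in $\mathbf{D}^-_{\text{perf}}(B)$ (here part (1), and the finiteness statement applied over $B$, enter), so it suffices to check the map after further derived base change to residue fields, where Lemma \ref{lem:finitecomplete} identifies $\cR_A\otimes_AA/\mathfrak{m}$ with $\cR_{A/\mathfrak{m}}$ and hence the algebraic with the completed tensor product. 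You should either carry out this d\'evissage or give a direct argument for the derived/completed comparison; as written, the proposal records the difficulty without resolving it.
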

\begin{proof}
	By \ref{thm:cohfinite} the cohomology groups of $C_{\varphi_L,Z}(M)$ are finitely generated and because $A$ is Noetherian quasi-isomorphic to a bounded above complex of finitely generated projective $A$-modules by \cite[\href{https://stacks.math.columbia.edu/tag/05T7}{Tag 05T7}]{stacks-project} using that the category of finitely generated modules over a Noetherian ring is abelian. By the analogue of \cite[Corollary 2.1.5]{KPX} $\cR_A$ is flat over $A$ and hence $C_{\varphi_L,Z}(M)$ consists of flat $A$-modules. Combining both points we see that $C_{\varphi_L,Z}(M)$ is quasi-isomorphic to a complex $$ X \to P_1 \to P_2,$$ where $P_1,P_2$ are projective and $X = \ker(P_1 \to P_2)$ is flat using \cite[Lemma 4.1.3]{KPX}. Then $X$ is a finitely generated submodule of a finitely generated module over a Noetherian ring and hence even finitely presented. We conclude that $X$ is finitely generated and projective by \cite[\href{https://stacks.math.columbia.edu/tag/00NX}{Tag 00NX}]{stacks-project} and therefore $C_{\varphi_L,Z}(M)\in \mathbf{D}^{[0,2]}_{\text{perf}}(A).$
	For the second statement the proof of \cite{KPX} carries over verbatim using \ref{lem:finitecomplete}.
\end{proof}
\begin{rem} \label{rem:specificinterval} More precisely \ref{thm:perfect}, \ref{thm:cohfinite} hold for $C_{\varphi_L,Z,I}(M)$ for any $I=[r,s]$ or $I=[r,1)$ with $r \geq r_1$ and $s \in [r^{1/q},1).$
\end{rem}
\begin{proof} The analogues of
	\ref{thm:cohfinite} and \ref{thm:perfect} (1) were proved implicitly. For \ref{thm:perfect} (2) the same proof works when we replace $M$ by $M^{I}$ the only subtlety being that in order to apply \cite[4.1.5]{KPX} we need $C_{\varphi_L,Z,I}(M\hat{\otimes}_AB) \in \mathbf{D}^{-}_{perf}(B),$ which we deduce by using  \ref{thm:cohfinite} requiring $r \geq r_1,$ with $r_1$ depending on $M$! One can check that the same $r_1$ works for $M \hat{\otimes}_A B$ using that the $(\varphi_L,\Gamma_L)$-action on $B$ is trivial.
\end{proof}
For the full analytic Herr cohomology we obtain a variant of \ref{thm:perfect}. Because the cohomology is defined by taking the invariants of the $(\varphi_L,Z)$-cohomology we can not formulate similar perfectness results (outside of the case $e<p-1$) and we only obtain a base change result in the flat case. 
\begin{rem}
	\label{rem:invariants}
	Let $R \to S$ be a flat morphism of commutative rings, $G$ be a finite group and $W$ an $R[G]$-module. Then 
	$(S\otimes_RW)^G = S\otimes_R W^G$
\end{rem}
\begin{proof}We can rewrite $W^G$ as $W^G =\operatorname{ker}(W\xrightarrow{\bigoplus(g-1)} \bigoplus_{g \in G}W)$ and apply the exact functor $S \otimes_R -.$
\end{proof}
\begin{cor}
	\label{kor:vollecohomo}
	Let $A,B$ be $K$-affinoid and let $M$ be an $L$-analytic $(\varphi_L,\Gamma_L)$-module over $\cR_A.$ Let $f\colon A  \to B$ be a flat morphism of $K$-affinoid algebras. Then:
	\begin{enumerate}[(1)]
		\item The groups $H^i_{\varphi_L,D(\Gamma_L,A)}(M)=H^i_{\varphi_L,Z}(M)^{\Gamma_L}$ are finitely generated and vanish for $i \neq 0,1,2.$
		\item The natural morphism $H^i_{\varphi_L,D(\Gamma_L,A)}(M) \otimes_A B \to H^i_{\varphi_L,D(\Gamma_L,A)}(M\hat{\otimes}_AB)$ is an isomorphism.
	\end{enumerate}
\end{cor}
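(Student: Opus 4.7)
The plan is to reduce everything to the already-established properties of the $(\varphi_L,Z)$-cohomology (Theorem \ref{thm:cohfinite} and Theorem \ref{thm:perfect}) together with the fact that the residual action on $H^i_{\varphi_L,Z}(M)$ factors through the finite group $\Gamma_L/\Gamma_n,$ so that taking invariants is well-behaved.

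For (1), I recall that by Lemma \ref{lem:indep} the definition $H^i_{\varphi_L,D(\Gamma_L,A)}(M):=H^i_{\varphi_L,Z}(M)^{\Gamma_L}$ is independent of the choice of $n\geq n_0,$ and by the remark preceding it the $\Gamma_L$-action on $H^i_{\varphi_L,Z}(M)$ factors through the finite quotient $\Gamma_L/\Gamma_n.$ Hence $H^i_{\varphi_L,D(\Gamma_L,A)}(M)$ is the submodule of $\Gamma_L/\Gamma_n$-invariants in the finitely generated $A$-module $H^i_{\varphi_L,Z}(M)$ (Theorem \ref{thm:cohfinite}). Since $A$ is Noetherian, any submodule of a finitely generated module is finitely generated, so (1) for finite generation is immediate. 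The vanishing outside of degrees $0,1,2$ follows simply because $C_{\varphi_L,Z}(M)$ is already concentrated in degrees $[0,2].$

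For (2), the flatness of $A\to B$ turns the derived base change statement from Theorem \ref{thm:perfect}(2) into an honest isomorphism on cohomology: the quasi-isomorphism $C_{\varphi_L,Z}(M)\otimes_A^{\mathbb{L}}B \to C_{\varphi_L,Z}(M\hat{\otimes}_AB)$ combined with $-\otimes_A^{\mathbb{L}} B = -\otimes_A B$ yields a natural isomorphism
\[
H^i_{\varphi_L,Z}(M)\otimes_A B \xrightarrow{\cong} H^i_{\varphi_L,Z}(M\hat{\otimes}_A B),
\]
and by naturality this isomorphism is $\Gamma_L$-equivariant (with the trivial action on $B$). It then remains to take $\Gamma_L/\Gamma_n$-invariants on both sides, for which I apply Remark \ref{rem:invariants} to the flat morphism $A\to B,$ the finite group $G=\Gamma_L/\Gamma_n,$ and the $A[G]$-module $W=H^i_{\varphi_L,Z}(M).$ This gives
\[
\bigl(H^i_{\varphi_L,Z}(M)\otimes_A B\bigr)^{\Gamma_L/\Gamma_n} \cong H^i_{\varphi_L,Z}(M)^{\Gamma_L/\Gamma_n}\otimes_A B,
\]
and composing with the previous isomorphism (taking invariants on the right) produces the desired comparison.

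I do not anticipate a serious obstacle: the only point that warrants care is checking that the base change isomorphism from Theorem \ref{thm:perfect}(2) is $\Gamma_L$-equivariant, but this is built into the construction, since $\Gamma_L$ acts by endomorphisms of the $(\varphi_L,\Gamma_L)$-module $M$ (and trivially on $B$) and the comparison map of complexes is functorial in $M.$ Thus both assertions reduce in a short, formal way to the results already proven for $(\varphi_L,Z)$-cohomology.
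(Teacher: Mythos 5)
Your proposal is correct and follows essentially the same route as the paper: finiteness of $H^i_{\varphi_L,Z}(M)$ plus Noetherianity of $A$ for (1), and for (2) the flat base change from Theorem \ref{thm:perfect}(2) followed by Remark \ref{rem:invariants} applied to the finite quotient $\Gamma_L/\Gamma_n$ through which the action factors. Your explicit remarks on the $\Gamma_L$-equivariance of the comparison map and on turning $\otimes^{\mathbb{L}}$ into $\otimes$ via flatness are exactly the points the paper leaves implicit.
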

\begin{proof}
	The first statement follows from \ref{thm:cohfinite} because $A$ is Noetherian. For the second statement we use \ref{thm:perfect} to conclude $$H^i_{\varphi_L,Z}(M)\otimes_A B = H^i_{\varphi_L,Z}(M\hat{\otimes}B)$$ and taking $\Gamma_L$-invariants we obtain $$H^i_{\varphi_L,D(\Gamma_L,A)}(M\hat{\otimes}_AB)=(B \otimes_A H^i_{\varphi_L,Z}(M))^{\Gamma_L},$$ using \ref{rem:invariants} and the fact that the $\Gamma_L$ action factors through $\Gamma_L/\Gamma_n$ we have
	$$(B \otimes_A H^i_{\varphi_L,Z}(M))^{\Gamma_L} = B \otimes_A H^i_{\varphi_L,Z}(M)^{\Gamma_L}=B\otimes_A H^i_{\varphi_L,D(\Gamma_L,A)}(M).$$
	
\end{proof}
\begin{cor}\label{cor:sheaves} Let $M$ be an $L$-analytic $(\varphi_L,\Gamma_L)$-module over $\cR_A.$
	Assigning to an affinoid subdomain $Sp(A') \subset Sp(A)$ the cohomology groups $H^i_{\varphi_L,Z}(M\hat{\otimes}_AA')$ (resp. $H^i_{\varphi_L,D(\Gamma_L,A)}(M\hat{\otimes}_AA'$)) defines coherent sheaves on $Sp(A).$
\end{cor}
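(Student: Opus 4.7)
The plan is to identify each of these presheaves with the coherent sheaf associated to the corresponding finitely generated $A$-module, namely $H^i_{\varphi_L,Z}(M)$ and $H^i_{\varphi_L,D(\Gamma_L,A)}(M)$. The key fact to invoke is that any finitely generated module $N$ over an affinoid algebra $A$ gives rise to a coherent sheaf $\tilde{N}$ on $\operatorname{Sp}(A)$ whose sections on an affinoid subdomain $\operatorname{Sp}(A') \subset \operatorname{Sp}(A)$ are canonically $N \otimes_A A'$ (cf.\ \cite[9.4.3]{BGR}). Finite generation over $A$ is already provided by Theorem \ref{thm:cohfinite} and Corollary \ref{kor:vollecohomo}\textit{(1)}, so the content of the corollary is a compatibility of formation of cohomology with restriction to affinoid subdomains.

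For this, I would start by recording that the structure morphism $A \to A'$ of an affinoid subdomain is flat (cf.\ \cite[Corollary 7.3.2/6]{bosch2014lectures}). With this in hand, Theorem \ref{thm:perfect}\textit{(2)} applied to $f\colon A \to A'$ specialises to a quasi-isomorphism
$$C_{\varphi_L,Z}(M) \otimes_A A' \xrightarrow{\;\sim\;} C_{\varphi_L,Z}(M \hat{\otimes}_A A'),$$
where the derived tensor product simplifies to the ordinary tensor product by flatness. Passing to cohomology yields a natural isomorphism
$$H^i_{\varphi_L,Z}(M) \otimes_A A' \xrightarrow{\;\cong\;} H^i_{\varphi_L,Z}(M \hat{\otimes}_A A'),$$
which is precisely the sheaf condition identifying the presheaf $\operatorname{Sp}(A') \mapsto H^i_{\varphi_L,Z}(M \hat{\otimes}_A A')$ with the coherent sheaf associated to $H^i_{\varphi_L,Z}(M)$. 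The analogous statement for $H^i_{\varphi_L,D(\Gamma_L,A)}$ is provided directly by Corollary \ref{kor:vollecohomo}\textit{(2)}, whose proof already used flatness to commute the formation of $\Gamma_L$-invariants with base change.

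The main (and really only) subtlety to check is the functoriality in $A'$: for a chain $\operatorname{Sp}(A'') \subset \operatorname{Sp}(A') \subset \operatorname{Sp}(A)$ of affinoid subdomains one must verify that the induced base change isomorphisms are compatible, so that the restriction maps of the presheaf match the restriction maps of the associated coherent sheaf. This is immediate from the naturality of the comparison morphisms in Theorem \ref{thm:perfect}\textit{(2)} and Corollary \ref{kor:vollecohomo}\textit{(2)}, together with the transitivity of completed tensor products $M \hat{\otimes}_A A' \hat{\otimes}_{A'} A'' \cong M \hat{\otimes}_A A''$, which reduces to the corresponding statement for $\cR_A$ and thus to affinoid base change as in Lemma \ref{lem:affbasechange}. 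Once this compatibility is in place, the coherent sheaf property follows formally from the standard correspondence between finitely generated modules and coherent sheaves on $\operatorname{Sp}(A)$.
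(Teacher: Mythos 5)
Your proposal is correct and follows essentially the same route as the paper: flatness of $A\to A'$ for affinoid subdomains, combined with Theorem \ref{thm:cohfinite} and the base change statement of Theorem \ref{thm:perfect}(2) (resp.\ Corollary \ref{kor:vollecohomo}) to identify the presheaf with the coherent sheaf attached to the finitely generated $A$-module $H^i_{\varphi_L,Z}(M)$. The extra remarks on functoriality in $A'$ are a harmless elaboration of what the paper leaves implicit.
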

\begin{proof} 
	By \cite[4/Corollary 5 p. 68]{bosch2014lectures} the map $A' \to A$ is flat.  Theorem \ref{thm:cohfinite} and the base change property \ref{thm:perfect}(2) assert that the sheaf is associated to the finitely generated module $H^i_{\varphi_L,Z}(M)$ and hence coherent. The second case is treated in the same way with the base change formula from Corollary \ref{kor:vollecohomo}.
\end{proof}
If $M$ is a $(\varphi_L,\Gamma_L)$-module over $\cR_L$ attached to an $L$-representation $V$ of $G_L$, one can relate analytic cohomology (defined via pro-analytic co-cycles as in section \ref{section:comparison}) to Galois cohomology. More specifically the first cohomology group is a subgroup of $H^1(G_L,V)$ parametrising the analytic extensions of $L$ by $V$. It does not follow formally that $H^1_{\varphi_L,D(\Gamma_L,K)}(K\hat{\otimes}_LM) \subset K \otimes H^1(G_L,V)$ as we do not know, whether the differentials of the complex computing analytic cohomology are strict. In the next section we will compare analytic cohomology to analytic extensions of $(\varphi_L,\Gamma_L)$-modules and analytic cohomology defined in terms of pro-analytic co-cycles. 
\section{Comparison to pro-analytic cohomology and description in terms of extension classes}
\label{section:comparison}
Let $M$ be an LF-space over $K$ with a pro-analytic action of an $L$-analytic monoid $G,$ i.e., a monoid object in the category of $L$-analytic manifolds.
We write $M$ suggestively as $M = \varinjlim_{0<r<1} \varprojlim_{r\leq s<1} M^{[r,s]}$ with Banach spaces $M^{[r,s]}.$  The pro-analytic cohomology groups are defined analogously to (semi)-group cohomology as the cohomology of the complex \linebreak $C^\bullet_\text{pro-an}(G,M)$ whose term in degree $n$ is given by $$C^\text{pro-an}(G^n,M):= \varinjlim_{0<r<1} \varprojlim_{r\leq s<1}C^{an}(G,M)$$ endowed with the usual inhomogeneous cochain differentials. We denote the pro-analytic cohomology groups by $H^i_\text{pro-an}(G,M).$
Our desired application in the context of $(\varphi_L,\Gamma_L)$-modules is the monoid $\varphi_L^{\NN_0}\times \Gamma_L,$ where $\varphi_L^{\NN_0}$ is given the structure of an $L$-analytic manifold with the discrete topology.
Using \cite[Theorem 11.6]{thomas2022cohomology} one can reinterpret this cohomology as a cone of $\varphi_L-1$ on a complex computing the pro-analytic cohomology of $\Gamma_L$ with coefficients in $M.$
\subsection{Comparison to $\operatorname{Ext}^1(\cR_A,M)$}
As before fix some $U=\Gamma_n$ for an $n\geq n_0$ and write $Z$ for the variable $Z_n \in D(\Gamma_L,K).$ We denote by  $\alpha(-):=\log(\chi_{LT}(-))/\pi_L^n$ the chart used to indentify $U$ with $o_L.$ For an $L$-analytic $(\varphi_L,\Gamma_L)$-module over $\cR_A$ we denote by $\operatorname{Ext}_{an}^1(\cR_A,M)$ the equivalence classes of extensions 
$$0 \to M \to E \to \cR_A \to 0$$ of $L$-analytic $(\varphi_L,\Gamma_L)$-modules. Given a preimage $e \in E$ of $1 \in \cR_A$ we can assign to it an element in $H^1_{an}(\varphi_L^{\NN_0}\times \Gamma_L,M)$ by taking the function $h \mapsto (h-1)e.$ This induces by \cite[Theorem 8.3]{thomas2022cohomology} an isomorphism $$\Theta_\text{pro-an}\colon \operatorname{Ext}_{an}^1(\cR_A,M) \xrightarrow{\cong} H^1_\text{pro-an}(\varphi_L^{\NN_0}\times \Gamma_L).$$
\begin{thm} \label{thm:h1vergleich} Let $M,E$ be $L$-analytic $(\varphi_L,\Gamma_L)$-modules over $\cR_A$ that fit in an exact sequence $$0 \to M \to E \to \cR_A \to 0.$$ Let $e$ be a preimage of $1$ in $E$ then $e \mapsto ((\varphi_L-1)e,Ze)$ gives a well-defined injection $$\Theta:\operatorname{Ext}_{an}^1(\cR_A,M) \to H^1_{\varphi_L,Z}(M),$$ whose image is contained in $H^1_{\varphi_L,D(\Gamma_L,K)}(M)$ inducing  a  bijection $$\operatorname{Ext}_{an}^1(\cR_A,M) \to H^1_{\varphi_L,D(\Gamma_L,K),M}(M).$$
\end{thm}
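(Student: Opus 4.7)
The plan is to verify that $\Theta$ is well-defined and $\Gamma_L$-invariant by direct calculation, prove injectivity into $H^1_{\varphi_L,Z}(M)$ via an averaging argument, and deduce surjectivity onto the invariants by comparing with the pro-analytic picture via $\Theta_{\text{pro-an}}$.

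For well-definedness, I would first observe that $((\varphi_L-1)e,Ze)$ is a $1$-cocycle in $C_{\varphi_L,Z}(M)$: the distribution algebra action commutes with $\varphi_L$, so $Z(\varphi_L-1)e=(\varphi_L-1)Ze$, matching the second differential. Both components land in $M$ because $\varphi_L$ and $Z$ fix $1 \in \cR_A$ (the latter by Remark \ref{rem:augmentation}). A different preimage $e'=e+m$ with $m \in M$ shifts the cocycle by the coboundary $((\varphi_L-1)m,Zm)$, and any isomorphism of extensions is the identity on $M$ and descends to the identity on $\cR_A$, so translates $e$ by an element of $M$. The same bookkeeping shows the class lies in $H^1_{\varphi_L,D(\Gamma_L,K)}(M)$: for $\gamma \in \Gamma_L$ we have $\gamma e-e \in M$, hence $\gamma\cdot \Theta(E)-\Theta(E)=((\varphi_L-1)(\gamma e-e),Z(\gamma e-e))$ is a coboundary.

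For injectivity, suppose the class of $((\varphi_L-1)e,Ze)$ vanishes in $H^1_{\varphi_L,Z}(M)$, so there exists $m \in M$ with $(\varphi_L-1)(e-m)=0$ and $Z(e-m)=0$. The $L$-analyticity of the $\Gamma_L$-action on $E$ extends to a $D(\Gamma_n,K)$-action, and under the Fourier isomorphism $\delta_\gamma-1=\eta(\alpha(\gamma),Z)-1$ lies in the ideal $Z\cdot D(\Gamma_n,K)$, since $\eta(a,T)-1$ has vanishing constant term. Consequently $e-m$ is fixed by $\Gamma_n$. Because $\Gamma_L/\Gamma_n$ is finite and $K$ has characteristic zero, averaging produces an element $\bar{e}:=\frac{1}{[\Gamma_L:\Gamma_n]}\sum_{\gamma\in \Gamma_L/\Gamma_n}\gamma(e-m)$ which is fixed by all of $\Gamma_L$, still $\varphi_L$-fixed (since $\varphi_L$ commutes with $\Gamma_L$), and still a preimage of $1$. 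Its $\cR_A$-linear span provides a splitting of $E$.

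For surjectivity, rather than attempting to build an extension by hand from a cocycle $(a,b)$ (which is awkward because the data only directly prescribes the action of $D(\Gamma_n,K)$, not of all of $\Gamma_L$), I would exploit the isomorphism $\Theta_{\text{pro-an}}\colon \operatorname{Ext}^1_{an}(\cR_A,M) \xrightarrow{\cong} H^1_{\text{pro-an}}(\varphi_L^{\NN_0}\times \Gamma_L,M)$ from \cite[Theorem 8.3]{thomas2022cohomology}. The comparison isomorphism between analytic Herr cohomology and pro-analytic cohomology established later in this section identifies $H^1_{\varphi_L,D(\Gamma_L,K)}(M)$ with $H^1_{\text{pro-an}}(\varphi_L^{\NN_0}\times \Gamma_L,M)$, and under this identification $\Theta$ corresponds to $\Theta_{\text{pro-an}}$, because both are produced from the same preimage $e$ and the pair $((\varphi_L-1)e,Ze)$ is precisely the image of the inhomogeneous cocycle $h\mapsto (h-1)e$ under the Herr-style translation. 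Bijectivity of $\Theta_{\text{pro-an}}$ then forces $\Theta$ to be surjective onto the invariants.

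The main obstacle is this last step: obtaining the surjection intrinsically from the Herr description requires pinning down an $L$-analytic $\Gamma_L$-structure on $M\oplus \cR_A$ from a $\Gamma_L$-invariant class, and the cleanest way to bypass this difficulty is to borrow the pro-analytic comparison (together with Kohlhaase-style input) and verify the compatibility of $\Theta$ and $\Theta_{\text{pro-an}}$ on representatives.
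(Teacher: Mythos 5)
Your treatment of well-definedness, the $\Gamma_L$-invariance of the class, and injectivity coincides with the paper's: the paper likewise notes that $\gamma e$ is another preimage of $1$ (so the class is $\Gamma_L$-invariant), and for injectivity it also passes from a $(\varphi_L,U)$-fixed preimage of $1$ to a $(\varphi_L,\Gamma_L)$-fixed one by averaging over $\Gamma_L/U$. Where you genuinely diverge is surjectivity. The paper constructs the extension by hand: from a cocycle $(a,b)$ it writes $\delta_\sigma-1=ZG_\sigma(Z)$, checks that $\sigma\mapsto G_\sigma(Z)b$ is a $1$-cocycle on $U$, extends it to $\Gamma_L$ by $\tfrac{1}{[\Gamma_L:U]}$ times corestriction, defines $E=M\times\cR_A$ with the twisted actions, proves $L$-analyticity of $E$ by an explicit convergence estimate for $(\eta(x,Z)-1)/Z$, and finally verifies $Ze=(b,0)$ by expanding a lift $X$ of $Z$ in $D_{\QQ_p}(U,K)$. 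Your route instead tries to import surjectivity from $\Theta_{\text{pro-an}}$ and the Kohlhaase comparison; this avoids the explicit construction but, as written, has a gap.

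The gap is the following. Corollary \ref{cor:comparison} identifies $C_{\varphi_L,Z}(M)$ with $C^\bullet_{\text{pro-an}}(\varphi_L^{\NN_0}\times U,M)$ for the \emph{subgroup} $U=\Gamma_n$, so it gives $H^1_{\varphi_L,Z}(M)\cong H^1_{\text{pro-an}}(\varphi_L^{\NN_0}\times U,M)$, whereas $\Theta_{\text{pro-an}}$ lands in $H^1_{\text{pro-an}}(\varphi_L^{\NN_0}\times \Gamma_L,M)$. The identification of $H^1_{\varphi_L,D(\Gamma_L,K)}(M)=H^1_{\varphi_L,Z}(M)^{\Gamma_L}$ with $H^1_{\text{pro-an}}(\varphi_L^{\NN_0}\times\Gamma_L,M)$ that you invoke is precisely the corollary the paper \emph{deduces from} Theorem \ref{thm:h1vergleich}, so quoting it here is circular. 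To repair your argument you would need to prove independently that restriction induces an isomorphism $H^1_{\text{pro-an}}(\varphi_L^{\NN_0}\times\Gamma_L,M)\xrightarrow{\sim} H^1_{\text{pro-an}}(\varphi_L^{\NN_0}\times U,M)^{\Gamma_L}$ (a restriction--corestriction argument using that $[\Gamma_L:U]$ is invertible and the corestriction of \cite[Definition 2.1.2]{BFFanalytic} extends to pro-analytic cochains), and you would also need to actually trace the cocycle $h\mapsto (h-1)e$ through the chain of isomorphisms in Lemma \ref{lem:LFproan} and the comparison of the bar resolution with $D(U,K)\xrightarrow{Z}D(U,K)$ to confirm it maps to $((\varphi_L-1)e,Ze)$; you assert this compatibility but do not verify it. Both steps are plausible, but neither is free, and the second in particular hides exactly the kind of $Z$-versus-$(\delta_\gamma-1)$ bookkeeping that the paper's explicit construction confronts directly.
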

\begin{proof} Since all modules involved are $L$-analytic, the morphisms between them are even $D(\Gamma_L,K)$-linear.
	Because $1$ is invariant under the action of $\varphi_L$ and $U$ the element $((\varphi_L-1)e,Ze)$ lies in $M \times M$ and it is clear that $\partial_2((\varphi_L-1)e,Ze)=0.$ If $\tilde{e}$ is another preimage of $1$ then $e-\tilde{e}$ lies in $M$ and therefore $((\varphi_L-1)e,Ze)-((\varphi_L-1)\tilde{e},Z\tilde{e}) = ((\varphi_L-1)(e-\tilde{e}),Z(e-\tilde{e})) = \partial_1(e-\tilde{e}).$ Note that for any $\gamma\in \Gamma_L$ the element $\gamma e$ is yet another preimage of $1$ and the above computation shows that the cocycles $(\varphi_L-1)e,Ze)$ and $\gamma (\varphi_L-1)e,Ze)$ differ by a coboundary, which proves the $\Gamma_L$-invariance of $\overline{((\varphi_L-1)e,Ze))}.$
	For the injectivity let $E$ be an extension such that $((\varphi_L-1)e,Ze)$ vanishes in $H^1.$ Then there exists $d \in M$ such that $((\varphi_L-1)(e-d),Z(e-d))=0.$ That means $e-d \in E$ is a preimage of $1 \in \cR_A$ fixed by $\varphi_L$ and $U,$ which can be modified to be $(\varphi_L,\Gamma_L)$-invariant by replacing it with $\frac{1}{[\Gamma_L:U]}\sum_{\gamma \in \Gamma_L/U}\gamma(e-d)$ and therefore induces a section of $E \to \cR_A,$ which shows injectivity.
	For the surjectivity 
	let $\overline{(a,b)} \in H^1_{\varphi_L,D(\Gamma_L,K)}(M).$ We first explain how to construct an $L$-analytic coycle of the semigroup $\varphi_L^{\NN_0}\times\Gamma_L$ with values in $M.$ In the following we identify $\sigma$ with its corresponding Dirac distribution. We obtain $\sigma-1 = ZG_{\sigma}(Z)$ with a suitable power series $G_{\sigma} \in o_K\llbracket Z \rrbracket.$ Note that the map $\sigma \mapsto G_{\sigma}(Z)b$ defines a $1$-cocycle $c: U \to M$ as for $\tau \in U$ we have $$\sigma\tau-1 = \sigma(\tau-1) + (\sigma-1) = Z\delta_{\sigma}G_{\tau}(Z) + ZG_{\sigma}(Z).$$ We extend it to the whole group $\Gamma_L$ using $\frac{1}{[\Gamma_L:U]}$-times the corestriction (defined in \cite[Definition 2.1.2]{BFFanalytic}) such that the restriction of $c$ to $U$ is the cocycle we started with. Finally we define an extension of $M$ by $\cR_A$ by setting $E=M \times \cR_A$ as $\cR_A$-modules with actions $\sigma((m,r))=(\sigma m+ (\sigma r)c(\sigma),\sigma r )$ and $(\varphi_E(m,r) = (\varphi_M(m)+\varphi_L(r)a,\varphi_L(r))).$ In order to show that this extension is $L$-analytic we need to show that the function $\sigma \mapsto c(\sigma)$ is $L$-analytic. It suffices to show that for $m \in M^{[r,s]}$ for any interval $[r,s]$ and a sufficiently small open subgroup $U' \subset U$ the orbit map $\sigma \mapsto \sigma m$ restricted to $U'$ is $L$-analytic. Recall that $\sigma$ acts on $m$ via the operator $$\eta(\alpha(\sigma),Z)$$ for some fixed $n \in \mathbb{N}$ depending on $U$ and we abbreviate $x:= \alpha(\sigma).$ 
	We wish to show that the series 
	$$G_{\sigma}(Z)m = (\eta(x,Z)-1)/Z m = \sum_{k=1}^{\infty} \frac{(x\Omega \log_{LT}(Z))^k}{k!Z}m$$
	converges on some ball $\lvert x\rvert \leq \pi_L^j,$ which via the chart $\chi_{LT}$ corresponds to the desired subgroup $U'$.
Choose $j$ such that $\lvert\lvert \frac{(\Omega\log_{LT}(Z)\pi_L^j/Z)^k}{k!} \rvert \rvert_{M^{[r,s]}}$ converges to zero for $k \to \infty$. 
	The choice of $j$ is possible because the series $\exp(T)$ has non-trivial radius of convergence and the operator norm of $\log_{LT}(Z)/Z \in D(U,K)$ acting on $M^{[r,s]}$ is bounded. It remains to show that the image of $E$ is the original element $(\overline{a,b}).$ We may choose $e=(0,1)$ as an explicit preimage of $1.$ By construction $(\varphi-1)e = (a,0)$. It remains to show $Ze = (b,0).$ For $\sigma\in U$ we compute $(\sigma-1)(e) = (G_{\sigma}(Z)b,0).$
	Recall that the action of $D(\Gamma_L,K)$ is obtained by continuous extension of the $K[\Gamma_L]$ action and that the action of $Z \in D(U,K)$ agrees with the action of any $X \in D_{\QQ_p}(U,K)$ that projects to $Z.$ Choose such an element and express it as a convergent series $$X = \sum_{\mathbf{k} \in \NN_0^d} a_{\mathbf{k}}\mathbf{b}^\mathbf{k},$$ where $\mathbf{b}= (\gamma_i-1,\dots,\gamma_d-1)$ is a $\ZZ_p$-basis of $U.$ The series defining $X$ converges a fortiori in $D(U,K)$ by definition of the quotient topology and since $\gamma_i-1$ is given by the power series $\eta(\alpha(\gamma_i),Z)-1$ evaluating at $Z=0$ shows that necessarily $a_{0}=0.$ We set $G_i(Z):=G_{\gamma_i}(Z).$ Let $\mathbf{G}:=(G_1,\dots,G_d)$ such that the image of  $\mathbf{b}^\mathbf{k}$ under the projection $\operatorname{proj}:D_{\QQ_p}(U,K)\to D(U,K)$ is $(ZG_1(Z),\dots,ZG_d(Z))^\mathbf{k}.$
	We compute 
	\begin{align}  Z &= \operatorname{proj}(X) \\
		& = \sum_{0 \neq \mathbf{k} \in \NN_0^d} a_\mathbf{k} Z^{\abs{\mathbf{k}}}\mathbf{G}^\mathbf{k}\\
		& = Z (\sum_{0 \neq \mathbf{k} \in \NN_0^d} a_\mathbf{k} Z^{\abs{\mathbf{k}}-1}\mathbf{G}^\mathbf{k})	
	\end{align}
	We claim that the inner sum $\mu:= \sum_{0 \neq \mathbf{k} \in \NN_0^d} a_\mathbf{k} Z^{\abs{\mathbf{k}}-1}\mathbf{G}^\mathbf{k}$ converges with respect to the Fréchet topology to an element of $D(U,K)$ which satisfies $\mu Z =Z$ by construction and hence has to be $1$ since $D(U,K)$ is a domain. For the convergence we remark that the map $\lambda \mapsto Z\lambda$ is an injective continuous operator with closed image.
	In particular $ZD(U,K)$ is itself a Fréchet space and we conclude that $\lambda \mapsto \lambda Z$ is a continuous surjection $D(U,K) \to ZD(U,K)$ between Fréchet spaces and thus a homeomorphism by the open mapping theorem.
	We further compute for $\mathbf{k} \neq 0$
	$$\mathbf{b}^\mathbf{k}(e) = (Z^{\abs{\mathbf{k}}-1}\mathbf{G}^\mathbf{k}b,0),$$ which can be seen as follows. Without loss of generality assume $k_1 \neq 0.$ Since $U$ is commutative we may first apply $\gamma_1-1$ and obtain $(\gamma_1-1)(e) = (G_1(Z)b,0)$ by construction. The resulting element belongs to the image of $M$ under the natural inclusion $M \to E$ and hence $\gamma_i-1$ acts via multiplication by $ZG_i(Z).$ Putting everything together we conclude 
	$$Ze = \left(\sum_{0 \neq \mathbf{k} \in \NN_0^d} a_{\mathbf{k}}Z^{\abs{\mathbf{k}}-1}\mathbf{G}^\mathbf{k}b,0\right) = (\mu b,0) = (b,0).$$
\end{proof}
Implicitly we have shown a slightly stronger statement.

\begin{cor} The map $\Theta$ induces an  isomorphism $H^1_{\varphi_L,D(\Gamma_L,K)}(M) \cong H^1_\text{pro-an}(\varphi_L^{\NN_0}\times\Gamma_L,M)$ such that the following diagram commutes
$$
	\begin{tikzcd}
		{\operatorname{Ext}^1_{an}(\cR_A,M)} \arrow[r, "\Theta_\text{pro-an}"] \arrow[d, no head, equal] & {H^1_\text{pro-an}(\varphi_L^{\NN_0}\times\Gamma_L,M)} \arrow[d] \\
		{\operatorname{Ext}^1_{an}(\cR_A,M)} \arrow[r, "\Theta"]                                         & {H^1_{\varphi_L,D(\Gamma_L,K)}(M).}            
	\end{tikzcd}
$$
\end{cor}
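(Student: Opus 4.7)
The plan is to leverage the fact that both horizontal maps in the diagram are already bijections with the same source $\operatorname{Ext}^1_{an}(\cR_A,M)$: the map $\Theta$ by Theorem \ref{thm:h1vergleich}, and $\Theta_{\text{pro-an}}$ by \cite[Theorem 8.3]{thomas2022cohomology}. Therefore the bijection $\Theta\circ\Theta_{\text{pro-an}}^{-1}$ is automatically an isomorphism of abelian groups making the claimed diagram commute by construction. The content of the corollary is thus mostly tautological, but to justify calling this the natural comparison map one should verify that it agrees with the intrinsic map coming from the cochain-level comparison between pro-analytic and $(\varphi_L,Z)$-cohomology.

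To exhibit this intrinsic description, I would proceed as follows. A pro-analytic $1$-cocycle on $\varphi_L^{\NN_0}\times\Gamma_L$ decomposes via the product structure into the pair $(a,c)$ with $a:=f(\varphi_L,1)\in M$ and $c:=f|_{\{1\}\times\Gamma_L}$, and the cocycle condition on $f$ translates to $(\varphi_L-1)c(\gamma)=(\gamma-1)a$ for all $\gamma\in\Gamma_L$. Using pro-analyticity I would extend $c|_U$ by continuity and $K$-linearity from $K[U]_{\text{aug}}$ to the augmentation ideal $D(U,K)_{\text{aug}}$, the key input being that $K[U]$ is dense in $D(U,K)$ (Remark \ref{rem:hahnbanach}) and that the $D(U,K)$-action on each Banach quotient $M^{[r,s]}$ is jointly continuous. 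Because $Z\in D(U,K)_{\text{aug}}$ by Remark \ref{rem:augmentation}, one then defines $b:=\widetilde{c}(Z)\in M$ and checks that $(\varphi_L-1)b=Za$, so that $(a,b)$ is a $1$-cocycle for $C_{\varphi_L,Z}(M)$. One verifies that the resulting class lies in $H^1_{\varphi_L,Z}(M)^{\Gamma_L}=H^1_{\varphi_L,D(\Gamma_L,K)}(M)$ and is independent of representative, yielding the desired map.

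Commutativity of the diagram is then checked on an extension $E$ with chosen preimage $e$ of $1\in \cR_A$. Going along $\Theta_{\text{pro-an}}$ produces the cocycle $h\mapsto (h-1)e$, whose decomposition is $a=(\varphi_L-1)e$ and $c(\gamma)=(\gamma-1)e$; the continuous extension is $\widetilde{c}(\lambda)=\lambda e$ for $\lambda\in D(U,K)_{\text{aug}}$, so evaluation at $Z$ returns $Ze$. Going along $\Theta$ one directly obtains $((\varphi_L-1)e,Ze)$, and the two outputs coincide.

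The main obstacle is the continuous extension step $c\mapsto \widetilde{c}$. This is essentially the reverse direction of the surjectivity argument in the proof of Theorem \ref{thm:h1vergleich}, where for a class $(a,b)$ one built the cocycle by $\sigma\mapsto G_\sigma(Z)b$ and verified convergence on Banach quotients; here I would run the same expansion $Z=\sum_{\mathbf{k}\neq 0}a_{\mathbf{k}}\mathbf{b}^{\mathbf{k}}$ (with $\mathbf{b}=(\gamma_1-1,\dots,\gamma_d-1)$) but applied to the abstract cocycle $c$ through the formula $\widetilde{c}(\mathbf{b}^{\mathbf{k}})=\sum_{j}\gamma_{i_1}\cdots\gamma_{i_{j-1}}c(\gamma_{i_j})$, whose convergence on each $M^{[r,s]}$ follows from the pro-analyticity of $c$ together with the operator-norm estimates for $D(U,K)$ acting on $M^{[r,s]}$ established in Section 2 (notably Remark \ref{rem:continuousoperator} and Lemma \ref{lem:operatornormestimates}). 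Alternatively one can bypass these estimates by invoking the Koszul description of analytic cohomology via $\mathbf{RHom}_{D(U,K)}(K,-)$ and Kohlhaase's comparison (Corollary \ref{cor:comparison}), which identifies both sides functorially before taking $\Gamma_L/U$-invariants.
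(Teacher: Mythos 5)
Your core argument is exactly the paper's: the corollary is stated without separate proof precisely because both horizontal maps are bijections out of $\operatorname{Ext}^1_{an}(\cR_A,M)$ (Theorem \ref{thm:h1vergleich} and the cited result of Thomas), so the vertical arrow is the composite $\Theta\circ\Theta_{\text{pro-an}}^{-1}$ and the diagram commutes by construction. Your additional cochain-level description of the induced map via extending a pro-analytic cocycle over the augmentation ideal of $D(U,K)$ and evaluating at $Z$ is a reasonable sanity check but goes beyond what the paper does or needs.
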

\subsection{Comparison to Kohlhaaase's analytic cohomology}
\label{sec:kohl}
In \cite{Kohlhaase} Kohlhaase studies analytic (co-)homology theories for the category $\mathcal{M}_G$ of locally convex $K$-vector spaces with a separately continuous $D(G,K)$ action, where $K$ is a spherically complete field and $G$ is an $L$-analytic group. We can not make the assumption that $K$ is spherically complete.  Luckily, the relevant constructions from \cite{Kohlhaase} work for general (complete) fields.
Kohlhaase defines for $V \in \mathcal{M}_G$ cohomology groups $H^q_{an}(G,V)$, which can be computed via so-called $s$-projective resolutions of the trivial module $K$ or, if $V$ is barrelled\footnote{More generally it suffices for $V$ to be a hypomodule in the sense of loc.cit..}, by $s$-injective resolutions of $V.$ 
In loc.cit.\ the cohomology groups are further endowed with topologies of their own, but we are only interested in their algebraic properties. For $V,W \in \mathcal{M}_G$ we denote by $\mathcal{L}_G(V,W) \subset \mathcal{L}_b(V,W)$ the subspace of $D(G,K)$-linear morphisms. We use $\mathcal{L}_{L,b}(V,W)$ (resp. $\mathcal{L}_{K,b}(V,W))$ to denote $L$-linear (resp. $K$-linear) continuous maps endowed with the respective strong topology if confusion can arise.  Following Kohlhaase we call a map between locally convex topological vector spaces \textit{strong}, if both its kernel and cokernel admit closed complements. An exact sequence of locally convex topological vector spaces is called $s$-exact if it is exact and every map is strong. An object $P$ in $\mathcal{M}_G$ is called $s$-projective if $\mathcal{L}_G(P,-)$ takes $s$-exact sequences to exact sequence of abstract $K$-vector spaces. The definition of $s$-injective objects is not symmetrical (cf. \cite[Definition 2.12]{Kohlhaase}), but we will not need this notion. By an $s$-projective resolution of an object $V \in \mathcal{M}_H$ we mean an  $s$-exact sequence $\dots \to V_1 \to V_0 \to V\to 0,$ in which all terms (except for $V$) are $s$-projective. We write $V_\bullet$ for the complex that arises by omitting $V$. One can now define $H^q_{an}(G,V)$ to be the cohomology of $\mathcal{L}_G(Y_\bullet,V),$ where $Y_\bullet$ is an $s$-projective resolution of the trivial object $K.$ This notion is independent of the choice of $s$-projective resolution (cf. \cite[Remark 2.10]{Kohlhaase}).
Kohlhaase constructs  for $V \in \mathcal{M}_G$ a standard projective resolution of the form $B_{-1}(G,V) =V$ and $B_{q}(G,V) =D(G,K) \hat{\otimes}_{K,i}  B_{q-1}(G,V)$ for $q  \geq 0$ viewed as a $D(G,K)$-module via the left tensor factor with differentials given on elements of the form $x = \delta_{0} \otimes \dots \otimes \delta_q \otimes v$ (with Dirac distributions $\delta_i$) by 
$$d(x) = (-1)^{q+1}(\delta_0\otimes\dots\otimes \delta_qv) + \sum_{i=1}^q(-1)^i\delta_0 \otimes \dots \otimes\delta_{i-1}\delta_{i}\otimes\dots \otimes v $$
The construction of this resolution does not require the base field to be spherically complete.
\begin{lem} Let $V$ be a Hausdorff locally convex topological vector spaces over $K$ and let $W \in \mathcal{M}_G$ then the natural map
	$$\mathcal{L}_G(D(G,K)\hat{\otimes}_{K,i}V,W) \to \mathcal{L}_b(V,W)$$ induced by composing with the map $v \mapsto 1\otimes v$ is a continuous bijection.
\end{lem}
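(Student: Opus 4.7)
The plan is to verify this as a universal-property statement: well-definedness and continuity are formal, injectivity follows from the density of $K[G]$ in $D(G,K)$, and surjectivity uses the defining property of the inductive tensor product topology together with completeness of the target.

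For well-definedness, if $f \in \mathcal{L}_G(D(G,K)\hat{\otimes}_{K,i}V,W)$ then the composition $v \mapsto f(1\otimes v)$ (with $1=\delta_e$) is continuous and $K$-linear on $V$; it need only be $K$-linear, not $D(G,K)$-linear, which matches the target $\mathcal{L}_b(V,W)$. Continuity of restriction in the strong topologies follows because $\iota\colon v \mapsto 1\otimes v$ is itself continuous (as the partial map at $1$ of the separately continuous bilinear map into $D(G,K)\otimes_{K,i}V$) and sends bounded sets to bounded sets.

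For injectivity, suppose $f(1\otimes v)=0$ for all $v\in V$. Then $D(G,K)$-linearity gives $f(\delta_g\otimes v)= \delta_g\cdot f(1\otimes v) =0$ for every $g\in G$, hence $f$ vanishes on $K[G]\otimes_K V$ by $K$-linearity. By Remark \ref{rem:hahnbanach} the subspace $K[G]$ is dense in $D(G,K)$, so $K[G]\otimes_K V$ is dense in $D(G,K)\otimes_{K,i}V$ and thus in $D(G,K)\hat{\otimes}_{K,i}V$ by the density remark following Remark \ref{rem:lfcomplete}. Continuity of $f$ then forces $f\equiv 0$.

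For surjectivity, given $g\in \mathcal{L}_b(V,W)$ I would consider the bilinear map
$$\Phi\colon D(G,K)\times V \to W,\qquad (\lambda,v)\mapsto \lambda\cdot g(v).$$
This is separately continuous: continuity in $\lambda$ for fixed $v$ follows from the separately continuous $D(G,K)$-action on $W$, and continuity in $v$ for fixed $\lambda$ follows from the continuity of $g$ composed with the continuous action of $\lambda$. By the defining property of the inductive tensor product topology, $\Phi$ descends to a continuous $K$-linear map $\tilde{\Phi}\colon D(G,K)\otimes_{K,i}V\to W$, which extends uniquely to the completion $D(G,K)\hat{\otimes}_{K,i}V$ by Hausdorffness and (quasi-)completeness of $W$ (implicit in the setup; it holds for all Banach, Fréchet and LF-spaces encountered in practice). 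The $D(G,K)$-linearity of $\tilde{\Phi}$ holds on elementary tensors since $\mu\cdot\tilde{\Phi}(\lambda\otimes v) = (\mu\lambda)g(v) = \tilde{\Phi}(\mu\cdot(\lambda\otimes v))$, and extends to the closure by continuity of the $\mu$-action on both source and target. By construction the restriction of $\tilde{\Phi}$ along $\iota$ recovers $g$, proving surjectivity.

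The only real obstacle is the extension of $\tilde{\Phi}$ to the completion, which is why the completeness hypothesis on $W$ (implicit in the ambient category of analytic representations) is essential; everything else is a combination of density and the universal property of $\otimes_{K,i}$.
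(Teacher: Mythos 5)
Your argument is correct, and it is worth noting that the paper itself does not prove this lemma at all: it simply cites Kohlhaase's Lemma 2.2 and observes that spherical completeness of $K$ is not used there. Your direct verification via the universal property of $\hat{\otimes}_{K,i}$ is essentially Kohlhaase's own argument, so in substance the routes coincide. Two small points. First, your injectivity step detours through the density of $K[G]$ in $D(G,K)$; this is unnecessary and slightly delicate, since the density statement in the preliminaries is formulated for the projective tensor topology, whereas here you are completing for the inductive one. The cleaner argument is immediate: every elementary tensor satisfies $\lambda\otimes v=\lambda\cdot(1\otimes v)$, so $D(G,K)$-linearity alone forces $f$ to vanish on $D(G,K)\otimes_K V$, which is dense in the completion by definition. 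Second, the completeness of $W$ that you flag as the "only real obstacle" is not actually a gap: in Kohlhaase's definition the objects of $\mathcal{M}_G$ are complete Hausdorff locally convex spaces, a hypothesis the present paper elides in its one-line description of $\mathcal{M}_G$ but clearly intends (and verifies for the relevant LF-spaces such as $\cR_A$ in Remark \ref{rem:robbacomplete}). With these two remarks your proof is complete and matches the cited source.
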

\begin{proof}
	See \cite[Lemma 2.2]{Kohlhaase} and note that spherical completeness of the base field is not required.
\end{proof} 
As a consequence of the above for $V=K$ the analytic cohomology of $W \in \mathcal{M}_G$ is computed by a complex of the form 
$$0 \to W \to \mathcal{L}_b(D(G,K),W) \to \mathcal{L}_b(D(G,K) \hat{\otimes}_{K,i}D(G,K),W) \to \dots $$
We will show that the complex above is (algebraically) isomorphic to the standard complex of pro-analytic cochains.
We warn the reader that this identification is not topological without further assumptions on $W.$
\begin{rem}
	Multiplication on $G$ induces a topological isomorphism $$D(G,K) \hat{\otimes}_{K,i}D(G,K) \cong D(G^2,K).$$ 
\end{rem}
\begin{proof}
	This follows from \cite[A.3]{schneider2005duality} applied over $L$ and completed base change to $K.$
\end{proof}
\begin{lem}\label{lem:FrtoLF}
	Let $V$ be a Fréchet space and let $W = \varinjlim_m W_m$ be an LF-space with Fréchet spaces  $W_m$. Then any morphism $V \to W$ factors over some $W_{m_0}.$ In particular the natural map $$\varinjlim_m \mathcal{L}_b(V,W_m)\to \mathcal{L}_b(V,W) $$ is an algebraic isomorphism.
\end{lem}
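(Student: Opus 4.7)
The plan is to establish the factorization assertion first and then deduce the algebraic isomorphism as a formal consequence.

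For the factorization I would use a Baire category argument. Given a continuous linear map $f\colon V \to W$, write $V = \bigcup_m f^{-1}(W_m)$. Since the LF-spaces in question are strict (as is the convention in \cite{PGS} and as fits the other LF-spaces appearing in the paper), each $W_m$ sits in $W$ as a closed subspace, so each $f^{-1}(W_m)$ is closed in $V$. As $V$ is Fréchet and hence Baire, some $f^{-1}(W_{m_0})$ must have non-empty interior, and absorption by linearity propagates this to $f^{-1}(W_{m_0}) = V$. Thus $f(V) \subseteq W_{m_0}$ set-theoretically. To promote this to a continuous factorization $\tilde f \colon V \to W_{m_0}$, I would apply the closed graph theorem: the graph of $\tilde f$ in $V \times W_{m_0}$ is the preimage of the closed graph of $f$ under the continuous injection $V \times W_{m_0} \hookrightarrow V \times W$, hence closed; since both $V$ and $W_{m_0}$ are Fréchet spaces, $\tilde f$ is continuous. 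A cleaner but less self-contained alternative is to invoke directly the non-archimedean version of Grothendieck's factorization theorem from \cite{PGS}.

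The ``in particular'' part is then immediate. Surjectivity of $\varinjlim_m \mathcal{L}_b(V,W_m) \to \mathcal{L}_b(V,W)$ is exactly the factorization just established. For injectivity, suppose $g \in \mathcal{L}_b(V,W_m)$ has zero composition with $W_m \to W$; in the strict LF setting $W_m \to W$ is injective, so $g = 0$ already in $\mathcal{L}_b(V,W_m)$, and two representatives inducing the same element in $\mathcal{L}_b(V,W)$ agree after passing to a large enough index.

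The main obstacle is justifying the strictness (equivalently, the closedness of each $W_m$ in $W$ with injective transition maps) of the LF-space under consideration, since in the non-strict generality the Baire exhaustion argument need not yield a closed cover and the injectivity step can fail; bypassing this via a direct citation from \cite{PGS} trades this technicality for a precise bookkeeping of the hypotheses imposed there.
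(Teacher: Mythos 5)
There is a genuine gap, and it is precisely the one you flag at the end but then dismiss: the strictness assumption. The lemma is stated for an arbitrary LF-space $W=\varinjlim_m W_m$, and it is applied in Lemma \ref{lem:LFproan} to $M=\varinjlim_r\varprojlim_s M^{[r,s]}$, a $(\varphi_L,\Gamma_L)$-module over the Robba ring. These inductive limits are emphatically \emph{not} strict: the transition maps $M^{[r,1)}\to M^{[r',1)}$ are injective with dense (in the ring case even compactoid) image, so the steps are neither closed in $W$ nor topologically embedded. The paper itself insists on this distinction (see the sentence preceding Lemma \ref{lem:Banachdirectlimit}, which explicitly concerns ``(non-strict) LF-spaces''). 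In the non-strict setting $f^{-1}(W_m)$ need not be closed in $V$, so your Baire exhaustion does not apply, and your appeal to ``the convention in \cite{PGS}'' does not rescue the argument because the convention in force in this paper allows non-strict limits. So the primary proof you give establishes a special case that excludes the intended application.

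The paper's proof is simply a citation of \cite[Corollary 8.9]{SchneiderNFA}, i.e.\ the non-archimedean Grothendieck factorization theorem, which holds for arbitrary countable inductive limits of Fréchet spaces. If you want a self-contained argument in that generality, the Baire argument must be run not on the preimages $f^{-1}(W_m)\subset V$ but on the fibre products $G_m=\{(v,w)\in V\times W_m : f(v)=j_m(w)\}$, where $j_m\colon W_m\to W$ is the structure map. Each $G_m$ is closed in the Fréchet space $V\times W_m$, hence Fréchet, and $V=\bigcup_m \operatorname{pr}_V(G_m)$; since $V$ is Baire, some $\operatorname{pr}_V(G_{m_0})$ is non-meager, so by the open mapping theorem the projection $G_{m_0}\to V$ is surjective and open, which gives both $f(V)\subseteq j_{m_0}(W_{m_0})$ and the continuity of the factored map (your closed-graph step then becomes superfluous). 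Your remark about injectivity of $\varinjlim_m\mathcal{L}_b(V,W_m)\to\mathcal{L}_b(V,W)$ is fine once one notes that the structure maps $W_m\to W$ in the relevant systems are injective; this does not require strictness.
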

\begin{proof} See \cite[Corollary 8.9]{SchneiderNFA}.
\end{proof}
\begin{lem}
	\label{lem:TensorAdjunction} Let $K/L$ be a complete normed field extension, let $E$ be a Hausdorff locally convex $L$-vector space, and let $F$ be a  $K$-Banach space. Then the natural map
	$$\mathcal{L}_K(K \hat{\otimes}_{L,\pi} E, F) \to \mathcal{L}_L(E,F)$$ mapping a morphism $f\colon K \hat{\otimes}_{L,\pi} E \to  F$ to the map $f_E \colon e \mapsto f(1 \otimes e)$ is a topological isomorphism for the respective strong topologies.
\end{lem}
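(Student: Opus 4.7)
The plan is to realize this as a tensor–hom adjunction: bijectivity on underlying vector spaces will follow from the universal property of the projective tensor product, while the topological equivalence reduces to matching the strong-topology seminorms on either side.

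For bijectivity, I construct the inverse explicitly. Given $g \in \mathcal{L}_L(E,F)$, the map $b_g \colon K \times E \to F$, $(\lambda,e) \mapsto \lambda g(e)$, is $L$-bilinear, and joint continuity follows from the ultrametric estimate $\lvert \lambda g(e) - \lambda_0 g(e_0) \rvert_F \leq \max(\lvert \lambda - \lambda_0 \rvert_K \lvert g(e)\rvert_F,\, \lvert \lambda_0\rvert_K \lvert g(e) - g(e_0)\rvert_F)$ combined with continuity of $g$. By the universal property of $K \otimes_{L,\pi} E$ and completeness of $F$, $b_g$ extends uniquely to a continuous $L$-linear $\hat{g} \colon K \hat{\otimes}_{L,\pi} E \to F$. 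On elementary tensors $\hat{g}(\mu\cdot(\lambda\otimes e))=\mu\lambda g(e)= \mu\hat{g}(\lambda\otimes e)$, and $K$-linearity extends to the completion by density and continuity. That $g\mapsto \hat{g}$ inverts the natural map $f\mapsto f\circ\iota$ (with $\iota(e)=1\otimes e$) is checked first on elementary tensors and then extended by continuity.

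Continuity of $f\mapsto f\circ\iota$ is immediate: $\iota$ is continuous, so takes any bounded $B\subset E$ to a bounded subset $\iota(B)\subset K \hat{\otimes}_{L,\pi} E$, and the identity $\sup_{e\in B}\lvert f(1\otimes e)\rvert_F = \sup_{v\in \iota(B)}\lvert f(v)\rvert_F$ shows that the basic seminorms of the strong topology on the right pull back to basic seminorms on the left.

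The hard part is continuity of the inverse. The key estimate is that if $\lvert g(e)\rvert_F \leq C\cdot q(e)$ for some continuous seminorm $q$ on $E$, then $\lvert \hat{g}(v)\rvert_F \leq C \cdot \lvert v\rvert_{\otimes,q}$, where $\lvert \cdot \rvert_{\otimes,q}$ is the tensor-product seminorm associated to $\lvert\cdot\rvert_K$ and $q$; this follows from writing $v=\sum \lambda_i\otimes e_i$, applying the strong triangle inequality to $\hat{g}(v)=\sum \lambda_i g(e_i)$, taking the infimum over representations, and extending to the completion by continuity. Given bounded $\tilde B\subset K \hat{\otimes}_{L,\pi} E$, each tensor seminorm $\lvert\cdot\rvert_{\otimes,q}$ is bounded on $\tilde B$; the remaining task is to convert this control by seminorms into control by the strong-topology seminorms $p_B(g)=\sup_{e\in B}\lvert g(e)\rvert_F$ for bounded $B\subset E$. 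I expect to handle this via the structure of bounded sets in nonarchimedean completed projective tensor products as in \cite[Chapter 10]{PGS}: for each bounded $\tilde B$ one produces a bounded $B\subset E$ and a constant $C>0$ such that $\tilde B$ lies in the closed $o_K$-submodule generated by $C\cdot \iota(B)$, which immediately yields $p_{\tilde B}(\hat{g})\leq C\cdot p_B(g)$ and closes the argument.
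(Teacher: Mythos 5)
Your proposal is correct and follows essentially the same route as the paper: you reduce the forward continuity to continuity of $e\mapsto 1\otimes e$, construct the inverse by extending scalars (your use of the universal property of the projective tensor product for the jointly continuous bilinear map $(\lambda,e)\mapsto\lambda g(e)$ is equivalent to the paper's direct algebraic extension $g\mapsto g^K$ checked on open lattices), and reduce the continuity of the inverse to the structure of bounded subsets of $K\hat{\otimes}_{L,\pi}E$. The domination statement you defer to --- every bounded subset of $K\hat{\otimes}_{L,\pi}E$ is controlled by the closed $o_K$-submodule generated by a bounded subset of $E$ --- is exactly the content of \cite[Remark 10.4.2]{PGS} that the paper itself invokes at this point, so nothing essential is missing.
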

\begin{proof}
	By \cite[Theorem 10.3.9]{PGS} the map $E \to K \otimes_L E$ mapping $e$ to $1 \otimes e$ is a topological embedding and since $K$ and $E$ are Hausdorff the natural map $K \otimes_L E \to K \hat{\otimes}_L E$ is a topological embedding. In particular $f_E$ is continuous. Let $g \colon E \to F$ be an $L$-linear map. Clearly it extends uniquely to a $K$-linear map $g^K \colon K \otimes_L {E\to F}.$ It remains to see that $g^K$ is continuous with respect to the projective tensor product topology. For that purpose let $U \subset F$ be some open $o_K$-lattice. By continuity of $g$ we can find some open lattice $U' \subset E$ with $g(U') \subset U$ and we see that $o_K \otimes_{o_L} U' \subset K \otimes_LE$ is an open lattice satisfying $g^K(o_K \otimes_{o_L} U') \subset U.$ We conclude that $g^K$ extends uniquely to $K \hat{\otimes} E.$ By construction we have $(g^K)_E = g.$ For the other direction observe that $(f_E)^K$ and $f$ are both continuous and agree on the dense subset $K \otimes_LE.$ This shows $(f_E)^K =f$ and we have thus constructed an inverse to $f \mapsto f_E.$ It remains to see that the assignment is topological for the respective strong topologies. By \cite[Remark 10.4.2]{PGS}  for any bounded subset $B \subset E$ the closure $B^K$ of $o_K \otimes_{ o_L} B$ is bounded in $K \hat{\otimes}_L E$ and conversly for any bounded $B' \subset K \hat{\otimes}_L E$ we have that $B' \cap 1 \otimes E$ is bounded and, since $E \to 1 \otimes E$ is a homeomorphic embedding, can be identified with a bounded subset $B'_E$ of $E.$ Fix an open $o_K$-lattice $V \subset F.$ Our construction shows that $\{g\colon E \to F \mid g(B) \subset V \} $ is mapped into $\{g\colon K \hat{\otimes}_LE \to F \mid g(B^K) \subset V \}$ and the inverse map sends $\{f\colon K \hat{\otimes}_LE \to F \mid f(B') \subset V \}$ into $\{f\colon E \to F \mid f(B'_E) \subset V \}.$ We conclude that $f \mapsto f_E$ (resp. $g \mapsto g^K$) is continuous with respect to the strong topology. 
\end{proof}
The following Lemma is due to Venjakob.
\begin{lem} \label{lem:LFproan}Let $M = \varinjlim_r\varprojlim_{r<s<1}M^{[r,s]}$ be a $K$-LF-space. Then we have for $q \geq 0$ canonical algebraic isomorphisms
	\begin{align}
		\mathcal{L}_{b,K}(D(G^q,K),M) 
		&\cong \varinjlim_r\varprojlim_{r<s<1}\mathcal{L}_{b,K}(D(G^q,K),M^{[r,s]})  \nonumber\\
		&\cong \varinjlim_r\varprojlim_{r<s<1} \mathcal{L}_{b,L}(D(G^q,L),M^{[r,s]}) \nonumber\\
		&\cong  \varinjlim_r\varprojlim_{r<s<1} C^{an}(G^q,M^{[r,s]}) \nonumber\\
		&=C^\text{pro-an}(G^q,M) \nonumber.
	\end{align}
	
\end{lem}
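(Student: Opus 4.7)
The last equality in the chain is the definition of $C^{\text{pro-an}}(G^q,M)$, so I will focus on the first three isomorphisms. For the first, I observe that $D(G^q,K)$ is Fréchet, being the $q$-fold completed projective tensor power of the Fréchet space $D(G,K)=C^{an}(G,K)'_b$. Since $M = \varinjlim_r M^{[r,1)}$ is an LF-space with Fréchet steps $M^{[r,1)} = \varprojlim_s M^{[r,s]}$, Lemma \ref{lem:FrtoLF} ensures that any continuous $K$-linear map from $D(G^q,K)$ into $M$ factors through some $M^{[r_0,1)}$, giving
\[
\mathcal{L}_{b,K}(D(G^q,K),M) \cong \varinjlim_r \mathcal{L}_{b,K}(D(G^q,K),M^{[r,1)}).
\]
The universal property of a projective limit in the category of locally convex spaces then identifies $\mathcal{L}_{b,K}(D(G^q,K),M^{[r,1)})$ with $\varprojlim_s \mathcal{L}_{b,K}(D(G^q,K),M^{[r,s]})$, yielding the first isomorphism.

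For the second isomorphism I plan to first establish $D(G^q,K) \cong K\hat{\otimes}_L D(G^q,L)$. Since $L\hookrightarrow K$ is isometric, the base change $K\hat{\otimes}_L D_r(G,L)$ agrees with $D_r(G,K)$, as both are completions of the group algebra with respect to the $r$-norm. Since $D(G,L) = \varprojlim_r D_r(G,L)$ has dense transition maps (because $L[G]$ is dense in each $D_r$), Remark \ref{rem:Frechetlim} gives $K \hat{\otimes}_L D(G,L) \cong D(G,K)$, and passing to $q$-fold completed tensor products extends this to $G^q$. At each $(r,s)$, Lemma \ref{lem:TensorAdjunction} applied to the $K$-Banach space $M^{[r,s]}$ yields
\[
\mathcal{L}_{b,K}(D(G^q,K),M^{[r,s]}) \cong \mathcal{L}_{b,K}(K\hat{\otimes}_L D(G^q,L),M^{[r,s]}) \cong \mathcal{L}_{b,L}(D(G^q,L),M^{[r,s]}).
\]
Taking $\varinjlim_r \varprojlim_s$ gives the second claimed isomorphism.

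The crux is the third isomorphism, which at each $(r,s)$ asserts
\[
\mathcal{L}_{b,L}(D(G^q,L),M^{[r,s]}) \cong C^{an}(G^q, M^{[r,s]}).
\]
My plan is to invoke the Schneider--Teitelbaum duality $D(G^q,L) = C^{an}(G^q,L)'_b$ together with the fact that $C^{an}(G^q, L)$ is a space of compact type, hence reflexive and nuclear. Nuclearity forces the relevant completed tensor products to coincide and produces
\[
\mathcal{L}_{b,L}(D(G^q,L), M^{[r,s]}) \cong C^{an}(G^q,L) \hat{\otimes}_L M^{[r,s]}.
\]
On a local coordinate chart $U$, one has $C^{an}(U, L) \cong L\langle T/\varepsilon\rangle$ and $L\langle T/\varepsilon\rangle \hat{\otimes}_L W \cong W\langle T/\varepsilon\rangle = C^{an}(U, W)$; gluing these local identifications gives $C^{an}(G^q, L)\hat{\otimes}_L M^{[r,s]} \cong C^{an}(G^q, M^{[r,s]})$. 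Concretely the resulting bijection will send $\phi$ to the locally analytic function $g \mapsto \phi(\delta_g)$. I expect this step to be the main obstacle, since it bundles the compact-type duality of Schneider--Teitelbaum together with a delicate $W$-valued tensor-product identification; the first two steps are largely formal.
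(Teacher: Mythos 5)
Your proposal is correct and follows essentially the same route as the paper: the first isomorphism via Lemma \ref{lem:FrtoLF} plus the universal property of projective limits, and the second via the adjunction of Lemma \ref{lem:TensorAdjunction} (after the identification $D(G^q,K)\cong K\hat{\otimes}_L D(G^q,L)$, which the paper leaves implicit). The only difference is that for the third isomorphism the paper simply cites \cite[Theorem 2.2]{schneiderteitlbaumlocallyanalytic} applied over the spherically complete field $L$ with $M^{[r,s]}$ viewed as an $L$-Banach space, whereas you sketch a proof of that cited result (compact-type duality plus the local identification $C^{an}(U,L)\hat{\otimes}_L W\cong C^{an}(U,W)$), which is exactly its standard argument.
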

\begin{proof}
	The first isomorphism is a consequence of Lemma \eqref{lem:FrtoLF} and the universal mapping property of projective limits. 	The second isomorphism follows from the adjunction from Lemma \ref{lem:TensorAdjunction}. The third isomorphism is \cite[Theorem 2.2]{schneiderteitlbaumlocallyanalytic} (applied over the spherically complete field $L,$ viewing $M^{[r,s]}$ as an $L$-Banach space by forgetting the $K$-linear structure). 
\end{proof}

A trace through the definitions shows that the composition of the above isomorphisms is compatible with the differentials on both sides. We obtain the following corollary, which is essentially a pro-analytic version of \cite[Remark 2.17]{Kohlhaase}.
\begin{cor}\label{cor:proan}
	The complexes $C^\bullet_\text{pro-an}(G,M)$ and $	\mathcal{L}_{b}(D(G^q,K),M) $ are canonically isomorphic as complexes of abstract vector spaces.
\end{cor}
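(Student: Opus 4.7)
The plan is to observe that Lemma \ref{lem:LFproan} already supplies the termwise isomorphism
\[
\Phi_q\colon \mathcal{L}_{b,K}(D(G^q,K),M) \xrightarrow{\;\cong\;} C^{\text{pro-an}}(G^q,M),
\]
so the only content left in the corollary is compatibility with the differentials. To make this compatibility explicit, I would first recall what the Kohlhaase differential looks like on the left: it is the map adjoint to the bar differential on the standard resolution $B_\bullet(G,K)$, which after the identification $D(G,K)^{\hat{\otimes}_{K,i}(q+1)}\cong D(G^{q+1},K)$ (the remark recalled just before Lemma \ref{lem:FrtoLF}) acts on Dirac tensors by the simplicial alternating sum written out in the paragraph preceding the corollary.

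Next I would trace a morphism $\varphi \in \mathcal{L}_{b,K}(D(G^q,K),M)$ through the three-step chain of adjunctions of Lemma \ref{lem:LFproan}: first factor through some Banach layer $M^{[r,s]}$ by Lemma \ref{lem:FrtoLF}, then descend scalars from $K$ to $L$ via Lemma \ref{lem:TensorAdjunction}, and finally apply Schneider--Teitelbaum duality. Unwinding these adjunctions on Dirac distributions shows that $\varphi$ corresponds to the pro-analytic cochain $f_\varphi\colon (g_1,\dots,g_q)\mapsto \varphi(\delta_{(g_1,\dots,g_q)})$. Plugging Dirac distributions into Kohlhaase's formula then reproduces, term for term, the inhomogeneous cochain differential defining $C^\bullet_{\text{pro-an}}(G,M)$: the leading $g_0\cdot f_\varphi(g_1,\dots,g_q)$ summand comes from the left $D(G,K)$-action on $B_q(G,K)$, the middle terms from the multiplication in the bar complex, and the last term from the augmentation $\delta_{g_q}\mapsto 1$.

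To promote this agreement on Dirac distributions to an equality of continuous linear maps $D(G^{q+1},K)\to M$ I would invoke the density of the group algebra $K[G^{q+1}]$ in $D(G^{q+1},K)$ guaranteed by Remark \ref{rem:hahnbanach}, together with the continuity of both differentials under the identifications provided by Lemma \ref{lem:LFproan}. This is enough to conclude that $(\Phi_\bullet)$ is a chain isomorphism.

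The only real obstacle is purely one of bookkeeping: carefully matching the sign and side conventions in Kohlhaase's bar resolution with those implicit in the inhomogeneous cochain complex, in particular ensuring that the leftmost tensor factor in $B_q(G,K)$ is the one that produces the $g_0\cdot$ action on $M$ and not a shifted variant. Once the dictionary between the two sides is fixed consistently, the verification becomes mechanical.
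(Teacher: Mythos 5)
Your proposal is correct and follows the same route as the paper: the termwise identification is exactly Lemma \ref{lem:LFproan}, and the paper likewise reduces the corollary to checking compatibility of the differentials, which it dismisses with ``a trace through the definitions'' — your verification on Dirac distributions followed by density (Remark \ref{rem:hahnbanach}) and continuity is precisely that trace, written out. No gap.
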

For the group $G = o_L \cong U$ we obtain (by our general assumptions on $K$) that $D(U,K)$ is isomorphic to $\cR_K^{[0,1)}$ and we have an obvious $s$-projective resolution of $K$ given by
\begin{equation}\label{eq:proj}D(U,K) \xrightarrow{Z} D(U,K) \to K \to 0.\end{equation} Note that this resolution is also a projective resolution of $K$ as a $D(U,K)$-module and hence using  $\mathcal{L}_G(D(G,K),-) \cong \operatorname{Hom}_{D(G,K)}(D(G,K),-)$ we conclude $$H^q_{an}(U,V) \cong \operatorname{Ext}^q_{D(U,K)}(K,V).$$

\begin{cor} \label{cor:comparison} Let $M$ be an $L$-analytic $(\varphi_L,\Gamma_L)$-module over $\cR_A.$ Then there are canonical isomorphisms in $\mathbf{D}(A):$
	\begin{enumerate}[1.)]
		\item $[M \xrightarrow{Z}M] \simeq C^\bullet_\text{pro-an}(U,M)$
		\item $C_{\varphi_L,Z}(M) \simeq C^\bullet_\text{pro-an}(\varphi_L^{\NN_0}\times U,M).$
	\end{enumerate}
	
\end{cor}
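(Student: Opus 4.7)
My plan for the first statement is to exhibit two $s$-projective resolutions of the trivial $D(U,K)$-module $K$ in Kohlhaase's category $\mathcal{M}_U$ and compare them. The short resolution
$$P_\bullet: \quad D(U,K) \xrightarrow{Z} D(U,K) \to K \to 0$$
has $s$-projective terms by the analogue of \cite[Lemma~2.2]{Kohlhaase}, whose proof goes through for arbitrary complete base fields, while its $s$-exactness follows from Remark \ref{rem:augmentation}: the augmentation splits topologically via $K\delta_0 \hookrightarrow D(U,K)$, and $Z$ is injective on the domain $D(U,K)\cong \mathcal{O}_K(\mathbb{B})$ with closed image equal to $\ker(\operatorname{Aug})$. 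Kohlhaase's bar resolution $B_\bullet(U,K)$ furnishes a second $s$-projective resolution of $K$. The defining lifting property of $s$-projectivity produces the standard comparison theorem, yielding a chain homotopy equivalence $P_\bullet \simeq B_\bullet(U,K)$ over $\id_K$. Applying $\mathcal{L}_U(-,M)$ then gives a chain homotopy equivalence of complexes of $A$-modules
$$\bigl[M \xrightarrow{Z} M\bigr] \;=\; \mathcal{L}_U(P_\bullet,M) \;\simeq\; \mathcal{L}_U(B_\bullet(U,K),M),$$
where the left-hand identification uses the adjunction $\mathcal{L}_U(D(U,K),M)\cong M$. Combining with Corollary \ref{cor:proan}, which algebraically identifies the right-hand side with $C^\bullet_\text{pro-an}(U,M)$, settles (1).

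For (2), I will invoke \cite[Theorem~11.6]{thomas2022cohomology} to identify $C^\bullet_\text{pro-an}(\varphi_L^{\NN_0}\times U,M)$ with the mapping cone of $\varphi_L-1$ acting on $C^\bullet_\text{pro-an}(U,M)$, and then transport this cone through the quasi-isomorphism of (1). A direct inspection shows that the mapping cone of $\varphi_L-1$ on $[M\xrightarrow{Z}M]$ is precisely $C_{\varphi_L,Z}(M)$. The crucial ingredient is that the quasi-isomorphism from (1) is strictly $\varphi_L$-equivariant: since $\varphi_L$ commutes with $\Gamma_L$ on $M$, continuity extends this to commutation with the full action of $D(U,K)$, so that $\varphi_L$ is a $D(U,K)$-linear endomorphism of $M$. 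The chain map $\mathcal{L}_U(P_\bullet,M)\to\mathcal{L}_U(B_\bullet(U,K),M)$ is obtained by applying the bifunctor $\mathcal{L}_U(-,M)$ to a fixed chain map of $D(U,K)$-module resolutions that does not involve $\varphi_L$, and therefore commutes on the nose with the $\varphi_L$-endomorphisms of both complexes.

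The main obstacle is precisely this compatibility with $\varphi_L$ in (2): comparison of $s$-projective resolutions is canonical only up to chain homotopy, which by itself would yield merely a morphism of cones in the derived category whose quasi-isomorphism property still requires justification. The resolution of this difficulty is the remark just made: once a representative chain map of resolutions has been fixed, applying $\mathcal{L}_U(-,M)$ automatically produces a strictly $\varphi_L$-equivariant morphism because $\varphi_L$ acts solely through the coefficient module $M$. Once this is recorded, all remaining verifications reduce to routine manipulations of double complexes and mapping cones, together with $A$-linearity of every map, which holds because the $A$-action on $M$ inherited from $\cR_A$ commutes with both $\varphi_L$ and $\Gamma_L$.
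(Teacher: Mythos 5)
Your proposal is correct and follows essentially the same route as the paper: compare the $s$-projective resolution $D(U,K)\xrightarrow{Z}D(U,K)\to K\to 0$ with Kohlhaase's standard resolution, apply $\mathcal{L}_U(-,M)$ together with Corollary \ref{cor:proan}, and obtain part (2) by taking the cone of $\varphi_L-1$ via \cite[Theorem 11.6]{thomas2022cohomology}. Your explicit justification of the strict $\varphi_L$-equivariance of the comparison map (since $\varphi_L$ is a $D(U,K)$-linear endomorphism of $M$ and the chain map of resolutions is fixed independently of $\varphi_L$) is a detail the paper leaves implicit, but it is the right argument.
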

\begin{proof} It follows formally that all terms in all involved complexes inherit from $M$ a natural $A$-module structure and all differentials turn out to be $A$-linear. The first quasi-isomorphism is obtained by comparing the  $s$-projective resolution \eqref{eq:proj} with Kohlhaase's standard resolution and applying  Lemma \ref{lem:LFproan} and Corollary \ref{cor:proan} for the LF-space $M.$
	The second part of the statement is obtained by taking the cone of $\varphi_L-1$ and using \cite[Theorem 11.6]{thomas2022cohomology}.
\end{proof}
\let\stdthebibliography\thebibliography
\let\stdendthebibliography\endthebibliography
\renewenvironment*{thebibliography}[1]{%
	\stdthebibliography{BGR84}}
{\stdendthebibliography}

\bibliographystyle{amsalpha}

\bibliography{Literatur}
\end{document}